\documentclass[11pt,reqno]{amsart}

\usepackage{amsmath,amssymb,amsthm,mathtools,mathrsfs}
\usepackage{verbatim}
\usepackage[a4paper,margin=30mm]{geometry}
\usepackage[hidelinks]{hyperref}

\numberwithin{equation}{section}

\newtheorem{theorem}{Theorem}[section]
\newtheorem{proposition}[theorem]{Proposition}
\newtheorem{lemma}[theorem]{Lemma}
\newtheorem{corollary}[theorem]{Corollary}
\newtheorem{remark}[theorem]{Remark}
\newtheorem{claim}[theorem]{Claim}
\theoremstyle{definition}
\newtheorem{definition}[theorem]{Definition}
\theoremstyle{plain}

\def\g{\gamma}
\newcommand{\R}{\mathbb R}
\newcommand{\Hhalf}{\mathbb R^n_+}
\newcommand{\bdH}{\partial\mathbb R^n_+}

\newcommand{\M}{\mathcal M}
\newcommand{\MT}{\mathcal M_{\mathrm T}}
\newcommand{\ST}{S_{\mathrm T}}

\newcommand{\nuout}{\nu}
\newcommand{\cA}{\mathcal A}
\newcommand{\cE}{\mathcal{E}}
\newcommand{\cB}{\mathcal{B}}
\newcommand{\bD}{\mathcal D}
\newcommand{\calQ}{\mathcal Q}
\newcommand{\calL}{\mathcal L}
\newcommand{\gradS}{\nabla_{S}}
\newcommand{\dd}{\,d}
\newcommand{\OmB}{\Omega_B}

\def\ov{\overline}

\def\bs{\backslash}
\def\tl{\tilde}

\def\non{\nonumber}

\def\Hz{\mathcal{H}}

\def\p{\partial}
\def\e{\epsilon}
\def\a{\alpha}
\def\b{\beta}

\def\d{\delta}
\def\l{\lambda}

\def\vp{\varphi}

\newcommand{\T}{\text}

\newcommand{\Om}{\Omega}

\newcommand{\divg}{\operatorname{div}}
\newcommand{\spanop}{\operatorname{span}}

\begin{document}
\title[Sharp gradient stability for the Sobolev trace inequality]
{Sharp Gradient Stability for the Sobolev Trace Inequality}


\author[Xi-Nan Ma]{Xi-Nan Ma}
\address{Department of Mathematics, University of Science and Technology of China, Hefei, 230026, Anhui Province, China.}
\email{xinan@ustc.edu.cn}
\author[Yitian Zhang]{Yitian Zhang}
\address{Chengdu University of Information Technology, Shuangliu District, Chengdu, 610225, Sichuan Province, China.}
\email{zyitian@cuit.edu.cn}
\author[Yang Zhou]{Yang Zhou}
\address{School of Mathematical Sciences, University of Science and Technology of China, Hefei, 230026, Anhui Province, China.}
\email{zy19700816@mail.ustc.edu.cn}
\subjclass[2020]{Primary 46E35; Secondary 35A23, 26D10.}
\keywords{Quantitative stability, Sobolev trace inequality, Spectral nondegeneracy, Weighted Steklov eigenvalue problem}

\begin{abstract}
Let \(n\ge3\) and \(1<p<n\).  We prove a sharp quantitative stability estimate
for the critical Sobolev trace inequality on the upper half-space.  More
precisely, the Sobolev trace deficit controls the \(\max\{2,p\}\)-th
power of the gradient distance to the manifold of trace bubbles. A central part of the proof is the spectral
nondegeneracy of the trace bubbles: the first two eigenspaces of the
linearized weighted Steklov problem are exactly the amplitude, dilation,
and tangential translation modes.
\end{abstract}

\maketitle

\section{Introduction}
Sharp functional inequalities identify the optimal constant and the extremal functions for which equality occurs.  A quantitative
stability theorem asks for more: if a function nearly attains the sharp
constant, must it be close, in a natural distance, to the
manifold of extremals?  Such estimates play an important role in the
calculus of variations, geometric analysis, nonlinear elliptic
equations, and the study of critical evolution problems.  They have
motivated growing interest in quantitative stability for functional and
geometric inequalities and for critical points of the associated
variational problems; see, for instance,
\cite{BianchiEgnell1991,CianchiFuscoMaggiPratelli2009,FigalliNeumayer2019,FigalliZhang2022,CiraoloGatti2026,FLZ26,Neumayer2020,Ho2022,ZhangZhouZou2025,CiraoloFigalliMaggi2018,FigalliGlaudo2020,DengSunWei2025}.
Following this line of research, we investigate quantitative stability
for extremals of the Sobolev trace inequality on the upper half-space.
Our result extends the whole-space theorem of
\cite{FigalliZhang2022} to the trace setting and the \(p=2\) trace
theorem of \cite{Ho2022} to the full range \(1<p<n\).

\subsection{The whole-space model and history of quantitative stability}
One model example is the sharp Sobolev inequality on \(\R^n\). 
The question of quantitative stability for the Sobolev inequality was
first raised by Brezis and Lieb \cite{BrezisLieb1985}. We first
introduce some useful definitions.

Given \(n\ge3\) and \(1<p<n\), denote by \(\dot W^{1,p}(\R^n)\) the
closure of \(C_c^\infty(\R^n)\) with respect to the norm
\[
 \|u\|_{\dot W^{1,p}(\R^n)}
 :=
 \left(\int_{\R^n}|Du|^p\,dx\right)^{1/p}.
\]
The Sobolev inequality guarantees the existence of a positive constant
\(S=S(n,p)\) such that
\[
 \|Du\|_{L^p(\R^n)}
 \ge
 S\|u\|_{L^{p^*}(\R^n)},
\]
where
\[
 p^*:=\frac{np}{n-p}.
\]
We call the largest constant \(S\) satisfying this property the
\emph{optimal Sobolev constant}.

Let \(\M\) be the \((n+2)\)-dimensional manifold of all functions of the
form
\[
 v_{a,b,x_0}(x)
 :=
 \frac{a}
 {\left(1+b|x-x_0|^{\frac{p}{p-1}}\right)^{\frac{n-p}{p}}},
 \qquad
 a\in\R\setminus\{0\},\quad b>0,\quad x_0\in\R^n.
\]
It was shown in
\cite{Aubin1976,Talenti1976,CorderoNazaretVillani2004} that \(\M\)
is precisely the set of extremals for the Sobolev inequality and that
every \(v\in\M\) is a weak solution of
\begin{equation}
 \label{eq:EL}
 -\Delta_pv
 =
 S^p\|v\|_{L^{p^*}(\R^n)}^{p-p^*}|v|^{p^*-2}v,
\end{equation}
where
\(
 \Delta_pv=\operatorname{div}(|Dv|^{p-2}Dv)\).
In particular,
\[
 \|Dv\|_{L^p(\R^n)}
 =
 S\|v\|_{L^{p^*}(\R^n)}
 \qquad \forall v\in\M.
\]

In the case \(p=2\), write \(2^*:=2n/(n-2)\).
Brezis and Lieb \cite{BrezisLieb1985} raised the problem of strengthening the Sobolev inequality by adding a remainder term involving an appropriate distance
between \(u\) and \(\M\), together with a suitable decay rate.  Later, Bianchi and Egnell \cite{BianchiEgnell1991} settled this problem and
proved that there exists \(c=c(n)>0\) such that
\begin{equation}
    \label{eq:BE}
 \delta(u):=
 \frac{\|Du\|_{L^2(\R^n)}}{\|u\|_{L^{2^*}(\R^n)}}-S
 \ge
 c\inf_{v\in\M}
 \left(
  \frac{\|Du-Dv\|_{L^2(\R^n)}}{\|Du\|_{L^2(\R^n)}}
 \right)^2
 \qquad
 \forall u\in\dot W^{1,2}(\R^n)\setminus\{0\},
\end{equation}
which is optimal both in terms of the strength of the distance from
\(\M\), and in terms of the exponent \(2\) appearing on the right-hand
side. 

However, Bianchi--Egnell's method relies heavily on the Hilbert-space
structure available at \(p=2\) and does not extend directly to
\(p\ne2\).  Later, the authors of
\cite{CianchiFuscoMaggiPratelli2009} used symmetrization and optimal
transport to prove a stability estimate for every \(p\in(1,n)\), with
distance given by
\begin{equation}
 \label{eq:CFMP-distance}
 \inf_{v\in\M}
 \left(
  \frac{\|u-v\|_{L^{p^*}(\R^n)}}{\|u\|_{L^{p^*}(\R^n)}}
 \right)^\alpha
 \qquad
 \forall u\in\dot W^{1,p}(\R^n)\setminus\{0\},
\end{equation}
together with the explicit decay exponent
\[
 \alpha=\alpha_{\mathrm{CFMP}}
 :=
 \left[
  p^*\left(3+4p-\frac{3p+1}{n}\right)
 \right]^2.
\]
Although this exponent was not sharp, this was the first
stability result valid for the full range of \(p\). Building on these techniques, Figalli, Maggi, and Pratelli \cite{FigalliMaggiPratelli2013} established a sharp stability result in the special case \(p=1\).

For \(p\ne2\), the absence of a Hilbert-space structure left open the
question of whether the \(p\)-Sobolev deficit could control distance to
\(\M\) at the gradient level, the strongest natural distance one may
hope to control by \(\delta(u)\), as in the Bianchi--Egnell theorem.
 Gradient stability estimates were established first for \(p\ge2\) by Figalli and
Neumayer \cite{FigalliNeumayer2019} and then for the full range
\(1<p<n\) by Neumayer \cite{Neumayer2020}. Indeed, they proved the existence of a constant
\(c=c(n,p)>0\) such that
\begin{equation}
 \label{eq:gradient-stability}
 \delta(u)
 \ge
 c\inf_{v\in\M}
 \left(
  \frac{\|D(u-v)\|_{L^p(\R^n)}}{\|Du\|_{L^p(\R^n)}}
 \right)^\alpha
 \qquad
 \forall u\in\dot W^{1,p}(\R^n)\setminus\{0\}.
\end{equation}
However, the stability exponent $\alpha$ in the preceding gradient
estimates for $p\ne2$ was far from optimal.
The sharp gradient power
\(\max\{2,p\}\) was finally obtained by Figalli and Zhang
\cite{FigalliZhang2022}. Indeed, they carefully analyzed the Taylor
remainders
\[
 |X+Y|^p-|X|^p-p|X|^{p-2}X\cdot Y
\]
and
\[
 |a+b|^{p^*}-|a|^{p^*}-p^*|a|^{p^*-2}a b,
\]
and improved the expansion of
\(\|Dv+\varepsilon D\varphi\|_{L^p(\R^n)}^p\). Combining these delicate
estimates with a disturbed spectral gap estimate, Figalli and Zhang
proved that there exists a constant \(c=c(n,p)>0\) such that
 \eqref{eq:gradient-stability} holds with
 \(\alpha=\max\{2,p\}\).

There is a related stability theory for critical points of the
functionals associated with Sobolev-type inequalities.
 When $p=2$, instead of controlling the distance to the extremal
manifold by the deficit, one controls the distance to one or several
bubbles by the residual in the Euler--Lagrange equation, that is 
\begin{equation}
 \label{eq:intro-residual}
 \mathcal R(u)
 :=
 \bigl\|-\Delta u-|u|^{2^*-2}u\bigr\|_{\dot H^{-1}(\R^n)} .
\end{equation}
Struwe's global compactness theorem \cite{Struwe1984} gives, along a
Palais--Smale sequence,
\begin{equation}
 \label{eq:intro-Struwe}
 u_k
 =
 u_\infty+
 \sum_{j=1}^{N}U[\lambda_{j,k},z_{j,k}]
 +o_{\dot H^1}(1),
 \qquad
 \mathcal R(u_k)\longrightarrow0,
\end{equation}
with asymptotically separated bubble parameters. 
Quantitative one-bubble
and multi-bubble results, together with their dimension-dependent sharp
forms, were developed in
\cite{CiraoloFigalliMaggi2018,FigalliGlaudo2020,DengSunWei2025}.
Although the present paper concerns deficit stability rather than a
residual estimate, the two theories share two indispensable ingredients:
classification of the bubbles and the spectral gap estimate.  Their compactness mechanisms are rooted in
the Brezis--Lieb lemma, concentration--compactness, and profile
decompositions; see
\cite{BrezisLieb1983,Gerard1998,Solimini1995,TintarevFieseler2007,
PalatucciPisante2014} and the references therein.

\subsection{The trace problem and previous results}

On \(\Hhalf=\R^{n-1}\times(0,\infty)\), the critical trace inequality is
\begin{equation}
 \label{eq:intro-trace}
 \ST(n,p)
 \left(
  \int_{\bdH}|u|^{p_*}\,dy
 \right)^{1/p_*}
 \le
 \left(
  \int_{\Hhalf}|\nabla u|^p\,dx
 \right)^{1/p}.
\end{equation}
Here
\[
 p_*:=\frac{(n-1)p}{n-p},
\]
and \(u\in\dot W^{1,p}(\Hhalf)\), the completion of
\(C_c^\infty(\overline{\Hhalf})\) with respect to the seminorm
\(\|\nabla u\|_{L^p(\Hhalf)}\).
The equality cases are
\begin{equation}
 \label{eq:intro-trace-bubbles}
 \MT
 =
 \left\{
  aU_{\lambda,\xi}:\
  a\ne0,\ \lambda>0,\ \xi\in\R^{n-1}
 \right\},
 \quad
 U_{\lambda,\xi}(y,t)
 =
 \left(
  \frac{\lambda^{2/p}}
       {|y-\xi|^2+(t+\lambda)^2}
 \right)^{\frac{n-p}{2(p-1)}} .
\end{equation}
We denote by \(\MT^+\) the positive component of \(\MT\), obtained by
restricting the amplitude parameter to \(a>0\).
They solve
\begin{equation}
 \label{eq:intro-trace-EL}
 \begin{cases}
  \Delta_pU_{\lambda,\xi}=0&\text{in }\Hhalf,\\
  -|\nabla U_{\lambda,\xi}|^{p-2}\partial_tU_{\lambda,\xi}
  =
  \ST^p\|U_{\lambda,\xi}\|_{L^{p_*}(\bdH)}^{p-p_*}
  U_{\lambda,\xi}^{p_*-1}
  &\text{on }\bdH .
 \end{cases}
\end{equation}
The study of the optimal constant in \eqref{eq:intro-trace} and of
positive solutions of \eqref{eq:intro-trace-EL} goes back to Escobar's
seminal paper \cite{Escobar1988}.  Using a mass-transportation method,
Nazaret \cite{Nazaret2006} obtained the sharp constant for every
\(p\in(1,n)\) and showed that the functions in
\eqref{eq:intro-trace-bubbles} are extremals.  The full
characterization of equality cases used here is recorded in
\cite{MaggiNeumayer2017}.  For \(p=2\), the moving-spheres method was
used in \cite{HY94,O96,YYL03} to show that every positive solution of
\eqref{eq:intro-trace-EL} has the form
\eqref{eq:intro-trace-bubbles}.  For general \(1<p<n\), positive
finite-energy solutions of \eqref{eq:intro-trace-EL} were classified in
\cite{Zhou2024}.

As for stability of the Sobolev trace inequality, Ho \cite{Ho2022}
adapted the method of \cite{BianchiEgnell1991} to the trace setting.
Writing
\[
 2_*:=\frac{2(n-1)}{n-2},
\]
Ho's result is equivalently expressed, after a change of constant, in
the following normalized form for functions with nonzero trace:
\begin{equation}
 \label{eq:intro-Ho}
 \delta_{\mathrm T}(u)
 :=
 \frac{\|\nabla u\|_{L^2(\Hhalf)}}
      {\|u\|_{L^{2_*}(\bdH)}}-\ST(n,2)
 \ge
 c(n)\!
 \inf_{V\in\MT}
 \left(
  \frac{\|\nabla(u-V)\|_2}{\|\nabla u\|_2}
 \right)^2 .
\end{equation}
More recently, Zhang, Zhou, and Zou \cite{ZhangZhouZou2025} proved
Bianchi--Egnell-type stability for fractional trace inequalities.
A key ingredient is the reduction principle in
\cite[Theorem~3.3]{ZhangZhouZou2025}, which reduces the
stability of the fractional trace inequality to a stability estimate for the fractional
Sobolev inequality. For the classical trace inequality with \(p\ne2\), however,
the sharp gradient estimate remained open.

The purpose of this paper is to establish the sharp gradient stability
estimate for the classical Sobolev trace inequality throughout the full
range \(1<p<n\).  For
\(u\in\dot W^{1,p}(\Hhalf)\) with nonzero trace, define the Sobolev
trace deficit and the normalized gradient distance from the trace-bubble
manifold by
\[
 \delta_{\mathrm T}(u)
 :=
 \frac{\|\nabla u\|_{L^p(\Hhalf)}}
      {\|u\|_{L^{p_*}(\bdH)}}
 -\ST(n,p)
\]
and
\[
 d_{\mathrm T}(u,\MT)
 :=
 \inf_{v\in\MT}
 \frac{\|\nabla(u-v)\|_{L^p(\Hhalf)}}
      {\|\nabla u\|_{L^p(\Hhalf)}}.
\]
We prove the following sharp gradient stability estimate for the Sobolev trace inequality.
\begin{theorem}
 \label{thm:main}
 Let \(n\ge3\) and \(1<p<n\).  There exists a constant
 \(c=c(n,p)>0\) such that every
 \(u\in\dot W^{1,p}(\Hhalf)\) with nonzero trace satisfies
 \begin{equation}
  \label{eq:main-stability}
  \delta_{\mathrm T}(u)
  \ge
  c\,d_{\mathrm T}(u,\MT)^{\max\{2,p\}}.
 \end{equation}
 Moreover, the exponent \(\max\{2,p\}\) is optimal.
\end{theorem}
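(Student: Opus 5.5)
The plan is to follow the Figalli--Zhang scheme \cite{FigalliZhang2022}, transplanted to the trace setting. It has three parts: a global-to-local reduction by compactness, a local expansion around a trace bubble that uses a spectral gap for the weighted Steklov linearization, and an explicit family of test functions showing the exponent cannot be improved.

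\emph{Step 1: reduction to the local regime.} Both \(\delta_{\mathrm T}\) and \(d_{\mathrm T}\) are invariant under scaling \(u\mapsto au(\lambda\,\cdot)\), tangential translations, and multiplication by constants, and \(d_{\mathrm T}\le1\) (take \(v\to0\)). It therefore suffices to prove the estimate when \(\delta_{\mathrm T}(u)\) is small. Suppose \(\delta_{\mathrm T}(u_k)\to0\) with \(\|u_k\|_{L^{p_*}(\bdH)}=1\). Concentration-compactness for the trace embedding, as in Lions' second lemma on the half-space, together with the classification of extremals \cite{Nazaret2006,MaggiNeumayer2017}, gives parameters \(\lambda_k,\xi_k\) such that the rescaled \(u_k\) converge in \(\dot W^{1,p}(\Hhalf)\) to some \(aU_{1,0}\). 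Hence \(d_{\mathrm T}(u_k,\MT)\to0\). It remains to show \(\delta_{\mathrm T}(u)\gtrsim d_{\mathrm T}(u)^{\max\{2,p\}}\) when \(d_{\mathrm T}\) is small. In that regime the infimum is attained at some \(V=aU_{\lambda,\xi}\), and we normalize to \(V=U:=U_{1,0}\). Writing \(u=U+\varepsilon\varphi\) with \(\|\nabla\varphi\|_p=1\), minimality gives orthogonality conditions on \(\varphi\): the first variation of \(\|\nabla(u-V)\|_p^p\) vanishes along the \(n+1\) directions \(U,\ \partial_\lambda U,\ \partial_{\xi_i}U\). Using the Euler--Lagrange system \eqref{eq:intro-trace-EL}, these conditions translate, to leading order, into boundary orthogonality \(\int_{\bdH}U^{p_*-2}\varphi\,Z=0\) for the same modes \(Z\). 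The resulting \(O(\varepsilon)\) error is handled as in \cite{FigalliZhang2022}.

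\emph{Step 2: spectral gap.} Consider the weighted Steklov problem
\[
-\divg\bigl(|\nabla U|^{p-2}A\nabla w\bigr)=0\ \text{ in }\Hhalf,\qquad |\nabla U|^{p-2}A\nabla w\cdot\nu=\mu\,U^{p_*-2}w\ \text{ on }\bdH,
\]
where \(A=\mathrm{Id}+(p-2)\frac{\nabla U\otimes\nabla U}{|\nabla U|^2}\). I need to show three things: the first eigenvalue is spanned by \(U\); the second eigenvalue equals \((p_*-1)\) times the first (suitably normalized by \(\ST^p\)); and the second eigenspace is exactly \(\spanop\{\partial_\lambda U,\partial_{\xi_i}U\}\). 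The route is to prove discreteness of the spectrum via compactness of the weighted trace embedding. This holds because \(U^{p_*-2}\) on \(\bdH\) and \(|\nabla U|^{p-2}\) decay at explicit rates, and one can use a Kelvin-type/conformal change sending \(\Hhalf\) to the ball. Then separate variables in the tangential spherical harmonics and an appropriate radial/angular coordinate adapted to the level sets of \(U\) (spheres centered at \((0,-1)\)). Each harmonic mode then reduces to a one-dimensional Sturm--Liouville/Steklov problem. Monotonicity in the harmonic degree, together with the explicit kernel functions, identifies the first two eigenvalues. I expect this nondegeneracy step to be the main obstacle. The weight \(|\nabla U|^{p-2}\) is degenerate or singular only at infinity, but the anisotropic matrix \(A\) does not commute with the naive separation of variables. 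My first attempt would be to work in bispherical/toroidal coordinates in which \(U\) depends on a single variable, so that \(A\) becomes diagonal there.

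\emph{Step 3: expansion and the disturbed gap.} Expand \(\|\nabla(U+\varepsilon\varphi)\|_p^p\) using the pointwise inequalities of \cite{FigalliZhang2022} for \(|X+Y|^p-|X|^p-p|X|^{p-2}X\cdot Y\), which give a lower bound in terms of \(\bigl(|X|+|Y|\bigr)^{p-2}|Y|^2\)-type weights. Expand \(\|U+\varepsilon\varphi\|_{L^{p_*}(\bdH)}^{p}\) using the upper bounds for the \(p_*\)-remainder. Next prove a perturbed spectral gap for the weight \(|\nabla U|+\varepsilon|\nabla\varphi|\), arguing by contradiction and compactness and relying on Step 2 together with the orthogonality from Step 1. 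This yields \(\delta_{\mathrm T}\gtrsim\varepsilon^2\int(|\nabla U|+\varepsilon|\nabla\varphi|)^{p-2}|\nabla\varphi|^2\), which is \(\gtrsim\varepsilon^{\max\{2,p\}}\).

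\emph{Optimality.} For \(p\le2\), take \(u=U+\varepsilon\partial_{\lambda\lambda}U\)-type perturbations orthogonal to the tangent space; these give \(\delta\sim\varepsilon^2\). For \(p>2\), take \(u=U+\varepsilon\psi\) with \(\psi\) supported where \(|\nabla U|\) is small relative to \(\varepsilon|\nabla\psi|\), as in \cite{FigalliZhang2022}. Concretely, \(\psi\) is a bump in the interior far away, or placed near a fixed point with a concentrated gradient; then the deficit scales like \(\varepsilon^p\) while the distance scales like \(\varepsilon\).
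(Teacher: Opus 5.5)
\emph{The spectral nondegeneracy step does not work as you describe it, and it is the heart of the theorem.}

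The anisotropy is not the obstruction. In polar coordinates about $(0,-1)$ the quadratic form is $|\nabla U|^{p-2}\bigl((p-1)\phi_\rho^2+\rho^{-2}|\gradS\phi|^2\bigr)$, which is already diagonal. The obstruction is the boundary. A level sphere $\{|(y,t+1)|=R\}$, $R>1$, meets $\{t=0\}$ at the angle $\arccos(1/R)$. The two surfaces are neither disjoint nor perpendicular, so they cannot both be coordinate surfaces of an orthogonal system. In bispherical coordinates, for example, the plane is a coordinate surface but $U$ depends on two coordinates.

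After expanding in harmonics on $S^{n-2}$, the only symmetry available, each sector is still a two-variable Steklov problem. In the paper this is a problem on $\{0<r<\tau<1\}$ with the boundary condition on the diagonal $r=\tau$, not a one-dimensional Sturm--Liouville problem. ``Monotonicity in the degree plus explicit kernel functions'' therefore only shows that the dilation and translation modes lie in $E_1$, not that they exhaust it.

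The paper gets the bound $\ge\mu_1$ on $\{U\}^\perp$ for free from the second variation of the trace quotient at the minimizer $U$. The real work is the equality case, and it needs two ideas your outline lacks:
\begin{itemize}
\item For $k=1$, the ground-state substitution $u=h_1\eta$ with the positive explicit solution $h_1=r\sqrt{1-\tau^2}$ (that is, $|y'|$). This gives $\cE_1[u,u]-p\cB_1[u,u]=\int r^{\alpha+1}(1-\tau^2)^{\beta}h_1^2\bigl((p-1)\eta_r^2+\tfrac{1-\tau^2}{r^2}\eta_\tau^2\bigr)\,dr\,d\tau$. The sectors $k\ge2$ then follow from the larger angular potential, which is where your monotonicity does enter.
\item In the axisymmetric sector the explicit eigenfunction $r\tau-\frac1p$ changes sign, so no ground-state argument is available. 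The paper tests the equation with $ru_r$, which is admissible by a dyadic $W^{2,2}$ estimate, to get a Pohozaev identity. Combined with the Steklov condition, this identity involves only the trace $b(\tau)=u(\tau,\tau)$. It reduces the equality case to the sharp one-dimensional inequality $\calQ[f]\ge0$ under $\int_0^1 f\rho\,d\tau=0$.
\end{itemize}
That one-dimensional inequality is proved by showing the singular Sturm--Liouville operator $\calL$ has exactly one negative eigenvalue, with eigenfunction $\tau^{p/(p-1)}$. It has no zero eigenvalue, witnessed by an explicit one-node eigenfunction whose eigenvalue is positive and below the essential-spectrum threshold. It also satisfies $\calL(\tau^2-\frac1p)=-1$. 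With nothing playing the role of the Pohozaev reduction or the ground-state identity, you have not excluded extra axisymmetric eigenfunctions at $\mu_1$.

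\emph{The orthogonality in Step 1 is also wrong for $p\neq2$.} Minimality of $V$ for $\|\nabla(u-V)\|_{L^p}$ gives $\int_{\Hhalf}|\nabla\varphi|^{p-2}\nabla\varphi\cdot\nabla Z\,dx=0$. This condition is exact, homogeneous in $\varphi$, and contains no $\varepsilon$. It is therefore not a perturbation of $\int_{\Hhalf}\mathcal A_U\nabla Z\cdot\nabla\varphi\,dx=0$, which is the condition the linearized Euler--Lagrange system turns into $\int_{\bdH}U^{p_*-2}\varphi Z\,dy=0$. The two coincide only when $p=2$. Otherwise the discrepancy is $O(1)$, not $O(\varepsilon)$, and the spectral gap cannot be applied to $\varphi$.

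You do not need the distance minimizer, since $d_{\mathrm T}(u,\MT)\le\|\nabla(u-v)\|_{L^p}/\|\nabla u\|_{L^p}$ for every $v\in\MT$. The paper instead minimizes $\mathcal F_u(v)=\frac1{p_*}\int_{\bdH}v^{p_*}-\frac1{p_*-1}\int_{\bdH}v^{p_*-1}u$ over $\MT^+$. Its critical-point equation is exactly $\int_{\bdH}v^{p_*-2}(u-v)Z\,dy=0$. Uniqueness of the minimizer when $u=\widehat v$, together with compactness, then gives $\|\nabla(u-v)\|_{L^p}\le\omega(\widehat\eta)$.

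The rest of your plan is consistent with the paper: the concentration--compactness reduction, discreteness via the compact weighted trace embedding, the contradiction/compactness proof of the disturbed gap using the Figalli--Zhang remainder bounds, and the optimality examples.
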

\begin{remark}
    \label{rem:sharpness}
The stability exponent \(\alpha=\max\{2,p\}\) is sharp, as in
\cite{FigalliZhang2022}.

 Fix \(v=U_{1,0}\in\MT\) and consider first \(u_i:=v(A_ix)\), where
 \(A_i\in\R^{n\times n}\) denotes the diagonal matrix
 \[
  A_i=\operatorname{diag}\left(1,\ldots,1,1+\frac1i\right).
 \]
 It is not difficult to check that \(\delta_{\mathrm T}(u_i)\) behaves as \(i^{-2}\),
 while the right-hand side of \eqref{eq:main-stability} behaves as
 \(i^{-\alpha}\), hence \eqref{eq:main-stability} cannot hold with
 \(\alpha<2\).
    
 On the other hand, fix \(\varphi\in C_c^\infty(B_1)\) such that
 \(\|D\varphi\|_{L^p(\R^n_+)}>0\), and consider
 \(\widetilde u_i:=v+\varphi(x_i+\,\cdot\,)\), where \(x_i\in\partial\R^n_+\) is
 a sequence of points escaping to infinity.  One can check that
 \[
  \|D\widetilde u_i\|_{L^p(\R^n_+)}^p
  =
  \|Dv\|_{L^p(\R^n_+)}^p
  +\|D\varphi\|_{L^p(\R^n_+)}^p+r_{i,1}
 \]
 and
 \[
  \|\widetilde u_i\|_{L^{p_*}(\partial\R^n_+)}^{p_*}
  =
  \|v\|_{L^{p_*}(\partial\R^n_+)}^{p_*}
  +\|\varphi\|_{L^{p_*}(\partial\R^n_+)}^{p_*}+r_{i,2},
 \]
 with
 \[
  |r_{i,1}|+|r_{i,2}|
  \le C\bigl(v(x_i)+|Dv(x_i)|\bigr)
  \le Cv(x_i)\longrightarrow0
  \qquad\text{as }i\to\infty.
 \]
 Hence, choosing a sequence \(\epsilon_i\to0\) such that
 \(v(x_i)\ll\epsilon_i\ll1\), the functions
 \(\widehat u_i:=v+\epsilon_i\varphi(x_i+\,\cdot\,)\) satisfy
 \[
  \|D\widehat u_i\|_{L^p(\R^n_+)}^p
  =
  \|Dv\|_{L^p(\R^n_+)}^p
  +\epsilon_i^p\|D\varphi\|_{L^p(\R^n_+)}^p+o(\epsilon_i^p)
 \]
 and
 \[
  \|\widehat u_i\|_{L^{p_*}(\partial\R^n_+)}^{p_*}
  =
  \|v\|_{L^{p_*}(\partial\R^n_+)}^{p_*}
  +\epsilon_i^{p_*}\|\varphi\|_{L^{p_*}(\partial\R^n_+)}^{p_*}
  +o(\epsilon_i^{p_*}).
 \]
 Thanks to these facts, one easily deduces that
 \(\delta_{\mathrm T}(\widehat u_i)\) behaves as \(\epsilon_i^p\), while
 the right-hand side of \eqref{eq:main-stability} behaves as
 \(\epsilon_i^\alpha\).  Thus \eqref{eq:main-stability} cannot hold with
 \(\alpha<p\).
\end{remark}

Thus the optimal power agrees with the whole-space exponent.  The
boundary geometry and the lack of radial symmetry, however, create new compactness and spectral
difficulties that prevent a direct adaptation of the Sobolev argument.
Section~\ref{sec:weighted-trace-compactness} begins by collecting the
notation used in the proof.

\subsection{Main difficulties and new ideas}

Let \(v\in\MT^+\) and write
\begin{equation}
 \label{eq:intro-modulation}
 u=v+h,
 \qquad
 \int_{\bdH}v^{p_*-2}hZ\,dy=0
 \quad\text{for every }Z\in T_v\MT .
\end{equation}
For \(X,Y\in\R^n\) and \(a\ge0\), \(b\in\R\), introduce the exact
remainders
\begin{align}
 \mathscr G_p(X,Y)
 &:=
 |X+Y|^p-|X|^p-p|X|^{p-2}X\cdot Y,
 \label{eq:intro-G}\\
 \mathscr B_q(a,b)
 &:=
 |a+b|^q-a^q-qa^{q-1}b.
 \label{eq:intro-B}
\end{align}
The remainder changes scale:
\begin{equation}
 \label{eq:intro-G-scales}
 \mathscr G_p(X,Y)
 \asymp_p
 \begin{cases}
  (|X|+|Y|)^{p-2}|Y|^2
  \asymp
  \min\{|Y|^p,|X|^{p-2}|Y|^2\},
  &1<p<2,\\[1mm]
  |X|^{p-2}|Y|^2+|Y|^p,
  &p\ge2 .
 \end{cases}
\end{equation}
The boundary remainder has two different forms:
\begin{align}
 \mathscr B_q(a,b)
 &\le
 \left(\frac{q(q-1)}2+\kappa\right)
 \frac{(a+C_\kappa|b|)^q}{a^2+b^2}\,b^2,
 &&1<q\le2,
 \label{eq:intro-B-subquadratic}\\
 \mathscr B_q(a,b)
 &\le
 \left(\frac{q(q-1)}2+\kappa\right)a^{q-2}b^2
 +C_\kappa|b|^q,
 &&q>2.
 \label{eq:intro-B-superquadratic}
\end{align}

After the normalization \(\|u\|_{L^{p_*}(\bdH)}=1\), $v=a U$ and $U:=U_{1,0}$, the
Euler--Lagrange equation cancels the first-order terms and gives the
lower bound when $\delta_{\mathrm T}(u)\ll 1$:
\begin{equation}
 \label{eq:intro-master-deficit}
C\delta_{\mathrm T}(u)\ge \int_{\Hhalf}|\nabla u|^p\,dx-\ST^p
 \ge
 \frac p2\mathcal G_v[h]
 -
 \frac p{p_*}\Lambda_v
 \int_{\bdH}\mathscr B_{p_*}(v,h)\,dy,
\end{equation}
where
\begin{equation}
 \label{eq:intro-Gv-Lambda}
 \mathcal G_v[h]
 :=
 \frac2p\int_{\Hhalf}
 \mathscr G_p(\nabla v,\nabla h)\,dx,
 \qquad
 \Lambda_v
 :=
 \ST^p\|v\|_{L^{p_*}(\bdH)}^{p-p_*}.
\end{equation}
Thus the problem reduces to comparing the two nonlinear remainders in
\eqref{eq:intro-master-deficit}.

Write $h=\varepsilon\varphi$ with
 $\|\nabla\varphi\|_{L^p(\Hhalf)}=1$.
The formal limit of \(\frac{1}{\varepsilon^2}\mathcal G_v(\varepsilon \varphi)\) is $\int_{\Hhalf}
 \mathcal A_v\nabla \varphi\cdot\nabla \varphi\,dx$, where
\begin{equation}
 \label{eq:intro-linearized-operator}
 \mathcal A_v
 =
 |\nabla v|^{p-2}
 \left(
  I+(p-2)
  \frac{\nabla v}{|\nabla v|}
  \otimes
  \frac{\nabla v}{|\nabla v|}
 \right).
\end{equation}
Also, the formal limit of \(\frac{1}{\varepsilon^2}\int_{\bdH}\mathscr B_{p_*}(v,\varepsilon\varphi)\,dy\) is \(p_*(p_*-1)/2\int_{\bdH} v^{p_*-2}\varphi^2\,dy\).

The main idea behind the proof consists of two steps.

(1) Prove the linear spectral gap estimate: for every
\(\phi\in\dot{W}^{1,2}(\Hhalf;|\nabla U|^{p-2})\) satisfying
\(\phi\perp T_U\MT\), one has
 \begin{align}
  &\int_{\Hhalf}
  \mathcal A_U\nabla\phi\cdot\nabla\phi\,dx
  \notag\\
  &\qquad\ge
  \left[
  (p_*-1)\ST^p
  +2\lambda_{\mathrm T}
  \right]\|U\|_{L^{p_*}(\bdH)}^{\,p-p_*}
  \int_{\bdH}
  U^{p_*-2}|\phi|^2\,dy.
 \label{eq:linear-spectral-gap}
 \end{align}
Here \(\phi\perp T_U\MT\) means
\[
 \int_{\bdH}U^{p_*-2}\phi Z\,dy=0
 \qquad\text{for every }Z\in T_U\MT.
\]

(2) Use compactness to show that there exists $\widetilde\lambda_{\mathrm T}>0$ such that for \(\varepsilon\) sufficiently
small,
\begin{equation}
 \label{step2}
 \mathcal G_v[\varepsilon\varphi]
 \ge
 \left(\frac2{p_*}+\widetilde\lambda_{\mathrm T}\right)\Lambda_v
 \int_{\bdH}\mathscr B_{p_*}(v,\varepsilon\varphi)\,dy
 \qquad
 \forall\,\varphi\perp T_v\MT
 \quad\text{with}\quad
 \|\nabla\varphi\|_{L^p(\Hhalf)}=1.
 \end{equation}

\noindent\textbf{The first difficulty: compactness.}
For \(p=2\), the Dirichlet energy is quadratic and its second variation
defines a fixed Hilbert structure.  For \(p\ne2\), a formal Taylor
expansion around a bubble \(U\) instead produces a weight
\(|\nabla U|^{p-2}\), while the boundary term produces
\(U^{p_*-2}\).  These weights are singular or degenerate at infinity,
depending on the range of \(p\), and the underlying half-space and its
boundary are both non-compact.  Moreover, a purely quadratic expansion
is not uniform,  especially when
\(p<2\), since the weighted norm $\dot W^{1,2}(\Hhalf;|\nabla U|^{p-2})$ cannot be controlled by
the available \(\dot W^{1,p}\) norm. When \(p_*\le2\), the boundary
remainder has the analogous obstruction.
Consequently the proof necessarily separates
\begin{equation}
 \label{eq:intro-three-regimes}
 1<p\le\frac{2n}{n+1}\ (p_*\le2),
 \qquad
 \frac{2n}{n+1}<p<2\ (p_*>2),
 \qquad
 2\le p<n .
\end{equation}

Analogous challenges in the whole-space model were overcome in
\cite{FigalliZhang2022} through delicate estimates of the remainders
\(\mathscr G_p\) and \(\mathscr B_{p_*}\).  We adopt their argument to
prove Step~(2), once Step~(1) has been established.

\noindent\textbf{The second difficulty: spectral gap estimate.}
 The spectral gap estimate \eqref{eq:linear-spectral-gap} is related to the following degenerate weighted Steklov-type eigenvalue
problem: 
\begin{equation}
 \label{eq:intro-stecklov}
 \int_{\Hhalf}\mathcal A_v\nabla\phi\cdot\nabla\psi\,dx
 =
 \mu\int_{\bdH}v^{p_*-2}\phi\psi\,dy .
\end{equation}
Both weights may degenerate or become singular, and the domain and
boundary are non-compact. We first adopt the method in \cite{FigalliZhang2022} to prove the compact embedding
\begin{equation}
 \label{eq:intro-compact-embedding}
 \dot W^{1,2}\!\left(\Hhalf;|\nabla U|^{p-2}\right)
 \hookrightarrow
 L^2\!\left(\bdH;U^{p_*-2}dy\right),
\end{equation}
which implies the spectrum is discrete: $\mu_0<\mu_1\le\mu_2\le\cdots$;
see Theorem~\ref{thm:weighted-trace-compact} and
Corollary~\ref{cor:discrete-spectrum}.

A variational argument identifies the first eigenvalue as
\(\mu_0=(p-1)\Lambda_U\), with eigenspace
\(E_0=\operatorname{span}\{U\}\), and the second eigenvalue as
\(\mu_1=(p_*-1)\Lambda_U\); see Theorem~\ref{thm:iden}.

Therefore, to prove \eqref{eq:linear-spectral-gap}, we only need to classify the second eigenspace:
\begin{equation}
 \label{eq:intro-low-spectrum}
 E_{1}=\operatorname{span}\{Z_0,Z_1,\ldots,Z_{n-1}\}.
\end{equation}
Here \(Z_0\) is the dilation mode and \(Z_1,\ldots,Z_{n-1}\) are the
tangential translation modes. 

In previously studied settings, the identification of the second
eigenspace often relies on radial symmetry of both the domain and the
weight.  Radial symmetry reduces the eigenvalue equation to ordinary
differential equations by separation of variables, after which
Sturm--Liouville theory can be used to classify the second eigenspace;
see, for example, \cite{FigalliNeumayer2019}.  In our setting, the
weight is not radially symmetric, even after the domain is transformed
to a ball as in \cite{Ho2022}.  Consequently, separation of variables
in full spherical coordinates does not directly classify the second
eigenspace of \eqref{eq:intro-stecklov}, and a different argument is
required.

We divide the proof of \eqref{eq:intro-low-spectrum} into four steps.
First, following \cite{Ho2022}, we transform the half-space \(\R^n_+\)
onto the ball
\(\Omega_B:=B\left(\frac12e_n,\frac12\right)\).
Write \[
  y=r \sigma,\,\,r=|y|,\,\,\sigma=(\sqrt{1-\tau^2}\,\theta,\tau),
  \,\,\theta\in S^{n-2},\,\,
  \tau:=\sigma_n,
\]
and define
\begin{align}\label{eq:intro-energy}
  \cE[\phi,\psi]
  :=\int_{\Omega_B} &r^{-\gamma}
  \left[
    (p-1)\phi_r\psi_r
    +\frac1{r^2}\gradS\phi\cdot\gradS\psi
  \right]\dd y,\\
\label{eq:intro-boundary}
  &\cB[\phi,\psi]
  :=\int_{\partial\Omega_B}r^{-\gamma}\phi\psi\dd S,
\end{align}
where \(\nabla_S\) is the spherical gradient on \(S^{n-1}\) and
\(\gamma=\frac{(n-1)(p-2)}{p-1}\).
Then the eigenvalue problem \eqref{eq:intro-stecklov} is equivalent to
\begin{equation}\label{eq:intro-eigen}
  \cE[\phi,\psi]=\lambda\cB[\phi,\psi]
\end{equation}
for every \(\psi\in W^{1,2}(\OmB;r^{-\g})\).
To make the change of spectral parameter explicit, set
\[
 \alpha_{\mathrm c}:=\frac{n-p}{p-1},
 \qquad
 C_0:=(p-1)\alpha_{\mathrm c}^{p-1},
 \qquad
 \lambda:=\frac{\mu-C_0}{\alpha_{\mathrm c}^{p-2}}.
\]
Thus the original first eigenvalue \(\mu_0=C_0\) becomes
\(\lambda_0=0\), with eigenfunction \(\phi_0=1\), while
\[
 \mu_1=(p_*-1)\alpha_{\mathrm c}^{p-1}
 =C_0+p\alpha_{\mathrm c}^{p-2}
\]
becomes the second eigenvalue \(\lambda_1=p\) of
\eqref{eq:intro-eigen}.
That is
\begin{equation}\label{eq:intro-gap-goal}
  \cE[\phi,\phi]\ge p\cB[\phi,\phi]
  \qquad\text{whenever}\qquad
  \cB[\phi,1]=0.
\end{equation}
To characterize the second eigenspace $E_1$, we need to prove that equality in \eqref{eq:intro-gap-goal} holds precisely on
\[
  \operatorname{span}\left\{y_1,\ldots,y_{n-1},y_n-\frac1p\right\}.
\]

Second, let
\[
\left\{Z_{k,j}: 1\le j\le \dim \mathcal{H}_k^{n-1}\right\}
\]
be an orthonormal basis of spherical harmonics of degree \(k\) on
\(S^{n-2}\) with respect to the normalized surface measure
\(d\sigma/|S^{n-2}|\).
For $\phi\in W^{1,2}(\Omega_B;r^{-\gamma})$, we can write
\begin{equation}
\phi(r,\tau,\theta)=\sum_{k=0}^\infty\sum_{j=1}^{\dim\mathcal{H}^{n-1}_k} u_{k,j}(r,\tau)Z_{k,j}(\theta)
 \label{eq:intro-expand}
\end{equation}
Thus the energy of \(\phi\) decomposes into different sectors:
\[
  \cE[\phi,\phi]=|S^{n-2}|\sum_{k,j}\cE_k[u_{k,j},u_{k,j}],
  \qquad
  \cB[\phi,\phi]=|S^{n-2}|\sum_{k,j}\cB_k[u_{k,j},u_{k,j}].
 \]
In Subsection~\ref{subsec}, we prove that every second eigenfunction
\(\phi\) satisfies
\[
 u_{1,j}(r,\tau)=c_jr\sqrt{1-\tau^2},
 \qquad
 u_{k,j}\equiv0\quad\text{for }k\ge2.
\]
Consequently,
\[
 \phi-u_0
 \in
 \operatorname{span}\{y_1,\ldots,y_{n-1}\}.
\]
It remains to classify \(u_0\); see
Theorem~\ref{ass:axisymmetric-gap}.

Third, we use a Pohozaev-type identity
(Lemma~\ref{l:pohozaev}) to reduce the classification of \(u_0\) to a
sharp one-dimensional inequality; see Proposition~\ref{p:ineq}.

Finally, in Subsection~\ref{subsec2}, we use singular
Sturm--Liouville theory to analyze the spectrum of the linearized
operator \(\calL\) associated with the one-dimensional inequality.  We
prove that \(\calL\) has exactly one negative eigenvalue and no zero
eigenvalue.  This identifies the equality cases in the sharp
one-dimensional inequality and completes the classification of \(u_0\).

\subsection{Outline of the proof}

In Section~\ref{sec:weighted-trace-compactness}, we introduce the
notation and formulate the linearized weighted Steklov problem.  We then
define the weighted energy space at the standard bubble, prove the
weighted trace inequality and compactness of the trace embedding,
establish discreteness of the Steklov spectrum, and reduce the spectral
gap estimate to the spectral nondegeneracy theorem
(Theorem~\ref{ass:spectral-nondegeneracy}).

In Section~\ref{sec:spectral-nondegeneracy-proof}, we prove
Theorem~\ref{ass:spectral-nondegeneracy} following the strategy described
in the discussion of the second difficulty.  A normalized conformal
transformation first reduces the Steklov problem on $\R_+^n$ to a
weighted equation on a ball.  We then decompose a second eigenfunction
$\phi$ into spherical-harmonic sectors and analyze the sectors of degree
$k\ge1$.  The $k=0$ sector is handled by the Pohozaev and
Sturm--Liouville arguments described above.  Combining the sector
estimates yields a strictly positive spectral gap on the orthogonal
complement of the tangent space.

In Section~\ref{sec:nonlinear-spectral-gap}, motivated by
\cite[Section~3]{FigalliZhang2022}, we work directly with the exact
Taylor remainders rather than replacing them prematurely by purely
quadratic bounds.  Combining compactness of normalized perturbations
with the linear spectral gap, we prove a uniform nonlinear spectral gap
near every trace bubble.

In Section~\ref{sec:proof-main}, we complete the proof of
Theorem~\ref{thm:main}.  Lions' concentration--compactness principle
\cite{Lions1985Part2} first reduces the global theorem to the local
regime.  As in \cite[Lemma~4.1]{FigalliZhang2022}, a modulation argument
then supplies the required tangent orthogonality.  We write
$u=v+\varepsilon\varphi$, expand
$\delta_{\mathrm T}(v+\varepsilon\varphi)$, and apply the nonlinear
spectral gap in each of the three ranges of $p$.

\section{The weighted trace space and compactness}
\label{sec:weighted-trace-compactness}

\subsection{Basic setting and the weighted trace space}

We first introduce some notation as in \cite{FigalliZhang2022}.
\begin{definition}[Weighted homogeneous Sobolev spaces]
 \label{def:weighted-spaces}
 Let \(\Om\subset\R^n\) be open.  Given \(q\ge1\) and a nonnegative
 locally integrable weight \(g_0:\Om\to[0,\infty)\), let
 \(L^q(\Om;g_0)\) be the space of measurable functions
 \(\phi:\Om\to\R\) such that
 \[
  \|\phi\|_{L^q(\Om;g_0)}
  :=\left(\int_\Om |\phi|^q g_0(x)\,dx\right)^{1/q}<\infty.
 \]
 For a nonnegative locally integrable weight \(g_1\) on \(\Om\), define
 \(\dot W^{1,q}(\Om;g_1)\) as the completion of
 \(C_c^\infty(\overline\Om)\) with respect to the homogeneous norm
 \[
  \|\phi\|_{\dot W^{1,q}(\Om;g_1)}
  :=\left(\int_\Om|D\phi|^qg_1(x)\,dx\right)^{1/q}.
 \]
 When \(g_1\equiv1\), we abbreviate
 \(\dot W^{1,q}(\Om;1)\) as \(\dot W^{1,q}(\Om)\).
\end{definition}

Let \(n\ge 3\), \(1<p<n\), and write points of the upper half-space as
\[
 x=(y,t)\in\Hhalf:=\R^{n-1}\times(0,\infty).
\]
Its boundary is identified with
\[
 \bdH=\R^{n-1}\times\{0\}\simeq\R^{n-1}.
\]
We use the two critical exponents
\begin{equation}
 \label{eq:critical-exponents}
 p^*:=\frac{np}{n-p},
 \qquad
 p_*:=\frac{(n-1)p}{n-p}.
\end{equation}
Thus \(p^*\) is the interior Sobolev exponent and \(p_*\) is the
critical trace exponent.  Notice that
\begin{equation}
 \label{eq:trace-threshold}
 p_*\le 2
 \quad\Longleftrightarrow\quad
 p\le \frac{2n}{n+1}.
\end{equation}

In the notation of Definition~\ref{def:weighted-spaces},
\(\dot{W}^{1,p}(\Hhalf)\) is equipped with the homogeneous norm
\[
 \|u\|_{\dot{W}^{1,p}(\Hhalf)}
 :=
 \|\nabla u\|_{L^p(\Hhalf)}.
\]
Every element of \(\dot{W}^{1,p}(\Hhalf)\) has a representative in
\(L^{p^*}(\Hhalf)\), and its trace belongs to
\(L^{p_*}(\bdH)\).  From now on, whenever a function appears in a
boundary integral or boundary norm, the same letter denotes its trace;
for instance, \(u\) on \(\bdH\) means \(u|_{\bdH}\).

The sharp Sobolev trace inequality is
\begin{equation}
 \label{eq:sharp-trace}
 \|\nabla u\|_{L^p(\Hhalf)}
 \ge
 \ST(n,p)\,
 \|u\|_{L^{p_*}(\bdH)}
 \qquad
 \forall u\in \dot{W}^{1,p}(\Hhalf),
\end{equation}
where
\begin{equation}
 \label{eq:sharp-trace-constant}
 \ST(n,p)
 :=
 \inf_{\substack{u\in \dot{W}^{1,p}(\Hhalf)\\ u|_{\bdH}\not\equiv0}}
 \frac{\|\nabla u\|_{L^p(\Hhalf)}}
      {\|u\|_{L^{p_*}(\bdH)}}.
\end{equation}
The quotient in \eqref{eq:sharp-trace-constant} is invariant under
multiplication by a nonzero constant, tangential translations, and
the critical dilation
\begin{equation}
 \label{eq:critical-dilation}
 u_{\lambda,\xi}(y,t)
 :=
 \lambda^{-\frac{n-p}{p}}\,
 u\!\left(\frac{y-\xi}{\lambda},\frac{t}{\lambda}\right),
 \qquad
 \lambda>0,\quad \xi\in\R^{n-1}.
\end{equation}

Set
\begin{equation}
 \label{eq:standard-bubble}
 U(y,t)
 :=
 \left(|y|^2+(t+1)^2\right)^{-\frac{n-p}{2(p-1)}}.
\end{equation}
For \(\lambda>0\) and \(\xi\in\R^{n-1}\), define
\begin{align}
 U_{\lambda,\xi}(y,t)
 &:=
 \lambda^{-\frac{n-p}{p}}\,
 U\!\left(\frac{y-\xi}{\lambda},\frac{t}{\lambda}\right)
 \label{eq:bubble-scaling}\\
 &=
 \left(
 \frac{\lambda^{2/p}}
 {|y-\xi|^2+(t+\lambda)^2}
 \right)^{\frac{n-p}{2(p-1)}}.
 \nonumber
\end{align}
The full extremal manifold is
\begin{equation}
 \label{eq:extremal-manifold}
 \MT
 :=
 \left\{
 V_{a,\lambda,\xi}:=
 aU_{\lambda,\xi}
 \,:\,
 a\in\R\setminus\{0\},\
 \lambda>0,\
 \xi\in\R^{n-1}
 \right\}.
\end{equation}
We denote its positive component by
\begin{equation}
 \label{eq:positive-extremal-manifold}
 \MT^+
 :=
 \left\{
 V_{a,\lambda,\xi}\in\MT\,:\,a>0
 \right\}.
\end{equation}
The manifold \(\MT\) is an \((n+1)\)-dimensional smooth manifold. By the classification of equality cases for the sharp
Sobolev trace inequality, \(\MT\) is precisely the family of nonzero
extremals of \eqref{eq:sharp-trace}; see
\cite{Escobar1988,Nazaret2006,MaggiNeumayer2017,Zhou2024}.
In particular,
\begin{equation}
 \label{eq:equality-on-manifold}
 \|\nabla v\|_{L^p(\Hhalf)}
 =
 \ST\,
 \|v\|_{L^{p_*}(\bdH)}
 \qquad
 \forall v\in\MT.
\end{equation}

For the standard positive bubble \(U=U_{1,0}\), introduce the
symmetry modes
\begin{equation}
 \label{eq:symmetry-modes}
 Z_{\mathrm a}:=U,
 \qquad
 Z_0:=
 \left.\frac{\partial}{\partial\lambda}
 U_{\lambda,0}\right|_{\lambda=1},
 \qquad
 Z_j:=
 \left.\frac{\partial}{\partial\xi_j}
 U_{1,\xi}\right|_{\xi=0},
 \quad 1\le j\le n-1.
\end{equation}
Hence
\begin{equation}
 \label{eq:tangent-space-standard}
 T_U\MT
 =
 \operatorname{span}
 \{Z_{\mathrm a},Z_0,Z_1,\ldots,Z_{n-1}\}.
\end{equation}
More generally,
\begin{equation}
 \label{eq:tangent-space-general}
 T_{V_{a,\lambda,\xi}}\MT
 =
 \operatorname{span}
 \left\{
 \partial_aV_{a,\lambda,\xi},
 \partial_\lambda V_{a,\lambda,\xi},
 \partial_{\xi_1}V_{a,\lambda,\xi},
 \ldots,
 \partial_{\xi_{n-1}}V_{a,\lambda,\xi}
 \right\}.
\end{equation}
There is no normal translation mode, since translation in the
\(t\)-direction does not preserve the fixed half-space and its
boundary.

Let \(v\in\MT\).  Differentiating the quotient in
\eqref{eq:sharp-trace-constant} gives the weak Euler--Lagrange equation
\begin{equation}
 \label{eq:EL-weak}
 \int_{\Hhalf}
 |\nabla v|^{p-2}\nabla v\cdot\nabla\varphi\,dx
 =
 \ST^p
 \|v\|_{L^{p_*}(\bdH)}^{\,p-p_*}
 \int_{\bdH}
 |v|^{p_*-2}v\,\varphi\,dy
\end{equation}
for every \(\varphi\in \dot{W}^{1,p}(\Hhalf)\).  Equivalently, \(v\)
satisfies
\begin{equation}
 \label{eq:EL-strong}
 \left\{
 \begin{aligned}
 &\operatorname{div}
 \bigl(|\nabla v|^{p-2}\nabla v\bigr)
 =0
 &&\text{in }\Hhalf,\\
 &|\nabla v|^{p-2}\nabla v\cdot\nuout
 =
 \ST^p
 \|v\|_{L^{p_*}(\bdH)}^{\,p-p_*}
 |v|^{p_*-2}v
 &&\text{on }\bdH,
 \end{aligned}
 \right.
\end{equation}
where \(\nuout=-e_n\) is the outward unit normal to \(\Hhalf\).
For a positive bubble, the boundary condition becomes
\begin{equation}
 \label{eq:EL-positive}
 -|\nabla v|^{p-2}\partial_t v
 =
 \ST^p
 \|v\|_{L^{p_*}(\bdH)}^{\,p-p_*}
 v^{p_*-1}
 \qquad\text{on }\bdH.
\end{equation}

It is convenient to introduce the linearized coefficient matrix
\begin{equation}
 \label{eq:linearized-matrix}
 \cA_v
 :=
 |\nabla v|^{p-2}
 \left(
 I+(p-2)
 \frac{\nabla v}{|\nabla v|}
 \otimes
 \frac{\nabla v}{|\nabla v|}
 \right).
\end{equation}
The associated generalized trace eigenvalue problem is
\begin{equation}
 \label{eq:linearized-eigenproblem}
 \int_{\Hhalf}
 \cA_v\nabla\phi\cdot\nabla\psi\,dx
 =
 \mu
 \int_{\bdH}
 |v|^{p_*-2}\phi\,\psi\,dy
\end{equation}
for all admissible test functions \(\psi\).

\begin{theorem}[Spectral nondegeneracy]
 \label{ass:spectral-nondegeneracy}
 Let \(v=V_{a,\lambda,\xi}\in\MT\) be positive.  Then
 \[
  T_v\MT=E_0\oplus E_1,
 \]
 where
 \[
  E_0:=\operatorname{span}\{v\}
 \]
 is the first eigenspace, with eigenvalue
 \[
  \mu_0=(p-1)\ST^p
  \|v\|_{L^{p_*}(\bdH)}^{p-p_*},
 \]
 and
 \[
  E_1:=\operatorname{span}\left\{
  \partial_\lambda V_{a,\lambda,\xi},
  \partial_{\xi_1}V_{a,\lambda,\xi},\ldots,
  \partial_{\xi_{n-1}}V_{a,\lambda,\xi}
  \right\}
 \]
 is the second eigenspace, with eigenvalue
 \[
  \mu_1=(p_*-1)\ST^p
  \|v\|_{L^{p_*}(\bdH)}^{p-p_*}.
 \]
\end{theorem}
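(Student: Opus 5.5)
The plan is to first reduce to the standard bubble \(v=U\). The quotient in \eqref{eq:sharp-trace-constant}, the matrix \(\cA_v\) and the boundary weight \(|v|^{p_*-2}\) all transform covariantly under amplitude scaling, the critical dilation \eqref{eq:critical-dilation} and tangential translations. The eigenproblem \eqref{eq:linearized-eigenproblem} at \(V_{a,\lambda,\xi}\) is therefore conjugate to the one at \(U\), and the eigenvalues rescale exactly by the factor \(\|v\|_{L^{p_*}(\bdH)}^{p-p_*}\) appearing in \(\mu_0,\mu_1\). The tangent vectors \(\partial_aV,\partial_\lambda V,\partial_{\xi_j}V\) are the pullbacks of \(Z_{\mathrm a},Z_0,Z_j\).

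At \(U\), I would use the compact embedding \eqref{eq:intro-compact-embedding} (Theorem~\ref{thm:weighted-trace-compact}) and the resulting discreteness of the spectrum (Corollary~\ref{cor:discrete-spectrum}). Then I would check directly that the symmetry modes are eigenfunctions. Testing \eqref{eq:EL-weak} shows that \(U\) solves \eqref{eq:linearized-eigenproblem} with \(\mu=(p-1)\Lambda_U\), because \(\cA_U\nabla U=(p-1)|\nabla U|^{p-2}\nabla U\). Differentiating the family of Euler--Lagrange equations \eqref{eq:EL-strong} in \(\lambda\) and \(\xi_j\), with \(\|U_{\lambda,\xi}\|_{L^{p_*}}\) constant, shows that \(Z_0,Z_j\) solve it with \(\mu=(p_*-1)\Lambda_U\). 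The variational identification of Theorem~\ref{thm:iden} gives \(\mu_0=(p-1)\Lambda_U\) with \(E_0=\spanop\{U\}\) (positivity of the first eigenfunction) and \(\mu_1=(p_*-1)\Lambda_U\). The latter follows from the second variation of the minimization in \eqref{eq:sharp-trace-constant}, which is nonnegative on the \(U^{p_*-2}\)-orthogonal complement of \(U\). What remains is to show that the \(\mu_1\)-eigenspace is no larger than \(\spanop\{Z_0,\dots,Z_{n-1}\}\).

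For this I would follow the four steps described in the introduction. First, the Kelvin-type map of \cite{Ho2022} sends \(\Hhalf\) to \(\OmB\), and the problem becomes \eqref{eq:intro-eigen}, where the target is the gap \eqref{eq:intro-gap-goal}. The claim is that equality holds exactly on \(\spanop\{y_1,\dots,y_{n-1},y_n-\frac1p\}\), the images of \(Z_j\) and \(Z_0\). Second, I would expand an eigenfunction as in \eqref{eq:intro-expand} in spherical harmonics on \(S^{n-2}\), using the axial symmetry of the weight \(r^{-\gamma}\) and of \(\OmB\) about the \(y_n\)-axis. In the \(k\ge1\) sectors I would compare \(\cE_k\) with \(\cB_k\), exploiting the extra angular term \(k(k+n-3)/(r^2(1-\tau^2))\). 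For \(k\ge2\) this forces \(\cE_k>p\cB_k\) strictly. For \(k=1\) I would write \(u_{1,j}=r\sqrt{1-\tau^2}\,w\) and obtain an energy inequality for \(w\) whose equality case is \(w\) constant, giving \(u_{1,j}=c_jr\sqrt{1-\tau^2}\).

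The main obstacle is the axisymmetric sector \(k=0\), where no symmetry reduction beyond two variables \((r,\tau)\) is available. I would first apply the Pohozaev-type identity (Lemma~\ref{l:pohozaev}), multiplying the equation by \(y\cdot\nabla u_0\) on \(\OmB\). This uses the boundary relation \(r=\tau\) on \(\partial\OmB\) to convert the Steklov constraint into a one-dimensional inequality in the boundary variable (Proposition~\ref{p:ineq}). I would then study the linearized operator \(\calL\) of that inequality as a singular Sturm--Liouville operator, classifying its endpoints as limit-point or limit-circle and counting zeros of solutions by oscillation theory. The goal is to show that \(\calL\) has exactly one negative eigenvalue, coming from the constant mode already removed by \(\cB[\phi,1]=0\), and trivial kernel. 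Hence equality forces \(u_0\in\spanop\{1,y_n-\frac1p\}\), and the orthogonality constraint fixes it to a multiple of \(y_n-\frac1p\). Combining the sectors gives \(E_1=\spanop\{Z_0,\dots,Z_{n-1}\}\), and with \(E_0\) this yields \(T_v\MT=E_0\oplus E_1\).
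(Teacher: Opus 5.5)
Your overall architecture matches the paper's: reduction to $U$, Theorem~\ref{thm:iden}, the Kelvin map to $\OmB$, the $S^{n-2}$ sector decomposition, the ground-state substitution for $k=1$, the angular potential for $k\ge2$, and the Pohozaev identity with test function $ru_r$ for $k=0$. However, the final step of the $k=0$ sector, as you state it, is wrong. The form $\calQ$ comes from the Pohozaev identity, which holds only for traces of eigenfunctions with $\lambda=p$. The constant mode of the original problem (eigenvalue $0$) plays no role in it, and $1$ is not even an eigenfunction of $\calL$, since $\calL1=-pN$ is nonconstant. The unique negative eigenfunction of $\calL$ is $\tau^{p/(p-1)}$, with eigenvalue $-np/(p-1)$. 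Hence the constraint $\int_0^1 b\rho\,d\tau=0$, which is orthogonality to $1$ in $H=L^2([0,1];\rho)$, does \emph{not} remove the negative direction. Knowing ``Morse index one and trivial kernel'' therefore says nothing by itself about $\calQ$ on the hyperplane $\{f\perp1\}$. On that hyperplane, $\calQ\ge0$ if and only if $\langle1,\calL^{-1}1\rangle_H\le0$, and its null set is a line only in the borderline case $\langle1,\calL^{-1}1\rangle_H=0$. So your conclusion that equality forces $u_0\in\spanop\{1,y_n-\frac1p\}$ does not follow. Note also that constants are not equality cases of \eqref{eq:k0-gap}, since $\cE_0[1,1]=0<p\cB_0[1,1]$.

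The missing ingredient is the explicit identity $\calL(\tau^2-\frac1p)=-1$ (Claim~\ref{claim1}). It gives $\calL^{-1}1=-b_0$, hence $\langle1,\calL^{-1}1\rangle_H=-\langle1,b_0\rangle_H=0$, because $h_0$ is $\cB_0$-orthogonal to constants. Then write $f=xe_1+y$ and $1=ae_1+h_+$ with $y,h_+\perp e_1$, and apply Cauchy--Schwarz with $\calL_+^{\pm1/2}$ on $\{e_1\}^\perp$. This yields $\calQ[f]\ge0$ whenever $f\perp1$, with equality exactly on $\spanop\{\calL^{-1}1\}=\spanop\{b_0\}$.

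You also give no concrete mechanism for the index count itself. Compactness fails at the endpoint $t=0$, and only $\sigma_{\mathrm{ess}}(\calL)\subset[\Lambda_*,\infty)$ is available, via the IMS argument of Lemma~\ref{l:embed}. So oscillation counting works only below $\Lambda_*$. The paper fixes the index with two explicit eigenfunctions: $\tau^{p/(p-1)}$, with no zero, and $\tau^{m_*}(1+c_*\tau^2)$, with one zero and eigenvalue in $(0,\Lambda_*)$.

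Finally, Proposition~\ref{p:ineq} only identifies the trace $u_0(\tau,\tau)=Cb_0$. To conclude $u_0=Ch_0$, you still need that $w=u_0-Ch_0$ is a $\lambda=p$ eigenfunction with zero trace, so that $\cE_0[w,w]=0$ and hence $w\equiv0$.
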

The proof of Theorem~\ref{ass:spectral-nondegeneracy} is given in
Section~\ref{sec:spectral-nondegeneracy-proof}.

\subsection{Compact embedding}
Throughout this paper, \(U=U_{1,0}\) denotes the standard positive
trace bubble defined in \eqref{eq:standard-bubble}.  Since
\[
 U(y,t)
 =
 \bigl(|y|^2+(t+1)^2\bigr)^{-\frac{n-p}{2(p-1)}},
\]
a direct computation gives
\begin{equation}
 \label{eq:gradient-bubble-explicit}
 |\nabla U(y,t)|
 =
 \frac{n-p}{p-1}
 \bigl(|y|^2+(t+1)^2\bigr)^{-\frac{n-1}{2(p-1)}}.
\end{equation}
Consequently,
\begin{equation}
 \label{eq:bulk-weight-explicit}
 |\nabla U(y,t)|^{p-2}
 =
 c_{n,p}
 \bigl(|y|^2+(t+1)^2\bigr)^{
 -\frac{(n-1)(p-2)}{2(p-1)}
 },
\end{equation}
whereas
\begin{equation}
 \label{eq:boundary-weight-explicit}
 U(y,0)^{p_*-2}
 =
 (1+|y|^2)^{
 -\frac{n(p-2)+p}{2(p-1)}
 }.
\end{equation}
In particular, both weights are smooth and strictly positive on every
bounded subset of \(\overline{\Hhalf}\).  The only loss of compactness
that must be excluded occurs at infinity.

We define \(\dot{W}^{1,2}(\Hhalf;|\nabla U|^{p-2})\) as the completion of
\(C_c^\infty(\overline{\Hhalf})\) with respect to the norm
\begin{equation}
 \label{eq:weighted-H-norm}
 \|\phi\|_{\dot{W}^{1,2}(\Hhalf;|\nabla U|^{p-2})}
 :=
 \left(
 \int_{\Hhalf}
 |\nabla U|^{p-2}|\nabla\phi|^2\,dx
 \right)^{1/2}.
\end{equation}

Since the eigenvalues of
\[
 I+(p-2)
 \frac{\nabla U}{|\nabla U|}
 \otimes
 \frac{\nabla U}{|\nabla U|}
\]
are \(1\) and \(p-1\), one has
\begin{align}
 \min\{1,p-1\}
 \int_{\Hhalf}|\nabla U|^{p-2}|\nabla\phi|^2\,dx
 &\le
 \int_{\Hhalf}
 \mathcal A_U\nabla\phi\cdot\nabla\phi\,dx
 \notag\\
 &\le
 \max\{1,p-1\}
 \int_{\Hhalf}|\nabla U|^{p-2}|\nabla\phi|^2\,dx.
 \label{eq:energy-equivalence}
\end{align}
Thus the scalar weighted norm and the full linearized quadratic form
are equivalent.

For \(R>1\), introduce
\begin{align}
 \Omega_R
 &:=
 \{(y,t)\in\Hhalf: |(y,t+1)|>R\},
 \label{eq:exterior-domain}\\
 \Gamma_R
 &:=
 \{y\in\R^{n-1}:(1+|y|^2)^{1/2}>R\}.
 \label{eq:exterior-boundary}
\end{align}

The following estimate is the main ingredient in the compact embedding theorem.

\begin{lemma}[Weighted trace inequality]
 \label{lem:weighted-exterior-trace}
 There exists \(C=C(n,p)>0\) such that, for every \(R\ge2\) and every
 \(\phi\in C_c^\infty(\overline{\Hhalf})\),
 \begin{equation}
  \label{eq:weighted-exterior-trace}
  \int_{\Gamma_{2R}}
  U^{p_*-2}\,|\phi|^2\,dy
  \le
  \frac{C}{R}
  \int_{\Omega_R}
  |\nabla U|^{p-2}\,|\nabla\phi|^2\,dx.
 \end{equation}

 There exists \(C=C(n,p)>0\) such that
 \begin{equation}
  \label{eq:weighted-trace-continuous}
  \int_{\bdH}
  U^{p_*-2}|\phi|^2\,dy
  \le
  C
  \int_{\Hhalf}
  |\nabla U|^{p-2}|\nabla\phi|^2\,dx
 \end{equation}
 for every \(\phi\in C_c^\infty(\overline{\Hhalf})\). 
\end{lemma}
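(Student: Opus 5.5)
The plan is to reduce both weights to powers of the single radial variable \(\rho:=|(y,t+1)|\), the distance to \(e:=(0,-1)\), and then prove everything with divergence-theorem (Hardy-type) identities on \(\Omega_R\). By \eqref{eq:bulk-weight-explicit} we have \(|\nabla U|^{p-2}=c_{n,p}\rho^{-\g}\), where \(\g=\frac{(n-1)(p-2)}{p-1}\). On \(\bdH\) we have \(\rho=(1+|y|^2)^{1/2}\), and \eqref{eq:boundary-weight-explicit} gives \(U^{p_*-2}=\rho^{-\beta}\) with
\[
\beta=\frac{n(p-2)+p}{p-1}=\g+2 .
\]
So \eqref{eq:weighted-exterior-trace} is equivalent to
\[
\int_{\Gamma_{2R}}\rho^{-\g-2}\phi^2\,dy\le \frac CR\int_{\Omega_R}\rho^{-\g}|\nabla\phi|^2\,dx .
\]
The key exponent is \(n-\g-2=\frac{n-p}{p-1}=\alpha_{\mathrm c}>0\). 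A scaling check, with \(\phi\) concentrated at scale \(\rho\sim R\), confirms that the factor \(1/R\) is the natural one.

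\textbf{Step 1 (exterior Hardy inequality).} Apply the divergence theorem on \(\Omega_R\) to \(F=\phi^2\rho^{-\g-2}(x-e)\), using \(\divg(\rho^{-\g-2}(x-e))=\alpha_{\mathrm c}\rho^{-\g-2}\). On the flat part of the boundary the outer normal is \(-e_n\) and \((x-e)\cdot(-e_n)=-1\). On the inner sphere \(\{\rho=R\}\) the outer normal points toward \(e\), so \((x-e)\cdot\nu=-R\). Both boundary fluxes are therefore nonpositive, and we get
\[
\int_{\Gamma_R}\rho^{-\g-2}\phi^2\,dy+R^{-\g-1}\int_{\{\rho=R\}}\phi^2+\alpha_{\mathrm c}\int_{\Omega_R}\rho^{-\g-2}\phi^2
=-2\int_{\Omega_R}\rho^{-\g-2}\phi\,\nabla\phi\cdot(x-e).
\]
By Cauchy--Schwarz (\(|x-e|=\rho\)) and Young's inequality this yields
\[
\int_{\Omega_R}\rho^{-\g-2}\phi^2\le C\int_{\Omega_R}\rho^{-\g}|\nabla\phi|^2 .
\]
The same identity on all of \(\Hhalf\) (with no sphere term) also bounds the boundary term. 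This already proves \eqref{eq:weighted-trace-continuous}.

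\textbf{Step 2 (gain of \(1/R\)).} Pick a cutoff \(\eta=\eta(\rho)\) with \(\eta=0\) for \(\rho<R\), \(\eta=1\) for \(\rho>2R\), and \(|\nabla\eta|\le C/R\). Apply the divergence theorem to the constant-direction field \(F=\eta\phi^2\rho^{-\g-2}e_n\). Its flux through \(\bdH\) is exactly \(-\int\eta\rho^{-\g-2}\phi^2\,dy\), and the cutoff removes the sign-indefinite sphere term. This is why \(\Gamma_{2R}\) appears on the left of \eqref{eq:weighted-exterior-trace}. Since \(\divg(\rho^{-\g-2}e_n)=-(\g+2)\rho^{-\g-4}(t+1)\) and \(t+1\le\rho\), we obtain
\[
\int_{\Gamma_{2R}}\rho^{-\g-2}\phi^2\le C\!\int_{\Omega_R}\!\rho^{-\g-3}\phi^2+2\Bigl(\int_{\Omega_R}\!\rho^{-\g-4}\phi^2\Bigr)^{1/2}\Bigl(\int_{\Omega_R}\!\rho^{-\g}|\nabla\phi|^2\Bigr)^{1/2}+\frac CR\int_{\Omega_R}\!\rho^{-\g-2}\phi^2 .
\]
On \(\Omega_R\) we have \(\rho^{-\g-3}\le R^{-1}\rho^{-\g-2}\) and \(\rho^{-\g-4}\le R^{-2}\rho^{-\g-2}\). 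Feeding in Step 1, every term on the right is bounded by \(\frac CR\int_{\Omega_R}\rho^{-\g}|\nabla\phi|^2\). This proves \eqref{eq:weighted-exterior-trace}.

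\textbf{Main obstacle.} The delicate point is extracting the extra factor \(1/R\). The radial field of Step 1 has the right signs but gives no gain, because the boundary weight \(\rho^{-\g-2}\) matches the Hardy weight exactly. The gain has to come from switching to the field in the constant direction \(e_n\), whose flux through \(\bdH\) is exactly the trace term while its divergence has one extra power of decay. I would double-check that \(\alpha_{\mathrm c}>0\) holds for every \(1<p<n\), including when \(\g<0\), since this positivity is what makes Step 1 work. I would also verify that all constants depend only on \(n\) and \(p\).
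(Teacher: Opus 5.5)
Your proof is correct, and it takes a genuinely different route from the paper's.

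\textbf{The paper's route.} The paper argues dyadically. On each half-annulus \(A_k^+=\{2^kR<\rho<2^{k+1}R\}\) it freezes both weights to constants. It then controls \(\phi-(\phi)_{A_k^+}\) on \(\Gamma_k\) by a rescaled unweighted trace--Poincar\'e inequality. The averages \((\phi)_{A_k^+}\) are bounded by telescoping out to infinity, and the factor \(2^{-\alpha(j-k)/2}\), with \(\alpha=\frac{n-p}{p-1}>0\), makes that series summable. Finally it sums over \(k\) using bounded overlap. It obtains \eqref{eq:weighted-trace-continuous} from \eqref{eq:weighted-exterior-trace} together with a separate local estimate near \(-e_n\).

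\textbf{Your route.} You reverse the order and replace this bookkeeping by two exact divergence identities. For the radial field \(\rho^{-\gamma-2}(x+e_n)\), both boundary fluxes on \(\partial\Omega_R\) have the favorable sign. This gives a Hardy inequality on the exterior domain with no condition on the inner sphere. Because \(\rho\ge1\) on \(\overline{\Hhalf}\), the same identity on all of \(\Hhalf\) yields \eqref{eq:weighted-trace-continuous} at once. The vertical field with a cutoff then converts the extra factor \(\rho^{-1}\le R^{-1}\) in its divergence into the \(1/R\) gain.

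\textbf{Comparison.} In both proofs \(\alpha>0\) is the essential input. For you it is the sign of \(\operatorname{div}(\rho^{-\gamma-2}(x+e_n))=\alpha\rho^{-\gamma-2}\); for the paper it is the convergence of the geometric series. Your version is shorter and gives explicit constants, for instance \(4/\alpha\) in \eqref{eq:weighted-trace-continuous} relative to \(\int\rho^{-\gamma}|\nabla\phi|^2\). It also avoids averaging, overlap counting and a separate local step. It is the same mechanism the paper itself uses later to prove the \(L^p\) Hardy--trace estimate \eqref{eq:weighted-hardy-trace}. The paper's dyadic argument is more modular. It uses only standard unweighted inequalities on one fixed half-annulus, plus the comparability of the weights to powers of \(\rho\), following the Figalli--Zhang template.
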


\begin{proof}
\textbf{(1):}
 We give a dyadic proof.  For \(k\ge0\), set
 \begin{align*}
  A_k^+
  &:=
  \left\{
  (y,t)\in\Hhalf:
  2^kR<|(y,t+1)|<2^{k+1}R
  \right\},\\
  \Gamma_k
  &:=
  \left\{
  y\in\R^{n-1}:
  2^kR<(1+|y|^2)^{1/2}<2^{k+1}R
  \right\}.
 \end{align*}
 On \(A_k^+\) and \(\Gamma_k\), respectively,
 \begin{align}
  |\nabla U|^{p-2}(x)
  &\simeq
  (2^kR)^{-\frac{(n-1)(p-2)}{p-1}},
  \label{eq:bulk-weight-annulus-comparison}\\
  U^{p_*-2}(y)
  &\simeq
  (2^kR)^{-\frac{n(p-2)+p}{p-1}},
  \label{eq:boundary-weight-annulus-comparison}
 \end{align}
 with constants depending only on \(n\) and \(p\).

 Rescaling the standard trace--Poincar\'e inequality on a fixed
 half-annulus yields
 \begin{equation}
  \label{eq:annular-trace-poincare}
  \int_{\Gamma_k}
  |\phi-(\phi)_{A_k^+}|^2\,dy
  \le
  C(2^kR)
  \int_{ A_k^+}
  |\nabla\phi|^2\,dx,
 \end{equation}
 where \((\phi)_{A_k^+}\) denotes the average of \(\phi\) on \(A_k^+\).

 Combining \eqref{eq:bulk-weight-annulus-comparison},
 \eqref{eq:boundary-weight-annulus-comparison}, and
 \eqref{eq:annular-trace-poincare}, we obtain
 \begin{equation}
  \label{eq:annular-weighted-trace}
  \int_{\Gamma_k}
  U^{p_*-2}
  |\phi-(\phi)_{A_k^+}|^2\,dy
  \le
  \frac{C}{2^kR}
  \int_{ A_k^+}
  |\nabla U|^{p-2}|\nabla\phi|^2\,dx.
 \end{equation}

 It remains to estimate the averages.  Since
 \(\phi\in C_c^\infty(\overline{\Hhalf})\), the averages vanish for
 all sufficiently large \(k\).  A telescoping argument gives
 \begin{equation}
  \label{eq:average-telescoping}
  |(\phi)_{A_k^+}|
  \le
  \sum_{j=k}^{\infty}
  |(\phi)_{A_j^+}-(\phi)_{A_{j+1}^+}|.
 \end{equation}
 We now prove the estimate for the difference of the two averages.
 Put \(\widetilde A_j^+:=A_j^+\cup A_{j+1}^+\) and \(r_j:=2^jR\).  Since
 \(|A_j^+|\simeq |A_{j+1}^+|\simeq |\widetilde A_j^+|\simeq r_j^n\), the triangle
 inequality and Cauchy--Schwarz give
 \[
  |(\phi)_{A_j^+}-(\phi)_{A_{j+1}^+}|
  \le
  C r_j^{-n/2}
  \|\phi-(\phi)_{\widetilde A_j^+}\|_{L^2(\widetilde A_j^+)}.
 \]
 Applying the Poincar\'e inequality on \(\widetilde A_j^+\)
 and using the scaling of the annulus,
 \[
  \|\phi-(\phi)_{\widetilde A_j^+}\|_{L^2(\widetilde A_j^+)}
  \le
  C r_j\|\nabla\phi\|_{L^2(\widetilde A_j^+)}.
 \]
 Therefore, by \eqref{eq:bulk-weight-annulus-comparison}
 \begin{equation}
  \label{eq:average-difference}
  |(\phi)_{A_j^+}-(\phi)_{A_{j+1}^+}|
  \le
  C(2^jR)^{-\frac{n-p}{2(p-1)}}
  \|\nabla\phi\|_{L^2(\widetilde A_j^+,|\nabla U|^{p-2})}.
 \end{equation}
 Using \eqref{eq:boundary-weight-annulus-comparison}, \eqref{eq:average-telescoping} and \eqref{eq:average-difference}, by Minkowski's inequality
 \begin{align}
  \label{eq:average-bound}
  \|(\phi)_{A_k^+}\|_{L^2(\Gamma_k,U^{p_*-2})}
  \le& \sum_{j=k}^{\infty}
  \|(\phi)_{A_j^+}-(\phi)_{A_{j+1}^+}\|_{L^2(\Gamma_k,U^{p_*-2})}\nonumber\\
  \le&
  \frac{C}{\sqrt{2^kR}}
  \sum_{j=k}^{\infty}
  2^{-\frac{n-p}{2(p-1)}(j-k)}
  \|\nabla\phi\|_{L^2(\widetilde A_j^+,|\nabla U|^{p-2})}\nonumber\\
  \le& \frac{C}{\sqrt{2^kR}}\|\nabla\phi\|_{L^2(\Omega_R,|\nabla U|^{p-2})}.
 \end{align}
 Combining \eqref{eq:annular-weighted-trace} and
 \eqref{eq:average-bound}, summing over \(k\ge1\), and using the
 bounded overlap of the enlarged annuli \(\widetilde A_k^+\), we find
 \[
  \sum_{k=1}^{\infty}
  \int_{\Gamma_k}
  U^{p_*-2}|\phi|^2\,dy
  \le
  \frac{C}{R}
  \int_{\Omega_R}
  |\nabla U|^{p-2}|\nabla\phi|^2\,dx.
 \]
 Since
 \[
  \Gamma_{2R}
  =
  \bigcup_{k=1}^{\infty}\Gamma_k,
 \]
 this proves \eqref{eq:weighted-exterior-trace}.

\noindent\textbf{(2).} Fix $R=2$ and argue as in part (1).  We obtain
\begin{equation}
  \label{eq:local-weighted-trace-preliminary1}
  \int_{\bdH\cap B_{2R}(-e_n)}
  U^{p_*-2}|\phi-(\phi)|^2\,dy
  \le
  C\int_{B_{2R}(-e_n)\cap\Hhalf}
  |\nabla U|^{p-2}|\nabla\phi|^2\,dx,
 \end{equation}
 where \((\phi)\) denotes the average of \(\phi\) on
 \(B_{2R}(-e_n)\cap\Hhalf\).  Also, we have
 \begin{align*}
  |(\phi)|
  &\le |(\phi)-(\phi)_{A_{1}^+}|
+\sum_{j=1}^{\infty}
  |(\phi)_{A_j^+}-(\phi)_{A_{j+1}^+}|,\\
  |(\phi)_{A_j^+}&-(\phi)_{A_{j+1}^+}|
  \le
  C(2^j)^{-\frac{n-p}{2(p-1)}}
  \|\nabla\phi\|_{L^2(\widetilde A_j^+,|\nabla U|^{p-2})},
 \end{align*}
and \[|(\phi)-(\phi)_{A_{1}^+}|\le C
  \|\nabla\phi\|_{L^2(B_{4R}(-e_n)\cap\Hhalf,|\nabla U|^{p-2})}.\]
Hence, by Minkowski's inequality
 \begin{align}
  \label{eq:average-bound1}
  &\|(\phi)\|_{L^2(\bdH\cap B_{2R}(-e_n),U^{p_*-2})}\nonumber\\
  \le&\|(\phi)-(\phi)_{A_{1}^+}\|_{L^2(\bdH\cap B_{4R}(-e_n),U^{p_*-2})}+ \sum_{j=1}^{\infty}
  \|(\phi)_{A_j^+}-(\phi)_{A_{j+1}^+}\|_{
  L^2(\bdH\cap B_{4R}(-e_n),U^{p_*-2})}\nonumber\\
  \le&
  C\left(
  \|\nabla\phi\|_{L^2(B_{4R}(-e_n)\cap\Hhalf,|\nabla U|^{p-2})}+
  \sum_{j=1}^{\infty}
  2^{-\frac{n-p}{2(p-1)}(j-1)}
  \|\nabla\phi\|_{L^2(\widetilde A_j^+,|\nabla U|^{p-2})}\right)\nonumber\\
  \le& C\|\nabla\phi\|_{L^2(\Hhalf,|\nabla U|^{p-2})}.
 \end{align}
Combining \eqref{eq:local-weighted-trace-preliminary1},
 \eqref{eq:average-bound1} and \eqref{eq:weighted-exterior-trace}, we obtain \eqref{eq:weighted-trace-continuous}.
 
\end{proof}

\begin{theorem}[Compact weighted trace embedding]
 \label{thm:weighted-trace-compact}
 The embedding
 \begin{equation}
  \label{eq:compact-weighted-embedding}
  \dot{W}^{1,2}(\Hhalf;|\nabla U|^{p-2})
  \hookrightarrow
  L^2(\bdH,U^{p_*-2}\,dy)
 \end{equation}
 is compact.  Equivalently, if
 \[
  \sup_k
  \int_{\Hhalf}
  |\nabla U|^{p-2}|\nabla\phi_k|^2\,dx<\infty,
 \]
 then, after passing to a subsequence, there exists
 \(\phi\in\dot{W}^{1,2}(\Hhalf;|\nabla U|^{p-2})\) such that
 \begin{equation}
  \label{eq:strong-weighted-boundary-convergence}
  \phi_k
  \longrightarrow
  \phi
  \qquad\text{strongly in }
  L^2(\bdH,U^{p_*-2}\,dy).
 \end{equation}
\end{theorem}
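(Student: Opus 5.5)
The plan is a standard ``local compactness plus uniformly small tail'' argument, with Lemma~\ref{lem:weighted-exterior-trace} supplying both the continuity of the trace and the tail control. Let \((\phi_k)\) be bounded in \(\dot W^{1,2}(\Hhalf;|\nabla U|^{p-2})\), say \(\int_{\Hhalf}|\nabla U|^{p-2}|\nabla\phi_k|^2\,dx\le M\). By density and \eqref{eq:weighted-trace-continuous}, the trace map extends continuously to the completion. We may therefore assume \(\phi_k\in C_c^\infty(\overline{\Hhalf})\), replacing each \(\phi_k\) by a smooth approximant within \(1/k\) in norm; the traces then also differ by at most \(C/k\) in \(L^2(\bdH,U^{p_*-2})\). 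The space \(\dot W^{1,2}(\Hhalf;|\nabla U|^{p-2})\) is a Hilbert space, so a subsequence converges weakly to some \(\phi\). The goal is to show that the traces converge strongly to the trace of \(\phi\).

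\textbf{Local step.} Fix \(R\ge2\) and let \(D_R:=\{x\in\Hhalf:|(y,t+1)|<4R\}\). On \(D_R\) both weights are bounded above and below by positive constants depending on \(R\) (see \eqref{eq:bulk-weight-explicit}--\eqref{eq:boundary-weight-explicit}). Hence \(\nabla\phi_k\) is bounded in \(L^2(D_R)\). I normalize by subtracting averages: \(c_k:=(\phi_k)_{D_R}\). These averages are bounded, because the telescoping estimate \eqref{eq:average-difference} summed over dyadic annuli, exactly as in part (2) of the proof of Lemma~\ref{lem:weighted-exterior-trace}, gives \(|c_k|\le C_R M^{1/2}\); this uses that the \(\phi_k\) have compact support. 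The Poincar\'e inequality then shows \(\phi_k\) is bounded in \(W^{1,2}(D_R)\). The domain \(D_R\) is Lipschitz, so the trace \(W^{1,2}(D_R)\hookrightarrow L^2(\partial D_R\cap\bdH)\) is compact (Rellich for traces, since \(H^{1/2}\hookrightarrow L^2\) is compact on bounded sets). After a diagonal extraction over \(R=2^m\), the traces \(\phi_k|_{\bdH}\) converge strongly in \(L^2(\bdH\cap\{(1+|y|^2)^{1/2}<2R\},U^{p_*-2})\) for every \(R\). The limit coincides with the trace of \(\phi\). To see this, note that the local weak \(W^{1,2}\) limit agrees with \(\phi\) modulo the identification of constants; the constants are pinned down by the convergent averages, and weak continuity of the trace identifies the limit.

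\textbf{Tail step.} Apply \eqref{eq:weighted-exterior-trace} to each \(\phi_k\) to get
\[
\int_{\Gamma_{2R}}U^{p_*-2}|\phi_k|^2\,dy\le \frac{C}{R}\int_{\Omega_R}|\nabla U|^{p-2}|\nabla\phi_k|^2\,dx\le \frac{CM}{R},
\]
uniformly in \(k\). The same bound holds for \(\phi\) by lower semicontinuity. Given \(\e>0\), choose \(R\) with \(CM/R<\e\). Splitting \(\bdH\) into \(\Gamma_{2R}\) and its complement, the local step yields
\[
\limsup_k\|\phi_k-\phi\|^2_{L^2(\bdH,U^{p_*-2})}\le 4\e.
\]
This proves \eqref{eq:strong-weighted-boundary-convergence}. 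The equivalence with compactness of the embedding is then immediate.

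\textbf{Main obstacle.} The only delicate point is the handling of additive constants in a homogeneous space. The norm does not see constants, so the local \(W^{1,2}(D_R)\) bound requires controlling the averages. This is exactly where the decay \(2^{-\frac{n-p}{2(p-1)}j}\) in \eqref{eq:average-difference} enters, together with the fact that approximants are compactly supported, which normalizes the ``value at infinity'' to zero. I would make this precise first, proving \(|(\phi)_{D_R}|\le C_R\|\phi\|_{\dot W^{1,2}(\Hhalf;|\nabla U|^{p-2})}\) for \(\phi\in C_c^\infty(\overline{\Hhalf})\) and extending it by density. Everything else is routine.
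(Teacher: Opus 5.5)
Your proposal is correct and follows essentially the same route as the paper: weak compactness in the Hilbert space, compactness of the trace on bounded half-balls, and the uniform $C/R$ tail bound from Lemma~\ref{lem:weighted-exterior-trace}. The only minor difference is how you get the local $W^{1,2}$ bound. You bound the averages directly through the dyadic telescoping estimate. The paper instead subtracts the weak limit and combines the global bound \eqref{eq:weighted-trace-continuous} with a Poincar\'e inequality with a boundary term, and that global bound itself rests on the same telescoping argument.
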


\begin{proof}
 Let \((\phi_k)\) be bounded in
 \(\dot{W}^{1,2}(\Hhalf;|\nabla U|^{p-2})\).  By weak compactness in
 this Hilbert space, after passing to a subsequence,
 \[
  \phi_k\rightharpoonup\phi
  \qquad\text{weakly in }
  \dot{W}^{1,2}(\Hhalf;|\nabla U|^{p-2}).
 \]
 Replacing \(\phi_k\) by \(\phi_k-\phi\), it is enough to assume
 \[
  \phi_k\rightharpoonup0
  \qquad\text{in }
  \dot{W}^{1,2}(\Hhalf;|\nabla U|^{p-2})
 \]
 and prove strong convergence of the traces.

 Fix \(R>2\).  Since \(|\nabla U|^{p-2}\) is bounded above and below by positive
 constants on
 \[
  B_{2R}(-e_n)\cap\Hhalf,
 \]
 the gradients \((\nabla\phi_k)\) are bounded in the usual \(L^2\)-space
 there.  Moreover, \eqref{eq:weighted-trace-continuous} bounds
 \((\phi_k)\) in \(L^2(\bdH\cap B_{2R}(-e_n))\), because the boundary
 weight is bounded below by a positive constant on this set.  The
 Poincar\'e inequality with a boundary term gives
 \[
  \|\phi_k\|_{L^2(B_{2R}(-e_n)\cap\Hhalf)}
  \le C_R\left(
  \|\nabla\phi_k\|_{L^2(B_{2R}(-e_n)\cap\Hhalf)}
  +\|\phi_k\|_{L^2(\bdH\cap B_{2R}(-e_n))}
  \right).
 \]
 Thus \((\phi_k)\) is bounded in the usual \(W^{1,2}\)-space on every
 bounded half-ball.  The compact trace theorem now implies
 \begin{equation}
  \label{eq:local-trace-strong}
  \phi_k\longrightarrow0
  \qquad\text{strongly in }
  L^2\bigl(
  \bdH\cap B_{2R}(-e_n)
  \bigr).
 \end{equation}
 Since \(U^{p_*-2}\) is bounded on this compact boundary region,
 \begin{equation}
  \label{eq:local-weighted-trace-strong}
  \int_{\bdH\cap B_{2R}(-e_n)}
  U^{p_*-2}|\phi_k|^2\,dy
  \longrightarrow0.
 \end{equation}

 On the other hand, Lemma~\ref{lem:weighted-exterior-trace} gives
 \begin{equation}
  \label{eq:uniform-tail-smallness}
  \sup_k
  \int_{\Gamma_{2R}}
  U^{p_*-2}|\phi_k|^2\,dy
  \le
  \frac{C}{R}
  \sup_k\|\phi_k\|_{
  \dot{W}^{1,2}(\Hhalf;|\nabla U|^{p-2})}^2.
 \end{equation}
 Therefore,
 \begin{align}
  \limsup_{k\to\infty}
  \int_{\bdH}
  U^{p_*-2}|\phi_k|^2\,dy
  &\le
  \frac{C}{R}
  \sup_k\|\phi_k\|_{
  \dot{W}^{1,2}(\Hhalf;|\nabla U|^{p-2})}^2.
  \label{eq:compactness-limsup}
 \end{align}
 Letting \(R\to\infty\) proves
 \[
  \int_{\bdH}
  U^{p_*-2}|\phi_k|^2\,dy
  \longrightarrow0.
 \]
 This is precisely the desired compactness.
\end{proof}

\subsection{Spectral gap estimate}

Consider the Rayleigh quotient
\begin{equation}
 \label{eq:trace-rayleigh-quotient}
 \frac{
 \displaystyle
 \int_{\Hhalf}
 \mathcal A_U\nabla\phi\cdot\nabla\phi\,dx
 }{
 \displaystyle
 \int_{\bdH}
 U^{p_*-2}|\phi|^2\,dy
 },
 \qquad
 \phi\in
 \left\{
 \psi\in\dot{W}^{1,2}(\Hhalf;|\nabla U|^{p-2})
 :\psi|_{\bdH}\not\equiv0
 \right\}.
\end{equation}
The Rayleigh quotient is associated with the following weak eigenvalue problem. Let $\phi\in \dot{W}^{1,2}(\Hhalf;|\nabla U|^{p-2})$ be a weak solution of
\begin{equation}
\begin{cases}
L\phi:=-\operatorname{div}\left( \mathcal A_U \nabla\phi\right)=0
&\text{in }\Hhalf,\\[1mm]
\bigl(\mathcal A_U \nabla\phi\bigr)\cdot\nu
=\mu U^{p_*-2}\phi
&\text{on }\partial\Hhalf.
\end{cases}
\label{eq:eigen}
\end{equation}

\begin{corollary}[Discreteness of the weighted trace spectrum]
 \label{cor:discrete-spectrum}
 The generalized eigenvalue problem
 \begin{equation}
  \label{eq:generalized-trace-spectrum}
  \int_{\Hhalf}
  \mathcal A_U\nabla\phi\cdot\nabla\psi\,dx
  =
  \mu
  \int_{\bdH}
  U^{p_*-2}\phi\,\psi\,dy\quad\text{for every }\psi\in \dot{W}^{1,2}(\Hhalf;|\nabla U|^{p-2})
 \end{equation}
 has a discrete sequence of positive eigenvalues
 \begin{equation}
  \label{eq:eigenvalue-sequence}
  0<\mu_0\le\mu_1\le\cdots\le\mu_k\le\cdots,
  \qquad
  \mu_k\longrightarrow+\infty,
 \end{equation}
 counted with multiplicity. The traces of the corresponding
 eigenfunctions form an orthonormal basis of \(L^2(\bdH,U^{p_*-2})\).
\end{corollary}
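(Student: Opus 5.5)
The plan is to recast the generalized eigenvalue problem \eqref{eq:generalized-trace-spectrum} as the spectral problem for a compact self-adjoint operator on a Hilbert space. Set \(\mathcal H:=\dot W^{1,2}(\Hhalf;|\nabla U|^{p-2})\) and equip it with the inner product
\[
 a(\phi,\psi):=\int_{\Hhalf}\mathcal A_U\nabla\phi\cdot\nabla\psi\,dx .
\]
By \eqref{eq:energy-equivalence}, \(a\) is symmetric, bounded and coercive, so it is equivalent to the defining norm \eqref{eq:weighted-H-norm}. Let \(T:\mathcal H\to L^2(\bdH,U^{p_*-2}dy)\) be the trace map. It is bounded by \eqref{eq:weighted-trace-continuous} (extended from \(C_c^\infty(\overline{\Hhalf})\) by density) and compact by Theorem~\ref{thm:weighted-trace-compact}.

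For \(f\in L^2(\bdH,U^{p_*-2})\), the functional \(\psi\mapsto\int_{\bdH}U^{p_*-2}f\,T\psi\,dy\) is bounded on \(\mathcal H\). By Riesz there is a unique \(Gf\in\mathcal H\) with
\[
 a(Gf,\psi)=\int_{\bdH}U^{p_*-2}f\,T\psi\,dy\qquad\text{for all }\psi\in\mathcal H,
\]
so \(G=T^*\) (adjoint with respect to \(a\)). Define \(K:=TG=TT^*\) on \(L^2(\bdH,U^{p_*-2})\). Then \(K\) is compact because \(T\) is, self-adjoint because it has the form \(TT^*\), and nonnegative because \(\langle Kf,f\rangle=a(Gf,Gf)\ge0\). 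Moreover \(K\) is injective: if \(Kf=0\), then \(a(Gf,Gf)=0\), so \(Gf=0\), and hence \(\int U^{p_*-2}f\,T\psi\,dy=0\) for all \(\psi\). It follows that \(f=0\), since the traces of \(C_c^\infty(\overline{\Hhalf})\) functions (all of \(C_c^\infty(\R^{n-1})\), extended by a cutoff in \(t\)) are dense in the weighted \(L^2\).

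The spectral theorem for compact self-adjoint operators then gives eigenvalues \(\kappa_0\ge\kappa_1\ge\cdots>0\), with \(\kappa_k\to0\) and finite multiplicities, and an orthonormal eigenbasis \(\{f_k\}\) of \(L^2(\bdH,U^{p_*-2})\). The space is infinite dimensional, so infinitely many \(\kappa_k\) occur.

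Translating back, set \(\phi_k:=\kappa_k^{-1}Gf_k\). Then \(T\phi_k=f_k\) and
\[
 a(\phi_k,\psi)=\kappa_k^{-1}\int U^{p_*-2}f_k\,T\psi\,dy ,
\]
so \(\phi_k\) solves \eqref{eq:generalized-trace-spectrum} with \(\mu_k=1/\kappa_k>0\), \(\mu_k\to\infty\). Conversely, any eigenpair \((\phi,\mu)\) with \(T\phi\neq0\) satisfies \(\phi=\mu G T\phi\), hence \(K(T\phi)=\mu^{-1}T\phi\), so the list is complete. If \(T\phi=0\), then \(a(\phi,\phi)=\mu\int U^{p_*-2}|T\phi|^2=0\) forces \(\phi=0\) in \(\mathcal H\), so there are no spurious eigenfunctions and \(\mu\) is positive.

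The main point needing care is that \(\mathcal H\) really is a Hilbert space of functions with well-defined traces, i.e. that the homogeneous completion does not quotient out constants. Here I would use \eqref{eq:weighted-trace-continuous}: on the completion, the trace map is continuous, and \(a\) is a genuine norm (no constants survive since they are not in the closure with finite trace norm). With this in hand, the argument above is standard.
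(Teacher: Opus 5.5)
Your proposal is correct and follows the same route as the paper. Both arguments use the Riesz representation on \(\dot W^{1,2}(\Hhalf;|\nabla U|^{p-2})\) with the equivalent inner product from \eqref{eq:energy-equivalence}, compose the resulting solution operator with the compact weighted trace embedding of Theorem~\ref{thm:weighted-trace-compact}, and apply the spectral theorem for compact self-adjoint operators. Your extra checks make explicit what the paper leaves implicit: injectivity of \(K=TT^*\), which is what makes the eigenfunction traces span all of \(L^2(\bdH,U^{p_*-2})\), and the exclusion of eigenfunctions with zero trace.
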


\begin{proof}
 The energy equivalence \eqref{eq:energy-equivalence} shows that the
 bilinear form corresponding to $L$
 \[
  (\phi,\psi)_{\dot{W}^{1,2}(\Hhalf;|\nabla U|^{p-2})}
  :=
 \int_{\Hhalf}
  \mathcal A_U\nabla\phi\cdot\nabla\psi\,dx
 \]
 is coercive and defines an equivalent Hilbert norm on
 \(\dot{W}^{1,2}(\Hhalf;|\nabla U|^{p-2})\).  Lemma~\ref{lem:weighted-exterior-trace}
implies that the weighted boundary pairing
 \[
  (\phi,\psi)_{L^2(\bdH,U^{p_*-2})}:=\int_{\bdH}
  U^{p_*-2}\phi\,\psi\,dy
 \]
 is continuous on
 \(\dot{W}^{1,2}(\Hhalf;|\nabla U|^{p-2})\).  
 Therefore, by the Riesz representation theorem, for every
 $f\in L^2(\bdH,U^{p_*-2})$, there exists a unique solution
 $\phi\in \dot{W}^{1,2}(\Hhalf;|\nabla U|^{p-2})$ of 
 \[(\phi,\psi)_{\dot{W}^{1,2}(\Hhalf;|\nabla U|^{p-2})}=(f,\psi)_{L^2(\bdH,U^{p_*-2})}.\]
 
 Let us write $\phi=L^{-1}f$.
 By Theorem~\ref{thm:weighted-trace-compact}, the
 operator $L^{-1}:L^2(\bdH,U^{p_*-2})\rightarrow \dot{W}^{1,2}(\Hhalf;|\nabla U|^{p-2})\Subset L^2(\bdH,U^{p_*-2})$ is compact, positive, and self-adjoint.  Its nonzero eigenvalues
 are the reciprocals of the generalized eigenvalues in
 \eqref{eq:generalized-trace-spectrum}.  The spectral theorem for
 compact self-adjoint operators then gives the asserted discreteness
 and the basis statement; see, for
 example, \cite{Borthwick2020,Kato1995}.
\end{proof}

\begin{corollary}[Spectral gap estimate]
 \label{cor:linear-spectral-gap}
 By Theorem~\ref{ass:spectral-nondegeneracy}, there exists
 \(\lambda_{\mathrm T}=\lambda_{\mathrm T}(n,p)>0\) such that every
 \(\phi\in\dot{W}^{1,2}(\Hhalf;|\nabla U|^{p-2})\) satisfying
 \begin{equation}
  \label{eq:weighted-orthogonality-standard}
  \int_{\bdH}
  U^{p_*-2}\phi\,Z\,dy=0
  \qquad
  \forall Z\in T_U\MT
 \end{equation}
 obeys
 \begin{align}
  &\int_{\Hhalf}
  \mathcal A_U\nabla\phi\cdot\nabla\phi\,dx
  \notag\\
  &\qquad\ge
  \left[
  (p_*-1)\ST^p
  +2\lambda_{\mathrm T}
  \right]\|U\|_{L^{p_*}(\bdH)}^{\,p-p_*}
  \int_{\bdH}
  U^{p_*-2}|\phi|^2\,dy.
  \label{eq:linear-spectral-gap-standard}
 \end{align}
 By scaling, tangential translation, and multiplication, the
 corresponding estimate holds at every positive
 \(v\in\MT\).
\end{corollary}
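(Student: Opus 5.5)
The estimate \eqref{eq:linear-spectral-gap-standard} follows from Theorem~\ref{ass:spectral-nondegeneracy} and the discreteness in Corollary~\ref{cor:discrete-spectrum} by a min--max argument. I would proceed as follows.

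\textbf{Step 1: the third eigenvalue is strictly larger.} Write $\Lambda_U:=\ST^p\|U\|_{L^{p_*}(\bdH)}^{p-p_*}$. By Corollary~\ref{cor:discrete-spectrum}, there is an orthonormal basis of $L^2(\bdH,U^{p_*-2})$ consisting of traces of eigenfunctions $\{e_k\}$, with eigenvalues $\mu_0\le\mu_1\le\cdots\to\infty$. Theorem~\ref{ass:spectral-nondegeneracy}, applied with $v=U$, gives three facts:
\begin{itemize}
\item the eigenvalue $\mu_0=(p-1)\Lambda_U$ has eigenspace exactly $\spanop\{U\}$;
\item the next eigenvalue is $(p_*-1)\Lambda_U$, with eigenspace exactly $\spanop\{Z_0,\dots,Z_{n-1}\}$;
\item $T_U\MT=E_0\oplus E_1$.
\end{itemize}
Since $p_*>p$, we have $\mu_0<(p_*-1)\Lambda_U$. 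So the first $n+1$ eigenvalues, counted with multiplicity, are $\mu_0$ and $n$ copies of $(p_*-1)\Lambda_U$, and the corresponding eigenfunctions span $T_U\MT$. Because the spectrum is discrete and $E_1$ is the full eigenspace, the next eigenvalue $\mu_{n+1}$ satisfies $\mu_{n+1}>(p_*-1)\Lambda_U$. Define
\[
2\lambda_{\mathrm T}\|U\|_{L^{p_*}}^{p-p_*}:=\mu_{n+1}-(p_*-1)\Lambda_U>0.
\]
This depends only on $n$ and $p$.

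\textbf{Step 2: expansion of the quadratic form.} Take $\phi$ satisfying \eqref{eq:weighted-orthogonality-standard}. I would use the decomposition of the energy space
\[
\dot W^{1,2}(\Hhalf;|\nabla U|^{p-2})=\mathcal H_0\oplus\overline{\spanop}\{e_k\},
\]
where $\mathcal H_0$ consists of functions with zero trace. This decomposition is orthogonal for the $\mathcal A_U$-form, by the weak eigen-equation \eqref{eq:generalized-trace-spectrum}.

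Write $\phi=\phi^0+\sum_k c_ke_k$ with $\phi^0\in\mathcal H_0$. Then
\[
\int\mathcal A_U\nabla\phi\cdot\nabla\phi\ge\sum_k\mu_kc_k^2,
\qquad
\int U^{p_*-2}\phi^2=\sum_k c_k^2 .
\]
The orthogonality condition forces $c_k=0$ for $k\le n$. Hence
\[
\int\mathcal A_U\nabla\phi\cdot\nabla\phi\ge\mu_{n+1}\int U^{p_*-2}\phi^2,
\]
which is \eqref{eq:linear-spectral-gap-standard}.

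The one point to check is that the projection is legitimate. This is the harmonic-extension argument: the $\mathcal A_U$-orthogonal complement of $\mathcal H_0$ is the closed span of the $e_k$. That follows because $L^{-1}$ maps onto this complement and its eigenfunctions are complete there, by the spectral theorem for the compact operator $L^{-1}$ in Corollary~\ref{cor:discrete-spectrum}.

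\textbf{Step 3: transfer to a general bubble.} For a general positive $v=aU_{\lambda,\xi}$, set $\phi(x)=\lambda^{-(n-p)/p}\tilde\phi((y-\xi)/\lambda,t/\lambda)$. Both sides of the inequality and the orthogonality conditions scale by the same power of $a$ and $\lambda$. The tangent space is transported to the tangent space, and $\Lambda_v$ rescales consistently with the energy. A direct change of variables therefore gives the estimate at $v$.

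The only genuine input is the strict gap $\mu_{n+1}>(p_*-1)\Lambda_U$ obtained in Step 1. This rests entirely on Theorem~\ref{ass:spectral-nondegeneracy}, which we assume, so no step here presents a substantial obstacle.
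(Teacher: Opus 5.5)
Your proposal is correct and is essentially the paper's argument. Discreteness (Corollary~\ref{cor:discrete-spectrum}) together with Theorem~\ref{ass:spectral-nondegeneracy} forces the eigenvalue after the $n$-fold $(p_*-1)\Lambda_U$ to be strictly larger; half the gap, after factoring out $\|U\|_{L^{p_*}(\bdH)}^{p-p_*}$, defines $\lambda_{\mathrm T}$; and the invariances of the quotient transfer the estimate to every positive $v\in\MT$. Your Step~2 (splitting off the zero-trace subspace and expanding the $\mathcal A_U$-harmonic part in the eigenfunctions) simply makes explicit the min--max step that the paper leaves implicit.
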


\begin{proof}
 By Corollary~\ref{cor:discrete-spectrum}, the spectrum is discrete. Theorem~\ref{ass:spectral-nondegeneracy} identifies the second eigenvalue as
 \[
  (p_*-1)\ST^p
  \|U\|_{L^{p_*}(\bdH)}^{\,p-p_*}
  \]
  and identifies the first two eigenspaces. The next eigenvalue is therefore strictly
  larger. Denoting half of the spectral separation, after factoring out
  \(\|U\|_{L^{p_*}(\bdH)}^{p-p_*}\), by
  \(\lambda_{\mathrm T}>0\) gives
  \eqref{eq:linear-spectral-gap-standard}.  The invariances of the
 quotient and of the bubble family yield the uniform form for general
 \(v\in\MT\).
\end{proof}

\section{Proof of spectral nondegeneracy}
\label{sec:spectral-nondegeneracy-proof}

This section proves Theorem~\ref{ass:spectral-nondegeneracy}.  We retain
the local notation introduced below to streamline the explicit
calculation.  The identifications are
\[
 \Om=\Hhalf,
 \qquad
 S=\ST,
 \qquad
 \mathcal M=\MT^+,
\]
where \(\MT^+\) denotes the positive component of \(\MT\).  Thus the
constant \(S\) has exactly the same gradient-side normalization as
\(\ST\).  At the end of the section, we translate the classification
back into the notation of
Section~\ref{sec:weighted-trace-compactness}.

\subsection{Reduction to the weighted Steklov problem}

For the explicit spectral calculation, we use the following local
notation.  Let $n\ge 3$ and $p\in(1,n)$, and denote by $B(x,r)$ the ball
centered at $x$ with radius $r$.  Set
\[
  \Om=\R^n_+ := \{(x,t):x\in\R^{n-1},\ t>0\},
  \qquad
  p_*:=\frac{p(n-1)}{n-p}.
\]
Let $\{e_i\}_{i=1}^n$ be the standard basis of $\R^n$, and denote the Euclidean metric on $\Om$ by
\[
  g_e=dx^2+dt^2.
\]
In this notation, the sharp Sobolev trace inequality reads
\begin{equation}
  S\|u\|_{L^{p_*}(\partial\Om)}\le \|Du\|_{L^p(\Om)}
  \label{eq:sobolev-trace}
\end{equation}
for all $u\in \dot W^{1,p}(\Om)$, with the homogeneous space understood
as in Definition~\ref{def:weighted-spaces}, under the norm
\[
  \|u\|_{\dot W^{1,p}(\Om)}:=\|Du\|_{L^p(\Om)}.
\]
For $a>0$, $\lambda>0$, and $x_0\in\R^{n-1}$, set
\[
  v_{a,\lambda,x_0}(x,t)
  :=a\lambda^{\frac{n-p}{p(p-1)}}
  \bigl(|x-x_0|^2+(t+\lambda)^2\bigr)^{-\frac{n-p}{2(p-1)}}
\]
and
\[
  \mathcal M:=\{v_{a,\lambda,x_0}:a>0,\ \lambda>0,\ x_0\in\R^{n-1}\}\subset \dot W^{1,p}(\Om).
\]
It is known that $\mathcal M$ is the set of positive functions for which equality holds in the Sobolev trace inequality above. In addition,
\begin{equation}
\begin{cases}
\Delta_p v_{a,\lambda,x_0}=0 &\text{in }\Om,\\[2mm]
|Dv_{a,\lambda,x_0}|^{p-2}\dfrac{\partial v_{a,\lambda,x_0}}{\partial t}
=-S^p\|v_{a,\lambda,x_0}\|_{L^{p_*}(\partial\Om)}^{p-p_*}|v_{a,\lambda,x_0}|^{p_*-2}v_{a,\lambda,x_0}
&\text{on }\partial\Om.
\end{cases}
\label{eq2}
\end{equation}

Define
\[
  v:=v_{a,1,0},
  \qquad
  U:=v_{1,1,0}.
\]
It follows from \eqref{eq2} that $S^p\|U\|_{L^{p_*}(\partial\Om)}^{p-p_*}=(\frac{n-p}{p-1})^{p-1}$.
A direct differentiation of the bubble parameters gives
\[
  T_v\mathcal M
  =\spanop\{v,\ \partial_\lambda|_{\lambda=1}v_{a,\lambda,0},\ \partial_1v,\ldots,\partial_{n-1}v\}\subset \dot W^{1,2}(\Om;|D v|^{p-2})\cap \dot W^{1,p}(\Om).
\]
Recall the desired spectral gap estimate:
there exists $\lambda=\lambda(n,p)>0$ such that, for every
\[
  \phi\in \dot W^{1,2}(\Om;|D v|^{p-2})\cap (T_v\mathcal M)^\perp,
\]
one has
\begin{align}
 &\int_\Om |Dv|^{p-2}|D\phi|^2
 +(p-2)|Dv|^{p-4}(Dv\cdot D\phi)^2\,dxdt
 \nonumber\\
 &\qquad\ge
 \bigl((p_*-1)S^p+2\lambda\bigr)
 \|v\|_{L^{p_*}(\partial\Om)}^{p-p_*}
 \int_{\partial\Om}v^{p_*-2}\phi^2\,dx.
 \label{eq:stability-orthogonal}
\end{align}

\begin{remark}\label{rem1}
Theorem~\ref{thm:weighted-trace-compact} gives the compact embedding
\[
        \dot W^{1,2}(\Om;|D v|^{p-2})
        \hookrightarrow L^2(\p\Om;v^{p_*-2}).
\]
We say that
$\phi\in \dot W^{1,p}(\Om)$ or
$\phi\in \dot W^{1,2}(\Om;|D v|^{p-2})$ is perpendicular to
$T_v\mathcal M$ if
    \[\int_{\p\Om}\phi\,\xi\, v^{p_*-2} d x=0\qquad\T{for any }\xi\in T_v\mathcal M.\]
\end{remark}

It remains to solve the eigenvalue problem
\begin{equation}
\begin{cases}
\divg\bigl(|DU|^{p-2}D\phi+(p-2)|DU|^{p-4}(DU\cdot D\phi)DU\bigr)=0
&\text{in }\Om,\\[1mm]
\bigl(|DU|^{p-2}D\phi+(p-2)|DU|^{p-4}(DU\cdot D\phi)DU\bigr)\cdot\nu
=\mu U^{p_*-2}\phi
&\text{on }\partial\Om.
\end{cases}
\label{eq:eigen-halfspace}
\end{equation}
Indeed, it suffices to consider the first two eigenspaces.

\begin{theorem}[Identification of the first two eigenvalues]\label{thm:iden}
The first eigenpair is
    \[
  \mu_0=(p-1)\left(\frac{n-p}{p-1}\right)^{p-1},
  \qquad
  \phi_0=U.
\]
The second eigenvalue is
\[
  \mu_1=(p_*-1)\left(\frac{n-p}{p-1}\right)^{p-1},
\]
and its eigenspace contains
\[
  \spanop\{\partial_\lambda|_{\lambda=1}v_{1,\lambda,0},\ \partial_1U,\ldots,\partial_{n-1}U\}.
\]
\end{theorem}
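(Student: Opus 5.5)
Three things have to be shown. First, $U$ is an eigenfunction with eigenvalue $\mu_0$ and that eigenvalue is the bottom of the spectrum. Second, the tangent modes are eigenfunctions with eigenvalue $\mu_1$. Third, $\mu_1$ is exactly the second eigenvalue.

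\emph{Step 1: $U$ and the tangent modes are eigenfunctions.} Since $DU$ is homogeneous of degree $p-2$ in itself, $\mathcal A_U DU=(p-1)|DU|^{p-2}DU$. So $\divg(\mathcal A_U DU)=(p-1)\Delta_pU=0$. On the boundary, \eqref{eq2} gives $(\mathcal A_UDU)\cdot\nu=(p-1)\Lambda U^{p_*-1}$, where $\Lambda=S^p\|U\|^{p-p_*}=(\frac{n-p}{p-1})^{p-1}$. Hence $U$ solves \eqref{eq:eigen-halfspace} with $\mu=\mu_0$.

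For the other modes, differentiate the family of equations \eqref{eq2} satisfied by $v_{1,\lambda,x_0}$ with respect to $\lambda$ at $\lambda=1$, and with respect to $x_{0,j}$ at $x_0=0$. The interior equation linearizes to $\divg(\mathcal A_U DZ)=0$. For the boundary equation, first note that on $\mathcal M$ (with $a=1$ fixed) the factor $\|v\|_{L^{p_*}}^{p-p_*}$ is independent of $\lambda$ and $x_0$ by scale and translation invariance, so it contributes no derivative. Differentiating $-|Dv|^{p-2}\partial_tv=\Lambda v^{p_*-1}$ then gives $(\mathcal A_UDZ)\cdot\nu=(p_*-1)\Lambda U^{p_*-2}Z$. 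Each $Z$ lies in $\dot W^{1,2}(\Om;|DU|^{p-2})$ by the explicit decay bounds. We conclude $\mu_1$ is an eigenvalue whose eigenspace contains the stated span.

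\emph{Step 2: $\mu_0$ is the first eigenvalue.} Let $\mu$ be the lowest eigenvalue, which exists by Corollary~\ref{cor:discrete-spectrum}. The lowest eigenvalue minimizes the Rayleigh quotient \eqref{eq:trace-rayleigh-quotient}. Its minimizers can be taken nonnegative, since $|\phi|$ has the same quotient, and the eigenspace is one-dimensional by the usual strong maximum principle / Harnack argument for the uniformly elliptic (locally) operator $L$.

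Now suppose $\mu<\mu_0$ with a positive eigenfunction $\phi_0$. Test the $\phi_0$-equation against $U$ and the $U$-equation against $\phi_0$, and subtract; by symmetry of $\mathcal A_U$ this yields $(\mu-\mu_0)\int U^{p_*-1}\phi_0=0$. The integral is positive, a contradiction. Hence $\mu_0$ is the first eigenvalue with $E_0=\spanop\{U\}$.

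\emph{Step 3: $\mu_1$ is the second eigenvalue.} This is the main obstacle. I would use the second variation of the Sobolev trace quotient at its minimizer $U$. Take $\phi\perp U$ in $L^2(\partial\Om;U^{p_*-2})$ and expand $Q(U+\epsilon\phi)\ge Q(U)$, where $Q(u)=\|Du\|_p^p/\|u\|_{p_*}^p$. The first-order term vanishes by \eqref{eq:EL-weak}. The term $\int U^{p_*-1}\phi$ vanishes by orthogonality, so the second-order expansion of the denominator contributes only $\frac{p(p_*-1)}{2}\|U\|^{-p_*}\int U^{p_*-2}\phi^2$ (times constants). The numerator contributes $\frac p2\int\mathcal A_UD\phi\cdot D\phi$. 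Comparing these gives
\[
\int\mathcal A_UD\phi\cdot D\phi\ge (p_*-1)\Lambda\int U^{p_*-2}\phi^2 .
\]
By the min--max principle this forces $\mu_1\ge(p_*-1)\Lambda$, and Step 1 supplies equality.

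The technical point is justifying the expansion for $\phi$ in the weighted space rather than in $\dot W^{1,p}$. I would first do it for $\phi\in C_c^\infty(\overline{\Om})$, where $|DU|$ is bounded below on the support, so the Taylor expansion of $|DU+\epsilon D\phi|^p$ is legitimate. Then I would conclude by density in $\dot W^{1,2}(\Om;|DU|^{p-2})$, using continuity of both sides from Lemma~\ref{lem:weighted-exterior-trace}. Density must be taken within the subspace orthogonal to $U$; this is arranged by subtracting a multiple of a fixed compactly supported function not orthogonal to $U$.
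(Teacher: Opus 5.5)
Your proposal is correct, and its core is the paper's argument. The paper also derives \eqref{eq6} from the second variation of the trace functional at $U$ for $\varphi\perp U$ in $L^2(\partial\Om;U^{p_*-2})$, then applies min--max using the fact that the tangent modes attain equality. You differ from it in two places.

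\emph{Tangent modes.} The paper only asserts that the tangent modes attain equality in \eqref{eq6}. You instead differentiate the Euler--Lagrange family, using that $\|v_{1,\lambda,x_0}\|_{L^{p_*}}^{p-p_*}$ is constant along critical dilations and translations. This verifies the eigen-equation directly and is cleaner.

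\emph{Ground state.} You use nonnegativity of Rayleigh minimizers, a Harnack/strong maximum principle argument for simplicity, and cross-testing against $U$. The paper writes $\phi=cU+\psi$ with $B_U[\psi,U]=0$ and uses $Q_U[U,\psi]=\mu_0B_U[U,\psi]=0$. Combined with \eqref{eq6}, this gives $Q_U[\phi,\phi]\ge\mu_0B_U[\phi,\phi]$, with equality only when $\psi=0$, and needs no positivity or regularity input.

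Your Step 2 is valid, since $\mathcal A_U$ is smooth and locally uniformly elliptic up to $\partial\Om$, but it is redundant given your Step 3. Take the max--min characterization with the single test direction $U$: the second eigenvalue (counted with multiplicity) is at least $\inf_{\phi\perp U}$ of the Rayleigh quotient \eqref{eq:trace-rayleigh-quotient}. That infimum is at least $(p_*-1)\Lambda$, which exceeds $(p-1)\Lambda$. Since $(p-1)\Lambda$ is an eigenvalue by your Step 1, it must be the lowest one and simple, so $E_0=\spanop\{U\}$ follows at once.

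Your density argument extends \eqref{eq6} from $C_c^\infty(\overline{\Om})$ to the weighted space, correcting orthogonality with a fixed compactly supported function. This supplies a justification the paper leaves implicit.
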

\begin{proof}
The positive function $\phi_0:=U$ satisfies \eqref{eq:eigen-halfspace} with
\[
  \mu_0=(p-1)\left(\frac{n-p}{p-1}\right)^{p-1}.
\]
We identify its variational position directly.  Denote by
$Q_U[\cdot,\cdot]$ and $B_U[\cdot,\cdot]$ the bulk and boundary
bilinear forms in \eqref{eq:eigen-halfspace}, respectively.  Since $U$
is a minimizer of the Sobolev trace inequality
\eqref{eq:sobolev-trace}, its second variation gives
\begin{align}
    0\le &\frac{d^2}{d \e^2}\bigg|_{\e=0}\left(\int_{\R^n_+}|D (U+\e \vp)|^p \,dx dt-S^p \bigg(\int_{\R^{n-1}}|U+\e \vp|^{p_*} \,dx\bigg)^\frac{p}{p_*}\right)\non\\
    =&p \int_\Om \left[
 |DU|^{p-2}|D\vp|^2
 +(p-2)|DU|^{p-4}(DU\cdot D\vp)^2\right]\,dx\,dt\non\\
-&S^p p (p_*-1)\|U\|_{L^{p_*}(\p\Om)}^{p-p_*}\int_{\p\Om}U^{p_*-2}\vp^2 d x-S^p p (p-p_*)\|U\|_{L^{p_*}(\p\Om)}^{p-2 p_*}\bigg(\int_{\p\Om}U^{p_*-1}\vp d x\bigg)^2.\label{eq5}
\end{align}
If $\vp\perp \phi_0$ in
$L^2(\partial\Om;U^{p_*-2})$, the final term vanishes and hence
\begin{equation}
\int_\Om |DU|^{p-2}|D\vp|^2
+(p-2)|DU|^{p-4}(DU\cdot D\vp)^2\,dxdt\ge S^p (p_*-1)\|U\|_{L^{p_*}(\p\Om)}^{p-p_*}\int_{\p\Om}U^{p_*-2}\vp^2 d x,\label{eq6}
\end{equation}
Using
\[
  S^p\|U\|_{L^{p_*}(\partial\Om)}^{p-p_*}
  =\left(\frac{n-p}{p-1}\right)^{p-1},
\]
define
\[
  \mu_1:=(p_*-1)\left(\frac{n-p}{p-1}\right)^{p-1}.
\]
Since $p_*>p$, one has $\mu_1>\mu_0$.  For arbitrary $\phi$, write
$\phi=cU+\psi$ with $B_U[\psi,U]=0$.  The eigenvalue equation for $U$
and \eqref{eq6} give
\[
  Q_U[\phi,\phi]
  =c^2\mu_0B_U[U,U]+Q_U[\psi,\psi]
  \ge \mu_0B_U[\phi,\phi],
\]
with equality only when $\psi=0$.  Thus $(\mu_0,U)$ is the first
eigenpair and its eigenspace is one-dimensional.

The functions in
\[
  \spanop\{\partial_\lambda|_{\lambda=1}v_{1,\lambda,0},\ 
  \partial_1U,\ldots,\partial_{n-1}U\}
\]
attain equality in \eqref{eq6}.
The min--max principle therefore identifies $\mu_1$ as the second
eigenvalue, and the displayed span is contained in its eigenspace; see
\cite[Theorem~5.15]{Borthwick2020}.
    
\end{proof}

Consider the conformal map
\[
  f:B\left(\frac12e_n,\frac12\right)\to\R^n_+,
  \qquad
  y=(y',y_n)\mapsto |y|^{-2}(y',y_n)-e_n,
\]
and its inverse
\[
  F=f^{-1}:\R^n_+\to B\left(\frac12e_n,\frac12\right),
  \qquad
  (x,t)\mapsto \bigl(|x|^2+(t+1)^2\bigr)^{-1}(x,t+1).
\]
Let $dy^2$ denote the Euclidean metric on $B(\frac12e_n,\frac12)$, and define
\[
  W(y):=U(f(y))=|y|^{\frac{n-p}{p-1}}.
\]
A direct calculation gives
\[
  F^*(dy^2)=\bigl(|x|^2+(t+1)^2\bigr)^{-2}(dx^2+dt^2)
  =U^{\frac{4(p-1)}{n-p}}g_e.
\]
Introduce the conformal exponent
\[
  \beta_{\mathrm c}:=-\frac{2(p-2)}{n-p},
\]
and the conformal metric
\[
  g_0:=W^{2\beta_{\mathrm c}}dy^2
  \qquad\text{on }B\left(\frac12e_n,\frac12\right).
\]
We have
\[
  g:=F^*g_0=U^{2\beta_{\mathrm c}+\frac{4(p-1)}{n-p}}g_e.
\]
Define
\[
  X(\phi)
  :=|\nabla_g U|^{p-2}\nabla_g\phi
  +(p-2)|\nabla_gU|^{p-4}(\nabla_gU\cdot\nabla_g\phi)\nabla_gU,
\]
where $\nabla_g$ is the Riemannian gradient on $(\Om,g)$ and $\cdot$ denotes the inner product induced by $g$. Then
\[
  X(\phi\varphi)=X(\phi)\varphi+\phi X(\varphi).
\]
With this choice of \(\beta_{\mathrm c}\), one obtains
\begin{align}
&\int_\Om |DU|^{p-2}|D\phi|^2
+(p-2)|DU|^{p-4}(DU\cdot D\phi)^2\,dxdt
\nonumber\\
&\quad=
\int_\Om X(\phi)\cdot\nabla_g\phi\,
U^{-\beta_{\mathrm c}(n-p)-2(p-1)}\,dV_g
\nonumber\\
&\quad=
\int_\Om X(U^{-1}\phi)\cdot\nabla_g(U^{-1}\phi)
+\phi^2\divg(X(U^{-1}))U^{-1}\,dV_g
\nonumber\\
&\qquad-
\int_{\partial\Om}\phi^2U^{-1}g(X(U^{-1}),\nu)\,d\sigma_g.
\label{eq:energy-identity}
\end{align}
A direct calculation gives
\[
  \divg(X(U^{-1}))=0,
  \qquad
  Ug(X(U^{-1}),\nu)
  =-(p-1)\left(\frac{n-p}{p-1}\right)^{p-1}
  U^{\frac{(n-1)(p-2)}{n-p}}.
\]
Set
\[
  \varphi:=U^{-1}\phi.
\]
Then
\begin{align}
&\int_\Om |DU|^{p-2}|D\phi|^2
+(p-2)|DU|^{p-4}(DU\cdot D\phi)^2\,dxdt
\nonumber\\
&\quad=
\int_\Om |\nabla_gU|^{p-2}|\nabla_g\varphi|^2
+(p-2)|\nabla_gU|^{p-4}(\nabla_gU\cdot\nabla_g\varphi)^2\,dV_g
\nonumber\\
&\qquad+(p-1)\left(\frac{n-p}{p-1}\right)^{p-1}
\int_{\partial\Om}U^{\frac{(n-1)(p-2)}{n-p}}\varphi^2\,d\sigma_g.
\label{eq:energy-u-transform}
\end{align}
On the other hand,
\begin{equation}
  \int_{\partial\Om}U^{p_*-2}\phi^2\,dx
  =\int_{\partial\Om}U^{\frac{(n-1)(p-2)}{n-p}}\varphi^2\,d\sigma_g.
\label{eq:boundary-norm-transform}
\end{equation}
Denote
\[
  \OmB:=B\left(\frac12e_n,\frac12\right),
  \qquad
  \widetilde\varphi:=\varphi\circ f.
\]
Since $g=F^*g_0$, we have
\begin{align}
&\int_\Om |DU|^{p-2}|D\phi|^2
+(p-2)|DU|^{p-4}(DU\cdot D\phi)^2\,dxdt
\nonumber\\
&\quad=
\int_{\OmB}|\nabla_{g_0}W|^{p-2}|\nabla_{g_0}\widetilde\varphi|^2
+(p-2)|\nabla_{g_0}W|^{p-4}
(\nabla_{g_0}W\cdot\nabla_{g_0}\widetilde\varphi)^2\,dV_{g_0}
\nonumber\\
&\qquad+(p-1)\left(\frac{n-p}{p-1}\right)^{p-1}
\int_{\partial\OmB}W^{\frac{(n-1)(p-2)}{n-p}}\widetilde\varphi^2\,d\sigma_{g_0},
\label{eq:ball-energy}
\end{align}
and
\begin{equation}
  \int_{\partial\Om}U^{p_*-2}\phi^2\,dx
  =\int_{\partial\OmB}W^{\frac{(n-1)(p-2)}{n-p}}\widetilde\varphi^2\,d\sigma_{g_0}.
\label{eq:ball-boundary-norm}
\end{equation}
For notational simplicity, we relabel $\widetilde\varphi$ as $\varphi$
from this point onward.
Therefore, the eigenvalue problem \eqref{eq:eigen-halfspace} is equivalent to the following eigenvalue problem on the ball:
\begin{equation}
\begin{cases}
\divg\bigl(|\nabla_{g_0}W|^{p-2}\nabla_{g_0}\varphi
 +(p-2)|\nabla_{g_0}W|^{p-4}(\nabla_{g_0}W\cdot\nabla_{g_0}\varphi)\nabla_{g_0}W\bigr)=0
&\text{in }\OmB,\\[1mm]
g_0\bigl(|\nabla_{g_0}W|^{p-2}\nabla_{g_0}\varphi
 +(p-2)|\nabla_{g_0}W|^{p-4}(\nabla_{g_0}W\cdot\nabla_{g_0}\varphi)\nabla_{g_0}W,\nu\bigr)
&\\
\qquad+(p-1)\left(\frac{n-p}{p-1}\right)^{p-1}W^{\frac{(n-1)(p-2)}{n-p}}\varphi
=\widetilde\mu W^{\frac{(n-1)(p-2)}{n-p}}\varphi
&\text{on }\partial\OmB.
\end{cases}
\label{eq:ball-eigenproblem-metric}
\end{equation}
Here
\[
  g_0=W^{2\beta_{\mathrm c}}dy^2,
  \qquad
  W(y)=|y|^{\frac{n-p}{p-1}},
  \qquad
  \beta_{\mathrm c}=-\frac{2(p-2)}{n-p}.
\]
Equivalently, in Euclidean notation, \eqref{eq:ball-eigenproblem-metric} becomes
\begin{equation}
\begin{cases}
\divg\Bigl[W^{-2(p-2)}\bigl(|DW|^{p-2}D\varphi
 +(p-2)|DW|^{p-4}(DW\cdot D\varphi)DW\bigr)\Bigr]=0
&\text{in }\OmB,\\[1mm]
W^{-2(p-2)}\bigl(|DW|^{p-2}D\varphi
 +(p-2)|DW|^{p-4}(DW\cdot D\varphi)DW\bigr)\cdot\nu
&\\[1mm]
\qquad+(p-1)\left(\frac{n-p}{p-1}\right)^{p-1}W^{-\frac{(n-1)(p-2)}{n-p}}\varphi
=\widetilde\mu W^{-\frac{(n-1)(p-2)}{n-p}}\varphi
&\text{on }\partial\OmB.
\end{cases}
\label{eq:transformed-eigenproblem}
\end{equation}

\subsection{Analysis of the transformed eigenvalue problem}

\subsubsection{Notation}

Let $\mathcal P_k^n$ denote the space of real homogeneous polynomials of degree $k$ on $\R^n$, and define the space of spherical harmonics of degree $k$ by
\[
  \mathcal H_k^n
  :=\{P|_{S^{n-1}}:\Delta P=0,\ P\in\mathcal P_k^n\}.
\]
It is known that \cite{DX13}
\[
  \dim\mathcal P_k^n=\binom{k+n-1}{k},
  \qquad
  \dim\mathcal H_k^n=\dim\mathcal P_k^n-\dim\mathcal P_{k-2}^n.
\]
Let
\[
  1<p<n,
  \qquad
  \OmB=B\left(\frac12e_n,\frac12\right)\subset\R^n,
  \qquad
  W(y)=|y|^\alpha,
  \qquad
  \alpha:=\frac{n-p}{p-1}.
\]
We write
\[
  r:=|y|,
  \qquad
  \sigma:=\frac{y}{|y|}\in S^{n-1},
  \qquad
  \tau:=\sigma_n.
\]
The boundary of $\OmB$ satisfies
\[
  \partial\OmB=\left\{y:\bigg|y-\frac12e_n\bigg|=\frac12\right\}
  \Longleftrightarrow |y|^2=y_n.
\]
Therefore, on $\partial\OmB$,
\[
  r=\tau.
\]
The Euclidean outward unit normal to $\partial\OmB$ is
\[
  \nu=2y-e_n.
\]
Set
\[
  \mathcal A_\varphi
  :=
  W^{-2(p-2)}\bigl(|DW|^{p-2}D\varphi
  +(p-2)|DW|^{p-4}(DW\cdot D\varphi)DW\bigr).
\]
Then \eqref{eq:transformed-eigenproblem} can be rewritten as follows:
\(\varphi\in W^{1,2}(\OmB;r^{-\gamma})\) is a weak solution of
\begin{equation}
\begin{cases}
L\vp:=\divg \mathcal A_\varphi=0
&\text{in }\OmB,\\[1mm]
\mathcal A_\varphi\cdot\nu
+C_0W^{-m}\varphi
=\widetilde\mu W^{-m}\varphi
&\text{on }\partial\OmB,
\end{cases}
\label{eq:model-eigenproblem}
\end{equation}
where
\[
  C_0:=(p-1)\alpha^{p-1},
  \qquad
  m:=\frac{(n-1)(p-2)}{n-p}.
\]
Since
\[
  W^{-m}=r^{-\gamma},
  \qquad
  \gamma:=\alpha m=\frac{(n-1)(p-2)}{p-1},
\]
this is the problem analyzed below.

\begin{remark}\label{remark2.1}
After the transformation, the first eigenpair is
\[
  \widetilde\mu_0=\mu_0=(p-1)\left(\frac{n-p}{p-1}\right)^{p-1},
  \qquad
  \varphi_0=1,
\]
and the second eigenvalue is
\[
  \widetilde\mu_1=\mu_1=(p_*-1)\left(\frac{n-p}{p-1}\right)^{p-1}.
\]
Moreover,
\[
  \spanop\left\{y_1,\ldots,y_{n-1},y_n-\frac1p\right\}
\]
is contained in the second eigenspace.

In addition, \eqref{eq6} becomes: 
if $\vp\perp \vp_0$ in $L^2(\partial\OmB;r^{-\g})$, then we have 
\begin{equation}
\int_{\OmB} \big(|D\vp|^2
+(p-2)(D r\cdot D\vp)^2\big)r^{-\g}\,d y\ge \frac{\tl\mu_1-C_0}{\a^{p-2}}\int_{\p\OmB}r^{-\g}\vp^2 d \sigma,\label{eq15}
\end{equation}
where $d \sigma$ is the area form of $\p\OmB$, and the functions in
\[
        \spanop\left\{y_1,\ldots,y_{n-1},y_n-\frac1p\right\}
\]
attain equality in \eqref{eq15}.

\end{remark}

\subsubsection{Transformed normalized problem and variational characterization}

Recall that
\[
\begin{gathered}
  1<p<n, \qquad
  \alpha:=\frac{n-p}{p-1}, \qquad
  \gamma:=\frac{(n-1)(p-2)}{p-1},\\
  \beta:=\frac{n-3}{2}, \qquad
  C_0:=(p-1)\alpha^{p-1}.
\end{gathered}
\]
From now on, we set
\[
  \OmB:=B\!\left(\frac12 e_n,\frac12\right)\subset\R^n.
\]
Use polar variables centered at the origin:
\[
  y=r\sigma,\qquad r=|y|,\qquad \sigma\in S^{n-1},\qquad \tau:=\sigma_n.
\]
Next write
\[
\sigma=\left(\sqrt{1-\tau^2}\,\xi,\tau\right),
\qquad \xi\in S^{n-2}.
\]

First consider the interior equation of \eqref{eq:model-eigenproblem}. Since
\[
DW=\alpha r^{\alpha-1}\sigma,
\qquad
|DW|=\alpha r^{\alpha-1},
\]
and
\[
D\varphi=\varphi_r\sigma+\frac{1}{r}\nabla_S\varphi,
\]
we get
\[
\mathcal A_\varphi
=
\alpha^{p-2}r^{-\gamma}
\left(
(p-1)\varphi_r\sigma+\frac{1}{r}\nabla_S\varphi
\right).
\]
Therefore, the interior equation is equivalent to
\[
\operatorname{div}
\left[
r^{-\gamma}
\left(
(p-1)\varphi_r\sigma+\frac{1}{r}\nabla_S\varphi
\right)
\right]
=0.
\]
Using the polar-coordinate divergence formula gives
\[
(p-1)\varphi_{rr}
+\frac{n-1}{r}\varphi_r
+\frac{1}{r^2}\Delta_S\varphi
=0.
\label{eq16}
\]

On \(\partial\OmB\), we have \(r=\tau\), \(\nu=2r\sigma-e_n\), and hence
\[
\sigma\cdot\nu=2r-\tau=\tau.
\]
Moreover,
\[
\nu-(\nu\cdot\sigma)\sigma=\tau\sigma-e_n=-\nabla_S\tau.
\]
Therefore
\[
\left((p-1)\varphi_r\sigma+\frac{1}{r}\nabla_S\varphi\right)\cdot\nu
=
(p-1)r\varphi_r
-\frac{1}{r}\nabla_S\tau\cdot\nabla_S\varphi.
\]
After cancellation of the common factor \(r^{-\gamma}\), the boundary condition in \eqref{eq:model-eigenproblem} becomes
\[
\alpha^{p-2}
\left(
(p-1)r\varphi_r
-\frac{1}{r}\nabla_S\tau\cdot\nabla_S\varphi
\right)
+C_0\varphi
=
\tilde{\mu}\varphi
\qquad \text{on } r=\tau.
\]

Note that the boundary of \(\OmB\) is
\[
  \partial\OmB=\{y:|y|^2=y_n\}=\{r=\tau\,\},
\]
and the interior is described by
\[
 \mathcal{D}:=\{(r,\tau): 0<\tau<1,\,\, 0<r<\tau\}.
\]
After subtracting the first eigenvalue contribution, the transformed Steklov parameter is
\[
  \lambda:=\frac{\widetilde\mu-C_0}{\alpha^{p-2}}.
\]
The normalized transformed problem \eqref{eq:model-eigenproblem} can be rewritten as follows.
For
\[
        \phi\in W^{1,2}(\OmB;r^{-\g}),
\]
it is
\begin{equation}\label{eq:normalized-problem}
\begin{cases}
\displaystyle
\operatorname{div}\left[
 r^{-\gamma}\left((p-1)\phi_r\sigma+\frac1r\gradS\phi\right)
\right]=0,
&\text{in }\OmB,\\[2mm]
\displaystyle
(p-1)r\phi_r-\frac1r\gradS\tau\cdot\gradS\phi=\lambda\phi,
&\text{on }r=\tau.
\end{cases}
\end{equation}
The first normalized eigenvalue is \(0\), with eigenfunction \(1\). The target second normalized eigenvalue is
\[
  \lambda_1=p.
\]
In the original transformed parameter this corresponds to
\[
  \widetilde\mu_1=C_0+p\alpha^{p-2}
  =n\alpha^{p-2}
  =(p_*-1)\alpha^{p-1},
  \qquad p_*:=\frac{p(n-1)}{n-p}.
\]

For $\phi,\psi\in W^{1,2}(\OmB;r^{-\g})$, define
\begin{equation}\label{eq:global-energy}
  \cE[\phi,\psi]
  :=\int_{\OmB} r^{-\gamma}
  \left[
    (p-1)\phi_r\psi_r
    +\frac1{r^2}\gradS\phi\cdot\gradS\psi
  \right]\dd y,
\end{equation}
and
\begin{equation}\label{eq:global-boundary}
  \cB[\phi,\psi]
  :=\int_{\partial\OmB}r^{-\gamma}\phi\psi\dd S.
\end{equation}
Then \eqref{eq:normalized-problem} is equivalent to
\begin{equation}\label{eq:weak-global}
  \cE[\phi,\psi]=\lambda\cB[\phi,\psi]
\end{equation}
for every \(\psi\in W^{1,2}(\OmB;r^{-\g})\). 
\begin{remark}
The desired spectral characterization at the transformed level is
\begin{equation}\label{eq:global-gap-goal}
  \cE[\phi,\phi]\ge p\cB[\phi,\phi]
  \qquad\text{whenever}\qquad
  \cB[\phi,1]=0.
\end{equation}
The equality case should be
\[
  \operatorname{span}\left\{y_1,\ldots,y_{n-1},y_n-\frac1p\right\}.
\]
\end{remark}

We consider a sector decomposition as in \cite[Section 5]{FLZ26}.
Write
\[
  \sigma=(\sqrt{1-\tau^2}\,\theta,\tau),
  \qquad \theta\in S^{n-2}.
\]
Then
\[
  |\gradS\phi|^2
  =(1-\tau^2)\phi_\tau^2
  +\frac1{1-\tau^2}|\nabla_\theta\phi|^2.
\]
Let
\[
\left\{Z_{k,j}: 1\le j\le \dim \mathcal{H}_k^{n-1}\right\}
\]
be an orthonormal basis of spherical harmonics of degree \(k\) on \(S^{n-2}\)
with respect to the normalized surface measure
\(d\sigma/|S^{n-2}|\), so that
\[
  -\Delta_{S^{n-2}}Z_{k,j}=\kappa_k Z_{k,j},
  \qquad \kappa_k:=k(k+n-3).
\]

For $\phi\in W^{1,2}(\OmB;r^{-\g})$, we can write
\begin{equation}
  \phi(r,\tau,\theta)=\sum_{k=0}^\infty\sum_{j=1}^{\dim \Hz^{n-1}_k} u_{k,j}(r,\tau)Z_{k,j}(\theta),
 \label{eq:expand}
\end{equation}
where the expansion is understood in the angular \(L^2(S^{n-2})\)-sense for a.e. \((r,\tau)\in \mathcal{D}\), and
\[
u_{k,j}(r,\tau)
=
\left\langle \phi(r,\tau,\cdot),Z_{k,j}\right\rangle_{S^{n-2}}
=
\frac{1}{|S^{n-2}|}
\int_{S^{n-2}}
\phi(r,\tau,\theta)Z_{k,j}(\theta)\,d\sigma(\theta).
\]
Therefore, we have $u_{k,j}(y):=u_{k,j}(r,\tau)\in W^{1,2}(\OmB;r^{-\g})$.

Let
\[
  w(\tau):=(1-\tau^2)^{\frac{n-3}{2}}.
\]
Since
\[
  n-2-\gamma=\alpha,
\]
the boundary form in the \(k\)-sector is
\begin{equation}\label{eq:sector-boundary}
  \cB_k[u,v]
  :=\int_0^1 \tau^\alpha w(\tau)u(\tau,\tau)v(\tau,\tau)\dd\tau.
\end{equation}
The \(k\)-sector energy is
\begin{equation}\label{eq:sector-energy}
\begin{aligned}
  \cE_k[u,v]
  :=\int_0^1\int_0^\tau r^{\a+1}w(\tau)
  \bigg[&
    (p-1)u_rv_r
    +\frac{1-\tau^2}{r^2}u_\tau v_\tau \\
    &+\frac{\kappa_k}{r^2(1-\tau^2)}uv
  \bigg]\dd r\dd\tau.
\end{aligned}
\end{equation}
Thus
\[
  \cE[\phi,\phi]=|S^{n-2}|\sum_{k,j}\cE_k[u_{k,j},u_{k,j}],
  \qquad
  \cB[\phi,\phi]=|S^{n-2}|\sum_{k,j}\cB_k[u_{k,j},u_{k,j}].
\]

\begin{remark}\label{regular}
    For $\phi\in W^{1,2}(\OmB;r^{-\g})$, we have
    \(u_{k,j}(\tau,\tau)\in L^2([0,1];\tau^{\alpha}(1-\tau^2)^{\beta})\) and
    \[
        u_{k,j}(r,\tau)\in W^{1,2}(\mathcal{D};\cE_k)
        :=\{u\in W^{1,2}_{loc}(\mathcal{D}):\cE_k[u,u]<\infty\}.
    \]
    For $k\ge 1$ and \(u\in W^{1,2}(\mathcal{D};\cE_k)\), we have
    \(u\in L^2(\mathcal{D};r^{\alpha-1}(1-\tau^2)^{\beta-1})\),
    \(u_r\in L^2(\mathcal{D};r^{\alpha+1}(1-\tau^2)^{\beta})\), and
    \(u_\tau\in L^2(\mathcal{D};r^{\alpha-1}(1-\tau^2)^{\beta+1})\). In addition, $W^{1,2}(\mathcal{D};\cE_k)=W^{1,2}(\mathcal{D};\cE_1)$ for $k\ge1$, because \(\kappa_1=n-2>0\).
\end{remark}

Let
\[
        A_{\mathrm{ax}}
        :=\{y=(y',y_n)\in\overline{\OmB}:y'=0\}.
\]
Here and below,
\(C_c^\infty(\overline{\OmB}\setminus A_{\mathrm{ax}})\) denotes the
restrictions to \(\overline{\OmB}\) of functions that are smooth in a
neighborhood of \(\overline{\OmB}\) and whose supports have positive
distance from \(A_{\mathrm{ax}}\).

\begin{claim}\label{claim:k1-density}
The space
\[
        C^\infty_{c}(\overline{\OmB}\setminus A_{\mathrm{ax}})
        \cap W^{1,2}(\mathcal D;\cE_1)
\]
is dense in \(W^{1,2}(\mathcal D;\cE_1)\) with respect to the norm
\[
        \|v\|_{\cE_1}^2:=\cE_1[v,v].
\]
\end{claim}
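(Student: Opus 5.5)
We read $u\in W^{1,2}(\mathcal D;\cE_1)$ as the axisymmetric profile of the function $U(y):=u(r,\tau)Z_{1,1}(\theta)$ on $\OmB$, and we approximate $u$ by functions vanishing near the three singular parts of $\overline{\mathcal D}$: the axis $\tau=1$ (which is $A_{\mathrm{ax}}$ minus the origin), the corner/origin $r\to0$, and the boundary arc $r=\tau$. Near $r=\tau$ no cutoff is needed, since we only need smoothness up to $\partial\OmB$. The proof is a sequence of truncations followed by a standard mollification, each step controlled in $\|\cdot\|_{\cE_1}$ by the weighted $L^2$ bound in Remark~\ref{regular}, namely $u\in L^2(\mathcal D;r^{\alpha-1}(1-\tau^2)^{\beta-1})$. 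This is where $\kappa_1>0$ is used.

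\textbf{Step 1 (cut away the axis).} Let $\eta_\epsilon(\tau)$ be a logarithmic cutoff in the variable $s=1-\tau^2$. Take $\eta_\epsilon=0$ for $s<\epsilon^2$ and $\eta_\epsilon=1$ for $s>\epsilon$, with $|\partial_\tau\eta_\epsilon|\lesssim \tau/(s|\log\epsilon|)$ in between. The error $\cE_1[(1-\eta_\epsilon)u]$ splits into two parts. The first part is dominated by the energy of $u$ on $\{s<\epsilon\}$, which tends to $0$ by dominated convergence. The second part is
\[
\int r^{\alpha-1}w\,s\,|\partial_\tau\eta_\epsilon|^2u^2\lesssim |\log\epsilon|^{-2}\int r^{\alpha-1}w\,s^{-1}u^2\mathbf 1_{\{\epsilon^2<s<\epsilon\}},
\]
and this tends to $0$ because $u^2r^{\alpha-1}(1-\tau^2)^{\beta-1}$ is integrable. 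A plain linear cutoff would also suffice here, because the Hardy-type term $\kappa_1 u^2/(r^2(1-\tau^2))$ is already part of the energy. Hence $u$ can be replaced by functions supported in $\{\tau\le 1-\delta\}$.

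\textbf{Step 2 (cut away the origin).} Since $\tau\ge r$ in $\mathcal D$, the origin is the only remaining point where $r\to0$ on the closure. We use a cutoff $\chi_\epsilon(r)$ that vanishes for $r<\epsilon$, equals $1$ for $r>2\epsilon$, and has $|\chi_\epsilon'|\lesssim1/r$ (or a logarithmic variant). The error term is $\int r^{\alpha+1}w|\chi'_\epsilon|^2u^2\lesssim\int_{\{r<2\epsilon\}} r^{\alpha-1}wu^2$, which tends to $0$ by the same weighted $L^2$ bound. The other error terms are handled by dominated convergence. Also, the boundary trace form $\cB_1$ is not part of the norm, so it does not need to be controlled.

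\textbf{Step 3 (smoothing).} At this stage the function is supported in a compact subset of $\overline{\mathcal D}\setminus(\{\tau=1\}\cup\{0\})$. On this region all weights $r^{\alpha+1},w,1-\tau^2,r^{-2}$ are bounded above and below, so $\|\cdot\|_{\cE_1}$ is equivalent to the unweighted $W^{1,2}$ norm. The corresponding function $u Z_{1,1}(\theta)$ on $\OmB$ lies in $W^{1,2}$ and is supported away from $A_{\mathrm{ax}}$. Since $\partial\OmB$ is Lipschitz (indeed smooth), the standard extension-and-mollification argument gives approximants in $C_c^\infty(\overline{\OmB}\setminus A_{\mathrm{ax}})$ with support still away from the axis. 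Their $Z_{1,1}$-coefficients are smooth and converge in $\cE_1$. The $\theta$-projection commutes with mollification in the sense that it does not increase the energy, so the coefficients serve as the approximants.

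\textbf{Main obstacle.} The delicate point is Step 1: the axis is where the coordinates $(r,\tau)$ degenerate, and the cutoff error must be controlled exactly by the $\kappa_1$-term. If the integrability in Remark~\ref{regular} is only borderline, the logarithmic cutoff is the tool to fall back on.
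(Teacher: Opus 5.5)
Your proposal is correct and follows the paper's argument: cut off near $r=0$ and near $\tau=1$, bound the cutoff-derivative errors by the angular-potential term $\kappa_1u^2/(r^2(1-\tau^2))$ (equivalently, $u\in L^2(\mathcal D;r^{\alpha-1}(1-\tau^2)^{\beta-1})$), and then mollify on the truncated region, where all weights are bounded above and below. The differences are cosmetic: the paper uses a single product of linear cutoffs $\chi_\varepsilon(r)\psi_\varepsilon(\tau)$, so your logarithmic variant is unnecessary, as you note yourself; and lifting to $uZ_{1,1}$ and projecting back is a more explicit version of the paper's ``standard $H^1$-mollification'' step.
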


\begin{proof}[Proof of the claim]
Let \(v\in W^{1,2}(\mathcal D;\cE_1)\). Choose
\(\chi_\varepsilon,\psi_\varepsilon\in C^\infty([0,1])\) such that
\[
\begin{gathered}
        \chi_\varepsilon=0\text{ on }(0,\varepsilon),\qquad
        \chi_\varepsilon=1\text{ on }(2\varepsilon,1],\qquad
        |\chi_\varepsilon'|\le C/\varepsilon,\\
        \psi_\varepsilon=1\text{ on }[0,1-2\varepsilon],\qquad
        \psi_\varepsilon=0\text{ on }[1-\varepsilon,1],\qquad
        |\psi_\varepsilon'|\le C/\varepsilon.
\end{gathered}
\]
Set \(v_\varepsilon=\chi_\varepsilon(r)\psi_\varepsilon(\tau)v\). The derivative terms
created by the cutoffs are controlled by the angular-potential term in \(\cE_1\):
\[
\begin{aligned}
\int_{\{\varepsilon<r<2\varepsilon\}}
r^{\alpha+1}(1-\tau^2)^\beta|\chi_\varepsilon'|^2v^2\dd r\dd\tau
&\le
C\int_{\{r<2\varepsilon\}}
r^{\alpha-1}(1-\tau^2)^{\beta-1}v^2\dd r\dd\tau,
\\
\int_{\{1-2\varepsilon<\tau<1-\varepsilon\}}
r^{\alpha-1}(1-\tau^2)^{\beta+1}|\psi_\varepsilon'|^2v^2\dd r\dd\tau
&\le
C\int_{\{\tau>1-2\varepsilon\}}
r^{\alpha-1}(1-\tau^2)^{\beta-1}v^2\dd r\dd\tau.
\end{aligned}
\]
Both right-hand sides tend to zero by absolute continuity of the integral. The remaining
energy terms converge by dominated convergence. Hence \(v_\varepsilon\to v\) in the
\(\|\cdot\|_{\cE_1}\)-norm. After this cutoff, the support stays away from the singular
sets \(r=0\) and \(\tau=1\); on such truncated Lipschitz subdomains all weights are smooth
and bounded above and below. Standard \(H^1\)-mollification therefore gives approximants
in \(C^\infty_{c}(\overline{\OmB}\setminus A_{\mathrm{ax}})\cap W^{1,2}(\mathcal D;\cE_1)\).
\end{proof}

\begin{claim}\label{claim:k1-trace}
If \(u\in W^{1,2}(\mathcal D;\cE_1)\), then the boundary trace
\(u(\tau,\tau)\) belongs to
\[
        L^2([0,1];\tau^\alpha(1-\tau^2)^\beta).
\]
Moreover, there exists $C=C(n,p)>0$ such that
\[
        \int_0^1 |u(\tau,\tau)|^2\tau^\alpha(1-\tau^2)^\beta\dd\tau
        \le C\cE_1[u,u].
\]
\end{claim}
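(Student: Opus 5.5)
The plan is to prove the estimate first for smooth functions supported away from the singular set, by integrating along rays \(\tau=\text{const}\) from \(r=0\) to the boundary point \(r=\tau\), and then to extend it by density using Claim~\ref{claim:k1-density}. Recall that in the \(k=1\) sector the weight is \(w(\tau)=(1-\tau^2)^{\beta}\) with \(\beta=\frac{n-3}{2}\), that \(\alpha=\frac{n-p}{p-1}>0\), and that \(\kappa_1=n-2>0\).

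\textbf{Step 1 (smooth case).} Let \(u\in C^\infty_c(\overline{\OmB}\setminus A_{\mathrm{ax}})\cap W^{1,2}(\mathcal D;\cE_1)\). Its support avoids \(r=0\) and \(\tau=1\), so for each fixed \(\tau\in(0,1)\) the function \(r\mapsto r^\alpha u(r,\tau)^2\) vanishes near \(r=0\). Hence
\[
 \tau^\alpha u(\tau,\tau)^2=\int_0^\tau\partial_r\bigl(r^\alpha u^2\bigr)\dd r
 =\int_0^\tau\bigl(\alpha r^{\alpha-1}u^2+2r^\alpha u\,u_r\bigr)\dd r .
\]
Apply Young's inequality in the form \(2r^\alpha u u_r\le r^{\alpha-1}u^2+r^{\alpha+1}u_r^2\). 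This gives
\[
 \tau^\alpha u(\tau,\tau)^2\le\int_0^\tau\bigl((\alpha+1)r^{\alpha-1}u^2+r^{\alpha+1}u_r^2\bigr)\dd r .
\]
Multiply by \(w(\tau)\) and integrate over \(\tau\in(0,1)\). In the first term, use \((1-\tau^2)^\beta\le(1-\tau^2)^{\beta-1}\) to compare it with the angular-potential term \(\kappa_1 r^{\alpha-1}(1-\tau^2)^{\beta-1}u^2\) of \(\cE_1\). The second term is \(\frac1{p-1}\) times the radial term of \(\cE_1\). Together these give
\[
 \int_0^1 u(\tau,\tau)^2\tau^\alpha(1-\tau^2)^\beta\dd\tau\le
 \max\Bigl\{\frac{\alpha+1}{n-2},\frac1{p-1}\Bigr\}\cE_1[u,u].
\]
The \(u_\tau\) term is not needed.

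\textbf{Step 2 (density and identification of the trace).} For general \(u\in W^{1,2}(\mathcal D;\cE_1)\), Claim~\ref{claim:k1-density} provides smooth \(u_j\to u\) in \(\|\cdot\|_{\cE_1}\). By Step 1 and linearity, the traces \(u_j(\tau,\tau)\) form a Cauchy sequence in \(L^2([0,1];\tau^\alpha(1-\tau^2)^\beta)\), so they converge to some limit \(g\).

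It remains to show that \(g\) is the trace of \(u\). On any compact subregion \(\{\varepsilon\le r,\ \tau\le1-\varepsilon\}\) of \(\overline{\mathcal D}\), all weights are bounded above and below. There \(\cE_1\)-convergence implies convergence in the unweighted \(W^{1,2}\) norm: the potential term controls \(L^2\), and the radial and \(\tau\)-terms control the gradient. The classical trace theorem on this Lipschitz piece of the curve \(r=\tau\) then shows that \(u_j(\tau,\tau)\to u(\tau,\tau)\) in \(L^2\) of that piece. Hence \(g=u(\tau,\tau)\) a.e. on \((\varepsilon,1-\varepsilon)\) for every \(\varepsilon>0\), and therefore a.e. on \((0,1)\). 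Passing to the limit in the inequality of Step 1 yields the claim with the same constant.

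The main obstacle is expected in Step 2, namely making rigorous the identification of the abstract limit with the trace near the degenerate endpoints \(\tau\to0\) (where \(r=\tau\to0\)) and \(\tau\to1\). The plan handles this by working on exhausting compact pieces, as above, and using Fatou's lemma if needed. The computation in Step 1 is elementary; its only essential ingredient is \(\kappa_1>0\), which is exactly why the estimate is stated for the \(k\ge1\) sectors.
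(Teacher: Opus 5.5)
Your proof is correct and follows the paper's argument. Both reduce via Claim~\ref{claim:k1-density} to smooth functions supported away from the axis, apply a one-dimensional trace estimate along each segment $0<r<\tau$ with weights $r^{\alpha+1}$ and $r^{\alpha-1}$, multiply by $w(\tau)$, and absorb into $\cE_1$ using $\kappa_1>0$ and $(1-\tau^2)^\beta\le(1-\tau^2)^{\beta-1}$. The differences are minor: you get the one-dimensional estimate by integrating $\partial_r(r^\alpha u^2)$ from $r=0$, using vanishing near the origin, instead of rescaling the $H^1$ trace inequality on $(\tau/2,\tau)$, which yields an explicit constant. You also make explicit the identification of the limiting trace on compact pieces, which the paper leaves implicit.
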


\begin{proof}
By Claim~\ref{claim:k1-density}, we only need to consider
$u\in C^\infty_{c}(\overline{\OmB}\setminus A_{\mathrm{ax}})
        \cap W^{1,2}(\mathcal D;\cE_1)$.
First, we use a one-dimensional trace inequality. For every
\(T\in(0,1)\) and every \(f\in W^{1,2}(\frac{T}{2},T)\),
\begin{equation}\label{eq:one-dimensional-trace-k1}
        T^\alpha |f(T)|^2
        \le
        C_\alpha\int_{\frac{T}{2}}^T
        \left(r^{\alpha+1}|f'(r)|^2+r^{\alpha-1}|f(r)|^2\right)\dd r.
\end{equation}
Indeed, after the scaling \(r=Ts\), this follows from the usual \(H^1\)-trace
inequality on \((1/2,1)\).

Apply \eqref{eq:one-dimensional-trace-k1} with \(T=\tau\) and
\(f(r)=u(r,\tau)\). Multiplying by \(w(\tau)=(1-\tau^2)^\beta\) and integrating
in \(\tau\), we obtain
\[
\begin{aligned}
        \int_0^1 |u(\tau,\tau)|^2\tau^\alpha w(\tau)\dd\tau
        &\le
        C\int_0^1\int_0^\tau
        r^{\alpha+1}w(\tau)|u_r|^2\dd r\dd\tau\\
        &\quad+
        C\int_0^1\int_0^\tau
        r^{\alpha-1}w(\tau)|u|^2\dd r\dd\tau.
\end{aligned}
\]
Since \(0<1-\tau^2\le1\), the
right-hand side is controlled by \(\cE_1[u,u]\). This proves the estimate and hence the claim.
\end{proof}

In the \(k\)-sector, the strong equation is
\begin{equation}\label{eq:sector-pde}
  (p-1)u_{rr}
  +\frac{n-1}{r}u_r
  +\frac1{r^2}\left[
    (1-\tau^2)u_{\tau\tau}
    -(n-1)\tau u_\tau
    -\frac{\kappa_k}{1-\tau^2}u
  \right]=0,
\end{equation}
and the boundary condition on \(r=\tau\) is
\begin{equation}\label{eq:sector-boundary-condition}
  (p-1)\tau u_r
  -\frac{1-\tau^2}{\tau}u_\tau
  =\lambda u.
\end{equation}

\subsubsection{Explicit eigenfunctions at \texorpdfstring{\(\lambda=p\)}{lambda=p}}

\paragraph{The axisymmetric mode}
\label{subsubsec:axisymmetric-mode}

Let
\[
  h_0(r,\tau):=r\tau-\frac1p.
\]
Since \(r\tau=y_n\), this is
\[
  h_0=y_n-\frac1p.
\]
It solves the interior equation in the \(k=0\) sector. On \(r=\tau\),
\[
  h_0(\tau,\tau)=\tau^2-\frac1p.
\]
Also
\[
  (h_0)_r=\tau,
  \qquad
  (h_0)_\tau=r.
\]
Therefore
\[
\begin{aligned}
  (p-1)\tau(h_0)_r
  -\frac{1-\tau^2}{\tau}(h_0)_\tau
  &=(p-1)\tau^2-(1-\tau^2)\\
  &=p\tau^2-1\\
  &=p\left(\tau^2-\frac1p\right)\\
  &=p h_0(\tau,\tau).
\end{aligned}
\]
Hence \(h_0\) is a \(k=0\) eigenfunction with eigenvalue \(p\).

It is also \(\cB_0\)-orthogonal to constants. Indeed,
\[
  \cB_0[h_0,1]
  =\int_0^1\left(\tau^2-\frac1p\right)
  \tau^\alpha(1-\tau^2)^{\frac{n-3}{2}}\dd\tau.
\]
The beta-function identity gives
\[
\frac{
\int_0^1\tau^{\alpha+2}(1-\tau^2)^{\frac{n-3}{2}}\dd\tau
}{
\int_0^1\tau^\alpha(1-\tau^2)^{\frac{n-3}{2}}\dd\tau
}
=\frac{\alpha+1}{\alpha+n}=\frac1p,
\]
so \(\cB_0[h_0,1]=0\).

\paragraph{The transverse modes}

For \(1\le i\le n-1\),
\[
  y_i=r\sqrt{1-\tau^2}\,\theta_i.
\]
Thus the common \(k=1\) amplitude is
\[
  h_1(r,\tau):=r\sqrt{1-\tau^2}.
\]
Here \(\kappa_1=n-2\). The functions \(h_1Z_{1,j}\) solve the interior equation. On \(r=\tau\),
\[
  h_1(\tau,\tau)=\tau\sqrt{1-\tau^2}.
\]
Also
\[
  (h_1)_r=\sqrt{1-\tau^2},
  \qquad
  (h_1)_\tau=-\frac{r\tau}{\sqrt{1-\tau^2}}.
\]
Therefore
\[
\begin{aligned}
  (p-1)\tau(h_1)_r
  -\frac{1-\tau^2}{\tau}(h_1)_\tau
  &=(p-1)\tau\sqrt{1-\tau^2}
    +\tau\sqrt{1-\tau^2}\\
  &=p\tau\sqrt{1-\tau^2}\\
  &=p h_1(\tau,\tau).
\end{aligned}
\]
Thus each \(y_i\), \(1\le i\le n-1\), is a transformed eigenfunction with eigenvalue \(p\).

\subsubsection{The sectors for
\texorpdfstring{$k\ge 1$}{k >= 1}}\label{subsec}

\begin{lemma}[Ground-state identity in the transverse sector]\label{lem:k1-ground-state}
For every
\[
  u=u(r,\tau)\in W^{1,2}(\mathcal{D};\cE_1)
\]
in the \(k=1\) sector, write \(u=h_1\eta\). Then
\begin{equation}\label{eq:k1-ground-state-identity}
  \cE_1[u,u]-p\cB_1[u,u]
  =\int_0^1\int_0^\tau r^{\a+1}w(\tau)h_1^2
  \left[
    (p-1)\eta_r^2
    +\frac{1-\tau^2}{r^2}\eta_\tau^2
  \right]\dd r\dd\tau.
\end{equation}
Consequently,
\[
  \cE_1[u,u]\ge p\cB_1[u,u],
\]
with equality if and only if \(u\) is a constant multiple of \(h_1\).
\end{lemma}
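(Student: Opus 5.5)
The plan is the classical ground-state substitution (Picone identity). By Claim~\ref{claim:k1-density} it suffices to prove \eqref{eq:k1-ground-state-identity} for $u\in C^\infty_c(\overline{\OmB}\setminus A_{\mathrm{ax}})\cap W^{1,2}(\mathcal D;\cE_1)$. For such $u$, $h_1=r\sqrt{1-\tau^2}$ is smooth and strictly positive on the support, so $\eta=u/h_1$ is smooth and compactly supported away from $r=0$ and $\tau=1$. Both sides of the identity are continuous in the $\cE_1$-norm: the left side by Claim~\ref{claim:k1-trace}, and the right side because it equals the left side on the dense class and is a nonnegative quadratic form. The identity, as an equality of nonnegative quantities, should therefore pass to the limit. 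I would phrase the limiting step by observing that $u\mapsto h_1\eta$ defines the right side as $\cE_1-p\cB_1$ on all of $W^{1,2}(\mathcal D;\cE_1)$, with local $W^{1,2}$ convergence of $\eta$ giving the pointwise form.

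For smooth $\eta$, expand $u_r=h_{1,r}\eta+h_1\eta_r$ and $u_\tau=h_{1,\tau}\eta+h_1\eta_\tau$. This gives
\[
\cE_1[u,u]=\text{(RHS)}+\int_0^1\!\!\int_0^\tau r^{\alpha+1}w\Big[(p-1)\big(h_{1,r}^2\eta^2+h_1h_{1,r}(\eta^2)_r\big)+\tfrac{1-\tau^2}{r^2}\big(h_{1,\tau}^2\eta^2+h_1h_{1,\tau}(\eta^2)_\tau\big)+\tfrac{\kappa_1}{r^2(1-\tau^2)}h_1^2\eta^2\Big]\,dr\,d\tau .
\]
Integrate the cross terms by parts in the $(r,\tau)$-domain $\mathcal D$. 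The interior contribution reduces to $-\eta^2 h_1\cdot(\text{sector operator applied to }h_1)$, which vanishes because $h_1$ solves \eqref{eq:sector-pde} with $k=1$; this is the same fact used for the transverse modes, i.e.\ $y_i$ is a solution. The boundary contributions come from three places. On $r=0$ and $\tau=1$ they vanish by the support condition, and on $\tau=0$ the domain degenerates. On the curve $r=\tau$, the divergence theorem in the $(r,\tau)$-plane, with the weighted flux $r^{\alpha+1}w\big((p-1)h_1h_{1,r},\,\tfrac{1-\tau^2}{r^2}h_1h_{1,\tau}\big)$ dotted against the (non-normalized) normal $(1,-1)d\tau$, produces
\[
\int_0^1 \tau^{\alpha}w\,\eta^2h_1\Big[(p-1)\tau h_{1,r}-\tfrac{1-\tau^2}{\tau}h_{1,\tau}\Big]\,d\tau = p\int_0^1\tau^\alpha w\,h_1^2\eta^2\,d\tau=p\cB_1[u,u],
\]
using the boundary relation \eqref{eq:sector-boundary-condition} with $\lambda=p$ already verified for $h_1$. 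This gives the identity.

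The inequality is then immediate. For equality, the integrand must vanish, so $\eta_r=\eta_\tau=0$ a.e. on $\mathcal D$ (the weights being positive there). Since $\mathcal D$ is connected, $\eta$ is constant, and hence $u=ch_1$. Conversely, $h_1$ gives equality, as computed above.

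The main obstacle is the density and limit step. We must check that $\eta=u/h_1$ for general $u$ in the energy space makes the right side meaningful and equal to the limit. Concretely, we need the pointwise convergence $\eta_\varepsilon\to\eta$ in $W^{1,2}_{loc}(\mathcal D)$, together with Fatou's lemma in one direction and the identity on approximants in the other. Fatou alone gives "$\ge$" for the right side, so I would instead verify directly that the map $u\mapsto h_1\nabla(u/h_1)$ is continuous from the $\cE_1$-norm into the weighted $L^2$ space. This amounts to the Hardy-type bound $|\nabla h_1|^2u^2/h_1^2\lesssim$ the potential term $\kappa_1 u^2/(r^2(1-\tau^2))$, which does hold, since $|h_{1,r}|^2/h_1^2=r^{-2}$ and $(1-\tau^2)h_{1,\tau}^2/(r^2h_1^2)=\tau^2/(r^2(1-\tau^2))$.
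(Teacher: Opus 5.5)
Your proposal is correct and follows the paper's proof. The identity is first established on the dense class $C^\infty_c(\overline{\OmB}\setminus A_{\mathrm{ax}})$ by expanding $\cE_1[h_1\eta,h_1\eta]$ and using that $h_1$ solves the $k=1$ sector problem with $\lambda=p$; the paper phrases your integration by parts as testing the weak equation for $h_1$ with $h_1\eta^2$. It is then extended by density using exactly your Hardy-type bounds $h_1^2\eta_r^2\lesssim u_r^2+u^2/r^2$ and $\frac{1-\tau^2}{r^2}h_1^2\eta_\tau^2\lesssim \frac{1-\tau^2}{r^2}u_\tau^2+\frac{\tau^2u^2}{r^2(1-\tau^2)}$, both absorbed by the $\kappa_1$ potential term. (Your opening remark that the right side is continuous ``because it equals the left side on the dense class'' is circular, but you rightly replace it with this direct estimate, which is what the paper does.)
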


\begin{proof}
We first prove the identity for
\[
        u(y):=u(r,\tau)\in
        C^\infty_{c}(\overline{\OmB}\setminus A_{\mathrm{ax}}).
\]
Since \(h_1(r,\tau)=r\sqrt{1-\tau^2}\) is positive in \(\mathcal D\), set
\(\eta:=u/h_1\). Notice first that
\begin{equation}\label{eq:k1-eta-derivatives}
        h_1\eta_r=u_r-\frac{u}{r},
        \qquad
        h_1\eta_\tau=u_\tau+\frac{\tau}{1-\tau^2}u.
\end{equation}
Since \(\kappa_1=n-2>0\), the definition of \(\cE_1\) and
\eqref{eq:k1-eta-derivatives} imply
\begin{align}
&\int_{\mathcal D}
 r^{\alpha+3}(1-\tau^2)^{\beta+1}|\eta_r|^2\dd r\dd\tau
 \le C\cE_1[u,u],
\label{eq:k1-eta-r-integrability}\\
&\int_{\mathcal D}
 r^{\alpha+1}(1-\tau^2)^{\beta+2}|\eta_\tau|^2\dd r\dd\tau
 \le C\cE_1[u,u].
\label{eq:k1-eta-tau-integrability}
\end{align}

The function \(h_1\) satisfies
\(
  \cE_1[h_1,\zeta]=p\cB_1[h_1,\zeta]
\)
for every \(\zeta\in W^{1,2}(\mathcal D;\cE_1)\).

Note that the support of \(\eta\) stays away from both \(r=0\) and
\(\tau=1\). Consequently
\(h_1\eta^2\in W^{1,2}(\mathcal D;\cE_1)\), so it is an
admissible test function in the equation for \(h_1\). Expanding
\(\cE_1[h_1\eta,h_1\eta]\) and testing with
\(h_1\eta^2\) gives
\begin{equation}\label{eq:k1-ground-state-cutoff}
\begin{aligned}
  &\cE_1[u,u]
  -p\cB_1[u,u]\\
  &\quad=
  \int_0^1\int_0^\tau r^{\a+1}(1-\tau^2)^\beta h_1^2
  \left[
    (p-1)|\eta_r|^2
    +\frac{1-\tau^2}{r^2}|\eta_\tau|^2
  \right]\dd r\dd\tau.
\end{aligned}
\end{equation}

We next consider general $u\in W^{1,2}(\mathcal D;\cE_1)$.
Claim~\ref{claim:k1-density} gives
$u_\varepsilon\in C^\infty_{c}(\overline{\OmB}\setminus A_{\mathrm{ax}})$ satisfying
\[
        u_\varepsilon\longrightarrow u
        \qquad\text{in }W^{1,2}(\mathcal D;\cE_1).
\]
By Claim~\ref{claim:k1-trace}, the boundary traces converge in
\(L^2([0,1];\tau^\alpha(1-\tau^2)^\beta)\). It remains to treat the
right-hand side of \eqref{eq:k1-ground-state-cutoff}. Denote
$\eta_\varepsilon:=u_\varepsilon/h_1$. Since
$\eta_\varepsilon-\eta=(u_\varepsilon-u)/h_1$, the estimates
\eqref{eq:k1-eta-r-integrability} and
\eqref{eq:k1-eta-tau-integrability}, applied to
$u_\varepsilon-u$, give
\[
 \int_{\mathcal D}
 r^{\alpha+3}(1-\tau^2)^{\beta+1}
 |(\eta_\varepsilon-\eta)_r|^2\dd r\dd\tau
 \le C\cE_1[u_\varepsilon-u,u_\varepsilon-u]\longrightarrow0,
\]
and
\[
 \int_{\mathcal D}
 r^{\alpha+1}(1-\tau^2)^{\beta+2}
 |(\eta_\varepsilon-\eta)_\tau|^2\dd r\dd\tau
 \le C\cE_1[u_\varepsilon-u,u_\varepsilon-u]\longrightarrow0.
\]

Letting \(\varepsilon\downarrow0\) in
\eqref{eq:k1-ground-state-cutoff} with $u=u_\varepsilon$ proves
\eqref{eq:k1-ground-state-identity} for every \(u\in W^{1,2}(\mathcal D;\cE_1)\).

The right-hand side vanishes exactly when \(\eta\) is constant in the
connected domain \(\mathcal D\), which proves the equality statement.
\end{proof}

\begin{lemma}[No equality at \(p\) for \(k\ge2\)]\label{lem:higher-sectors}
For every \(k\ge2\) and every $u(r,\tau)\in W^{1,2}(\mathcal{D};\cE_k)$ in the \(k\)-sector,
\[
  \cE_k[u,u]\ge p\cB_k[u,u].
\]
Moreover, if \(u\not\equiv0\), then equality cannot hold. Hence no nonzero \(k\ge2\) sector component can belong to the eigenspace \(\lambda=p\).
\end{lemma}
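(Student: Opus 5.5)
The plan is to reduce the $k\ge2$ sectors to the $k=1$ sector, using the fact that $\cE_k$ and $\cE_1$ differ only in the angular potential. Comparing the definitions in \eqref{eq:sector-energy}, every $u\in W^{1,2}(\mathcal D;\cE_k)$ satisfies
\[
 \cE_k[u,u]=\cE_1[u,u]+(\kappa_k-\kappa_1)\int_0^1\int_0^\tau r^{\alpha-1}w(\tau)\frac{u^2}{1-\tau^2}\dd r\dd\tau .
\]
For $k\ge2$ we have $\kappa_k-\kappa_1=(k-1)(k+n-2)>0$. The boundary forms also agree, $\cB_k=\cB_1$, since \eqref{eq:sector-boundary} does not depend on $k$. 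By Remark~\ref{regular}, $W^{1,2}(\mathcal D;\cE_k)=W^{1,2}(\mathcal D;\cE_1)$, so we may apply Lemma~\ref{lem:k1-ground-state} directly to $u$.

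Combining the displayed identity with \eqref{eq:k1-ground-state-identity} gives
\[
 \cE_k[u,u]-p\cB_k[u,u]=\int_{\mathcal D} r^{\alpha+1}w h_1^2\Big[(p-1)\eta_r^2+\tfrac{1-\tau^2}{r^2}\eta_\tau^2\Big]\dd r\dd\tau+(\kappa_k-\kappa_1)\int_{\mathcal D} r^{\alpha-1}w\frac{u^2}{1-\tau^2}\dd r\dd\tau ,
\]
where $\eta=u/h_1$. Both terms on the right are nonnegative, which proves $\cE_k[u,u]\ge p\cB_k[u,u]$. If equality holds, the second term must vanish. Its weight is strictly positive on $\mathcal D$, so $u=0$ a.e. in $\mathcal D$. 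The trace of $u$ then also vanishes: by Claim~\ref{claim:k1-trace} the trace is controlled by $\cE_1[u,u]$, and $\cE_1[u,u]=0$ when $u\equiv0$. Hence $u\equiv0$, which is the strict inequality for nonzero $u$.

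For the eigenspace statement, let $\phi$ be an eigenfunction with $\lambda=p$. Testing \eqref{eq:weak-global} against $\psi=u_{k,j}(r,\tau)Z_{k,j}(\theta)$ and using orthogonality of the spherical harmonics shows that each component $u_{k,j}$ satisfies $\cE_k[u_{k,j},\zeta]=p\cB_k[u_{k,j},\zeta]$ for all $\zeta$ in the sector. Taking $\zeta=u_{k,j}$ gives equality in the inequality just proved, so $u_{k,j}\equiv0$ for every $k\ge2$.

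The one step I expect to need care is the decoupling of sectors in this last argument: I must check that $\psi=u_{k,j}Z_{k,j}$ is an admissible test function in $W^{1,2}(\OmB;r^{-\gamma})$. This follows because $\cE[u_{k,j}Z_{k,j},u_{k,j}Z_{k,j}]=|S^{n-2}|\,\cE_k[u_{k,j},u_{k,j}]$, which is finite. The rest is immediate once the identity of Lemma~\ref{lem:k1-ground-state} is available.
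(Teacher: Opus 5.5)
Your proposal is correct and follows essentially the same argument as the paper. Like the paper, you split off the extra angular potential $(\kappa_k-\kappa_1)\int r^{\alpha-1}w\,u^2/(1-\tau^2)$, use $\cB_k=\cB_1$ together with Lemma~\ref{lem:k1-ground-state} for the $k=1$ part, and force $u\equiv0$ in the equality case from the strict positivity of that extra term. Your sector-decoupling step for the eigenspace claim goes slightly beyond the paper, which leaves it implicit; note that the test functions should be $\zeta Z_{k,j}$ for arbitrary sector functions $\zeta$, not only $u_{k,j}Z_{k,j}$.
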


\begin{proof}
For \(k\ge2\),
\[
  \kappa_k=k(k+n-3)>n-2=\kappa_1.
\]
For the same amplitude \(u\), the boundary form is independent of \(k\), while
\[
  \cE_k[u,u]
  =\cE_1[u,u]
  +(\kappa_k-\kappa_1)
  \int_0^1\int_0^\tau r^{\a+1}w(\tau)
  \frac{u^2}{r^2(1-\tau^2)}\dd r\dd\tau.
\]
By Lemma~\ref{lem:k1-ground-state},
\[
  \cE_1[u,u]\ge p\cB_1[u,u]=p\cB_k[u,u].
\]
Therefore \(\cE_k[u,u]\ge p\cB_k[u,u]\). If equality held and \(u\not\equiv0\), then the positive angular-potential remainder would have to vanish:
\[
  \int_0^1\int_0^\tau r^{\a+1}w(\tau)
  \frac{u^2}{r^2(1-\tau^2)}\dd r\dd\tau=0,
\]
which forces \(u\equiv0\), a contradiction.
\end{proof}

\subsubsection{The axisymmetric sector \texorpdfstring{$k=0$}{k = 0}}\label{subsec2}

The sector decomposition reduces the only delicate part to the following sharp two-variable inequality, which is related to a two-variable eigenproblem.

\begin{theorem}
  \label{ass:axisymmetric-gap}
For every axisymmetric $u(y):=u(r,\tau)\in W^{1,2}(\OmB;r^{-\g})$ in the \(k=0\) sector satisfying
\begin{equation}\label{eq:k0-orthogonality}
  \cB_0[u,1]
  =\int_0^1 u(\tau,\tau)\tau^\alpha(1-\tau^2)^{\frac{n-3}{2}}\dd\tau
  =0,
\end{equation}
one has
\begin{equation}\label{eq:k0-gap}
\begin{aligned}
  &\int_0^1\int_0^\tau r^{\a+1}(1-\tau^2)^{\frac{n-3}{2}}
  \left[
    (p-1)u_r^2+\frac{1-\tau^2}{r^2}u_\tau^2
  \right]\dd r\dd\tau \\
  &\hspace{32mm}\ge
  p\int_0^1\tau^\alpha(1-\tau^2)^{\frac{n-3}{2}}u(\tau,\tau)^2\dd\tau.
\end{aligned}
\end{equation}
Equality holds if and only if
\[
  u=c\left(r\tau-\frac1p\right).
\]
\end{theorem}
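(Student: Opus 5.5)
The inequality \eqref{eq:k0-gap} itself comes essentially for free, so the real content of Theorem~\ref{ass:axisymmetric-gap} is the equality statement. Indeed, an axisymmetric \(u\) with \(\cB_0[u,1]=0\) is an element of \(W^{1,2}(\OmB;r^{-\g})\) that is \(\cB\)-orthogonal to the first eigenfunction \(\varphi_0=1\). Then \eqref{eq15}, together with Remark~\ref{remark2.1} and Theorem~\ref{thm:iden} (second normalized eigenvalue \(\lambda_1=p\)), gives \(\cE[u,u]\ge p\cB[u,u]\). This is exactly \eqref{eq:k0-gap} after dividing by \(|S^{n-2}|\). Moreover, \(h_0=r\tau-\frac1p\) attains equality by the computation in \S\ref{subsubsec:axisymmetric-mode}.

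For the equality case I first reduce to an eigenfunction problem. If equality holds, \(u\) minimizes \(\cE_0-p\cB_0\) on the closed subspace \(\{\cB_0[\cdot,1]=0\}\). The Lagrange multiplier of the constraint vanishes (test with \(\zeta=1\), using \(\cE_0[u,1]=0\)), so \(u\) is a weak solution of \eqref{eq:sector-pde} with \(k=0\) and boundary condition \eqref{eq:sector-boundary-condition} with \(\lambda=p\). It then suffices to show that the \(k=0\), \(\lambda=p\) eigenspace is spanned by \(h_0\); the constant mode is excluded because it has eigenvalue \(0\).

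\textbf{Pohozaev reduction.} The interior equation \((p-1)u_{rr}+\frac{n-1}{r}u_r+r^{-2}\mathcal L_\tau u=0\), with \(\mathcal L_\tau u:=(1-\tau^2)u_{\tau\tau}-(n-1)\tau u_\tau\), is invariant under dilations \(r\mapsto sr\). I would therefore multiply it by \(r^{\alpha+1}w(\tau)\,(ru_r)\), the generator of dilations, and integrate over \(\mathcal D\). In the integration by parts the interior terms collapse: the homogeneity of the weights \(r^{\alpha+1}\), \(r^{\alpha-1}\) makes the bulk contributions combine into a multiple of \(\cE_0[u,u]\). Only boundary terms on the curve \(r=\tau\) survive, involving \(u_r\), \(u_\tau\) and \(u\) there. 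Combining this with the Steklov condition \eqref{eq:sector-boundary-condition} and the weak identity \(\cE_0[u,u]=p\cB_0[u,u]\) should eliminate the normal derivative. The result is an identity involving only the trace \(f(\tau):=u(\tau,\tau)\) and its tangential derivative. Concretely, I expect a quadratic identity of the form
\[
\int_0^1\tau^{\alpha}w(\tau)\bigl[a(\tau)f'(\tau)^2+b(\tau)f(\tau)^2\bigr]\dd\tau=0,
\]
with explicit coefficients \(a,b\), valid for every \(k=0\) eigenfunction at \(\lambda=p\). Regularity at \(r=0\) and \(\tau=1\) is needed to justify dropping the terms there. Following the proof of Lemma~\ref{lem:k1-ground-state}, I would first work with cut-offs \(\chi_\varepsilon(r)\psi_\varepsilon(\tau)\) and pass to the limit. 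For \(k=0\) there is no angular potential, so the needed decay has to come from elliptic regularity of the eigenfunction instead. This reduction is the content of Lemma~\ref{l:pohozaev} and Proposition~\ref{p:ineq}.

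\textbf{One-dimensional analysis.} The Pohozaev identity turns the question into a sharp one-dimensional problem: the quadratic form \(Q[f]=\int\tau^\alpha w(af'^2+bf^2)\) must vanish on the trace of any such eigenfunction, while the trace also satisfies \(\int f\tau^\alpha w=0\). I would show that \(Q\ge0\) on that constraint set, with equality only for \(f=c(\tau^2-\frac1p)\). To do this, let \(\calL\) be the singular Sturm--Liouville operator associated with \(Q\) on \((0,1)\) with weight \(\tau^\alpha w\). Both endpoints are singular: \(\tau=0\) through the power \(\tau^\alpha\), and \(\tau=1\) through \((1-\tau^2)^{\beta}\). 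I would classify them (limit-point or limit-circle) and pick the Friedrichs realization, so that the spectrum is discrete. I then aim to prove that \(\calL\) has exactly one negative eigenvalue and no zero eigenvalue. For the negative eigenvalue, I would exhibit an explicit test function with \(Q<0\), such as the constant, and use oscillation theory to show there is no second one. For the absence of a zero eigenvalue, I would use a Frobenius analysis at the endpoints showing that the admissible solution of \(\calL f=0\) fails the boundary condition at the other end, possibly via a hypergeometric representation in \(\tau^2\). Once \(\calL\) has a single negative direction, Courant--Fischer says \(Q\) can have a kernel on the orthogonality constraint only in one explicit direction, which must be \(\tau^2-\frac1p\). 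Hence \(f=c(\tau^2-\frac1p)\). Finally \(u-ch_0\) is a \(k=0\) eigenfunction at \(\lambda=p\) with zero trace, so \(\cE_0[u-ch_0,u-ch_0]=p\,\cB_0[u-ch_0,u-ch_0]=0\) forces it to be constant, hence zero.

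The main obstacle I expect is this last spectral step: establishing the exact negative-eigenvalue count and ruling out a zero eigenvalue for a singular operator whose coefficients depend on \(p\) and \(n\). I would try the Frobenius and hypergeometric route first.
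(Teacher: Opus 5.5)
Your reduction of the equality case to the $k=0$, $\lambda=p$ eigenproblem, the Pohozaev identity from testing with $ru_r$, and the closing zero-trace argument for $u-ch_0$ all match the paper. The gap is in the last one-dimensional step. Knowing that $\calL$ has exactly one negative eigenvalue and $0\notin\sigma(\calL)$ does not give $\calQ\ge0$ on the hyperplane $\{\langle f,1\rangle_H=0\}$. Interlacing only puts the bottom of $\calQ$ on that hyperplane somewhere in $[\delta_1,\delta_2]$. Decomposing $f=xe_1+y$ and $1=ae_1+h_+$, with $e_1>0$ the ground state and $y,h_+\perp e_1$, shows that $\calQ\ge0$ there holds exactly when $\langle 1,\calL^{-1}1\rangle_H\le0$. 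For example, $\operatorname{diag}(-1,1)$ on $\R^2$ with constraint vector $(1,2)$ is negative on the constraint line. Similarly, the only possible constrained null direction is $\calL^{-1}1$. Knowing $\calQ[b_0]=0$ for $b_0=\tau^2-\frac1p$ does not identify that direction with $b_0$, because an indefinite form has isotropic vectors.

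The missing idea is the explicit identity $\calL b_0=-1$, i.e.\ $(Pb_0')'+pN\rho\,b_0=\rho$ (Claim~\ref{claim1}). It gives $\calL^{-1}1=-b_0$ and $\langle1,\calL^{-1}1\rangle_H=-\langle b_0,1\rangle_H=0$. Then Cauchy--Schwarz between $\calL_+^{1/2}y$ and $\calL_+^{-1/2}h_+$ yields $\calQ\ge0$ on the hyperplane, with equality exactly on $\operatorname{span}\{b_0\}$.

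If you want to rely only on the index count, you can drop the constrained inequality altogether. The traces of all $k=0$, $\lambda=p$ eigenfunctions form a linear space on which $\calQ$ vanishes identically (polarize the Pohozaev identity). On the other hand, $\calQ>0$ on $(V\cap\{e_1\}^\perp)\setminus\{0\}$. Two independent traces would have a nonzero combination orthogonal to $e_1$ with $\calQ=0$, which is impossible. So the trace space is $\operatorname{span}\{b_0\}$, and your zero-trace argument finishes the proof.

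Two further corrections. First, $\calL$ does not have discrete spectrum, and no choice of realization fixes this. The endpoint $\tau=0$ is of Euler type and limit-point: with $s=-\log\tau$ and $F=\tau^{c}f$, $c=\frac{n-1}{2(p-1)}$, the operator is asymptotically $-(p-1)\partial_s^2+\Lambda_*$ on a half-line, where $\Lambda_*=p+\frac{(n-1)^2}{4(p-1)}$. Min--max and oscillation arguments are therefore available only below $\Lambda_*$, after an essential-spectrum bound such as the paper's IMS argument (Lemma~\ref{l:embed}).

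Second, the index count you flag as the main obstacle is obtained in the paper without any zero-energy Frobenius analysis. The function $\tau^{p/(p-1)}$ is a positive eigenfunction with eigenvalue $-\frac{np}{p-1}$. The function $\tau^{m_*}(1+c_*\tau^2)$ is an eigenfunction with exactly one interior zero and eigenvalue $\delta_2\in(0,\Lambda_*)$. The nodal count of Lemma~\ref{l:sl} then makes these the first two eigenvalues, so $\delta_1<0<\delta_2$.
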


\begin{remark}\label{remark2.6}
Note that $\frac{\tl\mu_1-C_0}{\a^{p-2}}=p$.
It follows from Remark~\ref{remark2.1} and \eqref{eq15} that
if
\[
        u\perp 1
        \quad\text{in }L^2(\partial\OmB;r^{-\g}),
\]
then
\begin{equation}
\int_{\OmB} \big(|D u|^2
+(p-2)(D r\cdot D u)^2\big)r^{-\g}\,d y\ge p\int_{\p\OmB}r^{-\g}u^2 d \sigma,\label{eq}
\end{equation}
where $d \sigma$ is the area form of $\p\OmB$. Taking $u=u(r,\tau)$ under the condition
\eqref{eq:k0-orthogonality}, we obtain \eqref{eq:k0-gap}. Therefore, we only need to
characterize the equality case.
\end{remark}

Recall that
\[
  1<p<n,\qquad
  \alpha:=\frac{n-p}{p-1},\qquad
  \beta=\frac{n-3}{2},
\]
and
\[
  \bD:=\{(r,\tau):0<r<\tau<1\}.
\]
Recall that for an axisymmetric function $u=u(r,\tau)$ 
\[
  \cE_0[u,v]
  :=\int_0^1\int_0^\tau
  r^{\alpha+1}(1-\tau^2)^\beta
  \left[
    (p-1)u_rv_r+\frac{1-\tau^2}{r^2}u_\tau v_\tau
  \right]\dd r\dd\tau
\]
and
\[
  \cB_0[u,v]
  :=\int_0^1
  u(\tau,\tau)v(\tau,\tau)
  \tau^\alpha(1-\tau^2)^\beta\dd\tau.
\]
The $k=0$ sector eigenvalue problem is the weak problem
\begin{equation}\label{eq:weak}
  \cE_0[u,\zeta]=\lambda\cB_0[u,\zeta]
  \qquad\text{for every axisymmetric }\zeta(y):=\zeta(r,\tau)\in W^{1,2}(\OmB;r^{-\g}).
\end{equation}
Equivalently, in strong form,
\begin{equation}\label{eq:interior}
  (p-1)u_{rr}+\frac{n-1}{r}u_r
  +\frac1{r^2}\left((1-\tau^2)u_{\tau\tau}-(n-1)\tau u_\tau\right)=0
\end{equation}
in $\bD$, with boundary condition on $r=\tau$,
\begin{equation}\label{eq:steklov-bc}
  (p-1)\tau u_r-\frac{1-\tau^2}{\tau}u_\tau=\lambda u.
\end{equation}

Note that if $u$ attains equality in \eqref{eq:k0-gap}, then $u$
satisfies \eqref{eq:k0-orthogonality} and \eqref{eq:weak} with $\l=p$.
Since $u(y)=u(r,\tau)\in W^{1,2}(\OmB;r^{-\g})$, we have
$u_r\in L^2(\mathcal{D};r^{\alpha+1}(1-\tau^2)^\beta)$,
$u_\tau\in L^2(\mathcal{D};r^{\alpha-1}(1-\tau^2)^{\beta+1})$, and
the boundary trace
$b(\tau):=u(\tau,\tau)\in
L^2([0,1];\tau^{\alpha}(1-\tau^2)^{\beta})$.
Moreover, elliptic regularity yields
$u\in C^\infty_{loc}(\ov\OmB\bs\{0\})$; see
\cite{ADN1959,LionsMagenes1972}.

The proof of the Pohozaev-type identity also requires the following
regularity lemma.

\begin{lemma}[Regularity of the dilation derivative]\label{lem:dilation-regularity}
Let \(u(y)=u(r,\tau)\in W^{1,2}(\OmB;r^{-\g})\) be an axisymmetric weak solution of
\eqref{eq:weak}. Then
\[
        r u_r\in W^{1,2}(\OmB;r^{-\g}).
\]
\end{lemma}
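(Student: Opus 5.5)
The plan is to exploit the dilation invariance of the problem. In the ball variables, the interior operator
\[
\operatorname{div}\bigl[r^{-\gamma}\bigl((p-1)\phi_r\sigma+r^{-1}\nabla_S\phi\bigr)\bigr]
\]
is homogeneous under $y\mapsto sy$. Hence the generator $y\cdot D=r\partial_r$ commutes with it up to a constant multiple, and $w:=ru_r$ solves the same interior equation \eqref{eq:interior} wherever $u$ is smooth. We already know $u\in C^\infty_{loc}(\ov\OmB\setminus\{0\})$, so $w$ is smooth away from the origin. The only issue is therefore the weighted integrability of $Dw$ near $y=0$, where all the weights degenerate. Integrability near the rest of $\partial\OmB$ is automatic from the local smoothness.

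First I would obtain decay of $u$ near the origin. Near $0$ the set $\OmB$ looks like the half-space $\{y_n>0\}$ tangent to $\partial\OmB$. In the coordinates $(\log r,\sigma)$ the equation becomes an autonomous (cylindrical) elliptic problem on a cone-like region, with boundary condition \eqref{eq:steklov-bc} on $r=\tau$. I would use a Caccioppoli argument on dyadic shells $A_j=\{2^{-j-1}<r<2^{-j}\}\cap\OmB$. Rescaling $A_j$ to unit size turns the weights $r^{-\gamma}$ and the boundary weight into constants times $2^{j\gamma}$. The rescaled boundary coefficient $\lambda/\tau$ becomes a bounded-geometry Robin term after accounting for the shape of $\partial\OmB$ near $0$. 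Interior-plus-boundary Schauder estimates (uniform in $j$ by scaling) then give
\[
\int_{A_j}r^{-\gamma}|D(ru_r)|^2\,dy\le C\int_{\widetilde A_j}r^{-\gamma}\bigl(|Du|^2+r^{-2}|u-c_j|^2\bigr)\,dy
\]
plus a boundary term. Here $\widetilde A_j$ is a slightly enlarged shell and $c_j$ is a suitable constant (the constant mode has zero gradient, so it can be subtracted).

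Second I would sum over $j$. The Poincaré-on-shells plus telescoping argument from Lemma~\ref{lem:weighted-exterior-trace} (now near $0$ rather than at infinity) controls $\sum_j\int_{\widetilde A_j}r^{-\gamma-2}|u-c_j|^2$ by $\cE[u,u]$. One subtlety remains: subtracting the constant $c_j$ in the Robin boundary term leaves a contribution $\lambda c_j$. This is controlled because $\tau^\alpha$ is integrable and $c_j$ is bounded, by the same telescoping with the geometric factor $2^{-\alpha j/2}$. Summing then gives $\int_{\OmB\cap B_{1/2}}r^{-\gamma}|D(ru_r)|^2<\infty$. Adding the compact-region estimate yields $ru_r\in W^{1,2}(\OmB;r^{-\gamma})$, after checking that $ru_r$ can be approximated by $C_c^\infty(\ov\OmB)$ functions via the cutoffs $\chi_\varepsilon(r)$ of Claim~\ref{claim:k1-density}. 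The term involving $\chi_\varepsilon'$ is bounded by $\varepsilon^{-2}\int_{r<2\varepsilon}r^{-\gamma}|ru_r|^2\le\int_{r<2\varepsilon}r^{-\gamma}|Du|^2\to0$.

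The main obstacle is step one: obtaining scale-uniform boundary regularity on shells near the origin, where $\partial\OmB$ is tangent to $\{y_n=0\}$ and the boundary condition carries the factor $\tau^{-1}$. I would handle this by writing the rescaled boundary piece as a graph converging smoothly to a flat piece. In the rescaled variables the Robin coefficient is $\lambda\cdot(\text{scale})/\tau\cdot$ normalization, which stays bounded, so standard $W^{2,2}$ estimates with uniform constants apply. If this proves delicate, the fallback is a difference-quotient argument in the dilation direction: compare $u(sy)$ with $u(y)$, which is legitimate because dilation maps $\OmB$ into itself for $s<1$, and bound $\|(u(s\cdot)-u)/(s-1)\|$ via the energy identity.
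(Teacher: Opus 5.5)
Your plan follows essentially the same route as the paper's proof. You rescale the dyadic shells near the origin by $y=\rho z$, so the coefficients become uniformly elliptic, the boundary piece $\rho^{-1}\partial\OmB$ is uniformly $C^2$, and the Robin coefficient becomes $\lambda\rho$ (in Cartesian form there is no genuine $1/\tau$ singularity). You then apply scale-uniform $W^{2,2}$ conormal estimates to control $\int_{A_\rho} r^{-\gamma}\bigl(|\nabla u|^2+r^2|D^2u|^2\bigr)\,dy$ and sum over shells. Subtracting the shell averages $c_j$ is a harmless refinement of the paper's estimate \eqref{eq:dyadic-H2}.

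One claim is wrong: $c_j$ need not be bounded. In the ball variables the telescoping gives $|c_{j+1}-c_j|\lesssim 2^{\alpha j/2}\|\nabla u\|_{L^2(\widetilde A_j;r^{-\gamma})}$, so the factor grows rather than decays, and you only get $|c_j|\lesssim 1+2^{\alpha j/2}$. The step still goes through: after undoing the scaling, the $\lambda c_j$ term contributes $\rho_j^{\,n-\gamma}c_j^2=\rho_j^{\,\alpha+2}c_j^2\lesssim\rho_j^{\,\alpha+2}+\rho_j^{\,2}$, which is summable. Alternatively, keep the finite boundary term $\int_{\partial\OmB}r^{-\gamma}u^2\,dS$ as the paper does.
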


\begin{proof}
Write
\[
        X:=y\cdot\nabla=r\partial_r.
\]
We prove \(Xu\in W^{1,2}(\OmB;r^{-\gamma})\). Since \(r\le1\) in \(\OmB\),
\[
        |Xu|\le r|\nabla u|\le |\nabla u|.
\]
The sector energy \(\cE_0[u,u]\) controls \(\int_{\OmB}r^{-\gamma}|\nabla u|^2\dd y\), and hence
\[
        \int_{\OmB}r^{-\gamma}|Xu|^2\dd y<\infty.
\]

It remains to prove \(\nabla Xu\in L^2(\OmB;r^{-\gamma})\). Since \(u\) is smooth on
\(\overline{\OmB}\setminus\{0\}\), only the singular point \(0\) has to be considered. Let
\[
        A_\rho:=\OmB\cap\{\rho<r<2\rho\},
        \qquad
        \widetilde A_\rho:=\OmB\cap\{\rho/2<r<4\rho\},
        \qquad 0<\rho<\rho_0:=\frac{1}{8}.
\]
In Cartesian variables, the equation associated with \eqref{eq:weak} has the divergence form
\begin{equation*}
\begin{cases}
\displaystyle
\operatorname{div}\!\left(r^{-\gamma}
        \bigl(I+(p-2)e_r\otimes e_r\bigr)\nabla u\right)=0,
&\text{in }\OmB,\\[2mm]
\displaystyle
\left(r^{-\gamma}
        \bigl(I+(p-2)e_r\otimes e_r\bigr)\nabla u\right)\cdot\nu=\lambda r^{-\gamma}u.
&\text{on }\p\OmB,
\end{cases}
\end{equation*}
On each annulus
\(\widetilde A_\rho\), after the rescaling \(y=\rho z\), the coefficient matrix becomes, up to
the harmless scalar factor \(\rho^{-\gamma}\),
\[
        |z|^{-\gamma}\bigl(I+(p-2)e_z\otimes e_z\bigr).
\]
Then, $\tl u(z):=u(\rho z)$ satisfies
\begin{equation*}
\begin{cases}
\displaystyle
\operatorname{div}\!\left(|z|^{-\gamma}\bigl(I+(p-2)e_z\otimes e_z\bigr)\nabla\tl u\right)=0,
&\text{in }\frac{1}{\rho}\OmB,\\[2mm]
\displaystyle
\left(|z|^{-\gamma}\bigl(I+(p-2)e_z\otimes e_z\bigr)\nabla\tl u\right)\cdot\nu=\lambda\rho |z|^{-\gamma}\tl u.
&\text{on }\frac{1}{\rho}(\p\OmB),
\end{cases}
\end{equation*}
The coefficient matrix is smooth and uniformly elliptic on the fixed annulus
\(\{1/2<|z|<4\}\), and the rescaled boundary $\frac{1}{\rho}(\p\OmB\cap\p \widetilde A_\rho)$ has uniformly bounded \(C^2\)-norm. The rescaled Robin coefficient
\(\lambda\rho\) is uniformly bounded. Therefore the standard interior and boundary
\(W^{2,2}\) estimates for uniformly elliptic equations with conormal boundary
conditions \cite[Chapter 2, Section 4]{LionsMagenes1972} give a
constant \(C\), independent of \(\rho\), such that
\begin{equation}\label{eq:dyadic-H2}
        \int_{A_\rho} r^{-\gamma}r^2|D^2u|^2\dd y
        \le
        C\left(
        \int_{\widetilde A_\rho}r^{-\gamma}|\nabla u|^2\dd y
        +
        \int_{\partial\OmB\cap\widetilde A_\rho}r^{-\gamma}u^2\dd S
        \right).
\end{equation}
Here the boundary term is finite because \(u\) is an eigenfunction:
\[
        \cE_0[u,u]=\lambda\cB_0[u,u]<\infty.
\]
Summing \eqref{eq:dyadic-H2} over the dyadic annuli \(\rho=2^{-j}\rho_0\), and using finite
overlap, yields
\[
        \int_{\OmB\cap\{r<\rho_0\}}r^{-\gamma}r^2|D^2u|^2\dd y<\infty.
\]
Since $u\in C^\infty_{loc}(\ov\OmB\bs\{0\})$, we obtain
\[
        \int_{\OmB}r^{-\gamma}r^2|D^2u|^2\dd y<\infty.
\]

Finally, note that
\(\nabla(Xu)=\nabla u+D^2u\, y\),
so we have
\[
        |\nabla(Xu)|^2\le 2|\nabla u|^2+2r^2|D^2u|^2.
\]
The two terms on the right-hand side are integrable with weight \(r^{-\gamma}\). Therefore
\[
        \int_{\OmB}r^{-\gamma}|\nabla(Xu)|^2\dd y<\infty.
\]
Together with \(Xu\in L^2(\OmB;r^{-\gamma})\), this proves
Lemma~\ref{lem:dilation-regularity}.
\end{proof}

We now prove the following Pohozaev-type identity, which reduces the
two-variable eigenproblem to the equality case of a one-dimensional
inequality.

\begin{lemma}[Pohozaev-type identity]\label{l:pohozaev}
Suppose that an axisymmetric function
\[
        u(y):=u(r,\tau)\in W^{1,2}(\OmB;r^{-\g})
\]
satisfies the eigenproblem \eqref{eq:weak} with eigenvalue \(\l=p\), and set
\[
  b(\tau):=u(\tau,\tau),
  \qquad
  M(\tau):=1+(p-2)\tau^2.
\]
Then we have
\[\int_0^1 b(\tau)\tau^\alpha(1-\tau^2)^{\frac{n-3}{2}}\dd\tau
  =0,\]
and
\begin{align}
    \int_0^1
b'(\tau)^2\frac{
  (p-1)\tau^2(1-\tau^2)
}{M(\tau)}
\tau^\alpha(1-\tau^2)^\beta\dd\tau 
=p\int_0^1
b(\tau)^2
  N(\tau)
\tau^\alpha(1-\tau^2)^\beta\dd\tau,\label{eq:pohozaev}
\end{align}
where
\begin{align*}
    N(\tau)=-\frac{1}{M(\tau)}+\frac{2(n-1)\tau^2}{M(\tau)}+\frac{2(p-1)\tau^2}{M(\tau)^2}.
\end{align*}
\end{lemma}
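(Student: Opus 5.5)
The orthogonality relation is immediate. The constant function $\zeta\equiv1$ is axisymmetric and lies in $W^{1,2}(\OmB;r^{-\g})$, and $\cE_0[u,1]=0$. Testing \eqref{eq:weak} with $\lambda=p$ against $\zeta=1$ therefore gives $p\,\cB_0[u,1]=0$, which is the first claim.

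For \eqref{eq:pohozaev}, I would use a Pohozaev (dilation) multiplier. By Lemma~\ref{lem:dilation-regularity}, $\zeta:=ru_r\in W^{1,2}(\OmB;r^{-\g})$, and it is axisymmetric, so it is an admissible test function:
\[
\cE_0[u,ru_r]=p\,\cB_0[u,ru_r]=p\int_0^1 b(\tau)\,\tau u_r(\tau,\tau)\,\tau^\alpha w(\tau)\dd\tau .
\]
On the left I would write, pointwise in $\mathcal D$,
\[
r^{\alpha+1}u_r(ru_r)_r=r^{\alpha+1}u_r^2+\tfrac12 r^{\alpha+2}(u_r^2)_r,
\qquad
r^{\alpha-1}u_\tau(ru_r)_\tau=\tfrac12 r^{\alpha}(u_\tau^2)_r .
\]
Then I would integrate by parts in $r$ over $(0,\tau)$ for each fixed $\tau$. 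Since $1-\frac{\alpha+2}2=-\frac\alpha2$, both interior contributions combine into $-\frac\alpha2\,\cE_0[u,u]=-\frac{\alpha p}{2}\cB_0[u,u]$. The boundary contribution at $r=\tau$ is
\[
\tfrac12\int_0^1\tau^\alpha w\bigl[(p-1)\tau^2u_r^2+(1-\tau^2)u_\tau^2\bigr]_{r=\tau}\dd\tau .
\]
Thus the identity involves only the diagonal values of $u_r$ and $u_\tau$ together with $b$.

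The next step is to eliminate $u_r$ and $u_\tau$ on the diagonal. Use $b'=u_r+u_\tau$ together with the Steklov condition \eqref{eq:steklov-bc} with $\lambda=p$. Solving this $2\times2$ system gives
\[
u_r=\frac{p\tau b+(1-\tau^2)b'/\tau}{M(\tau)}\cdot\frac{1}{1}\Big/\,1,\qquad\text{more precisely}\quad
u_r=\frac{p\tau^2 b+(1-\tau^2)b'}{\tau M(\tau)},\qquad u_\tau=b'-u_r,
\]
where the denominator comes from $(p-1)\tau^2+1-\tau^2=M$. Substituting these into the boundary term and into $p\,\cB_0[u,ru_r]$ produces an identity quadratic in $(b,b')$ with three kinds of terms: $b'^2$, $bb'$, and $b^2$. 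I would integrate the cross terms $bb'=\frac12(b^2)'$ by parts in $\tau$, which differentiates the explicit weights $\tau^{\alpha+k}(1-\tau^2)^{\beta+j}/M^i$. I would then collect everything, using $\alpha=\frac{n-p}{p-1}$ and $\beta=\frac{n-3}2$ to simplify. The coefficient of $b'^2$ should reduce to $\frac{(p-1)\tau^2(1-\tau^2)}{M}$, and that of $b^2$ to $pN(\tau)$. This is routine but lengthy algebra. As a sanity check, $b=\tau^2-\frac1p$, i.e.\ $u=h_0$, must satisfy the identity.

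The main obstacle is justifying the integrations by parts. For the radial integration, the boundary term at $r=0$ must vanish, and the $\tau$-integration by parts produces endpoint terms at $\tau=0$ and $\tau=1$. To handle this, I would first carry out the computation with cutoffs $\chi_\varepsilon(r)$ near the origin, or equivalently integrate over $\{r>\varepsilon\}$. Here $u$ is smooth away from $0$, and $(1-\tau^2)^\beta$ together with the factor $(1-\tau^2)$ kills the endpoint $\tau=1$. I would then pass to the limit $\varepsilon\to0$ using the integrability $r^{1-\g}|D^2u|^2r^{2}\in L^1$ from Lemma~\ref{lem:dilation-regularity} and $b\in L^2(\tau^\alpha w)$. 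Choosing a sequence $\varepsilon_j$ along which $\varepsilon^{\alpha+1}\int |\nabla u|^2$ on the sphere $\{r=\varepsilon\}$ tends to zero, which is possible by the finiteness of $\cE_0[u,u]$, makes the inner boundary contributions disappear.
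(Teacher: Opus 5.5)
Your strategy is the paper's. You test \eqref{eq:weak} with $\zeta\equiv1$ for the orthogonality, then with $ru_r$ (admissible by Lemma~\ref{lem:dilation-regularity}). Integrating by parts in $r$ gives
\[
\tfrac12\int_0^1\bigl[(p-1)\tau^2U^2+(1-\tau^2)V^2\bigr]\tau^\alpha w\,d\tau-\tfrac{\alpha p}{2}\cB_0[u,u]=p\int_0^1\tau bU\,\tau^\alpha w\,d\tau,
\]
with $U=u_r(\tau,\tau)$ and $V=u_\tau(\tau,\tau)$. You then eliminate $U,V$ and integrate the $bb'$ term by parts in $\tau$.

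However, the one formula you actually display is wrong, and it is the crux of the elimination step. Multiplying \eqref{eq:steklov-bc} by $\tau$ gives $(p-1)\tau^2U-(1-\tau^2)V=p\tau b$. Combined with $V=b'-U$, this yields
\[
U=\frac{p\tau b+(1-\tau^2)b'}{M(\tau)},\qquad V=\frac{(p-1)\tau^2b'-p\tau b}{M(\tau)}.
\]
Your expression $\frac{p\tau^2b+(1-\tau^2)b'}{\tau M}$ carries a spurious factor $1/\tau$ on $b'$. The $2\times2$ system is nonsingular, so your $U$ violates the Steklov condition, and the ``routine algebra'' would not return the coefficient $(p-1)\tau^2(1-\tau^2)/M$. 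Your own sanity check exposes this: for $u=h_0$ one has $u_r=\tau$, whereas your formula gives $\tau+2(1-\tau)^2(1+\tau)/M$.

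With the correct $U,V$, the computation closes through
\[
\tfrac12\bigl[(p-1)\tau^2U^2+(1-\tau^2)V^2\bigr]-p\tau bU=\frac{(p-1)\tau^2(1-\tau^2)b'^2-2p\tau(1-\tau^2)bb'-p^2\tau^2b^2}{2M}.
\]
After writing $2\tau(1-\tau^2)bb'=\tau(1-\tau^2)(b^2)'$ and integrating by parts once, the relations $\alpha(p-1)=n-p$ and $2\beta+2=n-1$ turn the $b^2$-coefficient into exactly $pN$.

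Your cutoff justification is an acceptable substitute for the paper's. The paper instead shows that $r\mapsto r^{\alpha+2}u_r^2$ and $r\mapsto r^\alpha u_\tau^2$ lie in $W^{1,1}(0,\tau)$ for a.e.\ $\tau$, and that $G=b^2\tau^{\alpha+1}(1-\tau^2)^{\beta+1}/M\in W^{1,1}(0,1)$; so the endpoint limits exist, and integrability forces them to vanish. In your version, state the vanishing quantity precisely. On $\{r>\varepsilon\}$ the inner boundary term is $\varepsilon\Phi(\varepsilon)$, where
\[
\Phi(\varepsilon):=\tfrac12\int_\varepsilon^1 w(\tau)\bigl[(p-1)\varepsilon^{\alpha+1}u_r(\varepsilon,\tau)^2+(1-\tau^2)\varepsilon^{\alpha-1}u_\tau(\varepsilon,\tau)^2\bigr]\,d\tau,\qquad \int_0^1\Phi\,d\varepsilon=\tfrac12\cE_0[u,u].
\]
The quantity you name, $\varepsilon^{\alpha+1}\int|\nabla u|^2$ over the sphere, is comparable to this integrable density $\Phi$ itself. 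Finite energy gives only $\liminf_{\varepsilon\to0}\varepsilon\Phi(\varepsilon)=0$, not $\Phi(\varepsilon_j)\to0$; fortunately the former is what you need.

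If you do the $\tau$-integration by parts on $(\varepsilon,1)$, the extra endpoint term has size $\varepsilon^{\alpha+1}b(\varepsilon)^2$, and it must vanish along the same sequence. So choose $\varepsilon_j$ with $\varepsilon_j\bigl(\Phi(\varepsilon_j)+\varepsilon_j^{\alpha}b(\varepsilon_j)^2\bigr)\to0$. This is possible because $\Phi+\varepsilon^{\alpha}b^2$ is integrable near $0$. Alternatively, integrate by parts in $\tau$ after passing to the limit, as the paper does.
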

\begin{proof}
By testing the weak equation with the first eigenfunction $u_0\equiv1$, we obtain
\[\int_0^1 b(\tau)\tau^\alpha(1-\tau^2)^{\frac{n-3}{2}}\dd\tau
  =0.\]

The Pohozaev-type identity \eqref{eq:pohozaev} is obtained by testing the eigenvalue equation with the dilation derivative
\[
  \zeta=r u_r.
\]
By Lemma~\ref{lem:dilation-regularity}, this is an admissible test
function.
We write the computation for the test function $ru_r$.
\begin{remark}\label{remark2.8}
Note that Lemma~\ref{lem:dilation-regularity} implies
$f(\tau):=\tau u_r(\tau,\tau)
\in L^2([0,1];\tau^{\alpha}(1-\tau^2)^{\beta})$.
    Then by boundary condition \eqref{eq:steklov-bc}, we have $U(\tau):=u_r(\tau,\tau)\in L^2([0,1];\tau^{\alpha+2}(1-\tau^2)^{\beta})$, $V(\tau):=u_\tau(\tau,\tau)\in L^2([0,1];\tau^{\alpha-2}(1-\tau^2)^{\beta+2})$ and $b'=U+V\in L^2([0,1];\tau^{\alpha+2}(1-\tau^2)^{\beta+2})$.
\end{remark}

From \eqref{eq:weak},
\begin{equation}\label{eq:test-start}
  \cE_0[u,ru_r]
  =\lambda\int_0^1
  b(\tau)\,\tau u_r(\tau,\tau)
  \tau^\alpha(1-\tau^2)^\beta\dd\tau.
\end{equation}
We compute the left-hand side. First,
\[
  (ru_r)_r=u_r+ru_{rr},
  \qquad
  (ru_r)_\tau=ru_{r\tau}.
\]
Hence
\begin{align*}
  \cE_0[u,ru_r]
  &=\int_0^1\int_0^\tau
  r^{\alpha+1}(1-\tau^2)^\beta
  \left[
    (p-1)u_r(u_r+ru_{rr})
    +\frac{1-\tau^2}{r^2}u_\tau(ru_{r\tau})
  \right]\dd r\dd\tau.
\end{align*}
We first justify the integration by parts at \(r=0\). By
Lemma~\ref{lem:dilation-regularity} and Fubini's theorem, for a.e. \(\tau\in(0,1)\) the functions
\[
  r\longmapsto r^{\alpha+2}u_r(r,\tau)^2,
  \qquad
  r\longmapsto r^\alpha u_\tau(r,\tau)^2
\]
belong to \(W^{1,1}(0,\tau)\). Moreover, finite energy gives
\[
  \int_0^\tau \frac{r^{\alpha+2}u_r(r,\tau)^2}{r}\dd r<\infty,
  \qquad
  \int_0^\tau \frac{r^\alpha u_\tau(r,\tau)^2}{r}\dd r<\infty.
\]
Therefore, each of the two \(W^{1,1}\)-functions has a limit at \(r=0\),
and that limit must be zero. Thus there is no boundary contribution at
\(r=0\). Integrating by parts in the \(r\) variable gives
\begin{align*}
  &\int_0^1 w(\tau)\int_0^\tau r^{\alpha+1}u_r(u_r+ru_{rr})\dd r \dd \tau\\
  &=\int_0^1 w(\tau)\int_0^\tau r^{\alpha+1}u_r^2\dd r\dd \tau
    +\frac12\int_0^1 w(\tau)\int_0^\tau r^{\alpha+2}\partial_r(u_r^2)\dd r \dd \tau \\
  &=\frac12\int_0^1\tau^{\alpha+2}(1-\tau^2)^\beta u_r(\tau,\tau)^2\dd \tau
    -\frac\alpha2\int_0^1w(\tau)\int_0^\tau r^{\alpha+1}u_r^2\dd r\dd \tau,
\end{align*}
and
\begin{align*}
  &\int_0^1(1-\tau^2)^{\beta+1}\int_0^\tau r^{\alpha-1}u_\tau(ru_{r\tau})\dd r\dd\tau
  =\frac12\int_0^1(1-\tau^2)^{\beta+1}\int_0^\tau r^\alpha\partial_r(u_\tau^2)\dd r \dd\tau \\
  &=\frac12\int_0^1\tau^\alpha(1-\tau^2)^{\beta+1} u_\tau(\tau,\tau)^2\dd\tau
    -\frac\alpha2\int_0^1(1-\tau^2)^{\beta+1}\int_0^\tau r^{\alpha-1}u_\tau^2\dd r\dd\tau.
\end{align*}
Therefore
\begin{equation}\label{eq:lhs-rellich}
\begin{aligned}
  \cE_0[u,ru_r]
  &=\frac12\int_0^1
  \left[(p-1)\tau^2u_r(\tau,\tau)^2
        +(1-\tau^2)u_\tau(\tau,\tau)^2\right]
  \tau^\alpha(1-\tau^2)^\beta\dd\tau  \\
  &\qquad -\frac\alpha2\cE_0[u,u].
\end{aligned}
\end{equation}
Taking $\zeta=u$ in \eqref{eq:weak} gives
\[
  \cE_0[u,u]=\lambda\cB_0[u,u]
  =\lambda\int_0^1 b(\tau)^2\tau^\alpha(1-\tau^2)^\beta\dd\tau.
\]
Combining this with \eqref{eq:test-start} and \eqref{eq:lhs-rellich}, we obtain
\begin{equation}\label{eq:boundary-rellich-raw}
\begin{aligned}
&\int_0^1
\left\{
\frac12\left[(p-1)\tau^2U(\tau)^2+(1-\tau^2)V(\tau)^2\right]
-\lambda\tau b(\tau)U(\tau)
\right\}
\tau^\alpha(1-\tau^2)^\beta\dd\tau  \\
&\hspace{35mm}
=\frac{\alpha\lambda}{2}
\int_0^1 b(\tau)^2\tau^\alpha(1-\tau^2)^\beta\dd\tau,
\end{aligned}
\end{equation}
where
\[
  U(\tau):=u_r(\tau,\tau),
  \qquad
  V(\tau):=u_\tau(\tau,\tau).
\]
It remains to express $U$ and $V$ in terms of the boundary trace $b$.
Since
\[
  b'(\tau)=u_r(\tau,\tau)+u_\tau(\tau,\tau)=U(\tau)+V(\tau),
\]
and the Steklov condition \eqref{eq:steklov-bc} gives
\[
  (p-1)\tau U(\tau)-\frac{1-\tau^2}{\tau}V(\tau)=\lambda b(\tau),
\]
we solve the resulting linear system to get
\begin{equation}\label{eq:UV}
  U(\tau)=\frac{\lambda\tau b(\tau)+(1-\tau^2)b'(\tau)}{M(\tau)},
  \qquad
  V(\tau)=\frac{(p-1)\tau^2b'(\tau)-\lambda\tau b(\tau)}{M(\tau)}.
\end{equation}
Substituting \eqref{eq:UV} into the integrand in \eqref{eq:boundary-rellich-raw} yields the elementary identity
\begin{align*}
&\frac12\left[(p-1)\tau^2U^2+(1-\tau^2)V^2\right]-\lambda\tau bU \\
&\qquad =
\frac{
  (p-1)\tau^2(1-\tau^2)b'^2
  -2\lambda\tau(1-\tau^2)bb'
  -\lambda^2\tau^2b^2
}{2M(\tau)}.
\end{align*}
Together with \eqref{eq:boundary-rellich-raw}, this proves 
\begin{equation}\label{eq:rellich-identity}
\begin{aligned}
&\int_0^1
\frac{
  (p-1)\tau^2(1-\tau^2)b'(\tau)^2
  -2\lambda\tau(1-\tau^2)b(\tau)b'(\tau)
  -\lambda^2\tau^2b(\tau)^2
}{M(\tau)}
\tau^\alpha(1-\tau^2)^\beta\dd\tau \\
&\hspace{35mm}
=\alpha\lambda
\int_0^1 b(\tau)^2\tau^\alpha(1-\tau^2)^\beta\dd\tau.
\end{aligned}
\end{equation}
Integrating by parts gives
\begin{align}
    &\int_0^1
\frac{
  -2\tau(1-\tau^2)b(\tau)b'(\tau)
}{M(\tau)}
\tau^\alpha(1-\tau^2)^\beta\dd\tau
=\int_0^1
b(\tau)^2\left(\frac{\tau^{\alpha+1}(1-\tau^2)^{\beta+1}
}{M(\tau)}\right)'
\dd\tau\non\\
=&\int_0^1
b(\tau)^2\left(\frac{(\a+1)(1-\tau^2)-2(\b+1)\tau^2
}{M(\tau)}-\frac{2(p-2)\tau^2(1-\tau^2)}{M(\tau)^2}\right)\tau^{\alpha}(1-\tau^2)^{\beta}
\dd\tau,\label{eq37}
\end{align}
Indeed, to justify the absence of endpoint terms in this integration by parts,
set
\[
  G(\tau):=
  \frac{b(\tau)^2\tau^{\alpha+1}(1-\tau^2)^{\beta+1}}{M(\tau)}.
\]
The weighted estimates for \(b\) and \(b'\) in
Remark~\ref{remark2.8}, together with Cauchy--Schwarz, imply that
\(G\in W^{1,1}(0,1)\).
Thus \(G\) has finite limits at
both endpoints, and then
the weighted integrability of \(b\) forces both endpoint limits of \(G\)
to be zero. Hence the integration by parts produces no term at either
\(\tau=0\) or \(\tau=1\).

Substituting \eqref{eq37} into \eqref{eq:rellich-identity}, we obtain
\begin{align}
    \int_0^1
b'(\tau)^2\frac{
  (p-1)\tau^{\a+2}(1-\tau^2)^{\b+1}
}{M(\tau)}
\dd\tau 
=\l\int_0^1
b(\tau)^2
  N(\tau)
\tau^\alpha(1-\tau^2)^\beta\dd\tau,\label{eq38}
\end{align}
where
\begin{align*}
    N(\tau)=-\frac{1}{M(\tau)}+\frac{2(n-1)\tau^2}{M(\tau)}+\frac{2(p-1)\tau^2}{M(\tau)^2}.
\end{align*}

\end{proof}

Set
\begin{align*}
    &P(t):=\frac{(p-1)t^{\a+2}(1-t^2)^{\b+1}}{1+(p-2)t^2}>0\qquad\T{in }(0,1),\\
    &\rho(t):=t^{\a}(1-t^2)^{\b}>0\qquad\T{in }(0,1).
\end{align*}
Define the weighted Sobolev space
\[
 V:=\left\{
 f\in W^{1,2}_{\mathrm{loc}}(0,1):
 \int_0^1\rho(t)|f(t)|^2\dd t
 +\int_0^1P(t)|f'(t)|^2\dd t<\infty
 \right\}.
\]
For $f\in V$, define the quadratic form
\begin{equation}\label{eq:Qdef}
       \calQ[f]
        :=
        \int_0^1 P(t)f'(t)^2\dd t
        -p\int_0^1 N(t)\rho(t)f(t)^2\dd t.
\end{equation}

Indeed, Lemma~\ref{l:pohozaev} reduces
Theorem~\ref{ass:axisymmetric-gap} to the following sharp
one-dimensional inequality.
\begin{proposition}\label{p:ineq}  
If $f\in V$ satisfies
\begin{equation}\label{eq:meanzero}
        \int_0^1 f(t)\rho(t)\dd t=0,
\end{equation}
then
\begin{equation}\label{eq:Qnonnegative}
        \calQ[f]\ge 0.
\end{equation}
Moreover,
\begin{equation}\label{eq:equalitycase}
        \calQ[f]=0
        \quad\Longleftrightarrow\quad
        f(t)=C\left(t^2-\frac1p\right)
\end{equation}
for some constant $C\in\R$.
\end{proposition}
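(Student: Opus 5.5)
The plan is to treat $\calQ$ as the quadratic form of the singular Sturm--Liouville operator
\[
\calL f:=-(Pf')'-pN\rho f ,
\]
acting in $L^2((0,1);\rho)$, and to run a Vakhitov--Kolokolov / Weinstein-type constrained-minimization argument. The key observation is that $f_0(t):=t^2-\frac1p$ should satisfy
\[
\calL f_0=c_0\,\rho\qquad\text{for an explicit constant } c_0,
\]
which is the Euler--Lagrange equation for minimizing $\calQ$ under the constraint \eqref{eq:meanzero}. The first step is to verify this identity by direct differentiation, using $P'$, the formula for $N$, and the exponent relation $\alpha+n=p(\alpha+1)$ already used in the beta-function computation, and to record the sign of $c_0$. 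Since $u=r\tau-\frac1p$ attains equality, I also expect $\calQ[f_0]=0$. Pairing $\calL f_0=c_0\rho$ with $f_0$ gives $\calQ[f_0]=c_0\int_0^1 f_0\rho\,dt=0$, which is consistent with $\cB_0[h_0,1]=0$. One must also check $f_0\in V$, which is immediate.

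The second step is functional-analytic. I would show that $V$ embeds compactly into $L^2(\rho)$ and that $\calQ+K\|\cdot\|_{L^2(\rho)}^2$ is coercive on $V$ for large $K$; here $N$ is bounded on $[0,1]$, since $M$ stays positive (it equals $1$ at $0$ and $p-1$ at $1$). The compactness comes from one-dimensional weighted Hardy inequalities at the two singular endpoints. Near $t=0$ one has $P\sim t^{\alpha+2}$ and $\rho\sim t^\alpha$. Near $t=1$ one has $P\sim(1-t)^{\beta+1}$ and $\rho\sim(1-t)^\beta$, so $\int_{1-\delta}^1\rho f^2$ is controlled by $\delta\int P f'^2$ plus local terms. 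Consequently the self-adjoint realization (Friedrichs extension) of $\calL$ has discrete spectrum $\nu_0<\nu_1\le\cdots$ with simple eigenvalues, as in any one-dimensional problem. The eigenfunctions in $V$ are the solutions with the principal (regular) behaviour at both endpoints: bounded at $0$ (the other Frobenius exponent is $-\alpha-1$) and bounded at $1$ (the other exponent is $-\beta$, with a logarithm when $n=3$).

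The third step, which I expect to be the main obstacle, is to prove that $\calL$ has exactly one negative eigenvalue and that $0$ is not an eigenvalue. Negativity of $\nu_0$ is easy: $\calQ[1]=-p\int_0^1 N\rho\,dt$, and I would check $\int_0^1 N\rho\,dt>0$ by writing $N\rho$ as a derivative plus a positive term, or by integrating directly with beta integrals. For the upper count I would use Sturm oscillation. Let $y$ be the solution of $\calL y=0$ that is regular at $t=0$. Then the number of negative eigenvalues equals the number of zeros of $y$ in $(0,1)$, and $0$ is an eigenvalue iff $y$ is also regular at $1$. To control $y$, I would first try to find it explicitly. The substitution $s=t^2$ should turn $\calL y=0$ into a Fuchsian equation with singular points $0$, $1$, $1/(2-p)$, $\infty$, and I would look for a polynomial or hypergeometric-type solution, guided by the fact that $f_0$ and $1$ generate $\calL$-images proportional to $\rho$ and $N\rho$. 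If no closed form appears, the fallback is a Sturm comparison/Wronskian argument against $f_0$, which has exactly one zero at $t=p^{-1/2}$ and satisfies $\calL f_0=c_0\rho$. Depending on the sign of $c_0$, $f_0$ is a super- or subsolution, which forces $y$ to have at most one zero. The nonregularity of $y$ at $1$ would then follow from a Wronskian identity $P(yf_0'-y'f_0)\big|_0^{t}=c_0\int_0^t\rho\,y$, evaluated as $t\to1$.

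Finally, with exactly one negative eigenvalue and $0\notin\operatorname{spec}\calL$, the operator $\calL$ is invertible and $g:=\calL^{-1}\rho=f_0/c_0$. The standard lemma then says that $\calQ\ge0$ on $\{f\in V:\int_0^1 f\rho\,dt=0\}$ if and only if $\langle g,\rho\rangle_{L^2(dt)}\le0$, and here $\langle g,\rho\rangle=c_0^{-1}\int_0^1 f_0\rho\,dt=0$. For the equality case, decompose $f=af_0+w$ with $w$ chosen $\calQ$-orthogonal to $f_0$. On this complement $\calQ$ is positive definite, because the unique negative direction is absorbed by $\rho$ and $0$ is not an eigenvalue. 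Hence $\calQ[f]=0$ with \eqref{eq:meanzero} forces $w=0$, i.e.\ $f=C(t^2-\frac1p)$. This gives \eqref{eq:Qnonnegative} and \eqref{eq:equalitycase}.
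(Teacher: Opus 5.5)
Your Step~1 (the identity $-(Pf_0')'-pN\rho f_0=c_0\rho$, which holds with $c_0=-1$; cf.\ Claim~\ref{claim1}) agrees with the paper, and so does your closing constrained-minimization argument. Steps~2 and~3, however, contain genuine gaps.

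\textbf{The compactness claim in Step~2 is false at $t=0$.} Near $0$ one has $P\sim(p-1)t^{\alpha+2}$, $\rho\sim t^{\alpha}$ and $-pN\rho\sim p\,t^{\alpha}$ (since $N(0)=-1$). So the form is asymptotically invariant under $t\mapsto\lambda t$, and the weighted Hardy inequality at $0$ gives boundedness but no smallness. Concretely, set $t=e^{-s}$ and $f(e^{-s})=e^{cs}F(s)$ with $c=\frac{n-1}{2(p-1)}$. The form becomes asymptotically $\int((p-1)|F'+cF|^2+p|F|^2)\,ds$ on a half-line. Translates $F(s-j)$ of a fixed bump are then bounded in $V$, converge weakly to $0$, and have $L^2(\rho)$-norms bounded away from $0$. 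Hence $V\hookrightarrow L^2(\rho)$ is not compact and the spectrum is not discrete. The paper proves $\sigma_{\mathrm{ess}}\subset[\Lambda_*,\infty)$ with $\Lambda_*=p+\frac{(n-1)^2}{4(p-1)}$ (Lemma~\ref{l:embed}), and the sequence above shows that essential spectrum is really present. For the same reason your Frobenius exponents at $0$ are wrong. The potential is not of lower order there: the indicial equation is $(p-1)m^2+(n-1)m-(p-\nu)=0$, whose roots depend on the spectral parameter $\nu$ and become complex for $\nu>\Lambda_*$. What your argument actually needs is $0<\inf\sigma_{\mathrm{ess}}$. The oscillation count at $\nu=0$, the invertibility of the operator, and the bounded inverse on the positive part all rest on this. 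It must be proved, e.g.\ by the IMS localization of Lemma~\ref{l:embed}.

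\textbf{Step~3, which is the heart of the proposition, is not carried out, and neither route you propose works.} In $s=t^2$ the zero-energy equation has four singular points (Heun type), and you give no reason to expect a closed form. For instance, for $p\neq2$ the ansatz $t^m(1+ct^2)$, which produces the paper's explicit eigenfunctions, is incompatible with eigenvalue $0$. The comparison with $f_0$ only helps on $(0,p^{-1/2})$: there $-f_0$ is a positive supersolution and does exclude zeros of the solution $y$ regular at $0$. On $(p^{-1/2},1)$, however, $f_0$ is a positive \emph{sub}solution and gives no bound on the number of zeros. The Wronskian identity as $t\to1$ cannot exclude a zero eigenvalue either. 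If $y$ were regular at $1$, it would only yield $\int_0^1\rho y\,dt=0$, which every zero-mode satisfies automatically (pair the equation for $y$ with $f_0$ and integrate by parts).

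The missing idea is to bracket $0$ by explicit eigenfunctions at \emph{other} eigenvalues. One is $a(t)=t^{p/(p-1)}>0$, with eigenvalue $-\frac{np}{p-1}$. The other is $g(t)=t^{m_*}(1+c_*t^2)$, where $m_*=\frac{\Delta-n}{p-1}$, $\Delta=\sqrt{n^2+p^2-2p}$ and $c_*<-1$; it has exactly one interior zero and eigenvalue $\delta_2=\frac{(n+1)(\Delta-n)+p}{p-1}\in(0,\Lambda_*)$. By simplicity and Sturm ordering below $\Lambda_*$, these give the first two eigenvalues. Hence there is exactly one negative eigenvalue and $0$ is not in the spectrum (Lemma~\ref{lem:moser-index}).

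Finally, your equality argument is ill-posed as written. When $\int_0^1 f\rho\,dt=0$, the function $w=f-af_0$ is $\calQ$-orthogonal to $f_0$ for every $a$, and $\calQ[f_0]=0$, so $\calQ$ is not positive definite on ``this complement''. The equality case should instead be read off from equality in the Cauchy--Schwarz step, as in the paper.
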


Now we consider the variation of \eqref{eq:Qdef} and obtain a one-dimensional
eigenproblem. Suppose that
\[
        f\in V
\]
is an eigenfunction with eigenvalue \(\delta\), that is,
  \begin{equation}\label{eq:ode}
\begin{cases}
\displaystyle
\frac{1}{\rho}\left[
(P f')'-Q f
\right]+\d f=0,
&\text{in }(0,1),\\[2mm]
\displaystyle
\lim_{t\downarrow0}P(t)f'(t)=0,
\qquad
\lim_{t\uparrow1}P(t)f'(t)=0
\end{cases}
\end{equation}
where
\begin{align*}
    Q(t):=-p N(t) \rho(t)\qquad\T{in }(0,1).
\end{align*}
Denote by \(\calL\) the formal differential expression
\[
        \calL f:=-\frac{1}{\rho}\left[(P f')'-Q f\right].
\]
More precisely, let
\[
        H:=L^2([0,1];\rho),\quad  \langle f,g\rangle_H:=\int_0^1 f g \rho \dd t.
\]
With the norm
\[
        \|f\|_V^2:=\|f\|_H^2+\int_0^1P(t)|f'(t)|^2\dd t,
\]
the space \(V\) is complete. Hence, $V$ and $H$ are Hilbert spaces.
The symmetric quadratic form defined on $V$
\[
        \calQ[f,g]
        :=
        \int_0^1 P(t)f'(t)g'(t)\dd t
        -p\int_0^1 N(t)\rho(t)f(t)g(t)\dd t
\]
is lower semibounded on $H$. Indeed, for some constant \(C>0\),
\[
        \calQ[f]\ge
        \int_0^1P(t)|f'(t)|^2\dd t
        -C\|f\|_H^2,
\]
since $M\ge \min\{1,p-1\}>0$ and $N$ is bounded on $[0,1]$. Therefore, \(\calQ\) is a closed lower semibounded form; see Kato \cite[Theorem 1.11 in Chapter VI]{Kato1995}.
By the first representation theorem, \(\calQ\) defines a self-adjoint lower-semibounded operator, still denoted by \(\calL\), through
\[
        \calQ[f,g]=\langle \mathcal L f,g\rangle_H
        \qquad
        f\in \operatorname{Dom}(\calL)\subset V,\ g\in V,
\]
see Kato \cite[Theorem 2.1 in Chapter VI]{Kato1995}. It is straightforward to verify that if $u\in \operatorname{Dom}(\calL)$, then for any cut-off function $\eta\in C^2_c([0,1])$, we have $\eta u\in \operatorname{Dom}(\calL)$ and 
\begin{equation}
\calL(\eta u)
=\eta \calL(u)
-\frac{2P\eta'}{\rho}u'
-\frac{(P\eta')'}{\rho}u.
\label{eq-tool}
\end{equation}

\begin{lemma}[Essential spectrum lower bound]\label{l:embed}
Let \(\calL\) be the self-adjoint operator associated with the form \(\calQ\) on
\(H=L^2([0,1];\rho)\). Then
\[
        \sigma_{\mathrm{ess}}(\calL)\subset[\Lambda_*,\infty),
        \qquad
        \Lambda_*:=p+\frac{(n-1)^2}{4(p-1)}.
\]
Consequently, the spectrum of \(\calL\) in \((-\infty,\Lambda_*)\) consists only of isolated eigenvalues of finite multiplicity, and these eigenvalues can accumulate only at \(\Lambda_*\).
\end{lemma}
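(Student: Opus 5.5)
The plan is to compute the bottom of the essential spectrum with a Persson-type (Glazman decomposition) argument. The underlying principle is
\[
\inf\sigma_{\mathrm{ess}}(\calL)=\lim_{\delta\downarrow0}\ \inf\Bigl\{\calQ[f]:\ f\in V,\ \|f\|_H=1,\ \operatorname{supp}f\subset(0,\delta)\cup(1-\delta,1)\Bigr\}.
\]
To justify the needed lower bound, I will use a partition of unity $\eta_0^2+\eta_1^2+\eta_2^2=1$ on $[0,1]$, with $\eta_0$ supported near $0$, $\eta_2$ near $1$, and $\eta_1\in C_c^2(0,1)$. The IMS-type localization formula that follows from \eqref{eq-tool} gives
\[
\calQ[f]=\sum_i\calQ[\eta_i f]-\int_0^1 P\sum_i|\eta_i'|^2f^2\,dt .
\]
The error term is $\int P|\eta'|^2 f^2$ with $P/\rho$ bounded, so it is a bounded multiplication operator supported where the $\eta_i'$ live, away from the endpoints. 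Since $P$ and $\rho$ are smooth and positive on compact subintervals, Rellich's theorem makes the middle piece $\calQ[\eta_1 f]$ and the error terms compact perturbations. They therefore do not affect $\sigma_{\mathrm{ess}}$, and it suffices to bound the form from below on functions supported near each endpoint separately.

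\emph{Endpoint $t=0$.} Here $M(t)\to1$, $\rho=t^\alpha(1+O(t^2))$, $P=(p-1)t^{\alpha+2}(1+O(t^2))$, and $N(t)\to-1$, so $-pN\to p$. The key tool is the sharp weighted Hardy inequality
\[
\int_0^\delta t^{\alpha+2}f'^2\,dt\ \ge\ \Bigl(\frac{\alpha+1}{2}\Bigr)^2\int_0^\delta t^{\alpha}f^2\,dt ,\qquad \operatorname{supp} f\subset[0,\delta).
\]
It follows from $s=-\log t$ and the conjugation $f=e^{(\alpha+1)s/2}g$, which turns the principal part into $-(p-1)g''+(p-1)\frac{(\alpha+1)^2}{4}g$ on a half-line. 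Since $\alpha+1=\frac{n-1}{p-1}$, we have $(p-1)\frac{(\alpha+1)^2}{4}=\frac{(n-1)^2}{4(p-1)}$. Combining this with the continuity of $N$ and $M$ at $0$ gives
\[
\calQ[f]\ge(\Lambda_*-C\delta)\|f\|_H^2\quad\text{for } \operatorname{supp}f\subset(0,\delta).
\]

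\emph{Endpoint $t=1$.} Here $P\simeq(1-t)^{\beta+1}$, $\rho\simeq(1-t)^\beta$, and $N$ is bounded. I will show that $\{f\in V:\operatorname{supp}f\subset(1-\delta,1)\}$ embeds compactly into $H$, with
\[
\int_{1-\epsilon}^1\rho f^2\le \omega(\epsilon)\int P f'^2,\qquad \omega(\epsilon)\to0 .
\]
For $\beta>0$ this is a Hardy inequality with a vanishing constant. For $\beta=0$ (i.e.\ $n=3$) I will use the pointwise bound $|f(t)|^2\lesssim|\log(1-t)|\int Pf'^2$ and integrate. Hence this endpoint contributes no essential spectrum at all; equivalently, $\calQ[f]\ge K\|f\|_H^2$ for any prescribed $K$ once $\delta$ is small.

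Putting the pieces together yields $\inf\sigma_{\mathrm{ess}}\ge\Lambda_*-C\delta$ for every $\delta$, hence $\sigma_{\mathrm{ess}}\subset[\Lambda_*,\infty)$. The remaining statement, that the spectrum below $\Lambda_*$ consists of isolated eigenvalues of finite multiplicity accumulating only at $\Lambda_*$, is then standard for lower-semibounded self-adjoint operators.

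The main obstacle is to make the localization rigorous in the form setting, so that the compact-perturbation claims hold in $V$ rather than only formally. The cleanest route I would try first is to show that the form $\calQ-\Lambda_*+\epsilon$, restricted to functions vanishing on a fixed middle interval, is bounded below by a positive constant plus a compact form. Then I would invoke the min–max characterization of $\inf\sigma_{\mathrm{ess}}$ via finite-codimension subspaces.
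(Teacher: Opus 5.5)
Your proposal is correct and follows essentially the same route as the paper: IMS localization, the substitution $t=e^{-s}$ with conjugation by $e^{(\alpha+1)s/2}$ at $t=0$ giving $(p-1)\frac{(\alpha+1)^2}{4}=\frac{(n-1)^2}{4(p-1)}$, and compactness at $t=1$. The only differences are cosmetic: at $t=1$ you use a vanishing-constant Hardy bound or a logarithmic bound where the paper identifies the space with radial $H^1(B^{n-1})$, and you conclude via min--max where the paper uses Weyl sequences. One small correction: the middle piece $\calQ[\eta_1 f]$ is not itself a compact form, but its kinetic part is nonnegative and can be discarded, leaving only the form-compact term $\int_0^1 Q\eta_1^2 f^2\,dt$.
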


\begin{proof}
We use the IMS/Weyl-sequence argument as in \cite[Chapter 3]{CFKS87} to get a lower bound on the bottom of the essential spectrum. The point is that the full embedding
\(V\hookrightarrow H\) is not compact at \(t=0\); nevertheless the loss of compactness occurs only above the threshold \(\Lambda_*\).

\medskip
\noindent\textbf{Step 1: local compactness away from \(t=0\).}
Fix \(\varepsilon>0\). On \([\varepsilon,1/2]\), the weights \(\rho\) and \(P\) are bounded above and below by positive constants, so Rellich's theorem gives compactness into \(L^2\).

Near \(t=1\), set
\[
        y=\sqrt{1-t^2},\qquad F(y)=f(t)=f(\sqrt{1-y^2}).
\]
Since \(1-t^2=y^2\) and \(dt=-(y/t)\dd y\), as \(y\downarrow0\) one has
\[
        \rho(t)\dd t\simeq y^{2\beta+1}\dd y=y^{n-2}\dd y.
\]
Moreover,
\[
        f'(t)=F_y(y)\frac{dy}{dt}=-F_y(y)\frac{t}{y},
\]
and therefore
\[
        P(t)|f'(t)|^2\dd t
        \simeq
        y^{2\beta+2}\frac{|F_y(y)|^2}{y^2}y\dd y
        =
        y^{n-2}|F_y(y)|^2\dd y.
\]
Thus near \(t=1\) the space \(W^{1,2}([\frac{1}{2},1],\rho,P)\) is equivalent to
\[
        W^{1,2}\!\left([0,\frac{\sqrt{3}}{2}],y^{n-2},y^{n-2}\right),
\]
which corresponds to the radial \(H^1(B^{n-1}_{\sqrt{3}/2})\) space. This radial
\(H^1\) space compactly embeds into \(L^2(B^{n-1}_{\sqrt{3}/2})\). Hence bounded
subsets of \(V\) are precompact in \(H\) away from \(t=0\).

\medskip
\noindent\textbf{Step 2: the tail estimate at \(t=0\).}
Let
\[
        c:=\frac{\alpha+1}{2}=\frac{n-1}{2(p-1)}.
\]
Put
\[
        t=e^{-s},\qquad F(s):=e^{-cs}f(e^{-s}),
\]
so \(f(e^{-s})=e^{cs}F(s)\). Let \(f\in V\) vanish almost everywhere on
\((\varepsilon,1)\), and set \(S=-\log\varepsilon\). Then
\[
        \int_0^\varepsilon |f(t)|^2\rho(t)\dd t
        =
        \int_S^\infty |F(s)|^2w(s)\dd s,
        \qquad
        w(s):=(1-e^{-2s})^\beta.
\]
Also
\[
        f'(e^{-s})=-e^{(c+1)s}\bigl(F'(s)+cF(s)\bigr),
\]
and hence
\[
        \int_0^\varepsilon P(t)|f'(t)|^2\dd t
        =
        \int_S^\infty a(s)|F'(s)+cF(s)|^2\dd s,
\]
where
\[
        a(s):=\frac{(p-1)(1-e^{-2s})^{\beta+1}}{1+(p-2)e^{-2s}}.
\]
Finally,
\[
        -p\int_0^\varepsilon N(t)\rho(t)|f(t)|^2\dd t
        =
        \int_S^\infty V_0(s)|F(s)|^2w(s)\dd s,
        \qquad
        V_0(s):=-pN(e^{-s}).
\]
We now justify the Sobolev regularity and endpoint behavior needed below.
For \(S\) sufficiently large, \(a\) and \(w\) are bounded above and below
by positive constants on \([S,\infty)\).  The preceding identities and
\(f\in V\) therefore give
\[
        F\in L^2(S,\infty),
        \qquad
        F'+cF\in L^2(S,\infty),
\]
so \(F\in W^{1,2}(S,\infty)\).  Moreover, the weights defining \(V\)
are bounded above and below by positive constants locally in $(0,1)$. Thus \(f\in H^1_{loc}(0,1)\subset C^\a_{loc}(0,1)\), and the fact that \(f=0\) on \((\varepsilon,1)\)
implies \(f(\varepsilon)=0\).  Hence \(F(S)=0\).  Finally, the continuous
representative of a function in \(W^{1,2}(S,\infty)\) tends to zero at
infinity; indeed, for any $t\ge 0$
\[
      \bigl||F(s+t)|^2-|F(s)|^2\bigr|
        \le
        2\|F\|_{L^2(s,\infty)}
        \|F'\|_{L^2(s,\infty)}
        \longrightarrow0
        \qquad\text{as }s\to\infty.
\]
Consequently, $\lim_{s\rightarrow\infty}F^2(s)=0$ and
\[
        \int_S^\infty 2cF'F\dd s
        =0.
\]
Since \(a(s)\to p-1\), \(V_0(s)\to p\) as $s\rightarrow+\infty$, and \(0<w(s)\le1\), for every \(0<\eta<p-1\) we may choose \(S\) sufficiently large so that, on \([S,\infty)\),
\[
        a(s)\ge p-1-\frac{\eta}{1+c^2},
        \qquad
        V_0(s)\ge p-\frac{\eta}{1+c^2}.
\]
Therefore
\[
        \calQ[f]
        \ge
        \left(p-1-\frac{\eta}{1+c^2}\right)
        \int_S^\infty |F'+cF|^2\dd s
        +\left(p-\frac{\eta}{1+c^2}\right)
        \int_S^\infty |F|^2w(s)\dd s.
\]
For the function \(F\) obtained above,
\[
        \int_S^\infty |F'+cF|^2\dd s
        =
        \int_S^\infty |F'|^2\dd s
        +c^2\int_S^\infty |F|^2\dd s
        \ge
        c^2\int_S^\infty |F|^2w(s)\dd s,
\]
where the cross term vanishes by the endpoint behavior just proved and
the inequality follows from \(w\le1\).
Consequently, for every \(0<\eta<p-1\) there exists
\(\varepsilon>0\) such that
\[
        f=0\ \text{a.e. on }(\varepsilon,1)
        \quad\Longrightarrow\quad
        \calQ[f]\ge(\Lambda_*-\eta)\|f\|_H^2,
\]
where
\[
        \Lambda_*=p+(p-1)c^2=p+\frac{(n-1)^2}{4(p-1)}.
\]

\medskip
\noindent\textbf{Step 3: IMS localization and essential spectrum estimate.}
Choose \(\chi_0,\chi_1\in C^2([0,1])\) such that
\[
        \chi_0^2+\chi_1^2=1,\qquad
        \chi_0=1\ \text{on }(0,\varepsilon/2),\qquad
        \chi_0=0\ \text{on }(\varepsilon,1).
\]
Then \(\chi_1\) is supported away from \(t=0\). Direct expansion of the derivative term gives
\[
        \calQ[f]
        =
        \calQ[\chi_0f]+\calQ[\chi_1f]-R_\varepsilon[f],
\]
where
\[
        R_\varepsilon[f]
        :=
        \int_0^1P(t)\bigl((\chi_0')^2+(\chi_1')^2\bigr)|f(t)|^2\dd t.
\]
The coefficient in \(R_\varepsilon\) is supported in
\([\varepsilon/2,\varepsilon]\), a compact interval away from the
singular endpoint.

Assume, to the contrary, that \(\lambda<\Lambda_*\) belongs to \(\sigma_{\mathrm{ess}}(\calL)\). Choose
\[
        0<\eta<\min\left\{p-1,\frac{\Lambda_*-\lambda}{4}\right\}.
\]
By Weyl's criterion \cite[Theorem 5.13]{Borthwick2020}, there are \(f_j\in\operatorname{Dom}(\calL)\) such that
\[
        \|f_j\|_H=1,\qquad
        f_j\rightharpoonup0\text{ in }H,\qquad
        (\calL-\lambda)f_j\to0\text{ in }H.
\]
Then
\[
        \calQ[f_j]
        =
        \langle\calL f_j,f_j\rangle_H
        =
        \lambda+o(1).
\]
In particular, \(\calQ[f_j]\) is bounded, and the lower semiboundedness estimate for \(\calQ\) gives a uniform bound for
\(\int_0^1P|f_j'|^2\).

By local compactness away from \(t=0\) and weak convergence to zero,
\[
        \chi_1f_j\to0\qquad\text{strongly in }H.
\]
Thus \(R_\varepsilon[f_j]\to0\). Since \(N\) is bounded,
\[
        \calQ[\chi_1f_j]\ge -C\|\chi_1f_j\|_H^2=o(1).
\]
Since \(\chi_0f_j\in V\) and \(\chi_0f_j=0\) on
\((\varepsilon,1)\), the tail estimate applies to \(\chi_0f_j\).
Together with the IMS identity, this gives
\[
        \calQ[f_j]\ge(\Lambda_*-\eta)\|\chi_0f_j\|_H^2+o(1).
\]
Moreover,
\[
        \|\chi_0f_j\|_H^2+\|\chi_1f_j\|_H^2=1,
\]
and hence \(\|\chi_0f_j\|_H\to1\). Therefore
\[
        \lambda=\lim_{j\to\infty}\calQ[f_j]\ge\Lambda_*-\eta,
\]
which contradicts the choice of \(\eta\). Thus
\[
        \sigma_{\mathrm{ess}}(\calL)\subset[\Lambda_*,\infty).
\]
The final assertion follows from the Min-Max principle for lower semibounded self-adjoint operators; see \cite[Theorem 5.15]{Borthwick2020}.
\end{proof}
\begin{lemma}[Sturm--Liouville Theory]\label{l:sl}
The following properties hold for the isolated eigenvalues of the singular Sturm--Liouville problem
\eqref{eq:ode} below the threshold \(\Lambda_*\):
\begin{enumerate}
    \item If \(f_1\) and \(f_2\) are two eigenfunctions corresponding to the same eigenvalue
    \(\delta<\Lambda_*\), then
    \[
    f_1 = c f_2.
    \]
    In other words, each eigenspace of \(\calL\) below \(\Lambda_*\) is one-dimensional.

    \item If the eigenvalues of \(\calL\) in \((-\infty,\Lambda_*)\) are ordered increasingly as
    \[
      \delta_1<\delta_2<\delta_3<\cdots<\Lambda_*,
    \]
    and \(e_i\) is an eigenfunction associated with \(\delta_i\), then \(e_i\) has exactly \(i-1\) interior zeros in \((0,1)\).
\end{enumerate}
\end{lemma}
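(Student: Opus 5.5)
The plan is to treat both assertions with the classical Sturm--Liouville tools: Wronskian arguments for simplicity, and Sturm comparison together with the variational characterization for the nodal count. The only new difficulty is the behaviour at the singular endpoints $t=0$ and $t=1$. The first step is therefore to record the boundary behaviour of eigenfunctions with $\delta<\Lambda_*$.

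\textbf{Behaviour at the endpoints.} Near $t=1$ we use the substitution $y=\sqrt{1-t^2}$ from Step~1 of Lemma~\ref{l:embed}. The equation becomes a radial Laplace-type equation in $\R^{n-1}$ with a bounded potential. Its indicial roots are $0$ and $-(n-3)$, and only the regular root is compatible with $f\in V$ when $n\ge4$. For $n=3$ the second solution is logarithmic, and we exclude it by the condition $Pf'\to0$. Near $t=0$ we use the variables $t=e^{-s}$, $F=e^{-cs}f$ from Step~2. The equation is asymptotically of constant-coefficient type, $(p-1)(F''-c^2F)+(\delta-p)F\approx0$, up to exponentially small errors. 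Since $\delta<\Lambda_*=p+(p-1)c^2$, the characteristic roots $\pm\kappa$ with $\kappa=\sqrt{c^2-(\delta-p)/(p-1)}>0$ are real and distinct. Membership in $H$ selects the decaying branch $F\sim e^{-\kappa s}$. By Levinson-type asymptotics, the solutions in $V$ near each endpoint therefore form a one-dimensional space. Moreover, both $P f'$ and the Wronskian $P(f_1f_2'-f_1'f_2)$ tend to $0$ at each endpoint for any two such solutions.

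\textbf{Part (1).} Let $f_1,f_2$ be two eigenfunctions with the same eigenvalue $\delta$. Their Wronskian $P(f_1f_2'-f_1'f_2)$ is constant on $(0,1)$, because the equation has no first-order term beyond $(Pf')'$. By the previous step it tends to $0$ at an endpoint, so it vanishes identically. Since $f_1,f_2$ solve a regular second-order ODE on compact subintervals, this gives $f_1=cf_2$. The local one-dimensionality of the admissible solution space near $t=0$ yields the same conclusion directly.

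\textbf{Part (2).} By Lemma~\ref{l:embed}, the eigenvalues below $\Lambda_*$ are given by min--max. We first prove that $e_i$ has at least $i-1$ zeros, using the Sturm comparison theorem together with Picone/Wronskian identities between consecutive eigenfunctions on nodal intervals. The boundary terms vanish by the endpoint asymptotics, so $e_{i+1}$ has a zero strictly between any two consecutive zeros of $e_i$, and also one before its first zero and one after its last. Conversely, suppose $e_i$ had $m\ge i$ zeros. We would restrict $e_i$ to its $m+1$ nodal intervals and extend by zero. Each restriction lies in $V$; near $t=0$ this uses the decaying asymptotics, and on interior intervals it is immediate. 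Each restriction also satisfies $\calQ[g]=\delta_i\|g\|_H^2$, as one sees by integrating by parts with vanishing boundary terms. Their span is $(m+1)$-dimensional, so min--max gives $\delta_{m+1}\le\delta_i$, which contradicts strict ordering. The same argument also shows that $e_i$ cannot have infinitely many zeros accumulating at an endpoint. The reason is that the decaying branch $F\sim e^{-\kappa s}$ is eventually of one sign near $t=0$, and the regular branch is nonzero at $t=1$ (the latter follows from the uniqueness theory at the regular singular point).

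\textbf{Main obstacle.} The main obstacle is the rigorous endpoint analysis at $t=0$. One must show that eigenfunctions in $\operatorname{Dom}(\calL)$ really follow the decaying exponential branch, and that they are non-oscillatory there. Non-oscillation is exactly where the condition $\delta<\Lambda_*$ is used: above the threshold the roots would be complex and the solutions oscillatory. To handle this, I would write the ODE in the $s$-variable as a first-order system whose constant matrix has real distinct eigenvalues, perturbed by an integrable $O(e^{-2s})$ term, and apply Levinson's asymptotic theorem. This gives the asymptotics together with their derivatives, which is enough for the boundary terms.
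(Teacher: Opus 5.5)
Your proposal is correct, but it establishes the endpoint facts by a different route than the paper.

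\textbf{Endpoint behaviour.} The paper never computes asymptotics. It shows that the Lagrange bracket $P(fg'-gf')$ has vanishing limits at $t=0$ and $t=1$ for \emph{all} $f,g\in\operatorname{Dom}(\calL)$. Green's identity forces these limits to exist, and testing self-adjointness of $\calL$ against $\chi_0 g$ (admissible by the cutoff rule \eqref{eq-tool}) forces them to be zero. This is short and uses neither an asymptotic ODE theorem nor the hypothesis $\delta<\Lambda_*$. It immediately gives Part~(1) and the boundary terms needed in the Sturm comparison. You instead derive Frobenius asymptotics at $t=1$ and Levinson asymptotics at $t=0$, where $\delta<\Lambda_*$ makes the roots $\pm\kappa$ real. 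This is heavier, but it yields more: a one-dimensional admissible branch at each end, with explicit decay. It also gives eventual one-signedness, hence finitely many zeros, which the paper's ``first zero/last zero'' comparison step uses only implicitly.

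\textbf{Upper bound on zeros.} The paper cites Courant's nodal theorem below the essential spectrum and separately argues that the first eigenfunction has a fixed sign. Your nodal-restriction min--max argument, combined with simplicity from Part~(1), proves this directly and makes the base case of the induction trivial.

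\textbf{The Levinson step is avoidable.} Your restriction argument does not actually need any endpoint asymptotics.
\begin{itemize}
\item Each restriction $g_j$ lies in $V$ simply because $e_i\in V$ and $e_i$ vanishes at the cut points.
\item By the representation theorem, $\calQ[g_j]=\calQ[e_i,g_j]=\langle\calL e_i,g_j\rangle_H=\delta_i\|g_j\|_H^2$.
\item The same argument rules out infinitely many nodal intervals, so non-oscillation at both endpoints follows without Levinson's theorem.
\end{itemize}
Similarly, for $n=3$ the logarithmic solution at $t=1$ is already excluded by finiteness of $\int_0^1 P|f'|^2\,dt$, so the appeal to $Pf'\to0$ there is not needed.
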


\begin{proof}[Proof of Lemma~\ref{l:sl}]
The argument follows the proof of Lemma~6.4 in Figalli--Neumayer
\cite{FigalliNeumayer2019}, with the endpoint at infinity replaced by
the singular endpoint \(t=0\).  We first record the regularity and
endpoint facts needed below.

Let \(f\in V\) be an eigenfunction. In the weak sense it satisfies
\[
  (P f')'=(Q-\delta\rho)f.
\]
If \(I\Subset(0,1)\), then \(P,\rho,Q\) are smooth on \(I\) and \(P\ge c_I>0\). 
The standard elliptic regularity theory implies $f\in C^\infty_{\rm loc}(0,1)$.

\begin{claim}\label{claim2}
     For any two functions \(f,g\in\operatorname{Dom}(\calL)\),
\begin{equation}\label{eq:lagrange-bracket-endpoint}
  \lim_{t\downarrow0}P(t)(f(t)g'(t)-g(t)f'(t))=0,
  \qquad
  \lim_{t\uparrow1}P(t)(f(t)g'(t)-g(t)f'(t))=0.
\end{equation}
\end{claim}
\begin{proof}[Proof of Claim~\ref{claim2}]
     For functions \(u,v\in\operatorname{Dom}(\calL)\), set
\[
  [u,v](t):=P(t)(u(t)v'(t)-v(t)u'(t)).
\]
Green's identity on \((a,b)\Subset(0,1)\) gives
\begin{equation}\label{eq:green-lagrange-identity}
  [u,v](b)-[u,v](a)
  =\int_a^b\bigl((\calL u)v-u(\calL v)\bigr)\rho\,dt.
\end{equation}
The right-hand side has finite one-sided limits as \(a\downarrow0\) and \(b\uparrow1\) whenever \(u,v\in\operatorname{Dom}(\calL)\), by Cauchy's inequality in \(L^2([0,1];\rho)\). Thus the endpoint limits of \([u,v](t)\) exist.

Now let \(f,g\in\operatorname{Dom}(\calL)\) and choose \(\chi_0\in C^\infty([0,1])\) such that \(\chi_0=1\) near \(0\) and \(\chi_0=0\) near \(1\). Since \(\calL\) is self-adjoint, applying \eqref{eq:green-lagrange-identity} to \(f\) and \(\chi_0g\) yields
\[
  0=\langle \calL f,\chi_0g\rangle_H-\langle f,\calL(\chi_0g)\rangle_H
  =[f,\chi_0g](1)-[f,\chi_0g](0).
\]
The bracket at \(1\) is zero because \(\chi_0\) vanishes near \(1\), while \([f,\chi_0g](0)=[f,g](0)\) because \(\chi_0=1\) and \(\chi_0'=0\) near \(0\). Hence \([f,g](0)=0\). Repeating the same argument with a cutoff \(\chi_1\) equal to \(1\) near \(1\) and equal to \(0\) near \(0\) gives \([f,g](1)=0\). This proves \eqref{eq:lagrange-bracket-endpoint}; see also \cite[Chapters~6 and~10]{Zettl2005} for the general Friedrichs endpoint theory.
\end{proof}

We will use the following fact: If two solutions of \eqref{eq:ode} have the same Cauchy data at some point \(t_0\in(0,1)\), then they coincide on \((0,1)\). Indeed, on every compact subinterval of \((0,1)\), the equation is a nondegenerate second-order linear ODE with smooth coefficients.

\noindent\textbf{Proof of (1)}. Let \(f_1\) and \(f_2\) be two eigenfunctions associated with the same eigenvalue \(\delta\), and set
\[
  W(t):=f_1(t)f_2'(t)-f_2(t)f_1'(t).
\]
A direct computation using
\[
  (P f_i')'=(Q-\delta\rho)f_i,
  \qquad i=1,2,
\]
gives
\[
  (P W)'=0\qquad\text{in }(0,1).
\]
Thus \(P W\) is constant. Applying Claim~\ref{claim2} to \(f_1\)
and \(f_2\) gives \(\lim_{t\downarrow0}P(t)W(t)=0\). Hence
\(P(t)W(t)=0\) for every \(t\in(0,1)\). Since \(P(t)>0\) in
\((0,1)\), we have \(W\equiv0\). Fixing any \(t_0\in(0,1)\),
there are constants \(c_1,c_2\), not both zero, such that
\[
  c_1f_1(t_0)+c_2f_2(t_0)=0,
  \qquad
  c_1f_1'(t_0)+c_2f_2'(t_0)=0.
\]
The linear combination \(c_1f_1+c_2f_2\) therefore has zero Cauchy data at \(t_0\), so uniqueness gives \(c_1f_1+c_2f_2\equiv0\). Thus the eigenspace is one-dimensional.

\noindent\textbf{Proof of (2)}. Let \(f_1\) and \(f_2\) be eigenfunctions corresponding to \(\delta_1<\delta_2\). For the same Wronskian \(W=f_1f_2'-f_2f_1'\), we now have
\begin{equation}\label{eq:sturm-wronskian-comparison}
  (PW)'=(\delta_1-\delta_2)\rho f_1f_2.
\end{equation}
We claim that between two consecutive zeros of \(f_1\) there is a zero of \(f_2\). Suppose, to the contrary, that \(f_1\) has consecutive zeros \(a<b\) and that \(f_2\) has no zero in \((a,b)\). Changing signs if necessary, assume that \(f_1>0\) and \(f_2\ge0\) on \((a,b)\). Integrating \eqref{eq:sturm-wronskian-comparison} from \(a\) to \(b\) gives
\[
  0>(\delta_1-\delta_2)\int_a^b\rho f_1f_2\,dt
  =P(b)W(b)-P(a)W(a).
\]
Since \(f_1(a)=f_1(b)=0\), \(f_1'(a)>0\), and \(f_1'(b)<0\), the right-hand side equals
\[
  -P(b)f_1'(b)f_2(b)+P(a)f_1'(a)f_2(a)\ge0,
\]
a contradiction.

The same argument also gives zeros before the first zero and after the last zero of \(f_1\). If \(a\) is the first zero of \(f_1\) and \(f_2\) has no zero in \((0,a)\), choose signs so that \(f_1,f_2\ge0\) on \((0,a)\). Since \(P W\to0\) at \(0\), integrating from \(0\) to \(a\) gives
\[
  0>(\delta_1-\delta_2)\int_0^a\rho f_1f_2\,dt
  =P(a)W(a)=-P(a)f_1'(a)f_2(a)\ge0,
\]
again a contradiction. If \(b\) is the last zero of \(f_1\) and \(f_2\) has no zero in \((b,1)\), choose signs so that \(f_1,f_2\ge0\) on \((b,1)\). Using \(P W\to0\) at \(1\),
\[
  0>(\delta_1-\delta_2)\int_b^1\rho f_1f_2\,dt
  =-P(b)W(b)=P(b)f_1'(b)f_2(b)\ge0,
\]
where \(f_1'(b)>0\) after the final sign choice. This is impossible. Therefore an eigenfunction with the larger eigenvalue has at least one zero in each interval before, between, and after the zeros of an eigenfunction with the smaller eigenvalue.

The first isolated eigenfunction below the essential spectrum has a fixed
sign in the interior. Indeed, by the variational characterization for
isolated eigenvalues below the essential spectrum (see \cite[Theorem 5.15]{Borthwick2020}) and simplicity, the first eigenfunction does not change sign in the interior.  A second isolated eigenfunction, if present, is
orthogonal to the positive first eigenfunction in \(L^2((0,1);\rho)\).
It must therefore change sign and hence has at least one interior zero.
This supplies the base case for the comparison argument above, which then
shows inductively that the \(i\)-th eigenfunction below \(\Lambda_*\) has
at least \(i-1\) interior zeros. On the other hand, the variational characterization for
isolated eigenvalues below the essential spectrum implies the Courant nodal-domain
theorem below the essential spectrum; that is, the \(i\)-th eigenfunction has at most \(i-1\)
interior zeros; see \cite{Teschl2014,CH89}. Therefore, the proof is complete.

\end{proof}

\begin{lemma}[Moser-index lemma]\label{lem:moser-index}
The operator \(\calL\), associated with the form $\calQ$ whose domain is
\(
        V,
\)
has exactly one negative eigenvalue and has no zero eigenvalue.
\end{lemma}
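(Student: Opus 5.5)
The plan is to prove Lemma~\ref{lem:moser-index} by a Sturm oscillation count at the level $\delta=0$, using Lemma~\ref{l:embed} and Lemma~\ref{l:sl}. Since $0<p<\Lambda_*$, Lemma~\ref{l:embed} shows that the spectrum of $\calL$ in $(-\infty,0]$ consists of finitely many simple isolated eigenvalues $\delta_1<\delta_2<\cdots$, each simple by Lemma~\ref{l:sl}(1). Lemma~\ref{l:sl}(2) then gives the following standard consequence. Let $g_0$ be the solution of $(Pg')'=Qg$ on $(0,1)$ selected by the Friedrichs (principal) condition at $t=0$, namely $P g_0'\to0$ and $g_0\in L^2$ near $0$ with respect to $\rho$.
\begin{itemize}
\item[(i)] The number of eigenvalues of $\calL$ strictly below $0$ equals the number of interior zeros of $g_0$ in $(0,1)$.
\item[(ii)] The value $0$ is an eigenvalue if and only if $g_0$ also satisfies the Friedrichs condition at $t=1$.
\end{itemize}
Fact (i) follows from Sturm comparison of $g_0$ with the eigenfunctions $e_i$, via the Wronskian identity \eqref{eq:sturm-wronskian-comparison} and the vanishing of brackets in Claim~\ref{claim2}. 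So the whole lemma reduces to understanding one explicit ODE solution.

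\textbf{Step 1 (indicial analysis).} First I will analyse the endpoints. Near $t=0$ we have $P\sim(p-1)t^{\alpha+2}$, $\rho\sim t^\alpha$ and $Q\to p\rho$, so the equation is of Euler type. Its indicial roots are the roots of $(p-1)s(s+\alpha+1)=p$; one root is positive (principal) and one is below $-(\alpha+1)$, and only the positive one gives finite $H$-norm. Near $t=1$, in the variable $y=\sqrt{1-t^2}$ of Step~1 of Lemma~\ref{l:embed}, the equation is a radial Laplace-type equation in $\R^{n-1}$. Its two behaviours are $F\sim1$ (principal) and $F\sim y^{-(n-3)}$, or $\log y$ when $n=3$. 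Hence (ii) holds if and only if $g_0$ stays bounded at $t=1$.

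\textbf{Step 2 (explicit solution).} Next I will pass to the variable $x=t^2$. The coefficients $P,\rho,N$ are rational in $t^2$ times the powers $t^\alpha(1-t^2)^\beta$, so the equation becomes a Fuchsian equation in $x$ with regular singular points $x=0$, $x=1$, $x=-1/(p-2)$ (the zero of $M$) and $x=\infty$. A guiding fact is that $f_0=t^2-\tfrac1p$ gives equality in the Pohozaev identity \eqref{eq:pohozaev}. Being the constrained extremal, it solves $\calL f_0=c$ for an explicit constant $c$; since the Lagrange constant is forced, I expect $c\neq0$. I will therefore look for the homogeneous solution in the form
\[
g_0=A(x)\,x^{s_+}\quad\text{or}\quad g_0=f_0+\text{(particular correction)}.
\]
The first step is to check whether the apparent singularity at $x=-1/(p-2)$ lets the equation be solved by a low-degree polynomial in $x$ times $x^{s_+}$, after which reduction of order gives the second solution. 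This explicit identification is the main obstacle. If a closed form fails, the fallback is qualitative: show that $g_0$ has at least one zero and at most one zero.

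\textbf{Step 3 (counting and the endpoint at $1$).} For \emph{at least one} negative eigenvalue I will evaluate $\calQ[1]=-p\int_0^1N\rho\,dt$ by Beta-function integrals, after splitting $1/M$ and $1/M^2$ terms, and aim to show it is negative. For \emph{at most one}, the plan is to show that $g_0$ changes sign exactly once. For this I compare with the function $f_0$, which has a single zero at $t=p^{-1/2}$: the Wronskian $P(g_0f_0'-f_0g_0')$ has derivative $c\rho g_0$, so a sign count of $g_0$ follows from tracking this Wronskian from $t=0$, where it vanishes by the principal behaviour. Finally, I will show that $g_0$ has the singular behaviour $y^{-(n-3)}$ (or $\log y$) at $t=1$, so $g_0\notin V$ there. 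By Step~1 this rules out the eigenvalue $0$. Concretely, the argument is: if $g_0$ were bounded at $1$, integrating $(PW)'=c\rho g_0$ over $(0,1)$ would force $c\int\rho g_0=0$, and combining this with the sign structure of $g_0$ gives a contradiction.
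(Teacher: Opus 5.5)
\textbf{There is a genuine gap.} Counting the interior zeros of the level-$0$ solution $g_0$ that is principal at $t=0$ is a legitimate framework, and your endpoint analysis in Step~1 is correct. But the two facts that carry the lemma are never established: that $g_0$ has \emph{at most one} interior zero, and that $g_0$ is \emph{not} principal at $t=1$. The mechanisms you propose do not deliver either.

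With $\calL f_0=-1$ (Claim~\ref{claim1}) and both functions principal at $0$, your Wronskian identity is $P(g_0f_0'-f_0g_0')(t)=\int_0^{t}\rho(s)g_0(s)\,ds$. Normalize $g_0>0$ near $0$. At the first zero $z_1$ the identity forces $f_0(z_1)>0$, i.e.\ $z_1>p^{-1/2}$. After that $f_0>0$, and at each later zero $z_j$ the identity only fixes the sign of $\int_0^{z_j}\rho g_0$. That is consistent with any number of further sign changes, so no upper bound on the zero count follows.

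The closing step fails for the same reason. If $0$ were an eigenvalue you obtain $\int_0^1\rho g_0=0$, but a function with exactly one sign change can have $\rho$-mean zero. In fact the data $\calL f_0=-1$, $\langle f_0,1\rangle_H=0$ are abstractly compatible with $0$ being a simple eigenvalue whose eigenfunction has one zero and mean zero. Genuinely quantitative spectral input is therefore needed. A smaller point: Claim~\ref{claim2} cannot be applied to $g_0$ when justifying Fact (i), since $g_0\notin\operatorname{Dom}(\calL)$; the last nodal interval needs the asymptotics at $t=1$.

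Step~2 cannot supply that input either. For $p\neq2$ there is no solution at level $\delta=0$ of the form $t^{s_+}A(t^2)$ with $A$ a polynomial. The top coefficient of the resulting identity is a nonzero multiple of $(p-2)\bigl((p-1)\nu^2+(np+p-2)\nu\bigr)$ with $\nu=s_++2\deg A-2$, and this never vanishes; for $\deg A=1$ it is the paper's $C_3$.

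This is why the paper does not work at level $0$ but tunes $\delta$ instead. It exhibits two explicit eigenfunctions:
\begin{itemize}
\item $a(\tau)=\tau^{p/(p-1)}$, with eigenvalue $\delta_1=-np/(p-1)$ and no interior zero;
\item $g(\tau)=\tau^{m_*}(1+c_*\tau^2)$, with eigenvalue $\delta_2\in(0,\Lambda_*)$ and exactly one interior zero.
\end{itemize}
Lemma~\ref{l:sl} identifies these as the first two eigenvalues below $\Lambda_*$, and $\delta_1<0<\delta_2$ gives both conclusions at once.

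Within your oscillation framework, the missing ingredient is exactly such an explicit solution at a positive level with one zero. Sturm comparison of $g_0$ against $g$ then bounds the zeros of $g_0$ by one, and $\delta_2\neq0$ excludes the zero eigenvalue.

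For ``at least one negative eigenvalue'' you do not need Beta integrals, which in any case do not directly handle the $1/M$ and $1/M^2$ terms. Simply note $\calQ[f_0+\varepsilon]=-2\varepsilon\|1\|_H^2+\varepsilon^2\calQ[1]<0$ for small $\varepsilon>0$.
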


\begin{proof}
The proof uses two explicit eigenfunctions and the Sturm--Liouville
Lemma~\ref{l:sl}.

\medskip
\noindent\textbf{Step 1: the first eigenfunction.}

Set
\[
  L_1 f=-(P f')'-p N\rho f,\qquad
  a(\tau):=\tau^{\frac{p}{p-1}}.
\]
We claim that
\begin{equation}\label{eq:a-eigenfunction}
  L_1 a=-\frac{np}{p-1}\rho a.
\end{equation}
Indeed, for a pure power \(f(\tau)=\tau^m\), one computes
\begin{align}\label{eq:power-computation}
\frac{1}{\rho\tau^m}(P f')'
&=m(p-1)\left[
\frac{(\alpha+m+1)(1-\tau^2)-(n-1)\tau^2}{M(\tau)}
-\frac{2(p-2)\tau^2(1-\tau^2)}{M(\tau)^2}
\right].
\end{align}
Taking \(m=p/(p-1)\), using \(\alpha=(n-p)/(p-1)\), and inserting the definition of
\(N\), the identity
\[
  -\frac{1}{\rho a}(Pa')'-p N=-\frac{np}{p-1}
\]
follows by direct simplification.  This proves \eqref{eq:a-eigenfunction}.

Moreover \(a(\tau)>0\) for \(0<\tau<1\).  Since \(a\) has no
interior zero, Sturm--Liouville Lemma~\ref{l:sl} implies that \(a\)
is the first eigenfunction of \eqref{eq:ode}.  Thus the
first eigenvalue is
\[
  \d_1=-\frac{np}{p-1}<0.
\]
\medskip
\noindent\textbf{Step 2: the second eigenfunction.}
Define
\[
  \Delta:=\sqrt{n^2+p^2-2p},
\]
\[
  m_*:=\frac{\Delta-n}{p-1},
\]
\[
  c_*:=\frac{p^2-2p-n-(p-1)\Delta}{n+1},
\]
and
\[
  g(\tau):=\tau^{m_*}(1+c_*\tau^2).
\]
We claim that
\begin{equation}\label{eq:g-eigenfunction}
  L_1 g=\d_2\rho g,
\end{equation}
where
\begin{equation}\label{eq:mu-one}
  \d_2:=\frac{(n+1)(\Delta-n)+p}{p-1}.
\end{equation}

We first verify the differential equation.  Let
\[
  x:=\tau^2,
  \qquad
  f(\tau):=\tau^m(1+cx).
\]
Then
\[
  f'(\tau)=\tau^{m-1}\left[m+c(m+2)x\right].
\]
A direct computation gives
\begin{align}\label{eq:general-mc-computation}
\frac{1}{\rho\tau^m}(Pf')'
&=(p-1)\Bigg\{
\frac{\left[(\alpha+m+1)(1-x)-(n-1)x\right]
\left[m+c(m+2)x\right]}{M}
\notag\\
&\hspace{1.5cm}
+\frac{2c(m+2)x(1-x)}{M}
-
\frac{2(p-2)x(1-x)\left[m+c(m+2)x\right]}{M^2}
\Bigg\},
\end{align}
where now \(M=1+(p-2)x\).  Therefore \(L_1 f=\d\rho f\) is equivalent to the polynomial
identity
\begin{equation}\label{eq:poly-identity}
  C_0+C_1x+C_2x^2+C_3x^3\equiv 0,
\end{equation}
after multiplying by \(M^2\), where
\begin{align*}
C_0&=-(p-1)m^2-(n-1)m-\d+p,
\\
C_1&=-c m^2p+c m^2-cmn-4cmp+5cm-c\d-2cn-3cp+6c
\\
&\quad -m^2p^2+4m^2p-3m^2+2mn+2mp^2-6mp+2m-2\d p+4\d
-2np-p^2+2p,
\\
C_2&=-cm^2p^2+4cm^2p-3cm^2+2cmn-2cmp^2+10cmp-10cm
-2c\d p+4c\d
\\
&\quad -2cnp+4cn-cp^2+6cp-8c
+m^2p^2-3m^2p+2m^2+mn p^2-2mn p
\\
&\quad -3mp^2+8mp-4m-\d p^2+4\d p-4\d-2np^2+4np+2p^2-4p,
\\
C_3&=c(p-2)\left((p-1)m^2+(np+p-2)m-(p-2)\d\right).
\end{align*}
Now insert
\[
  m=m_*,\qquad c=c_*,\qquad \d=\d_2.
\]
Equivalently, use the identities
\begin{equation}\label{eq:mstar-relations}
  (p-1)^2m_*^2+2n(p-1)m_*-p(p-2)=0,
\end{equation}
\begin{equation}\label{eq:mustar-relation}
  \d_2=p-(p-1)m_*^2-(n-1)m_*,
\end{equation}
and
\begin{equation}\label{eq:cstar-relation}
  c_*=\frac{p^2-2p-n-(p-1)(n+(p-1)m_*)}{n+1}.
\end{equation}
The coefficient \(C_0\) vanishes by \eqref{eq:mustar-relation}; the coefficient \(C_3\)
vanishes by \eqref{eq:mstar-relations} and \eqref{eq:mustar-relation}; and substituting
\eqref{eq:mstar-relations}--\eqref{eq:cstar-relation} into \(C_1\) and \(C_2\) gives zero as
well.  Hence \eqref{eq:poly-identity} holds, proving \eqref{eq:g-eigenfunction}.

Next we check the sign of \(\d_2\).  Since
\[
  \d_2>0
  \quad\Longleftrightarrow\quad
  \Delta>n-\frac{p}{n+1},
\]
it is enough to square both sides.  We obtain
\begin{align*}
\Delta^2-\left(n-\frac{p}{n+1}\right)^2
&=n^2+p^2-2p-
\left(n^2-\frac{2np}{n+1}+\frac{p^2}{(n+1)^2}\right)\\
&=\frac{p\left(pn(n+2)-2(n+1)\right)}{(n+1)^2}>0,
\end{align*}
because \(p>1\) and \(n\ge 3\).  Thus \(\d_2>0\).
Finally, \(g\) has exactly one zero in \((0,1)\).  Indeed,
\[
  c_*+1
  =\frac{(p-1)(p-1-\Delta)}{n+1}.
\]
But
\[
  \Delta^2-(p-1)^2=n^2-1>0,
\]
so \(\Delta>p-1\), and hence \(c_*+1<0\).  Therefore \(c_*<-1\), and
\[
  1+c_*\tau^2
\]
has exactly one zero in \((0,1)\), namely \(\tau_*=(-1/c_*)^{1/2}\).  Since \(\tau^{m_*}>0\)
on \((0,1)\), the function \(g\) has exactly one interior zero.

\medskip
\noindent\textbf{Step 3: admissibility at the singular endpoints.}
Near \(\tau=0\), we have \(g(\tau)\sim \tau^{m_*}\).  The form-domain condition reduces to
\[
  \alpha+2m_*>-1.
\]
But
\[
  \alpha+2m_*+1=\frac{2\Delta-n-1}{p-1}>0.
\]
The last inequality follows from
\[
  \Delta^2-\frac{(n+1)^2}{4}
  =\frac{3n^2-2n-1+4p(p-2)}{4}>0
\]
for \(n\ge3\) and \(p>1\).  Thus \(g\in L^2((0,1);\rho)\) and
\(\int_0^1  g'(\tau)^2 \tau^{\a+2}(1-\tau^2)^{\b+1} \dd \tau<\infty\), since near $\tau=0$
\[
  g'(\tau)^2 \tau^{\a+2}(1-\tau^2)^{\b+1}=O\left(\tau^{\alpha+2m_*}\right).
\]

Near \(\tau=1\), the weight $\tau^{\a+2}(1-\tau^2)^{\b+1}$ contains the factor
\((1-\tau^2)^{\beta+1}\), with \(\beta=(n-3)/2\ge0\).  Hence the natural boundary term also
vanishes at \(\tau=1\).  The same endpoint checks are simpler for
\(a(\tau)=\tau^{p/(p-1)}\).  Therefore both explicit functions belong to the natural operator
form domain \(V\).

\medskip
\noindent\textbf{Step 4: Sturm oscillation and the index conclusion.}
The eigenvalue of \(a\) is negative, hence it lies below
\(\Lambda_*\). Since \(a\) has no interior zero, Lemma~\ref{l:sl}
identifies it as the first isolated eigenfunction below \(\Lambda_*\),
with eigenvalue
\[
  \d_1=-\frac{np}{p-1}<0.
\]
We next check that the explicit eigenvalue \(\d_2\) in \eqref{eq:mu-one} is also below \(\Lambda_*\). Multiplying the inequality \(\d_2<\Lambda_*\) by \(p-1\) and using
\(\Delta^2=n^2+p^2-2p\), it is equivalent to
\[
        (n+1)\Delta<\Delta^2+\frac{(n+1)^2}{4},
\]
namely
\[
        0<\left(\Delta-\frac{n+1}{2}\right)^2.
\]
The strict inequality holds because
\[
        \Delta^2-\frac{(n+1)^2}{4}
        =
        \frac{3n^2-2n-1+4p^2-8p}{4}>0
\]
for \(n\ge3\) and \(1<p<n\). Hence \(g\) is a discrete eigenfunction below the essential spectrum threshold.

The function \(g\) has exactly one interior zero. By
Lemma~\ref{l:sl}, it is therefore the second eigenfunction below
\(\Lambda_*\). Its eigenvalue is the positive number \(\d_2\) given
in \eqref{eq:mu-one}. Consequently
\[
        \d_1<0<\d_2<\Lambda_*,
        \qquad
        \sigma_{\mathrm{ess}}(\calL)\subset[\Lambda_*,\infty).
\]
Thus the operator has exactly one negative eigenvalue and no zero eigenvalue.
\end{proof}

We now prove Proposition~\ref{p:ineq}.
\begin{proof}[Proof of Proposition~\ref{p:ineq}]
As in Subsection~\ref{subsubsec:axisymmetric-mode}, we know that
$h_0(r,\tau):=r\tau-\frac{1}{p}$ satisfies
\[\int_0^1 h_0(\tau,\tau)\rho(\tau)\dd\tau
  =0.\]
Denote $b_0(\tau):=h_0(\tau,\tau)=\tau^2-\frac{1}{p}$.
Recall that $L_1 f=-(P f')'-p N\rho f$.
\begin{claim}\label{claim1}
    $L_1 b_0=-\rho$.
\end{claim}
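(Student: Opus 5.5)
The plan is to verify Claim~\ref{claim1} by direct computation, the same way \eqref{eq:a-eigenfunction} and \eqref{eq:g-eigenfunction} were checked in Lemma~\ref{lem:moser-index}. Write $x:=\tau^2$ and $b_0=x-\frac1p$. Since $L_1$ is linear,
\[
 L_1b_0=-(P(\tau^2)')'-pN\rho\,\tau^2+N\rho .
\]
So the claim is equivalent to the pointwise identity
\[
 -\frac{1}{\rho}\bigl(P(\tau^2)'\bigr)'-(px-1)N=-1
 \qquad\text{on }(0,1).
\]

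For the first term I will use the pure-power formula \eqref{eq:power-computation} with $m=2$. Since $\alpha+3=\frac{n+2p-3}{p-1}$, it gives
\[
 \frac1\rho\bigl(P(\tau^2)'\bigr)'
 =2x\left[\frac{(n+2p-3)(1-x)-(p-1)(n-1)x}{M}-\frac{2(p-1)(p-2)x(1-x)}{M^2}\right],
\]
with $M=1+(p-2)x$. For the second term I will substitute
\[
 N=\frac{-M+2(n-1)xM+2(p-1)x}{M^2}.
\]
Multiplying the target identity by $M^2$ then turns it into a polynomial identity in $x$ of degree at most $3$. I will show that every coefficient vanishes, using only $\alpha(p-1)=n-p$.

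Two quick checks support this before the full expansion.
\begin{itemize}
\item At $x=0$ we have $M=1$ and $N=-1$, and the first term vanishes. The identity reduces to $-(-1)(-1)=-1$, which holds.
\item The cubic coefficient should cancel between the $x^2M$ contributions and the $pxN M^2$ contributions. I will confirm this first, as a sanity check on the algebra.
\end{itemize}

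The main obstacle is purely algebraic: keeping the bookkeeping correct when collecting the $x$ and $x^2$ coefficients. As a cross-check I intend to use the structural reason behind the claim. The function $b_0$ is the boundary trace of the $\lambda=p$ eigenfunction $h_0=r\tau-\frac1p$. Along the boundary it satisfies \eqref{eq:UV} with $U=\tau$ and $V=\tau$. Redoing the integration by parts in \eqref{eq37}, but without integrating against $b$, expresses $L_1b_0$ through these traces. This confirms the constant $-1$, consistent with $b_0$ being orthogonal to $1$ in $L^2(\rho)$.

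Finally, I will note that $b_0\in\operatorname{Dom}(\calL)$. This holds because $b_0$ is smooth on $[0,1]$, and $P b_0'=2\tau P$ vanishes at both endpoints. Hence Claim~\ref{claim1} holds in the operator sense as well, which is what is needed when it is combined with Lemma~\ref{lem:moser-index} afterwards.
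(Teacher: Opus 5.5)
Your proposal is correct and is essentially the paper's proof: both compute $(Pb_0')'/\rho$ explicitly, multiply through by $M^2$, and check that the resulting cubic in $x=\tau^2$ vanishes identically, which it does. The only difference is that you use the case $m=2$ of \eqref{eq:power-computation}, while the paper uses $\rho'/\rho$.

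Two minor remarks. First, the $x^3$ coefficient does not cancel between the two groups you name: together they leave a residue $\pm4(p-1)(p-2)$, which is removed only by the $M^{-2}$ part of $(P(\tau^2)')'/\rho$. Second, your Pohozaev ``cross-check'' only yields the scalar identity $\calQ[b_0]=0$. That identity holds whenever $\calL b_0$ is constant, because $b_0\perp1$ in $L^2(\rho)$, so it neither replaces the computation nor fixes the constant $-1$; your evaluation at $x=0$ is what fixes it.
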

\begin{proof}[Proof of Claim~\ref{claim1}]
Since
\[
        b_0'(t)=2t,
        \qquad
        \frac{P(t)}{\rho(t)}=\frac{(p-1)t^2(1-t^2)}{M(t)},
\]
we compute
\begin{align*}
        \frac{(P b_0')'}{\rho}
        &=\left(\frac{P}{\rho}b_0'\right)'
          +\frac{\rho'}{\rho}\frac{P}{\rho}b_0' \\
        &=2(p-1)
          \left[
             \frac{(\alpha+3)t^2-(\alpha+n+2)t^4}{M(t)}
             -\frac{2(p-2)t^4(1-t^2)}{M(t)^2}
          \right].
\end{align*}
Here we used
\[
        \frac{\rho'}{\rho}
        =\frac{\alpha}{t}-\frac{2\beta t}{1-t^2},
        \qquad 2\beta=n-3.
\]
Therefore
\begin{align*}
        \frac{(P b_0')'}{\rho}+p N b_0-1
        &=2(p-1)
          \left[
             \frac{(\alpha+3)t^2-(\alpha+n+2)t^4}{M}
             -\frac{2(p-2)t^4(1-t^2)}{M^2}
          \right] \\
        &\quad
          +p\left(t^2-\frac1p\right)
          \left[
             -\frac1M+\frac{2(n-1)t^2}{M}
             +\frac{2(p-1)t^2}{M^2}
          \right]-1.
\end{align*}
Substituting $\alpha=(n-p)/(p-1)$ and putting everything over the common denominator $M^2$
shows that the numerator is identically zero.  Hence
\[
        \frac{(P b_0')'}{\rho}+p N b_0=1,
\]
which is exactly Claim~\ref{claim1}.
\end{proof}

By Lemma~\ref{lem:moser-index}, \(\calL\) has exactly one negative
eigenvalue
\(\d_1<0\), whose normalized eigenfunction is \(e_1=Ct^{p/(p-1)}\), and
\(0\notin\sigma(\calL)\). Set
\[
        H_+:=\{e_1\}^{\perp}\subset H.
\]
Since \(e_1\) is an eigenfunction of the self-adjoint operator \(\calL\), we have \(\calL(H_+)\subset H_+\). By Lemmas~\ref{l:embed}
and~\ref{lem:moser-index}, the
restriction of \(\calL\) to \(H_+\), denoted by \(\calL_+\), is self-adjoint and
strictly positive on \(H_+\), and \(\calL_+^{-1}\) is bounded. Therefore by the
spectral theorem, the fractional powers
\[
\calL_+^{1/2},\qquad \calL_+^{-1/2}
\]
are well-defined, and \(\calL_+^{-1/2}\) is bounded.

Write the orthogonal decompositions
\[
        f=xe_1+y,\qquad
        1=ae_1+h_+,\qquad
        y,h_+\in H_+.
\]
Here \(a=\langle1,e_1\rangle_H\ne0\), since \(e_1>0\) in \((0,1)\). The constraint \(\langle f,1\rangle_H=0\) gives
\[
        ax+\langle y,h_+\rangle_H=0.
\]
Since \(\calL b_0=-1\), we have \(\calL^{-1}1=-b_0\). Together with
\(\langle b_0,1\rangle_H=0\), this gives
\[
        0=\langle 1,\calL^{-1}1\rangle_H
        =
        \frac{a^2}{\d_1}
        +\langle h_+,\calL_+^{-1}h_+\rangle_H.
\]
Thus
\begin{equation}\label{eq:positive-resolvent-identity}
        \langle h_+,\calL_+^{-1}h_+\rangle_H
        =
        -\frac{a^2}{\d_1}.
\end{equation}
By Cauchy's inequality,
\[
        |\langle y,h_+\rangle_H|^2
        =
        |\langle \calL_+^{1/2}y,\calL_+^{-1/2}h_+\rangle_H|^2
        \le
        \calQ[y]\,
        \langle h_+,\calL_+^{-1}h_+\rangle_H.
\]
Using the constraint and \eqref{eq:positive-resolvent-identity}, we obtain
\[
        a^2x^2
        \le
        -\frac{a^2}{\d_1}\calQ[y].
\]
Since \(\d_1<0\), this implies
\[
        \calQ[f]=\d_1x^2+\calQ[y]\ge0.
\]

Equality holds exactly when equality holds in the Cauchy inequality above. Hence \(y=C\calL_+^{-1}h_+\), and the constraint then gives
\[
        f=C\calL^{-1}1=-Cb_0.
\]
Thus equality holds only for multiples of \(b_0\). Conversely, by
Lemma~\ref{l:pohozaev}, \(b_0\) attains equality in
\eqref{eq:Qnonnegative}. This proves the equality statement.
\end{proof}

We now combine Lemma~\ref{l:pohozaev} and
Proposition~\ref{p:ineq} to prove
Theorem~\ref{ass:axisymmetric-gap}.

\begin{proof}[Proof of Theorem~\ref{ass:axisymmetric-gap}]
By Remark~\ref{remark2.6}, we only need to characterize the equality
case.

Recall that $h_0(r,\tau)=r\tau-\frac{1}{p}$ and $b_0(\tau)=h_0(\tau,\tau)=\tau^2-\frac{1}{p}$.

As observed before Lemma~\ref{l:pohozaev}, if $u$ attains equality in
\eqref{eq:k0-gap}, then $u$ satisfies
\eqref{eq:k0-orthogonality} and \eqref{eq:weak} with $\l=p$.

Therefore, using Lemma~\ref{l:pohozaev} and
Proposition~\ref{p:ineq}, there exists a constant $C$ such that
$u(\tau,\tau)=C b_0(\tau)$, that is, $u=C h_0$ on
$\p\OmB=\{r=\tau\}$.
Hence $w:=u-C h_0$ solves the same eigenvalue equation with $\lambda=p$ and has zero boundary
trace.  Testing with $w$ gives
\[
        \cE_0[w,w]=p\cB_0[w,w]=0.
\]
Thus $w$ is constant, and its zero trace forces $w\equiv0$.  Therefore
\[
        u(r,\tau)=C\left(r\tau-\frac1p\right).
\]
This is the desired equality characterization.
    
\end{proof}

\subsubsection{Sectorial classification}

\begin{theorem}[Sectorial classification]\label{thm:sectorial-classification}
If $\phi\in W^{1,2}(\OmB;r^{-\g})$ satisfies \(\cB[\phi,1]=0\), then
\[
  \cE[\phi,\phi]\ge p\cB[\phi,\phi],
\]
with equality if and only if
\[
  \phi\in\operatorname{span}\left\{
    y_1,\ldots,y_{n-1},y_n-\frac1p
  \right\}.
\]
\end{theorem}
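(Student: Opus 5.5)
The plan is to assemble the sector results already established. Given $\phi\in W^{1,2}(\OmB;r^{-\g})$ with $\cB[\phi,1]=0$, expand $\phi$ as in \eqref{eq:expand}. By orthogonality of the spherical harmonics, the energy and boundary forms split:
\[
\cE[\phi,\phi]=|S^{n-2}|\sum_{k,j}\cE_k[u_{k,j},u_{k,j}],\qquad \cB[\phi,\phi]=|S^{n-2}|\sum_{k,j}\cB_k[u_{k,j},u_{k,j}].
\]
Since $1$ lies in the $k=0$ sector, the constraint $\cB[\phi,1]=0$ becomes exactly $\cB_0[u_{0,1},1]=0$; the components with $k\ge1$ carry no constraint.

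\textbf{Sectorwise inequalities.} For $k=0$, Theorem~\ref{ass:axisymmetric-gap} gives $\cE_0[u_0,u_0]\ge p\cB_0[u_0,u_0]$. To apply it, I must check that the axisymmetric function $u_0(y)=u_{0,1}(r,\tau)$ lies in $W^{1,2}(\OmB;r^{-\g})$. This follows because averaging over $\theta\in S^{n-2}$ does not increase the weighted energy, by Jensen/Cauchy–Schwarz. For $k=1$, Lemma~\ref{lem:k1-ground-state} gives $\cE_1\ge p\cB_1$ for each $u_{1,j}$. For $k\ge2$, Lemma~\ref{lem:higher-sectors} gives $\cE_k\ge p\cB_k$, with strict inequality unless $u_{k,j}\equiv0$. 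The regularity needed, $u_{k,j}\in W^{1,2}(\mathcal D;\cE_k)$, is Remark~\ref{regular}. Summing over all sectors yields $\cE[\phi,\phi]\ge p\cB[\phi,\phi]$.

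\textbf{Equality case.} If equality holds, then equality must hold in every sector, since all the summands $\cE_k-p\cB_k$ are nonnegative. The conclusions sector by sector are:
\begin{itemize}
\item $k\ge2$: every $u_{k,j}$ vanishes.
\item $k=1$: each $u_{1,j}=c_jh_1=c_jr\sqrt{1-\tau^2}$, by the equality clause of Lemma~\ref{lem:k1-ground-state}.
\item $k=0$: $u_0=c(r\tau-\frac1p)$, by Theorem~\ref{ass:axisymmetric-gap}.
\end{itemize}
Reassembling, $h_1Z_{1,j}(\theta)$ is a linear combination of $y_1,\dots,y_{n-1}$, because the $Z_{1,j}$ are linear functions of $\theta$ and $y_i=r\sqrt{1-\tau^2}\theta_i$. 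Hence $\phi$ lies in the stated span.

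For the converse, each element of the span satisfies $\cB[\phi,1]=0$: the functions $y_i$ with $i\le n-1$ are odd in $\theta$, and $y_n-\frac1p$ satisfies it by the beta-function computation in the axisymmetric mode paragraph. Each element also attains equality, by the explicit eigenfunction computations at $\lambda=p$, or equivalently by Remark~\ref{remark2.1}. Equality extends to linear combinations because cross terms between different sectors vanish.

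The main point needing care is justifying the sector splitting of the energy for a general $\phi$ in the weighted space, that is, the interchange of the sum and the integral. I would first handle smooth $\phi$ supported away from the axis $A_{\mathrm{ax}}$ and then pass to the limit by density, in the spirit of Claim~\ref{claim:k1-density}. Together with Parseval in $\theta$ applied at a.e. $(r,\tau)$, this makes the splitting legitimate, and all the remaining steps are direct citations of earlier results.
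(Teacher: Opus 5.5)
Your proposal is correct and follows the same route as the paper's proof. You expand in spherical harmonics on $S^{n-2}$, observe that the constraint $\cB[\phi,1]=0$ only affects the $k=0$ component, apply Theorem~\ref{ass:axisymmetric-gap}, Lemma~\ref{lem:k1-ground-state} and Lemma~\ref{lem:higher-sectors} sector by sector, and reassemble the equality cases into $\operatorname{span}\{y_1,\ldots,y_{n-1},y_n-\frac1p\}$. The paper only asserts the points you justify explicitly: that the averaged $k=0$ component lies in $W^{1,2}(\OmB;r^{-\g})$, the converse direction, and that the diagonal splitting is legitimate.
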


\begin{proof}
Decompose as \eqref{eq:expand}
\[
  \phi(r,\tau,\theta)=\sum_{k=0}^\infty
  \sum_{j=1}^{\dim\mathcal H_k^{n-1}}
  u_{k,j}(r,\tau)Z_{k,j}(\theta).
\]
The forms \(\cE\) and \(\cB\) are diagonal in this decomposition. Hence
\[
  \cE[\phi,\phi]=|S^{n-2}|\sum_{k,j}\cE_k[u_{k,j},u_{k,j}],
  \qquad
  \cB[\phi,\phi]=|S^{n-2}|\sum_{k,j}\cB_k[u_{k,j},u_{k,j}].
\]
If \(\cB[\phi,1]=0\), then the \(k=0\) component satisfies
\[
  \cB_0[u_{0,1},1]=0,
\]
while all \(k\ge1\) components are automatically orthogonal to constants by angular orthogonality.

For the \(k=0\) component, Theorem~\ref{ass:axisymmetric-gap} gives
\[
  \cE_0[u_{0,1},u_{0,1}]
  \ge p\cB_0[u_{0,1},u_{0,1}],
\]
with equality only for
\[
  u_{0,1}=c\left(r\tau-\frac1p\right).
\]
For each \(k=1\) component, Lemma~\ref{lem:k1-ground-state} gives
\[
  \cE_1[u_{1,j},u_{1,j}]
  \ge p\cB_1[u_{1,j},u_{1,j}],
\]
with equality only for
\[
  u_{1,j}=c_j r\sqrt{1-\tau^2}.
\]
For every \(k\ge2\), Lemma~\ref{lem:higher-sectors} gives
\[
  \cE_k[u_{k,j},u_{k,j}]
  \ge p\cB_k[u_{k,j},u_{k,j}],
\]
and equality is impossible unless \(u_{k,j}\equiv0\).

Summing over all sectors proves
\[
  \cE[\phi,\phi]\ge p\cB[\phi,\phi].
\]
The equality case forces all \(k\ge2\) components to vanish, each \(k=1\) component to be a multiple of \(r\sqrt{1-\tau^2}Z_{1,j}\), and the \(k=0\) component to be a multiple of \(r\tau-1/p\). Since the degree-one harmonics on \(S^{n-2}\) are spanned by \(\theta_1,\ldots,\theta_{n-1}\), this equality space is exactly
\[
  \operatorname{span}\left\{
    y_1,\ldots,y_{n-1},y_n-\frac1p
  \right\}.
\]
The explicit computations above show that all these functions are genuine eigenfunctions with eigenvalue \(p\). Thus the min--max principle gives \(\lambda_1=p\), and the equality classification gives the full second eigenspace.
\end{proof}

We now return to the original half-space variables.
Let
\[
  F(x,t)=\frac{(x,t+1)}{|x|^2+(t+1)^2},
  \qquad
  D:=|x|^2+(t+1)^2,
\]
and
\[
  U(x,t)=D^{-\alpha/2}.
\]
The original eigenfunction corresponding to a transformed eigenfunction \(\phi\) is
\[
  \varphi(x,t)=U(x,t)\phi(F(x,t)).
\]
For \(1\le i\le n-1\),
\[
  U\,y_i\circ F
  =U\frac{x_i}{D}
  =-\frac1\alpha\partial_iU.
\]
Also,
\[
  U\left(y_n-\frac1p\right)\circ F
  =U\left(\frac{t+1}{D}-\frac1p\right).
\]
Since
\[
  v_{1,\lambda,0}(x,t)
  =\lambda^{\alpha/p}\left(|x|^2+(t+\lambda)^2\right)^{-\alpha/2},
\]
one has
\[
  \left.\partial_\lambda v_{1,\lambda,0}\right|_{\lambda=1}
  =\alpha U\left(\frac1p-\frac{t+1}{D}\right).
\]
Therefore
\[
  U\left(y_n-\frac1p\right)\circ F
  =-\frac1\alpha\left.\partial_\lambda v_{1,\lambda,0}\right|_{\lambda=1}.
\]
Consequently, the transformed classification is equivalent to 
\begin{align*}
  E_{1}
  =&\operatorname{span}\left\{
    \partial_1U,\ldots,\partial_{n-1}U,
    \left.\partial_\lambda v_{1,\lambda,0}\right|_{\lambda=1}
  \right\}\\
  =&\operatorname{span}\left\{ Z_0,Z_1,\cdots,Z_{n-1} \right\}.
\end{align*}

Together with Theorem \ref{thm:iden}, we finish the proof of Theorem~\ref{ass:spectral-nondegeneracy}.

\section{Nonlinear spectral gap near the trace-bubble manifold}
\label{sec:nonlinear-spectral-gap}

In this section we use the linear spectral gap estimate from
Corollary~\ref{cor:linear-spectral-gap} to prove a disturbed nonlinear spectral gap estimate that
will appear in the Taylor expansion of $\delta(v+\varepsilon\varphi)$.
This is the main statement needed in the proof of
Theorem~\ref{thm:main}.

We first work at the standard positive bubble \(U=U_{1,0}\).  The
corresponding statements at an arbitrary positive \(v\in\MT\) follow
from the invariances of the trace quotient.

\subsection{The exact nonlinear gradient remainder}

For \(X,Y\in\R^n\), define
\begin{equation}
 \label{eq:gradient-remainder-pointwise}
 \mathscr G_p(X,Y)
 :=
 |X+Y|^p-|X|^p-p|X|^{p-2}X\cdot Y.
\end{equation}
By the fundamental theorem of calculus,
\begin{align}
 \mathscr G_p(X,Y)
 &=
 p\int_0^1(1-s)
 \Big[
 |X+sY|^{p-2}|Y|^2
 \notag\\
 &\hspace{35mm}
 +(p-2)|X+sY|^{p-4}
 \bigl((X+sY)\cdot Y\bigr)^2
 \Big]\,ds.
 \label{eq:gradient-remainder-integral}
\end{align}
For \(h\in \dot{W}^{1,p}(\Hhalf)\), set
\begin{equation}
 \label{eq:nonlinear-gradient-form}
 \mathcal G_U[h]
 :=
 \frac{2}{p}
 \int_{\Hhalf}
 \mathscr G_p(\nabla U,\nabla h)\,dx.
\end{equation}
For a positive bubble \(v\in\MT\) we use the analogous notation
\begin{equation}
 \label{eq:general-bubble-gradient-form}
 \mathcal G_v[h]
 :=
 \frac{2}{p}
 \int_{\Hhalf}
 \mathscr G_p(\nabla v,\nabla h)\,dx.
\end{equation}
Equivalently,
\begin{align}
 \mathcal G_U[h]
 &=
 2\int_0^1(1-s)
 \int_{\Hhalf}
 \Big[
 |\nabla U+s\nabla h|^{p-2}|\nabla h|^2
 \notag\\
 &\hspace{31mm}
 +(p-2)|\nabla U+s\nabla h|^{p-4}
 \bigl((\nabla U+s\nabla h)\cdot\nabla h\bigr)^2
 \Big]\,dx\,ds.
 \label{eq:nonlinear-gradient-form-integral}
\end{align}

For every fixed smooth compactly supported \(h\),
\begin{equation}
 \label{eq:nonlinear-form-linearization}
 \lim_{\varepsilon\to0}
 \frac{\mathcal G_U[\varepsilon h]}{\varepsilon^2}
 =
 \int_{\Hhalf}
 \mathcal A_U\nabla h\cdot\nabla h\,dx.
\end{equation}

The following pointwise estimates can be proved as in \cite[Section 2]{FigalliZhang2022}, and will be used repeatedly.

\begin{lemma}[Estimate of the gradient remainder]
 \label{lem:gradient-remainder-coercivity}
 There exist constants \(c_p,C_p>0\), depending only on \(p\), such
 that
 \begin{equation}
  \label{eq:gradient-remainder-subquadratic}
  c_p
  (|X|+|Y|)^{p-2}|Y|^2
  \le
  \mathscr G_p(X,Y)
  \le
  C_p
  (|X|+|Y|)^{p-2}|Y|^2.
 \end{equation}
If \(1<p<2\), then
 \begin{equation}
  \label{eq:gradient-remainder-min-form}
  \mathscr G_p(X,Y)
  \simeq_p
  \min
  \left\{
  |Y|^p,\,
  |X|^{p-2}|Y|^2
  \right\}.
 \end{equation}
 If \(p\ge2\), then
 \begin{equation}
  \label{eq:gradient-remainder-superquadratic}
  \mathscr G_p(X,Y)
  \ge
  c_p
  \left(
  |X|^{p-2}|Y|^2+|Y|^p
  \right).
 \end{equation}
\end{lemma}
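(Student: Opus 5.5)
The plan is to reduce everything to the integral representation \eqref{eq:gradient-remainder-integral} together with the elementary comparison of $\int_0^1(1-s)|X+sY|^{p-2}\,ds$ with $(|X|+|Y|)^{p-2}$. By homogeneity ($\mathscr G_p(tX,tY)=t^p\mathscr G_p(X,Y)$) and rotation invariance, I may normalize $|X|+|Y|=1$ whenever $Y\neq0$; the case $Y=0$ is trivial.

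\textbf{Step 1: the two-sided bound \eqref{eq:gradient-remainder-subquadratic}.}

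First I handle the integrand in \eqref{eq:gradient-remainder-integral}. By Cauchy--Schwarz, the bracket satisfies
\[
\min\{1,p-1\}|X+sY|^{p-2}|Y|^2\le[\cdots]\le\max\{1,p-1\}|X+sY|^{p-2}|Y|^2 .
\]
It therefore suffices to show
\[
\int_0^1(1-s)|X+sY|^{p-2}\,ds\simeq_p(|X|+|Y|)^{p-2}.
\]

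For the upper bound when $p\ge2$, use $|X+sY|\le|X|+|Y|$.

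For the lower bound when $p\ge2$, I look for a set of $s$ of length $\gtrsim1$ inside $[0,1/2]$ on which $|X+sY|\gtrsim|X|+|Y|$. If $|X|\ge|Y|/4$, take $s\in[0,1/8]$, where $|X+sY|\ge|X|-|Y|/8\gtrsim|X|+|Y|$. If $|X|<|Y|/4$, take $s\in[3/8,1/2]$, where $|X+sY|\ge s|Y|-|X|\gtrsim|Y|$.

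When $p<2$ the exponent is negative, so the lower bound for the integral follows from $|X+sY|\le|X|+|Y|$. For the upper bound I use the classical estimate
\[
\int_0^1|X+sY|^{p-2}\,ds\le C_p(|X|+|Y|)^{p-2}.
\]
This holds because $s\mapsto|X+sY|$ is at least the distance from $-X$ to the line through $0$ in direction $Y$, and also grows linearly in $s$ away from its minimizer. Hence the integral is dominated by $|Y|^{-1}\int_{-2}^{2}|\sigma|^{p-2}\,d\sigma$ when $|Y|\gtrsim|X|$, which is finite since $p-2>-1$. When $|X|\gg|Y|$, $|X+sY|\simeq|X|$ and the estimate is immediate. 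This integrability step near a possible zero of $X+sY$ is the only delicate point, and it is where $p>1$ is used.

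\textbf{Step 2: the min form \eqref{eq:gradient-remainder-min-form} for $1<p<2$.}

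From Step 1, $\mathscr G_p\simeq(|X|+|Y|)^{p-2}|Y|^2$. If $|Y|\le|X|$, then $|X|+|Y|\simeq|X|$, so the quantity is comparable to $|X|^{p-2}|Y|^2$. In this regime $|X|^{p-2}|Y|^2\le|Y|^p$, because $(|Y|/|X|)^{2-p}\le1$, so the minimum equals $|X|^{p-2}|Y|^2$.

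If $|Y|\ge|X|$, then the quantity is comparable to $|Y|^p$. Here $|X|^{p-2}|Y|^2\ge|Y|^p$, so the minimum equals $|Y|^p$.

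\textbf{Step 3: the superquadratic bound \eqref{eq:gradient-remainder-superquadratic} for $p\ge2$.}

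For $p\ge2$ we have $(|X|+|Y|)^{p-2}\ge\max\{|X|,|Y|\}^{p-2}$. This gives
\[
(|X|+|Y|)^{p-2}|Y|^2\ge\tfrac12\bigl(|X|^{p-2}|Y|^2+|Y|^p\bigr),
\]
and the bound follows directly from the lower half of \eqref{eq:gradient-remainder-subquadratic}.

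I expect no real obstacle beyond the uniform integrability in Step 1 for $p<2$, which follows the argument of \cite[Section 2]{FigalliZhang2022}.
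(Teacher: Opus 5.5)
Your proof is correct. It differs from the paper's proof only in how the regime $|Y|\gtrsim|X|$ is handled.

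The paper splits into two regions.
\begin{itemize}
\item For $|Y|\le|X|/2$, one has $|X+sY|\simeq|X|$ for every $s\in[0,1]$, so the integrand in \eqref{eq:gradient-remainder-integral} is directly comparable to $|X|^{p-2}|Y|^2$.
\item For $|Y|>|X|/2$, the paper does not estimate the $s$-integral at all. It invokes the homogeneity of $\mathscr G_p$: rescale to $|Y|=1$, and note that $\mathscr G_p$ is continuous and strictly positive on the compact set $\{|Y|=1,\ |X|\le 2\}$ by strict convexity of $|\cdot|^p$. This gives $\mathscr G_p\simeq|Y|^p\simeq(|X|+|Y|)^{p-2}|Y|^2$.
\end{itemize}
That compactness shortcut never has to deal with the possible zero of $X+sY$ when $p<2$, but its constants are non-explicit.

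You instead reduce the bracket to the scalar weight $|X+sY|^{p-2}|Y|^2$ by Cauchy--Schwarz. You then prove $\int_0^1(1-s)|X+sY|^{p-2}\,ds\simeq_p(|X|+|Y|)^{p-2}$ by hand: via $|X+sY|\ge|Y|\,|s-s_0|$ and integrability of $|\sigma|^{p-2}$ when $p<2$, and via a choice of $s$-intervals where $|X+sY|\gtrsim|X|+|Y|$ when $p\ge2$. This costs some extra case analysis, but it gives explicit constants and shows exactly where $p>1$ enters.

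Your derivations of \eqref{eq:gradient-remainder-min-form} and \eqref{eq:gradient-remainder-superquadratic} from the two-sided bound are the elementary comparisons the paper leaves implicit.
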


\begin{proof}
 They follow from
 \eqref{eq:gradient-remainder-integral} by separating the regions
 \(|Y|\le |X|/2\) and \(|Y|>|X|/2\).  In the first region,
 \(|X+sY|\simeq|X|\), while in the second region the homogeneity of
 \(\mathscr G_p\) gives a lower bound proportional to \(|Y|^p\).
\end{proof}

The boundary counterpart of the preceding gradient estimates is the
following scalar lemma.

\begin{lemma}[Boundary Taylor remainder]
 \label{lem:boundary-taylor-remainder}
 For \(a>0\), \(b\in\R\), and \(q>1\), set
 \begin{equation}
  \label{eq:boundary-remainder-pointwise}
  \mathscr B_q(a,b)
  :=
  |a+b|^q-a^q-qa^{q-1}b.
 \end{equation}
 If \(1<q\le2\), then, for every \(\kappa>0\), there exists
  \(C_\kappa=C(q,\kappa)\ge1\) such that
 \begin{equation}
  \label{eq:boundary-remainder-q-less-two}
  \mathscr B_q(a,b)
  \le
  \left(
  \frac{q(q-1)}{2}+\kappa
  \right)
  \frac{(a+C_\kappa|b|)^q}{a^2+b^2}\,b^2.
 \end{equation}
 If \(q>2\), then, for every \(\kappa>0\), there exists
  \(C_\kappa=C(q,\kappa)>0\) such that
 \begin{equation}
  \label{eq:boundary-remainder-q-greater-two}
  \mathscr B_q(a,b)
  \le
  \left(
  \frac{q(q-1)}{2}+\kappa
  \right)a^{q-2}b^2
  +
   C_\kappa|b|^q.
 \end{equation}
\end{lemma}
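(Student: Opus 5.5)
By homogeneity, we may normalise \(a=1\) and write \(b=a\,s\) with \(s\in\R\). Then \(\mathscr B_q(a,b)=a^q\,\mathscr B_q(1,s)\), and both right-hand sides scale the same way: \(a^q s^2(1+C_\kappa|s|)^q/(1+s^2)\) and \(a^q\bigl[(\tfrac{q(q-1)}2+\kappa)s^2+C_\kappa|s|^q\bigr]\). So it suffices to prove one-variable inequalities for \(f(s):=|1+s|^q-1-qs\), \(s\in\R\), which is a continuous function. We split into a small region \(|s|\le\delta\) and a large region \(|s|>\delta\), with \(\delta=\delta(q,\kappa)\) chosen first.

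\emph{Small region.} Near \(s=0\), Taylor's formula gives \(f(s)=\tfrac{q(q-1)}2|1+\theta s|^{q-2}s^2\) for some \(\theta\in(0,1)\). Choose \(\delta\) so small that \(\tfrac{q(q-1)}2|1+\theta s|^{q-2}\le \tfrac{q(q-1)}2+\tfrac\kappa2\) for \(|s|\le\delta\). This handles \(q>2\) immediately, since the term \(C_\kappa|s|^q\) is nonnegative. For \(1<q\le2\) we need \((1+C|s|)^q/(1+s^2)\ge 1-\kappa'\) on \(|s|\le\delta\). Indeed \((1+C|s|)^q\ge1\), and \(1/(1+s^2)\ge 1-\delta^2\). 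Shrinking \(\delta\) so that \((\tfrac{q(q-1)}2+\tfrac\kappa2)\le(\tfrac{q(q-1)}2+\kappa)(1-\delta^2)\) closes this case for any \(C_\kappa\ge1\).

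\emph{Large region.} For \(|s|\ge\delta\), \(f\) is continuous and satisfies \(f(s)\le C(q)(1+|s|)^q\).
\begin{itemize}
\item For \(q>2\): \(|s|^q\ge\delta^{q}\) and \((1+|s|)^q\le (1+1/\delta)^q|s|^q\). Hence \(f(s)\le C(q)(1+1/\delta)^q|s|^q\), and we take \(C_\kappa:=C(q)(1+\delta^{-1})^q\).
\item For \(1<q\le2\): with \(C_\kappa\ge1\) we have \((1+C_\kappa|s|)^q s^2/(1+s^2)\ge (C_\kappa|s|)^q\,\delta^2/(1+\delta^2)\). Also \(f(s)\le C(q)(1+|s|)^q\le C(q)(1+\delta^{-1})^q|s|^q\). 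Choosing \(C_\kappa^q\ge C(q)(1+\delta^{-1})^q(1+\delta^2)\delta^{-2}/(\tfrac{q(q-1)}2)\) gives the claim.
\end{itemize}

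The bound \(f(s)\le C(q)(1+|s|)^q\) is elementary: \(|1+s|^q\le(1+|s|)^q\) and \(|1+qs|\le q(1+|s|)^q\). The only point requiring care is the order of choices: \(\delta\) is fixed from \(\kappa\) in the small region, and then \(C_\kappa\) from \(\delta\). No step is a real obstacle; this mirrors \cite[Section 2]{FigalliZhang2022}.
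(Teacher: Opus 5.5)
Your proof is correct and is essentially the argument behind the paper's proof: the paper simply cites \cite[Lemma~2.4]{FigalliZhang2022}, and your reduction to $a=1$ by homogeneity, a Taylor expansion for $|s|\le\delta$, and the crude bound $f(s)\le C(q)(1+\delta^{-1})^q|s|^q$ for $|s|\ge\delta$ is the standard elementary verification of that scalar lemma. The only implicit point is that you need $\delta<1$, so that $1+\theta s>0$ and the Lagrange form of the remainder applies.
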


\begin{proof}
 These are precisely the scalar estimates of
 \cite[Lemma~2.4]{FigalliZhang2022}, with the Sobolev exponent there
 replaced by \(q\). 
\end{proof}

We next extend the weighted notion of orthogonality as in
\cite[Remark~3.7]{FigalliZhang2022}.  If \(v\in\MT^+\) and
\(Z\in T_v\MT\), then \(|Z|\le C_vv\) on \(\bdH\), and hence
\[
 v^{p_*-2}Z\in L^{p_*'}(\bdH).
\]
Therefore, for every \(\psi\in L^{p_*}(\bdH)\), we define
\(\psi\perp T_v\MT\) by the duality conditions
\begin{equation}
 \label{eq:orthogonality-general-duality}
 \int_{\bdH}v^{p_*-2}\psi Z\,dy=0
 \qquad
 \forall Z\in T_v\MT.
\end{equation}
The pairing is well defined by H\"older's inequality.  When
\(\psi\in L^2(\bdH,v^{p_*-2}\,dy)\), this agrees with the usual
weighted Hilbert-space orthogonality.  In particular,
the trace inequality gives this notion of orthogonality for every
\(h\in\dot{W}^{1,p}(\Hhalf)\), even when
\(h\notin L^2(\bdH,v^{p_*-2}\,dy)\).

At the standard bubble \(U\), condition
\eqref{eq:orthogonality-general-duality} becomes
\begin{equation}
 \label{eq:nonlinear-gap-orthogonality}
 \int_{\bdH}
 U^{p_*-2}h\,Z\,dy=0
 \qquad
 \forall Z\in T_U\MT.
\end{equation}
In view of \eqref{eq:tangent-space-standard}, this is equivalent to
\begin{equation}
 \label{eq:orthogonality-expanded}
 \int_{\bdH}
 U^{p_*-2}h\,U\,dy=0,
 \qquad
 \int_{\bdH}
 U^{p_*-2}h\,Z_j\,dy=0,
 \quad 0\le j\le n-1.
\end{equation}

\subsection{A compactness lemma for normalized perturbations}

We adopt the idea in \cite[Lemma 3.4]{FigalliZhang2022} to prove the following compactness lemma, which will be used in the
contradiction argument of Proposition \ref{prop:nonlinear-spectral-gap}.

\begin{lemma}[Compactness of normalized perturbations]
 \label{lem:normalized-perturbation-compactness}
 Assume \(1<p<2\).  Let \(h_k\in \dot{W}^{1,p}(\Hhalf)\) satisfy
 \begin{equation}
  \label{eq:hk-small-gradient}
  \|\nabla h_k\|_{L^p(\Hhalf)}\longrightarrow0
 \end{equation}
 and
 \begin{equation}
  \label{eq:hk-orthogonal}
  \int_{\bdH}
  U^{p_*-2}h_k\,Z\,dy=0
  \qquad
  \forall Z\in T_U\MT.
 \end{equation}
 Define
 \begin{equation}
  \label{eq:epsilon-k-definition}
  \varepsilon_k^2
  :=
  \int_{\Hhalf}
  \bigl(|\nabla U|+|\nabla h_k|\bigr)^{p-2}
  |\nabla h_k|^2\,dx
 \end{equation}
 and assume \(\varepsilon_k>0\).  Set
 \[
  \phi_k:=\frac{h_k}{\varepsilon_k}.
 \]
 Then we have $\varepsilon_k\rightarrow0$ and, after passing to a subsequence, there exists
 \(\phi\in\dot{W}^{1,p}(\Hhalf)\cap L^2(\bdH,U^{p_*-2}\,dy)\) such that
 \begin{align}
  \phi_k
  \rightharpoonup\phi
  \qquad\text{weakly in }\dot{W}^{1,p}(\Hhalf),
  \label{eq:phi-k-weak}
 \end{align}
 and, after taking a further subsequence, \(\phi_k\to\phi\) almost
 everywhere in \(\Hhalf\) and on \(\bdH\).  Moreover,
 \begin{equation}
  \label{eq:limit-orthogonality}
  \int_{\bdH}
  U^{p_*-2}\phi\,Z\,dy=0
  \qquad
  \forall Z\in T_U\MT.
 \end{equation}

 If \(p_*>2\), then
 \begin{equation}
  \label{eq:phi-k-strong-trace-weighted}
  \phi_k\longrightarrow\phi
  \qquad\text{strongly in }
  L^2(\bdH,U^{p_*-2}\,dy).
 \end{equation}

 If \(p_*\le2\), then, for every fixed \(C_0\ge1\),
 \begin{equation}
  \label{eq:nonlinear-boundary-limsup}
  \lim_{k\to\infty}
  \frac{1}{\varepsilon_k^2}
  \int_{\bdH}
  \frac{(U+C_0|h_k|)^{p_*}}{U^2+|h_k|^2}
  |h_k|^2\,dy
  =
  \int_{\bdH}
  U^{p_*-2}|\phi|^2\,dy.
 \end{equation}
\end{lemma}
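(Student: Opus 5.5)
The plan is to follow \cite[Lemma~3.4]{FigalliZhang2022}, using two elementary facts valid for $1<p<2$. First, since $(|\nabla U|+|\nabla h_k|)^{p-2}\le|\nabla h_k|^{p-2}$, we get $\varepsilon_k^2\le\|\nabla h_k\|_{L^p}^p\to0$. Second, Hölder's inequality with exponents $2/p$ and $2/(2-p)$ gives
\[
\int_{\Hhalf}|\nabla h_k|^p\,dx\le\varepsilon_k^p\Bigl(\int_{\Hhalf}(|\nabla U|+|\nabla h_k|)^p\,dx\Bigr)^{\frac{2-p}2}\le C\varepsilon_k^p .
\]
Hence $\|\nabla\phi_k\|_{L^p}\le C$. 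Weak compactness then yields \eqref{eq:phi-k-weak}. Rellich's theorem on bounded half-balls, together with local compactness of the trace $\dot W^{1,p}\to L^q_{\mathrm{loc}}(\bdH)$ for $q<p_*$ and a diagonal argument, gives a.e. convergence in $\Hhalf$ and on $\bdH$.

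The orthogonality \eqref{eq:limit-orthogonality} passes to the limit because $\phi_k\rightharpoonup\phi$ weakly in $L^{p_*}(\bdH)$, by the trace inequality \eqref{eq:sharp-trace}, and $U^{p_*-2}Z\in L^{p_*'}(\bdH)$ since $|Z|\le CU$. To show $\phi\in L^2(\bdH,U^{p_*-2})$, split $\nabla\phi_k=g_k+b_k$ with $g_k:=\nabla\phi_k\mathbf 1_{\{|\nabla h_k|\le|\nabla U|\}}$. On that set the weight satisfies $(|\nabla U|+|\nabla h_k|)^{p-2}\ge 2^{p-2}|\nabla U|^{p-2}$, so $g_k$ is bounded in $L^2(\Hhalf;|\nabla U|^{p-2})$. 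On the complement, $\int|\nabla h_k|^p\le 2^{2-p}\varepsilon_k^2$, so $\|b_k\|_{L^p}^p\le C\varepsilon_k^{2-p}\to0$. Thus $\nabla\phi$ is the weak limit of $g_k$ and lies in $L^2(|\nabla U|^{p-2})$. Lemma~\ref{lem:weighted-exterior-trace}, extended by density (truncate and mollify $\phi$, which is legitimate because $\phi\in\dot W^{1,p}$), then gives the weighted $L^2$ bound on the trace.

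If $p_*>2$, strong convergence is simpler and does not use the weighted structure. Locally, $2<p_*$ makes the trace map $\dot W^{1,p}\to L^2(\bdH\cap B_R)$ compact, so $\phi_k\to\phi$ in $L^2$ on bounded boundary regions. For the tail, Hölder's inequality gives
\[
\int_{\Gamma_R}U^{p_*-2}|\phi_k|^2\,dy\le\Bigl(\int_{\Gamma_R}U^{p_*}\,dy\Bigr)^{\frac{p_*-2}{p_*}}\|\phi_k\|_{L^{p_*}(\bdH)}^2 .
\]
The second factor is uniformly bounded by the trace inequality and the first tends to $0$ as $R\to\infty$. This proves \eqref{eq:phi-k-strong-trace-weighted}.

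If $p_*\le2$, the integrand in \eqref{eq:nonlinear-boundary-limsup} equals $\frac{(U+C_0|h_k|)^{p_*}}{U^2+h_k^2}\phi_k^2$. It converges a.e. to $U^{p_*-2}\phi^2$ because $h_k=\varepsilon_k\phi_k\to0$ a.e. The limit will follow from Vitali's theorem once uniform integrability is shown, and I split the boundary into three sets. On $\{|h_k|\le 2U\}$ the integrand is at most $CU^{p_*-2}\phi_k^2$, so it suffices to have uniform integrability (local and at infinity) of $U^{p_*-2}\phi_k^2$. I plan to obtain this by splitting $\phi_k$ as above, applying Lemma~\ref{lem:weighted-exterior-trace} to the $g_k$-part, and showing that the $b_k$-part contributes $o(1)$. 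On $\{|h_k|>2U\}$ the integrand is at most $C|h_k|^{p_*}/\varepsilon_k^2$, and one must show
\[
\varepsilon_k^{-2}\int_{\{|h_k|>2U\}}|h_k|^{p_*}\,dy\to0 .
\]
For this I would set $w_k:=(|h_k|-U)_+$ and note that $|h_k|\le 2w_k$ where $|h_k|>2U$. The trace inequality then gives a bound by $C\|\nabla w_k\|_{L^p}^{p_*}$, and $\|\nabla w_k\|_{L^p}^p\le C\int_{\{|h_k|>U\}}\bigl(|\nabla h_k|^p+|\nabla U|^p\bigr)\,dx$. The piece where $|\nabla h_k|>|\nabla U|$ is $O(\varepsilon_k^2)$. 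The remaining pieces involve $\int_{\{|h_k|>U\}}|\nabla U|^p$, where I would use Chebyshev in the weighted $L^2$/$L^{p^*}$ sense to show the measure of $\{|h_k|>U\}$ is $O(\varepsilon_k^{2})$ relative to the weight. Since $p_*/p>1$, the resulting power of $\varepsilon_k$ exceeds $2$. This estimate on $\{|h_k|>2U\}$ is the main obstacle, and I expect to need the finer layer-cake decomposition of \cite[Lemma~3.4]{FigalliZhang2022}, with truncations of $h_k$ at levels proportional to $U$, to close it.
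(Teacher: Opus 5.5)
Your handling of $\varepsilon_k\to0$, the $\dot W^{1,p}$ bound, the weak and a.e.\ limits, the orthogonality, and the case $p_*>2$ is correct and matches the paper; your direct H\"older bound $\int_{\Hhalf}|\nabla h_k|^p\le C\varepsilon_k^p$ is a simpler version of the paper's absorption step. The gap is the case $p_*\le2$, which is the real content of the lemma. On $\{|h_k|\le2U\}$ you propose to ``split $\phi_k$ as above'' and apply Lemma~\ref{lem:weighted-exterior-trace} to the $g_k$-part. But $g_k=\nabla\phi_k\mathbf 1_{\{|\nabla h_k|\le|\nabla U|\}}$ is in general not a gradient. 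So splitting $\nabla\phi_k$ does not split $\phi_k$ into functions, and there is no trace to which the lemma could be applied. The uniform integrability must also be sought only on $\{|h_k|\le2U\}$, i.e.\ for a truncation of the \emph{values} such as $\min\{|\phi_k|,2U/\varepsilon_k\}$. On all of $\bdH$ it fails for $p_*<2$: a bump of height $A\to\infty$ and radius $A^{-\gamma}$ centred on $\bdH$, with $\frac{p}{n-p}<\gamma<\frac{2-p}{p-1}$ and corrected by a negligible element of $T_U\MT$, has $\varepsilon_k\to0$ but $\int_{\bdH}U^{p_*-2}\phi_k^2\to\infty$. A value-truncation, however, still has large gradient on $\{\varepsilon_k|\nabla\phi_k|>|\nabla U|\}$. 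It is therefore bounded only in the mixed energy, not in $\dot W^{1,2}(\Hhalf;|\nabla U|^{p-2})$. The missing idea is to let the \emph{interior} amplitude bound $\varepsilon_k\phi_k\le\zeta U$ pay for the large gradient. The paper applies a weighted $L^p$ Hardy--trace inequality, with weights $\rho^{\beta-1}$, $\rho^{\beta}$, $\rho^{\beta+p-1}$ and $\beta=\alpha(2-p_*)$, to $(1+\varepsilon_k\phi_k/U)^{(p-2)/p}\phi_k^{2/p}$. On the large-gradient set it uses $\phi_k^{2-p}\le(\zeta U/\varepsilon_k)^{2-p}$ and $\beta+p=\alpha(2-p)$ to bound $\rho^{\beta+p-1}\phi_k^{2-p}|\nabla\phi_k|^p$ by $C\zeta^{2-p}\rho^{-1}$ times the mixed energy density; on the small-gradient set it uses Young's inequality. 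A logarithmic cutoff then gives uniformly small tails.

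On $\{|h_k|>2U\}$, your reduction to $\|\nabla w_k\|_{L^p}^{p_*}=o(\varepsilon_k^2)$ is sound. It would close if $\int_{\{|h_k|>U\}}|\nabla U|^p\,dx\le C\varepsilon_k^2$, but Chebyshev cannot deliver this. Since $|\nabla U|^p/U^2\asymp\rho^\beta$, the $L^2$ version needs $\int_{\Hhalf}\rho^\beta h_k^2\,dx\le C\varepsilon_k^2$, and the mixed energy does not control this when $p<2$. For example, a bump of radius $R$ centred at $3Re_n$ with height $R^{-s}$, $\frac{n-p}{p}<s<\alpha$, vanishes on $\bdH$ (so it is orthogonal to $T_U\MT$) and has $\varepsilon_k\to0$, yet $\varepsilon_k^{-2}\int\rho^\beta h_k^2\to\infty$. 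The $L^{p^*}$ version leaves the unbounded weight $\rho^{p/(p-1)}$. The paper gets the estimate from the equation for $U$. After reducing to $h_k\ge0$, it tests the Euler--Lagrange equation of $U$ with $(\varepsilon_k\phi_k-\zeta U)_+$; the boundary term is nonnegative. It then uses convexity of $F_t(z)=(t+|z|)^{p-2}|z|^2$ at $z=\zeta\nabla U$, $t=|\nabla U|$, where $DF_t$ is a positive multiple of $|\nabla U|^{p-2}\nabla U$. This gives $\zeta^2(1+\zeta)^{p-2}\int_{\{\varepsilon_k\phi_k>\zeta U\}}|\nabla U|^p\le\varepsilon_k^2$. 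Two consequences follow from it. First, with $\phi_{k,2}:=\phi_k-\min\{\phi_k,\zeta U/\varepsilon_k\}$, one gets $\varepsilon_k^{p_*-2}\|\phi_{k,2}\|_{L^{p_*}(\bdH)}^{p_*}\lesssim\varepsilon_k^{2(p_*-p)/p}\to0$. Second, the truncation $\min\{\phi_k,\zeta U/\varepsilon_k\}$ has bounded mixed energy, which the small-amplitude step needs. Without these two ingredients, \eqref{eq:nonlinear-boundary-limsup} is not established.
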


\begin{proof}
 For the energy estimates and the nonlinear boundary convergence, we
 may replace \(h_k\) by \(|h_k|\), since
 \(|\nabla|h_k||=|\nabla h_k|\) almost everywhere and all the relevant
 boundary integrands depend only on \(|h_k|\).  Thus we may assume
 \(h_k\ge0\), and hence \(\phi_k\ge0\), in those parts of the proof.
 The passage to the orthogonality condition is carried out for the
 original signed sequence.
 By the definition of \(\varepsilon_k\),
 \begin{equation}
  \label{eq:normalized-mixed-energy}
  \int_{\Hhalf}
  \bigl(|\nabla U|+\varepsilon_k|\nabla\phi_k|\bigr)^{p-2}
  |\nabla\phi_k|^2\,dx
  =1.
 \end{equation}

 First, \(\varepsilon_k\to0\).  Indeed, since \(p<2\),
 \[
  \varepsilon_k^2
  \le
  \int_{\Hhalf}|\nabla h_k|^p\,dx
  \longrightarrow0.
 \]

 We next prove that \((\phi_k)\) is bounded in
 \(\dot{W}^{1,p}(\Hhalf)\).  H\"older's inequality and
 \eqref{eq:normalized-mixed-energy} give
 \begin{align}
  \int_{\Hhalf}|\nabla\phi_k|^p\,dx
  &\le
  \left(
  \int_{\Hhalf}
  \bigl(|\nabla U|+\varepsilon_k|\nabla\phi_k|\bigr)^{p-2}
  |\nabla\phi_k|^2\,dx
  \right)^{p/2}
  \notag\\
  &\quad\times
  \left(
  \int_{\Hhalf}
  \bigl(|\nabla U|+\varepsilon_k|\nabla\phi_k|\bigr)^p\,dx
  \right)^{1-p/2}
  \notag\\
  &\le
  C
  \left(
  1+\varepsilon_k^p
  \int_{\Hhalf}|\nabla\phi_k|^p\,dx
  \right)^{1-p/2}.
  \label{eq:phi-k-W1p-absorption}
 \end{align}
 Since \(1-p/2<1\) and \(\varepsilon_k\le1\) for large \(k\),
 an elementary absorption argument yields
 \begin{equation}
  \label{eq:phi-k-W1p-bound}
  \sup_k\|\nabla\phi_k\|_{L^p(\Hhalf)}<\infty.
 \end{equation}
 Reflexivity gives \eqref{eq:phi-k-weak} after passing to a
 subsequence.  The trace operator is continuous into
 \(L^{p_*}(\bdH)\),
 and the local Rellich theorem and local compact trace embedding give,
 after a further subsequence, almost-everywhere convergence in
 \(\Hhalf\) and on \(\bdH\).

 For every \(Z\in T_U\MT\), the explicit tangent modes satisfy
 \(|Z|\le C U\) on \(\bdH\).  Hence
 \(U^{p_*-2}Z\in L^{p_*'}(\bdH)\).  Dividing
 \eqref{eq:hk-orthogonal} by \(\varepsilon_k\) and passing to the
 weak trace limit gives \eqref{eq:limit-orthogonality}.

 If \(p_*>2\), then the local compact trace convergence and H\"older's
 inequality on the tails imply strong convergence in the weighted
 \(L^2\)-space.  Indeed, for \(R>1\),
 \begin{align*}
  \int_{\{\rho>R\}\cap\bdH}
  U^{p_*-2}|\phi_k-\phi|^2\,dy
  &\le
  \left(
  \int_{\{\rho>R\}\cap\bdH}U^{p_*}\,dy
  \right)^{(p_*-2)/p_*}
  \|\phi_k-\phi\|_{L^{p_*}(\bdH)}^2,
 \end{align*}
 where \(\rho(x):=|x+e_n|\).  The first factor tends to zero as
 \(R\to\infty\), uniformly in \(k\), while on \(\{\rho\le R\}\cap\bdH\)
 the convergence is strong in \(L^2\).  This proves
 \eqref{eq:phi-k-strong-trace-weighted}.

 It remains to prove the nonlinear boundary convergence when
 \(p_*\le2\).  Set
 \[
  \alpha:=\frac{n-p}{p-1},
  \qquad
  \beta:=\alpha(2-p_*)\ge0,
  \qquad
  \rho(x):=|x+e_n|.
 \]
 Then
 \[
  U\asymp\rho^{-\alpha},
  \qquad
  |\nabla U|\asymp\rho^{-\alpha-1},
  \qquad
  U^{p_*-2}\asymp\rho^\beta,
 \]
 and the identities
 \begin{equation}
  \label{eq:trace-alpha-identities}
  \alpha(p_*-p)=p,
  \qquad
  \beta+p-1-\alpha(2-p)=-1
 \end{equation}
 will be used below.

We shall use the following Hardy--trace type estimate: for any
\(\xi\in\dot{W}^{1,p}(\Hhalf,\rho^{\beta+p-1})\),
 \begin{equation}
  \label{eq:weighted-hardy-trace}
  \int_{\Hhalf}\rho^{\beta-1}|\xi|^p\,dx
  +
  \int_{\bdH}\rho^\beta|\xi|^p\,dy
  \le
  C
  \int_{\Hhalf}\rho^{\beta+p-1}|\nabla\xi|^p\,dx.
 \end{equation}
 To prove it, first apply integration by parts to
 \(X(x):=\rho^{\beta-1}(x+e_n)\).  Since
 \(\operatorname{div}X=(n+\beta-1)\rho^{\beta-1}\), H\"older's
 inequality gives the interior Hardy estimate.  Then integrate
 \(-\partial_t(\rho^\beta|\xi|^p)\) in the vertical direction and use
 the interior estimate once more to obtain the boundary term.

 \medskip
 \noindent\emph{Step 1: small amplitudes.}
 We first prove the convergence under the additional assumptions
 \begin{equation}
  \label{eq:small-amplitude-regime}
  0\le\varepsilon_k\phi_k\le\zeta U
  \quad\text{in }\Hhalf,
  \qquad
  \sup_k
  \int_{\Hhalf}
  \bigl(|\nabla U|+\varepsilon_k|\nabla\phi_k|\bigr)^{p-2}
  |\nabla\phi_k|^2\,dx
  <\infty,
  \end{equation}
 where \(\zeta>0\) will be chosen sufficiently small.  Since
 \(0\le\varepsilon_k\phi_k/U\le\zeta\),
 \begin{equation}
  \label{eq:small-amplitude-comparison}
  \left[
  \left(1+\frac{\varepsilon_k\phi_k}{U}\right)^{(p-2)/p}
  \phi_k^{2/p}
  \right]^p
  \asymp\phi_k^2.
 \end{equation}
 A direct differentiation gives
 \begin{equation}
  \label{eq:small-amplitude-gradient-preliminary}
  \left|
  \nabla\left[
  \left(1+\frac{\varepsilon_k\phi_k}{U}\right)^{(p-2)/p}
  \phi_k^{2/p}
  \right]
  \right|^p
  \le
  C\phi_k^2
  \left(
  \frac{\varepsilon_k|\nabla\phi_k|}{U}
  +
  \frac{\varepsilon_k\phi_k}{U}\frac{|\nabla U|}{U}
  \right)^p
  +
  C\phi_k^{2-p}|\nabla\phi_k|^p.
 \end{equation}
 Using \(|\nabla U|/U\asymp\rho^{-1}\), we get
 \begin{align}
  \rho^{\beta+p-1}
  \left|
  \nabla\left[
  \left(1+\frac{\varepsilon_k\phi_k}{U}\right)^{(p-2)/p}
  \phi_k^{2/p}
  \right]
  \right|^p
  &\le
  C\zeta^p\rho^{\beta-1}\phi_k^2
  +
  C\rho^{\beta+p-1}
  \phi_k^{2-p}|\nabla\phi_k|^p.
  \label{eq:small-amplitude-gradient-reduced}
 \end{align}

 \begin{claim}
 For every \(\delta>0\), choosing
 \(\zeta=\zeta(\delta)>0\) sufficiently small gives
 \begin{align}
  \rho^{\beta+p-1}\phi_k^{2-p}|\nabla\phi_k|^p
  &\le
  \delta\rho^{\beta-1}\phi_k^2
  \notag\\
  &\quad+
  C_\delta\rho^{-1}
  \bigl(|\nabla U|+\varepsilon_k|\nabla\phi_k|\bigr)^{p-2}
  |\nabla\phi_k|^2.
  \label{eq:key-weighted-numerical-estimate}
 \end{align}
 \end{claim}

 \begin{proof}
 On \(\{\varepsilon_k|\nabla\phi_k|<|\nabla U|\}\), we have
 \[
  |\nabla U|+\varepsilon_k|\nabla\phi_k|
  \asymp |\nabla U|.
 \]
 Moreover, \(|\nabla U|\asymp\rho^{-\alpha-1}\) and
 \eqref{eq:trace-alpha-identities} imply
 \begin{align*}
  &\rho^{\beta+p-1}\phi_k^{2-p}|\nabla\phi_k|^p\\
  &\quad\le C
  \bigl(\rho^{\beta-1}\phi_k^2\bigr)^{1-p/2}
  \bigl(\rho^{-1}|\nabla U|^{p-2}
  |\nabla\phi_k|^2\bigr)^{p/2}.
 \end{align*}
 Indeed, equality of the powers of \(\rho\) is precisely equivalent to
 \(\beta+p=\alpha(2-p)\), which is the second identity in
 \eqref{eq:trace-alpha-identities}.  Young's inequality therefore gives
 \begin{align*}
  \rho^{\beta+p-1}\phi_k^{2-p}|\nabla\phi_k|^p
  &\le
  \delta\rho^{\beta-1}\phi_k^2\\
  &\quad+
  C_\delta\rho^{-1}
  \bigl(|\nabla U|+\varepsilon_k|\nabla\phi_k|\bigr)^{p-2}
  |\nabla\phi_k|^2.
 \end{align*}

 On \(\{\varepsilon_k|\nabla\phi_k|\ge|\nabla U|\}\), we instead have
 \[
  \bigl(|\nabla U|+\varepsilon_k|\nabla\phi_k|\bigr)^{p-2}
  |\nabla\phi_k|^2
  \asymp
  \varepsilon_k^{p-2}|\nabla\phi_k|^p.
 \]
 Since \(\varepsilon_k\phi_k\le\zeta U\), it follows that
 \begin{align*}
  &\rho^{\beta+p-1}\phi_k^{2-p}|\nabla\phi_k|^p\\
  &\quad\le
  C\zeta^{2-p}
  \rho^{\beta+p-1}U^{2-p}\varepsilon_k^{p-2}
  |\nabla\phi_k|^p\\
  &\quad\le
  C\zeta^{2-p}\rho^{-1}
  \bigl(|\nabla U|+\varepsilon_k|\nabla\phi_k|\bigr)^{p-2}
  |\nabla\phi_k|^2,
 \end{align*}
 where the last inequality follows from
 \(U\asymp\rho^{-\alpha}\) and
 \(\beta+p=\alpha(2-p)\).  Combining the two regions proves
 \eqref{eq:key-weighted-numerical-estimate}.
 \end{proof}

 Combining \eqref{eq:small-amplitude-gradient-reduced} and
 \eqref{eq:key-weighted-numerical-estimate}, and choosing
 \(\delta\) and then \(\zeta\) sufficiently small, we obtain
 \begin{equation}
  \label{eq:small-amplitude-gradient-final}
  \rho^{\beta+p-1}
  \left|
  \nabla\left[
  \left(1+\frac{\varepsilon_k\phi_k}{U}\right)^{(p-2)/p}
  \phi_k^{2/p}
  \right]
  \right|^p
  \le
  c_{\mathrm{abs}}\rho^{\beta-1}\phi_k^2
  +
  C\rho^{-1}
  \bigl(|\nabla U|+\varepsilon_k|\nabla\phi_k|\bigr)^{p-2}
  |\nabla\phi_k|^2,
 \end{equation}
 where \(c_{\mathrm{abs}}>0\) is chosen small enough to be absorbed below.  Applying
 \eqref{eq:weighted-hardy-trace} to
 \[
  \left(1+\frac{\varepsilon_k\phi_k}{U}\right)^{(p-2)/p}
  \phi_k^{2/p},
 \]
 and using \eqref{eq:small-amplitude-comparison} and
 \eqref{eq:small-amplitude-gradient-final}, yields
 \begin{equation}
  \label{eq:small-regime-weighted-bound}
  \int_{\Hhalf}\rho^{\beta-1}\phi_k^2\,dx
  +
  \int_{\bdH}\rho^\beta\phi_k^2\,dy
  \le
  C
  \int_{\Hhalf}
  \bigl(|\nabla U|+\varepsilon_k|\nabla\phi_k|\bigr)^{p-2}
  |\nabla\phi_k|^2\,dx
  \le C.
 \end{equation}

 We next prove the uniform tail bound
 \begin{equation}
  \label{eq:small-regime-tail-vanishing}
  \lim_{R\to\infty}\sup_k
  \left[
  \int_{\{\rho>R\}\cap\Hhalf}\rho^{\beta-1}\phi_k^2\,dx
  +
  \int_{\{\rho>R\}\cap\bdH}\rho^\beta\phi_k^2\,dy
  \right]
  =0.
 \end{equation}
 For \(R>e\), choose a logarithmic cutoff \(\eta_R\) such that
 \[
  0\le\eta_R\le1,
  \qquad
  \eta_R=0\ \text{on }\{\rho\le R\},
  \qquad
  \eta_R=1\ \text{on }\{\rho\ge R^2\},
 \]
 and
 \[
  |\nabla\eta_R|
  \le\frac{C}{\rho\log R}
  \mathbf 1_{\{R<\rho<R^2\}}.
 \]
 Apply \eqref{eq:weighted-hardy-trace}, first with an additional outer
 compact cutoff and then by approximation, to
 \[
  \eta_R
  \left(1+\frac{\varepsilon_k\phi_k}{U}\right)^{(p-2)/p}
  \phi_k^{2/p}.
 \]
 By \eqref{eq:small-amplitude-comparison}, the left-hand side controls
 \[
  \int_{\{\rho>R^2\}\cap\Hhalf}
  \rho^{\beta-1}\phi_k^2\,dx
  +
  \int_{\{\rho>R^2\}\cap\bdH}
  \rho^\beta\phi_k^2\,dy.
 \]
 Using \eqref{eq:small-amplitude-gradient-final}, the product rule, and
 choosing \(c_*\) small enough to absorb the resulting interior term,
 we obtain
 \begin{align*}
  &\int_{\{\rho>R^2\}\cap\Hhalf}
  \rho^{\beta-1}\phi_k^2\,dx
  +
  \int_{\{\rho>R^2\}\cap\bdH}
  \rho^\beta\phi_k^2\,dy\\
  &\quad\le
  C\int_{\{\rho>R\}\cap\Hhalf}
  \rho^{-1}
  \bigl(|\nabla U|+\varepsilon_k|\nabla\phi_k|\bigr)^{p-2}
  |\nabla\phi_k|^2\,dx\\
  &\qquad+
  \frac{C}{(\log R)^p}
  \int_{\{R<\rho<R^2\}\cap\Hhalf}
  \rho^{\beta-1}\phi_k^2\,dx.
 \end{align*}
 The first term is at most \(C/R\) by
 \eqref{eq:small-amplitude-regime}, while the second is at most
 \(C/(\log R)^p\) by \eqref{eq:small-regime-weighted-bound}.
 Therefore
 \[
  \sup_k\left[
  \int_{\{\rho>R^2\}\cap\Hhalf}\rho^{\beta-1}\phi_k^2\,dx
  +
  \int_{\{\rho>R^2\}\cap\bdH}\rho^\beta\phi_k^2\,dy
  \right]
  \le
  \frac{C}{R}+\frac{C}{(\log R)^p},
 \]
 which proves \eqref{eq:small-regime-tail-vanishing}.

 On bounded boundary sets,
 \(
 ((1+\varepsilon_k\phi_k/U)^{(p-2)/p}\phi_k^{2/p})
 \)
 is bounded in \(W^{1,p}\), so its traces are bounded in
 \(L^{p_*}\).  Since \(p_*/p>1\) and
 \eqref{eq:small-amplitude-comparison} holds, the sequence
 \((\phi_k^2)\) is locally
 uniformly integrable on \(\bdH\).  Together with
 \(\varepsilon_k\to0\) and the almost-everywhere convergence, this
 gives local \(L^1\)-convergence of
 \[
  \frac{(U+C_0\varepsilon_k\phi_k)^{p_*}}
       {U^2+\varepsilon_k^2\phi_k^2}\phi_k^2
  \quad\text{to}\quad
  U^{p_*-2}\phi^2.
 \]
 Combining this local convergence with
 \eqref{eq:small-regime-tail-vanishing}, we conclude that
 \begin{equation}
  \label{eq:small-regime-limit}
  \lim_{k\to\infty}
  \int_{\bdH}
  \frac{(U+C_0\varepsilon_k\phi_k)^{p_*}}
       {U^2+\varepsilon_k^2\phi_k^2}
  \phi_k^2\,dy
  =
  \int_{\bdH}U^{p_*-2}\phi^2\,dy.
 \end{equation}

 \medskip
 \noindent\emph{Step 2: removal of the truncation.}
 Let \(\zeta>0\) be fixed as in Step 1 and
 define
 \[
  \phi_{k,1}:=
  \min\left\{\phi_k,\frac{\zeta U}{\varepsilon_k}\right\},
  \qquad
  \phi_{k,2}:=\phi_k-\phi_{k,1}.
 \]
 For \(t\ge0\), set
 \[
  F_t(z):=(t+|z|)^{p-2}|z|^2.
 \]
 We use the standard convexity splitting from Figalli--Zhang:
 \begin{equation}
  \label{eq:truncated-energy-splitting}
  \begin{aligned}
  &\int_{\Hhalf}
  \bigl(|\nabla U|+\varepsilon_k|\nabla\phi_{k,1}|\bigr)^{p-2}
  |\nabla\phi_{k,1}|^2\,dx\\
  &\quad+
  \int_{\Hhalf}
  \bigl(|\nabla U|+\varepsilon_k|\nabla\phi_{k,2}|\bigr)^{p-2}
  |\nabla\phi_{k,2}|^2\,dx\\
  &\le
  C\int_{\Hhalf}
  \bigl(|\nabla U|+\varepsilon_k|\nabla\phi_k|\bigr)^{p-2}
  |\nabla\phi_k|^2\,dx
  \le C.
  \end{aligned}
 \end{equation}
 Indeed, \(F_t\) is convex for every \(t\ge0\), and
 testing the \(p\)-harmonic equation for \(U\) with
 \((\varepsilon_k\phi_k-\zeta U)_+\) gives
 \[
  \int_{\{\varepsilon_k\phi_k>\zeta U\}}
  DF_{|\nabla U|}(\zeta\nabla U)
  \cdot\nabla(\varepsilon_k\phi_k-\zeta U)\,dx
  \ge0.
 \]
 The corresponding convexity inequality, integrated over
 \(\{\varepsilon_k\phi_k>\zeta U\}\), then yields
 \begin{equation}
  \label{eq:U-energy-on-large-set}
  \frac{c}{\varepsilon_k^2}
  \int_{\{\varepsilon_k\phi_k>\zeta U\}}|\nabla U|^p\,dx
  \le
  \int_{\{\varepsilon_k\phi_k>\zeta U\}}
  \bigl(|\nabla U|+\varepsilon_k|\nabla\phi_k|\bigr)^{p-2}
  |\nabla\phi_k|^2\,dx
  \le C,
 \end{equation}
 and the doubling estimate for \(F_t\) gives
 \eqref{eq:truncated-energy-splitting}.

 We now prove that the large-amplitude part is negligible:
 \begin{equation}
  \label{eq:large-part-W1p-small}
  \int_{\Hhalf}|\nabla\phi_{k,2}|^p\,dx
  \le
  C\varepsilon_k^{2-p}.
 \end{equation}
 Split
 \[
  E_k:=\{\varepsilon_k|\nabla\phi_{k,2}|\ge|\nabla U|\},
  \qquad
  F_k:=\{\varepsilon_k|\nabla\phi_{k,2}|<|\nabla U|\}.
 \]
 On \(F_k\), H\"older's inequality, \eqref{eq:U-energy-on-large-set}, and
 \eqref{eq:truncated-energy-splitting} give
 \[
  \int_{F_k}|\nabla\phi_{k,2}|^p\,dx
  \le
  C
  \left(
  \int_{F_k}|\nabla U|^{p-2}|\nabla\phi_{k,2}|^2\,dx
  \right)^{p/2}
  \left(
  \int_{\{\varepsilon_k\phi_k>\zeta U\}}|\nabla U|^p\,dx
  \right)^{1-p/2}
  \le
  C\varepsilon_k^{2-p}.
 \]
 On \(E_k\),
 \[
  |\nabla U|+\varepsilon_k|\nabla\phi_{k,2}|
  \asymp
  \varepsilon_k|\nabla\phi_{k,2}|,
 \]
 and hence
 \[
  \int_{E_k}|\nabla\phi_{k,2}|^p\,dx
  \le
  C\varepsilon_k^{2-p}
  \int_{E_k}
  \bigl(|\nabla U|+\varepsilon_k|\nabla\phi_{k,2}|\bigr)^{p-2}
  |\nabla\phi_{k,2}|^2\,dx
  \le
  C\varepsilon_k^{2-p}.
 \]
 This proves \eqref{eq:large-part-W1p-small}.  The Sobolev trace
 inequality therefore yields
 \begin{align}
  \varepsilon_k^{p_*-2}
  \int_{\bdH}|\phi_{k,2}|^{p_*}\,dy
  &\le
  C\varepsilon_k^{p_*-2}
  \left(
  \int_{\Hhalf}|\nabla\phi_{k,2}|^p\,dx
  \right)^{p_*/p}
  \notag\\
  &\le
  C\varepsilon_k^{\,2(p_*-p)/p}
  \longrightarrow0.
  \label{eq:large-part-boundary-small}
 \end{align}
 In particular, \(\phi_{k,2}\to0\) strongly in
 \(\dot{W}^{1,p}(\Hhalf)\) and in \(L^{p_*}(\bdH)\), so
 \(\phi_{k,1}\rightharpoonup\phi\) in \(\dot{W}^{1,p}\) and
  \(\phi_{k,1}\to\phi\) almost everywhere on \(\bdH\).  Since
 \(\varepsilon_k\phi_{k,1}\le\zeta U\) and the first integral on the
 left-hand side of \eqref{eq:truncated-energy-splitting} is bounded,
 Step 1 gives
 \begin{equation}
  \label{eq:truncated-boundary-limit}
  \lim_{k\to\infty}
  \int_{\bdH}
  \frac{(U+C_0\varepsilon_k\phi_{k,1})^{p_*}}
       {U^2+\varepsilon_k^2\phi_{k,1}^2}
  \phi_{k,1}^2\,dy
  =
  \int_{\bdH}U^{p_*-2}|\phi|^2\,dy.
 \end{equation}

 It remains only to compare \(\phi_k\) with \(\phi_{k,1}\).  On
 \(A_k:=\{\varepsilon_k\phi_k>\zeta U\}\cap\bdH\), one has
 \(\phi_{k,1}=\zeta U/\varepsilon_k\).  A one-dimensional
 differentiation in the variable \(s\) gives
 \begin{align*}
 &\left|
  \frac{(U+C_0\varepsilon_k\phi_k)^{p_*}}
       {U^2+\varepsilon_k^2\phi_k^2}\phi_k^2
  -
  \frac{(U+C_0\varepsilon_k\phi_{k,1})^{p_*}}
       {U^2+\varepsilon_k^2\phi_{k,1}^2}\phi_{k,1}^2
 \right|
 \\
 &\qquad\le
  C\varepsilon_k^{p_*-2}
  \left(
  \phi_{k,1}^{p_*-1}\phi_{k,2}
  +
  \phi_{k,2}^{p_*}
 \right).
 \end{align*}
 On the boundary set
 \(A_k\),
we have
 \[
  \varepsilon_k^{p_*-2}
  \int_{A_k}\phi_{k,1}^{p_*}\,dy
  \le C\int_{A_k}
  \frac{(U+C_0\varepsilon_k\phi_{k,1})^{p_*}}
       {U^2+\varepsilon_k^2\phi_{k,1}^2}
  \phi_{k,1}^2\,dy\le C.
 \]
 Since \(\phi_{k,2}=0\) outside \(A_k\), H\"older's inequality and
 \eqref{eq:large-part-boundary-small} imply
 \[
  \varepsilon_k^{p_*-2}
  \int_{A_k}\phi_{k,1}^{p_*-1}\phi_{k,2}\,dy
  \longrightarrow0,
  \qquad
  \varepsilon_k^{p_*-2}
  \int_{\bdH}\phi_{k,2}^{p_*}\,dy
  \longrightarrow0.
 \]
 Combining this with \eqref{eq:truncated-boundary-limit}, and using
 \(h_k=\varepsilon_k\phi_k\), gives
 \[
  \lim_{k\to\infty}
  \frac{1}{\varepsilon_k^2}
  \int_{\bdH}
  \frac{(U+C_0|h_k|)^{p_*}}{U^2+|h_k|^2}
  |h_k|^2\,dy
  =
  \int_{\bdH}U^{p_*-2}|\phi|^2\,dy.
 \]
 This is \eqref{eq:nonlinear-boundary-limsup}.
\end{proof}

\subsection{Disturbed spectral gap}
As in \cite[Proposition 3.8]{FigalliZhang2022}, we combine the blow-up statement and the spectral gap estimate to prove the following disturbed spectral gap.
\begin{proposition}[Disturbed spectral gap]
 \label{prop:nonlinear-spectral-gap}
 Let \(\lambda_{\mathrm T}>0\) be the constant from
 Corollary~\ref{cor:linear-spectral-gap}.

 If \(1<p<2\) and \(p_*\le2\), then, for every fixed
 \(C_0\ge1\), there exists \(\delta_0=\delta_0(n,p,C_0)>0\)
 such that every \(h\in \dot{W}^{1,p}(\Hhalf)\) satisfying
 \begin{align}
  \|\nabla h\|_{L^p(\Hhalf)}
  &\le\delta_0,
  \label{eq:nonlinear-gap-smallness-q-less-two}\\
  \int_{\bdH}
  U^{p_*-2}h\,Z\,dy
  &=0
  \qquad
  \forall Z\in T_U\MT
  \label{eq:nonlinear-gap-orthogonality-q-less-two}
 \end{align}
 obeys
 \begin{align}
  \mathcal G_U[h]
  &\ge
  \left[
  (p_*-1)\ST^p+\lambda_{\mathrm T}
  \right]
  \|U\|_{L^{p_*}(\bdH)}^{\,p-p_*}
  \int_{\bdH}
  \frac{(U+C_0|h|)^{p_*}}{U^2+|h|^2}|h|^2\,dy.
  \label{eq:nonlinear-spectral-gap-q-less-two}
 \end{align}

 If \(p_*>2\), then there exists
 \(\delta_1=\delta_1(n,p)>0\) such that every
 \(h\in \dot{W}^{1,p}(\Hhalf)\) satisfying
 \begin{align}
  \|\nabla h\|_{L^p(\Hhalf)}
  &\le\delta_1,
  \label{eq:q-greater-two-smallness}\\
  \int_{\bdH}
  U^{p_*-2}h\,Z\,dy
  &=0
  \qquad
  \forall Z\in T_U\MT
  \label{eq:q-greater-two-orthogonality}
 \end{align}
 obeys
 \begin{align}
  \mathcal G_U[h]
  &\ge
  \left[
  (p_*-1)\ST^p+\lambda_{\mathrm T}
  \right]
  \|U\|_{L^{p_*}(\bdH)}^{\,p-p_*}
  \int_{\bdH}
  U^{p_*-2}|h|^2\,dy.
  \label{eq:nonlinear-gap-q-greater-two}
 \end{align}
\end{proposition}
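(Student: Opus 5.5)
We argue by contradiction in each regime, in the spirit of \cite[Proposition~3.8]{FigalliZhang2022}. Suppose that for some fixed \(C_0\ge1\) (or in the case \(p_*>2\)) there is a sequence \(h_k\in\dot W^{1,p}(\Hhalf)\) with \(\|\nabla h_k\|_{L^p}\to0\), satisfying the orthogonality conditions \eqref{eq:nonlinear-gap-orthogonality-q-less-two} (resp.\ \eqref{eq:q-greater-two-orthogonality}), for which the reverse inequality holds. We may assume \(h_k\not\equiv0\). Set
\[
\varepsilon_k^2:=\int_{\Hhalf}(|\nabla U|+|\nabla h_k|)^{p-2}|\nabla h_k|^2\,dx,\qquad \phi_k:=h_k/\varepsilon_k .
\]
By Lemma~\ref{lem:gradient-remainder-coercivity}, \(\mathcal G_U[h_k]\ge c_p\varepsilon_k^2\), so \(\mathcal G_U[h_k]/\varepsilon_k^2\) is bounded below by a positive constant.

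\textbf{Case \(1<p<2\).} Apply Lemma~\ref{lem:normalized-perturbation-compactness}. It gives a limit \(\phi\) with \(\phi_k\rightharpoonup\phi\) weakly, almost everywhere convergence, and the limiting orthogonality \eqref{eq:limit-orthogonality}. The boundary side then converges to \(\int_{\bdH}U^{p_*-2}\phi^2\). When \(p_*\le2\) this is \eqref{eq:nonlinear-boundary-limsup}; when \(p_*>2\) it is the strong convergence \eqref{eq:phi-k-strong-trace-weighted}.

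The key point is lower semicontinuity of the bulk term:
\[
\liminf_k \varepsilon_k^{-2}\mathcal G_U[\varepsilon_k\phi_k]\ge \int_{\Hhalf}\mathcal A_U\nabla\phi\cdot\nabla\phi\,dx .
\]
To prove it, we use the integral formula \eqref{eq:nonlinear-gradient-form-integral} and restrict to the set \(\{\varepsilon_k|\nabla\phi_k|\le \eta|\nabla U|\}\cap B_R\). On this set the integrand is \((1+O(\eta))\mathcal A_U\nabla\phi_k\cdot\nabla\phi_k\), a convex quadratic form, so weak lower semicontinuity applies. We need \(\nabla\phi_k\) to be bounded in \(L^2_{loc}\) there, which follows from the normalization, and then \(\phi_k\rightharpoonup\phi\) in \(W^{1,2}_{loc}\). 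The complementary set contributes a nonnegative amount, since \(\mathscr G_p\ge0\). Sending \(\eta\to0\) and \(R\to\infty\) gives the claim, and shows \(\phi\in\dot W^{1,2}(\Hhalf;|\nabla U|^{p-2})\).

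If \(\phi\not\equiv0\) on the boundary, Corollary~\ref{cor:linear-spectral-gap} gives the bound with \(2\lambda_{\mathrm T}\), which contradicts the assumed reverse inequality with \(\lambda_{\mathrm T}\). If \(\int_{\bdH}U^{p_*-2}\phi^2=0\), the right-hand side tends to \(0\), while the left-hand side is at least \(c_p>0\). This is again a contradiction.

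\textbf{Case \(p\ge2\) (hence \(p_*>2\)).} Use the weighted normalization \(\varepsilon_k^2:=\int_{\Hhalf}|\nabla U|^{p-2}|\nabla h_k|^2\). By \eqref{eq:gradient-remainder-superquadratic}, \(\mathcal G_U[h_k]\ge c(\varepsilon_k^2+\|\nabla h_k\|_p^p)\). Then \(\phi_k\) is bounded in \(\dot W^{1,2}(\Hhalf;|\nabla U|^{p-2})\), and Theorem~\ref{thm:weighted-trace-compact} yields strong convergence of the traces in \(L^2(\bdH,U^{p_*-2})\). Orthogonality passes to the limit by the same compactness. The lower semicontinuity argument is identical, and here it is simpler because \(\mathscr G_p(X,Y)\ge \tfrac p2\,\mathcal A\)-quadratic\(-o\) on small ratios. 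The same two-alternative contradiction concludes.

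\textbf{Main obstacle.} The delicate step is the lower semicontinuity of \(\varepsilon_k^{-2}\mathcal G_U[\varepsilon_k\phi_k]\) when \(p<2\). There one only has \(\dot W^{1,p}\) control, and the quadratic approximation fails where \(\varepsilon_k|\nabla\phi_k|\gtrsim|\nabla U|\). The truncation to the set \(\{\varepsilon_k|\nabla\phi_k|\le\eta|\nabla U|\}\), together with the nonnegativity of \(\mathscr G_p\) on the rest, is what makes this work.
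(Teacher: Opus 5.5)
Your proposal is correct and follows essentially the same route as the paper. The argument is by contradiction with the mixed normalization $\varepsilon_k$. The boundary term and the orthogonality pass to the limit via Lemma~\ref{lem:normalized-perturbation-compactness} for $p<2$, or via Theorem~\ref{thm:weighted-trace-compact} for $p\ge2$. Lower semicontinuity of $\varepsilon_k^{-2}\mathcal G_U[h_k]$ comes from truncating to the set where $\varepsilon_k|\nabla\phi_k|$ is small relative to $|\nabla U|$, and Corollary~\ref{cor:linear-spectral-gap} then yields the contradiction. Your fixed threshold $\eta$, where the paper uses $\varepsilon_k^{1/4}|\nabla\phi_k|\le M|\nabla U|$, and your purely weighted normalization for $p\ge2$ are harmless variants.

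One step should be stated more carefully. For $p<2$, set $S_k:=\{\varepsilon_k|\nabla\phi_k|\le\eta|\nabla U|\}\cap B_R$. Only the truncated gradients $\chi_{S_k}\nabla\phi_k$ are bounded in $L^2(B_R)$; the sequence $\phi_k$ is not bounded in $W^{1,2}_{\mathrm{loc}}$. Their weak limit is nonetheless $\nabla\phi$. Off $S_k$ one has $(|\nabla U|+\varepsilon_k|\nabla\phi_k|)^{p-2}|\nabla\phi_k|^2\ge c_\eta\varepsilon_k^{p-2}|\nabla\phi_k|^p$, so the normalization gives $\int_{B_R\setminus S_k}|\nabla\phi_k|^p\le C_\eta\varepsilon_k^{2-p}\to0$.
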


\begin{proof}
 We prove the two cases separately, but the compactness argument
 is the same.

\noindent\textbf{(1). Assume first that \(1<p<2\) and \(p_*\le2\).}
 
 Assume for contradiction that the first assertion is false.  Then there exists
 a sequence \(h_k\) satisfying
 \[
  \|\nabla h_k\|_{L^p(\Hhalf)}\longrightarrow0
 \]
 and the orthogonality conditions
 \eqref{eq:nonlinear-gap-orthogonality-q-less-two}, but such that
 \begin{align}
  \mathcal G_U[h_k]
  &<
  \left[
  (p_*-1)\ST^p+\lambda_{\mathrm T}
  \right]
  \|U\|_{L^{p_*}(\bdH)}^{\,p-p_*}
  \int_{\bdH}
  \frac{(U+C_0|h_k|)^{p_*}}{U^2+|h_k|^2}|h_k|^2\,dy.
  \label{eq:contradiction-nonlinear-gap}
 \end{align}

 Define
 \[
  \varepsilon_k^2
  :=
  \int_{\Hhalf}
  \bigl(|\nabla U|+|\nabla h_k|\bigr)^{p-2}
  |\nabla h_k|^2\,dx,
  \qquad
  \phi_k:=\frac{h_k}{\varepsilon_k}.
 \]
 Lemma~\ref{lem:gradient-remainder-coercivity} shows that
 \[
  \mathcal G_U[h_k]\simeq\varepsilon_k^2.
 \]
 Lemma~\ref{lem:normalized-perturbation-compactness} yields $\varepsilon_k\rightarrow0$ and, after passing
 to a subsequence,
 \[
  \phi_k\rightharpoonup\phi
  \quad\text{in }\dot{W}^{1,p}(\Hhalf),
 \]
 with
 \[
  \phi\perp T_U\MT.
 \]

 By lower semicontinuity of the nonlinear gradient remainder,
 \begin{equation}
  \label{eq:gradient-liminf-nonlinear}
  \liminf_{k\to\infty}
  \frac{\mathcal G_U[h_k]}{\varepsilon_k^2}
  \ge
  \int_{\Hhalf}
  \mathcal A_U\nabla\phi\cdot\nabla\phi\,dx.
 \end{equation}
 Indeed, writing \(h_k=\varepsilon_k\phi_k\), define
 \[
  f_k(x,Y)
  :=
  \frac{2}{p\varepsilon_k^2}
  \mathscr G_p(\nabla U(x),\varepsilon_kY).
 \]
 By \eqref{eq:nonlinear-gradient-form},
 \[
  \frac{\mathcal G_U[h_k]}{\varepsilon_k^2}
  =
  \int_{\Hhalf}f_k(x,\nabla\phi_k)\,dx.
 \]
 For every \(x\), the map \(Y\mapsto f_k(x,Y)\) is nonnegative.  Fix \(R>0\) and set
 \(B_R^+:=B_R\cap\Hhalf\).  Since \(\nabla U\) is smooth and bounded
 away from zero on \(\overline{B_R^+}\), Taylor's formula gives,
 for every \(R, M<\infty\),
 \[
  \sup_{\substack{x\in\overline{B_R^+}\\ \varepsilon_k^{1/4}|Y|\le M|\nabla U(x)|}}
  \left|
  f_k(x,Y)-\mathcal A_U(x)Y\cdot Y
  \right|
  \longrightarrow0.
 \]
 For fixed \(M>0\), set
 \[
  E_k:=
  \{\varepsilon_k^{1/4}|\nabla\phi_k|\le M|\nabla U|\}.
 \]
 Then
\begin{align}
  \liminf_{k\to\infty}
  \int_{B_R^+}f_k(x,\nabla\phi_k)\,dx
  \ge&
  \liminf_{k\to\infty}\int_{B_R^+}
   f_k(x,\chi_{E_k}\nabla\phi_k)\,dx\nonumber\\
   =&\liminf_{k\to\infty}\int_{B_R^+\cap E_k}
   \mathcal A_U(x)\nabla\phi_k\cdot \nabla\phi_k\,dx.
\end{align}
 Since
 \[
  |B_R^+\setminus E_k|
  \le C\int_{B_R^+\setminus E_k}|\nabla U|^p\,dx
  \le C\varepsilon_k^{p/4}
  \int_{B_R^+\setminus E_k}|\nabla\phi_k|^p\,dx
  \le C\varepsilon_k^{p/4},
 \]
 we have \(|B_R^+\setminus E_k|\to0\) and
 \(\chi_{E_k}\nabla\phi_k\rightharpoonup\nabla\phi\) weakly in
 \(L^p(B_R^+)\).  The lower semicontinuity theorem for convex integral
 functionals \cite[Theorem~3]{Ioffe1977}, applied on \(B_R^+\), therefore yields
 \[
  \liminf_{k\to\infty}
  \int_{B_R^+}f_k(x,\nabla\phi_k)\,dx
  \ge
  \int_{B_R^+}
  \mathcal A_U\nabla\phi\cdot\nabla\phi\,dx.
 \]
 Since \(f_k\ge0\), for every \(R>0\)
 \[
  \liminf_{k\to\infty}
  \frac{\mathcal G_U[h_k]}{\varepsilon_k^2}
  \ge
  \int_{B_R^+}
  \mathcal A_U\nabla\phi\cdot\nabla\phi\,dx.
 \]
 Finally, the integrand on the right is nonnegative and
 \(B_R^+\uparrow\Hhalf\).  Letting \(R\to\infty\) and applying the
 monotone convergence theorem proves
 \eqref{eq:gradient-liminf-nonlinear}.  Since the left-hand side is
 bounded along the contradiction sequence, the right-hand side is
 finite.  The equivalence \eqref{eq:energy-equivalence} then implies
 \(\phi\in\dot{W}^{1,2}(\Hhalf;|\nabla U|^{p-2})\).
 On the other hand, \eqref{eq:nonlinear-boundary-limsup} in
 Lemma~\ref{lem:normalized-perturbation-compactness} gives
 \begin{equation}
  \label{eq:boundary-limsup-nonlinear}
  \limsup_{k\to\infty}
  \frac{1}{\varepsilon_k^2}
  \int_{\bdH}
  \frac{(U+C_0|h_k|)^{p_*}}{U^2+|h_k|^2}|h_k|^2\,dy
  \le
  \int_{\bdH}
  U^{p_*-2}|\phi|^2\,dy.
 \end{equation}
 Hence \eqref{eq:contradiction-nonlinear-gap} implies
 \begin{align}
  \int_{\Hhalf}
  \mathcal A_U\nabla\phi\cdot\nabla\phi\,dx
  \le
  \left[
  (p_*-1)\ST^p+\lambda_{\mathrm T}
  \right]
  \|U\|_{L^{p_*}(\bdH)}^{\,p-p_*}
  \int_{\bdH}
  U^{p_*-2}|\phi|^2\,dy.
  \label{eq:limit-violates-gap}
 \end{align}

 The limit \(\phi\) is nontrivial.  Indeed,
 Lemma~\ref{lem:gradient-remainder-coercivity} gives
 \(\mathcal G_U[h_k]\ge c\varepsilon_k^2\); if \(\phi=0\), then
 \eqref{eq:boundary-limsup-nonlinear} and
 \eqref{eq:contradiction-nonlinear-gap} would force
 \(\limsup_k\mathcal G_U[h_k]/\varepsilon_k^2\le0\), a contradiction.
 Therefore
 \eqref{eq:limit-violates-gap} contradicts the linear spectral gap
 \eqref{eq:linear-spectral-gap-standard}.

\noindent\textbf{(2). Assume now that \(p_*>2\).}

 If the second assertion is false, there exists \(h_k\to0\) in
 \(\dot{W}^{1,p}(\Hhalf)\), satisfying the orthogonality conditions,
 such that
 \begin{align}
  \mathcal G_U[h_k]
  &<
  \left[
  (p_*-1)\ST^p+\lambda_{\mathrm T}
  \right]
  \|U\|_{L^{p_*}(\bdH)}^{\,p-p_*}
  \int_{\bdH}
  U^{p_*-2}|h_k|^2\,dy.
  \label{eq:q-greater-two-contradiction}
 \end{align}
 Normalize \(h_k\) by the nonlinear energy scale
 \(\varepsilon_k\) as in \eqref{eq:epsilon-k-definition}, and set
 \(\phi_k:=h_k/\varepsilon_k\).
 
 If \(1<p<2\),
 Lemma~\ref{lem:normalized-perturbation-compactness} gives, after passing to
 a subsequence,
 \[
  \phi_k\rightharpoonup\phi
  \quad\text{in }\dot{W}^{1,p}(\Hhalf)
 \]
 and
 \[
  \phi_k\to\phi
  \quad\text{in }L^2(\bdH,U^{p_*-2}dy).
 \]
 
 If \(p\ge2\), we have
 \begin{align*}
  \varepsilon_k^2
  &\le
  C\int_{\Hhalf}
  \left(
  |\nabla U|^{p-2}|\nabla h_k|^2+|\nabla h_k|^p
  \right)\,dx\\
  &\le
  C\|\nabla U\|_{L^p(\Hhalf)}^{p-2}
  \|\nabla h_k\|_{L^p(\Hhalf)}^2
  +C\|\nabla h_k\|_{L^p(\Hhalf)}^p
  \longrightarrow0.
 \end{align*}
 Moreover, by the definition of $\varepsilon_k$,
 \[
 \int_{\Hhalf}
  |\nabla U|^{p-2}|\nabla \phi_k|^2
  \,dx\le 1.
 \]
 Therefore, after passing to a
 subsequence,
 \[
  \phi_k\rightharpoonup\phi
  \quad\text{in }
  \dot{W}^{1,2}(\Hhalf;|\nabla U|^{p-2}),
  \qquad
  \phi_k\to\phi
  \quad\text{in }L^2(\bdH,U^{p_*-2}dy)
 \]
 by Theorem~\ref{thm:weighted-trace-compact}.
 The limit remains orthogonal to \(T_U\MT\).

 When \(1<p<2\), the gradient lower bound is precisely
 \eqref{eq:gradient-liminf-nonlinear}.  We now establish the same bound
 when \(p\ge2\).  
 Define
 \[
  f_k(x,Y):=
  \frac{2}{p\varepsilon_k^2}
  \mathscr G_p(\nabla U(x),\varepsilon_kY).
 \]
 Fix \(R>0\), write \(B_R^+:=B_R\cap\Hhalf\), and set
 \[
  E_{k,R}:=
  \{x\in B_R^+:
  \varepsilon_k^{1/4}|\nabla\phi_k(x)|\le|\nabla U(x)|\}.
 \]
 Since \(|\nabla U|\) is bounded above and away from zero on
 \(\overline{B_R^+}\), the local \(L^2\)-norms of
 \(\nabla\phi_k\) are uniformly bounded.  Consequently,
 \[
  |B_R^+\setminus E_{k,R}|
  \le C_R\varepsilon_k^{1/2}
  \int_{B_R^+}|\nabla\phi_k|^2\,dx
  \longrightarrow0.
 \]
 It follows that
 \[
  \chi_{E_{k,R}}\nabla\phi_k
  \rightharpoonup\nabla\phi
  \quad\text{in }L^2(B_R^+).
 \]
  Moreover, Taylor's formula,
 uniformly on \(\overline{B_R^+}\), gives
 \[
  \sup_{\substack{x\in\overline{B_R^+}\\
  \varepsilon_k^{1/4}|Y|\le|\nabla U(x)|}}
  \left|f_k(x,Y)-\mathcal A_U(x)Y\cdot Y\right|
  \longrightarrow0.
 \]
 Since \(f_k\ge0\) and \(f_k(x,0)=0\), weak lower
 semicontinuity of the quadratic integral on \(B_R^+\) yields
 \begin{align*}
  \liminf_{k\to\infty}
  \frac{\mathcal G_U[h_k]}{\varepsilon_k^2}
  &\ge
  \liminf_{k\to\infty}
  \int_{E_{k,R}}f_k(x,\nabla\phi_k)\,dx\\
  &=
  \liminf_{k\to\infty}
  \int_{E_{k,R}}
  \mathcal A_U\nabla\phi_k\cdot\nabla\phi_k\,dx\\
  &\ge
  \int_{B_R^+}
  \mathcal A_U\nabla\phi\cdot\nabla\phi\,dx.
 \end{align*}
 Letting \(R\to\infty\) proves the desired bound.  Thus, in both
 ranges of \(p\),
 \begin{equation}
  \label{eq:gradient-liminf-q-greater-two}
  \liminf_{k\to\infty}
  \frac{\mathcal G_U[h_k]}{\varepsilon_k^2}
  \ge
  \int_{\Hhalf}
  \mathcal A_U\nabla\phi\cdot\nabla\phi\,dx.
 \end{equation}

 Dividing \eqref{eq:q-greater-two-contradiction} by
 \(\varepsilon_k^2\), using
 \eqref{eq:gradient-liminf-q-greater-two} and
 the strong weighted trace convergence, gives
 \begin{align}
  \int_{\Hhalf}
  \mathcal A_U\nabla\phi\cdot\nabla\phi\,dx
  \le
  \left[
  (p_*-1)\ST^p+\lambda_{\mathrm T}
  \right]
  \|U\|_{L^{p_*}(\bdH)}^{\,p-p_*}
  \int_{\bdH}
  U^{p_*-2}|\phi|^2\,dy.
 \end{align}
 The limit \(\phi\) is nontrivial.  If \(\phi=0\), then the strong convergence
 in \(L^2(\bdH,U^{p_*-2}dy)\) makes the right-hand side of
 \eqref{eq:q-greater-two-contradiction}, after division by
 \(\varepsilon_k^2\), tend to zero, while
 \(\mathcal G_U[h_k]\ge c\varepsilon_k^2\).  This is impossible.
 Hence the last displayed inequality contradicts
 Corollary~\ref{cor:linear-spectral-gap}.
\end{proof}

\begin{corollary}[Uniform nonlinear spectral gap]
 \label{cor:uniform-nonlinear-gap}
 The preceding estimates hold uniformly for \(v\in\MT^+\)
 with
 \[
  \frac12\le\|v\|_{L^{p_*}(\bdH)}\le\frac32.
 \]
 More precisely, after decreasing the smallness threshold if necessary,
 the estimate \eqref{eq:nonlinear-spectral-gap-q-less-two} holds with
 \(U\) replaced by \(v\) when \(p_*\le2\), and the estimate
 \eqref{eq:nonlinear-gap-q-greater-two} holds with \(U\) replaced by
 \(v\) when \(p_*>2\).  In both cases, the coefficient is
 \[
  [(p_*-1)\ST^p+ \lambda_{\mathrm T}]\|v\|_{L^{p_*}(\bdH)}^{p-p_*}.
 \]
 For later use, we write
 \begin{equation}
  \label{eq:scaled-spectral-gap-constant}
  \lambda_{\mathrm T}(v)
  :=\lambda_{\mathrm T}
  \|v\|_{L^{p_*}(\bdH)}^{p-p_*}.
 \end{equation}
\end{corollary}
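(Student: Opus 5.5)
The plan is to reduce everything to the standard bubble \(U\) by the symmetries of the problem. Write \(v=V_{a,\lambda,\xi}=aU_{\lambda,\xi}\) with \(a>0\). The bounds \(\frac12\le\|v\|_{L^{p_*}(\bdH)}\le\frac32\) confine \(a\) to a fixed compact interval \([a_-,a_+]\subset(0,\infty)\), because \(\|U_{\lambda,\xi}\|_{L^{p_*}(\bdH)}=\|U\|_{L^{p_*}(\bdH)}\). Given \(h\) admissible for \(v\), define
\[
 \widetilde h(y,t):=a^{-1}\lambda^{\frac{n-p}{p}}h(\lambda y+\xi,\lambda t),
\]
so that \(h=a\widetilde h_{\lambda,\xi}\) in the notation \eqref{eq:critical-dilation}. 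The critical dilation and tangential translation preserve \(\|\nabla\cdot\|_{L^p(\Hhalf)}\) and \(\|\cdot\|_{L^{p_*}(\bdH)}\). Hence \(\|\nabla\widetilde h\|_{L^p}=a^{-1}\|\nabla h\|_{L^p}\le a_-^{-1}\|\nabla h\|_{L^p}\). So if we shrink the threshold to \(a_-\delta_0\) (respectively \(a_-\delta_1\)), then \(\widetilde h\) satisfies the smallness hypothesis of Proposition~\ref{prop:nonlinear-spectral-gap}.

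Next we check that each term transforms with a common power of \(a\). The tangent space satisfies \(T_v\MT=\{Z_{\lambda,\xi}:Z\in T_U\MT\}\), up to constant multiples of the generators, since differentiating \(aU_{\lambda',\xi'}\) in the parameters at \((\lambda,\xi)\) gives rescaled versions of \(U,Z_0,Z_j\). Change variables \(y=\lambda y'+\xi\) in the orthogonality integral. The exponents combine as \(\frac{n-p}{p}p_*=n-1\), which exactly cancels the Jacobian \(\lambda^{n-1}\). Therefore \(\int v^{p_*-2}hZ_{\lambda,\xi}\,dy=a^{p_*-1}\int U^{p_*-2}\widetilde hZ\,dy\), and orthogonality transfers to \(\widetilde h\).

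By \(p\)-homogeneity of \(\mathscr G_p\), \(\mathcal G_v[h]=a^p\mathcal G_U[\widetilde h]\). The boundary quantities scale by \(a^{p_*}\). For \(p_*>2\), \(\int v^{p_*-2}h^2\,dy=a^{p_*}\int U^{p_*-2}\widetilde h^2\,dy\). For \(p_*\le2\), the integrand \(\frac{(v+C_0|h|)^{p_*}}{v^2+h^2}h^2\) is \(p_*\)-homogeneous in the pair \((v,h)\), so the same factor \(a^{p_*}\) appears.

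Finally we compare the prefactors. \(\|v\|^{p-p_*}=a^{p-p_*}\|U\|^{p-p_*}\), so the right-hand side carries \(a^{p-p_*}\cdot a^{p_*}=a^p\), matching the left-hand side. Applying Proposition~\ref{prop:nonlinear-spectral-gap} to \(\widetilde h\) and multiplying by \(a^p\) yields the claimed estimate with coefficient \([(p_*-1)\ST^p+\lambda_{\mathrm T}]\|v\|^{p-p_*}\). This is uniform, since the only dependence on \(v\) enters through \(a_-\) in the threshold.

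The main step requiring care is the bookkeeping of exponents: checking that every term scales exactly by \(\lambda^0\), and that the \(a\)-powers on both sides agree. In particular, the \(p_*\le2\) nonlinear boundary integrand must be genuinely homogeneous of degree \(p_*\) under \((v,h)\mapsto(a v,a h)\) and invariant under the dilation. Both hold because \(\frac{n-p}{p}p_*=n-1\).
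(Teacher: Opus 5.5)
Your proposal is correct and follows the paper's route. Both reduce to \(U\) by tangential translation, critical dilation, and scaling by the amplitude \(a\), using that the orthogonality condition, \(\mathcal G_v\), and the boundary integrals transform homogeneously, and that \(a\) is confined to a compact interval. You also supply the exponent bookkeeping that the paper's three-line proof leaves implicit, namely \(\frac{n-p}{p}p_*=n-1\), the matching \(a^p\) factors on both sides, and the threshold \(a_-\delta_0\) (resp.\ \(a_-\delta_1\)).
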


\begin{proof}
 Translate and critically dilate \(v\) to \(U\), and then multiply by
 its boundary \(L^{p_*}\)-scale.  The quotient, the tangent
 orthogonality, and the nonlinear forms transform homogeneously under
 these operations.  Since the boundary norm of \(v\) is restricted to
 a compact interval, the smallness thresholds and constants remain
 uniform.
\end{proof}

\section{Proof of Theorem~\ref{thm:main}}
\label{sec:proof-main}

We begin with concentration--compactness.  The following statement is a
consequence of Lions' limit-case principle for Sobolev and trace
inequalities \cite{Lions1985,Lions1985Part2}; see, in particular,
\cite[Theorem~2.3]{Lions1985Part2}.

\begin{theorem}[Lions' concentration--compactness theorem]
 \label{thm:almost-extremizer-compactness}
 Let \((u_k)\subset \dot{W}^{1,p}(\Hhalf)\) satisfy
 \begin{equation}
  \label{eq:almost-extremizer-normalization}
  \|u_k\|_{L^{p_*}(\bdH)}=1
 \end{equation}
 and
 \begin{equation}
  \label{eq:almost-extremizer-energy}
  \|\nabla u_k\|_{L^p(\Hhalf)}^p
  \longrightarrow
  \ST^p.
 \end{equation}
 Then there exist bubbles \(v_k\in\MT\) such that
 \begin{equation}
  \label{eq:distance-to-manifold-goes-zero}
  \|\nabla(u_k-v_k)\|_{L^p(\Hhalf)}
  \longrightarrow0.
 \end{equation}
\end{theorem}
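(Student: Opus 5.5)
Given a minimizing sequence \(u_k\) normalized by \(\|u_k\|_{L^{p_*}(\bdH)}=1\) with \(\|\nabla u_k\|_{L^p}^p\to\ST^p\), my plan is to run Lions' concentration--compactness argument at the level of the boundary traces, after first renormalizing away the noncompact symmetries. Concretely, I would use the critical dilation \eqref{eq:critical-dilation} together with tangential translations to replace \(u_k\) by \(\widetilde u_k:=(u_k)_{\lambda_k,\xi_k}\), choosing \(\lambda_k,\xi_k\) so that the Lévy concentration function of \(|\widetilde u_k|^{p_*}\,dy\) satisfies \(\sup_{\xi}\int_{B_1(\xi)\cap\bdH}|\widetilde u_k|^{p_*}dy=\int_{B_1(0)\cap\bdH}|\widetilde u_k|^{p_*}dy=\tfrac12\). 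This normalization is legitimate because both norms and the manifold \(\MT\) are invariant under these operations. Since \(\widetilde u_k\) is then bounded in \(\dot W^{1,p}(\Hhalf)\), I may pass to a subsequence along which \(\widetilde u_k\rightharpoonup u\) weakly, \(|\nabla\widetilde u_k|^p\,dx\rightharpoonup\mu\) and \(|\widetilde u_k|^{p_*}dy\rightharpoonup\nu\) as measures on \(\overline{\Hhalf}\) and \(\bdH\), respectively.

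The first step is to exclude vanishing and dichotomy for the boundary measures. Vanishing is ruled out directly by the normalization, which forces mass \(\tfrac12\) in \(B_1\). For dichotomy I would use cutoffs \(\chi_R\) and \(1-\chi_{R'}\) with \(R'\gg R\), together with the trace inequality \eqref{eq:sharp-trace} applied to each piece. This gives \(\ST^p\ge \ST^p(a^{p/p_*}+(1-a)^{p/p_*})-o(1)\) with \(a\in(0,1)\). Because \(p/p_*<1\), the right-hand side is strictly larger than \(\ST^p\), which is a contradiction; here the cutoff error terms are controlled by the gradient bound on annuli, where \(\mu\) carries vanishing mass. Therefore \(\nu\) is tight.

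The second step handles possible atoms. I would invoke the second concentration--compactness lemma (the trace version, \cite[Theorem~2.3]{Lions1985Part2}): it gives \(\nu=|u|^{p_*}dy+\sum_j\nu_j\delta_{x_j}\) and \(\mu\ge|\nabla u|^pdx+\sum_j\mu_j\delta_{x_j}\), with \(\ST\nu_j^{1/p_*}\le\mu_j^{1/p}\). The strict concavity argument then shows that the total mass \(1\) lies either entirely in \(|u|^{p_*}\) or entirely in a single atom. A single atom carries mass \(1>\tfrac12\), and this contradicts the concentration normalization, which bounds the mass of any unit ball by \(\tfrac12\). Hence \(\|u\|_{L^{p_*}(\bdH)}=1\), and lower semicontinuity yields \(\|\nabla u\|_p\le\ST\). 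It follows that \(u\) is an extremal, so \(u\in\MT\) by the classification quoted after \eqref{eq:extremal-manifold}.

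The final step, which I expect to be the main obstacle, is upgrading weak convergence to strong convergence of the gradients. We know \(\|\nabla\widetilde u_k\|_p\to\ST=\|\nabla u\|_p\) together with \(\nabla\widetilde u_k\rightharpoonup\nabla u\) in \(L^p\), and uniform convexity of \(L^p\) (\(1<p<\infty\)) then gives \(\nabla\widetilde u_k\to\nabla u\) strongly. The delicate point here is that weak convergence of the gradients must be identified on all of \(\Hhalf\), since the interior could in principle lose mass at infinity even when the trace does not. That loss is excluded precisely because the norms converge. Undoing the symmetries, I set \(v_k:=u_{\lambda_k^{-1},\cdot}\) (the inverse dilation/translation of \(u\)), which lies in \(\MT\) and satisfies \(\|\nabla(u_k-v_k)\|_p=\|\nabla(\widetilde u_k-u)\|_p\to0\).
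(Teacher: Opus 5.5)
Your argument is correct and takes the same route as the paper. The paper gives no separate proof; it invokes Lions' concentration--compactness principle \cite[Theorem~2.3]{Lions1985Part2}. You have written out that standard argument: normalizing the concentration function, excluding vanishing and dichotomy via strict concavity of \(s\mapsto s^{p/p_*}\), applying the second lemma to boundary atoms, and upgrading to strong convergence by uniform convexity (Radon--Riesz).

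One routine point is missing. Your construction works only along a subsequence, so to get \(v_k\) for every \(k\) you should add that every subsequence has a further subsequence along which \(\inf_{v\in\MT}\|\nabla(u_k-v)\|_{L^p(\Hhalf)}\to0\). Hence this infimum tends to zero along the whole sequence.
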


\begin{remark}[Qualitative stability]
 \label{rem:qualitative-stability}
 For every \(\varepsilon>0\), there exists
 \(\delta=\delta(n,p,\varepsilon)>0\) such that, if
 \(u\in\dot{W}^{1,p}(\Hhalf)\), \(u\not\equiv0\) on \(\bdH\), and
 \begin{equation}
  \label{eq:small-deficit-qualitative}
  \delta_{\mathrm T}(u)\le\delta,
 \end{equation}
 then
 \begin{equation}
  \label{eq:qualitative-distance-small}
  d_{\mathrm T}(u,\MT)\le\varepsilon.
 \end{equation}
 Equivalently,
 \begin{equation}
  \label{eq:deficit-controls-closeness-qualitatively}
  \delta_{\mathrm T}(u_k)\longrightarrow0
  \quad\Longrightarrow\quad
  d_{\mathrm T}(u_k,\MT)\longrightarrow0.
 \end{equation}
 Indeed, normalize \(\|u_k\|_{L^{p_*}(\bdH)}=1\).  Then
 \(\delta_{\mathrm T}(u_k)\to0\) implies
 \[
  \|\nabla u_k\|_{L^p(\Hhalf)}^p\longrightarrow\ST^p.
 \]
 Theorem~\ref{thm:almost-extremizer-compactness} now gives convergence
 to \(\MT\).
\end{remark}

As in \cite[Lemma 4.1]{FigalliZhang2022}, we first establish the modulation result that supplies the exact
tangent orthogonality needed in the proof of the main theorem.  For
\(v\in\MT^+\), define
\begin{equation}
 \label{eq:auxiliary-modulation-functional}
 \mathcal F_u(v)
 :=
 \frac1{p_*}
 \int_{\bdH}v^{p_*}\,dy
 -
 \frac1{p_*-1}
 \int_{\bdH}v^{p_*-1}u\,dy.
\end{equation}

\begin{proposition}[Orthogonal modulation near a bubble]
 \label{prop:uniform-modulation}
 Assume that
 \[
  \|u\|_{L^{p_*}(\bdH)}=1,
  \qquad
  \|\nabla(u-\widehat v)\|_{L^p(\Hhalf)}\le\widehat\eta,
 \]
 for some \(\widehat v\in\MT^+\) satisfying
 \[
  \frac34
  \le
  \|\widehat v\|_{L^{p_*}(\bdH)}
  \le
  \frac43.
 \]
 There exist \(\eta'=\eta'(n,p)>0\) and a modulus of continuity
 \(\omega:\R_+\to\R_+\), with \(\omega(t)\to0\) as \(t\to0\), such
 that, if \(\widehat\eta\le\eta'\), then there exists \(v\in\MT^+\)
 for which
 \begin{equation}
  \label{eq:uniform-modulation-orthogonality}
  \int_{\bdH}v^{p_*-2}(u-v)Z\,dy=0
  \qquad
  \forall Z\in T_v\MT,
 \end{equation}
 and
 \begin{equation}
  \label{eq:modulation-modulus-bound}
  \|\nabla(u-v)\|_{L^p(\Hhalf)}
  \le
  \omega(\widehat\eta).
 \end{equation}
\end{proposition}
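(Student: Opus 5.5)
The plan is to minimize the auxiliary functional \(\mathcal F_u\) over a compact set of bubbles near \(\widehat v\), and to read off the orthogonality \eqref{eq:uniform-modulation-orthogonality} from the first-order conditions. Parametrize \(v=V_{a,\lambda,\xi}\). The derivative of \(\mathcal F_u\) in direction \(Z\in T_v\MT\) is
\[
 \int_{\bdH}v^{p_*-1}Z\,dy-\int_{\bdH}v^{p_*-2}Zu\,dy
 =-\int_{\bdH}v^{p_*-2}(u-v)Z\,dy .
\]
So any interior critical point of \(\mathcal F_u\) restricted to \(\MT^+\) satisfies \eqref{eq:uniform-modulation-orthogonality} exactly. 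We will use \(\mathcal F_u\) rather than the gradient distance because it is \(C^1\) in the parameters, which the non-Hilbert distance is not for \(p\ne2\).

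\textbf{Step 1: reduction.} By the invariances (translation, critical dilation \eqref{eq:critical-dilation}, and the fact that \(\mathcal F_u\), the orthogonality and \(\|\nabla\cdot\|_{L^p}\) transform homogeneously), assume \(\widehat v=\widehat a\,U\) with \(\widehat a\in[c_1,c_2]\) determined by \(\frac34\le\|\widehat v\|\le\frac43\). Then \(\|u-\widehat v\|_{L^{p_*}(\bdH)}\le C\widehat\eta\) by the trace inequality.

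\textbf{Step 2: the comparison functional.} Consider \(\mathcal F_{\widehat v}\) on the parameter set \(K=\{|a-\widehat a|\le r_0,\ |\log\lambda|\le r_0,\ |\xi|\le r_0\}\).
\begin{itemize}
\item Since \(\mathcal F_{\widehat v}(v)=\frac1{p_*}\int v^{p_*}-\frac1{p_*-1}\int v^{p_*-1}\widehat v\), Hölder's inequality and the equality case of Hölder show that for fixed \(\|v\|\) this functional is minimized exactly when \(v\) is proportional to \(\widehat v\).
\item Optimizing in the amplitude then gives the unique minimizer \(v=\widehat v\) on \(\MT^+\).
\item The Hessian at \(\widehat v\) in the parameters equals \(\int \widehat v^{p_*-2}Z_iZ_j\) up to a positive factor. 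This Gram matrix is nondegenerate, since \(T_{\widehat v}\MT\) has dimension \(n+1\) (Theorem~\ref{ass:spectral-nondegeneracy} gives the spanning functions, and they are linearly independent).
\item Hence \(\mathcal F_{\widehat v}\) has a strict nondegenerate minimum at \(\widehat v\), and there is \(\theta(r_0)>0\) with \(\mathcal F_{\widehat v}\ge \mathcal F_{\widehat v}(\widehat v)+\theta\) on \(\partial K\).
\end{itemize}

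\textbf{Step 3: perturbation.} We have
\[
 |\mathcal F_u(v)-\mathcal F_{\widehat v}(v)|\le C\|v\|_{L^{p_*}}^{p_*-1}\|u-\widehat v\|_{L^{p_*}}\le C\widehat\eta
\]
uniformly on \(K\). For \(\widehat\eta\le\eta'\) small, the minimum of \(\mathcal F_u\) over the compact set \(K\) is therefore attained in the interior. This gives the orthogonality \eqref{eq:uniform-modulation-orthogonality}. Moreover, by the strict minimum of \(\mathcal F_{\widehat v}\) and continuity, the minimizer's parameters lie within \(\omega_0(\widehat\eta)\) of \((\widehat a,1,0)\), where \(\omega_0\) is obtained from the modulus of \(\mathcal F_{\widehat v}\) near its minimum (a compactness/contradiction argument suffices).

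\textbf{Step 4: conclusion.} Since the map \((a,\lambda,\xi)\mapsto V_{a,\lambda,\xi}\in\dot W^{1,p}\) is continuous (dominated convergence on explicit bubbles),
\[
 \|\nabla(u-v)\|_{L^p}\le\widehat\eta+\|\nabla(\widehat v-v)\|_{L^p}=:\omega(\widehat\eta)\to0 .
\]

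The main obstacle is Step 2: obtaining strict uniform separation on \(\partial K\), i.e. that \(\widehat v\) is the unique, nondegenerate minimizer of \(\mathcal F_{\widehat v}\) in a neighborhood. Uniformity over the amplitude range then follows from compactness of \([c_1,c_2]\).
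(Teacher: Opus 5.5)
Your proof is correct, but it localizes differently from the paper.

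The paper minimizes \(\mathcal F_u\) over all of \(\MT^+\). It proves that \(\widehat v\) is the unique minimizer of \(\mathcal F_{\widehat v}\), using the same H\"older equality argument as yours. It then excludes loss of compactness by checking the escaping regimes. If \(b\to0\), then \(\mathcal F_u\to0\); if \(b\to\infty\), then \(\mathcal F_u\to+\infty\). If \(\lambda\to0\), \(\lambda\to\infty\) or \(|\xi|\to\infty\), then \(w^{p_*-1}\rightharpoonup0\) in \(L^{p_*'}(\bdH)\), so \(\liminf\mathcal F_u\ge0\). Meanwhile \(\mathcal F_u(\widehat v)<0\) uniformly. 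Hence a global minimizer exists, uniqueness plus a contradiction argument places it near \(\widehat v\), and the first variation gives the orthogonality.

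You instead minimize over a compact parameter box \(K\) around \(\widehat v\). The uniform bound \(|\mathcal F_u-\mathcal F_{\widehat v}|\le C\widehat\eta\) on \(K\) forces an interior local minimizer, and a local minimizer is all that is needed for the orthogonality. What your route buys is that it never has to analyze escaping parameters, and it gives an essentially explicit modulus through \(\theta(r):=\min\{\mathcal F_{\widehat v}(w)-\mathcal F_{\widehat v}(\widehat v)\}\), taken over \(w\) in \(K\) whose parameters are at distance at least \(r\) from those of \(\widehat v\). The paper's route buys a canonical global minimizer without choosing a neighborhood.

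The step you flag as the main obstacle is easier than you make it. The Hessian computation is unnecessary. Uniqueness of the minimizer on \(\MT^+\), injectivity of \((a,\lambda,\xi)\mapsto V_{a,\lambda,\xi}\), and continuity of \(\mathcal F_{\widehat v}\) on the compact set \(\partial K\) already give \(\min_{\partial K}\mathcal F_{\widehat v}>\mathcal F_{\widehat v}(\widehat v)\). If you do want the Gram matrix, its nondegeneracy is just linear independence of the explicit traces \(U\), \(U\bigl(\frac1p-\frac1{1+|y|^2}\bigr)\) and \(y_jU/(1+|y|^2)\). Invoking Theorem~\ref{ass:spectral-nondegeneracy} there is not needed.

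The remaining inputs are continuity and \(C^1\) dependence of \(\mathcal F_u\) on the parameters for \(u\in L^{p_*}(\bdH)\). Both follow from dominated convergence, since on \(\bdH\) one has \(|\partial_\lambda V|+|\partial_\xi V|\le CV\le C'U\) locally uniformly in the parameters.
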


\begin{proof}
 We minimize \(\mathcal F_u\) over \(\MT^+\).  First suppose that
 \(u=\widehat v\).  For \(w\in\MT^+\), H\"older's inequality gives
 \begin{align}
  \mathcal F_{\widehat v}(w)
  &\ge
  \frac1{p_*}\|w\|_{L^{p_*}(\bdH)}^{p_*}
  -
  \frac{\|\widehat v\|_{L^{p_*}(\bdH)}}{p_*-1}
  \|w\|_{L^{p_*}(\bdH)}^{p_*-1}
  \notag\\
  &\ge
  -\frac{\|\widehat v\|_{L^{p_*}(\bdH)}^{p_*}}
  {p_*(p_*-1)}
  =
  \mathcal F_{\widehat v}(\widehat v).
  \label{eq:modulation-unique-minimizer}
 \end{align}
 The scalar function
 \[
  s\longmapsto
  \frac{s^{p_*}}{p_*}
  -
  \frac{\|\widehat v\|_{L^{p_*}(\bdH)}}{p_*-1}s^{p_*-1}
 \]
 is uniquely minimized at
 \(s=\|\widehat v\|_{L^{p_*}(\bdH)}\).  Equality in H\"older's
 inequality then shows that equality in
 \eqref{eq:modulation-unique-minimizer} holds only for
 \(w=\widehat v\).  Thus \(\widehat v\) is the unique minimizer of
 \(\mathcal F_{\widehat v}\) on \(\MT^+\).

 We next exclude loss of compactness of the minimizing parameters.
 Write \(w=V_{b,\lambda,\xi}\in\MT^+\).  By H\"older's inequality,
 \[
  \left|\int_{\bdH}w^{p_*-1}u\,dy\right|
  \le
  \|w\|_{L^{p_*}(\bdH)}^{p_*-1}
  \|u\|_{L^{p_*}(\bdH)}.
 \]
 Since the boundary norm is proportional to \(b\), it follows that
 \(\mathcal F_u(w)\to0\) when \(b\to0\), whereas
 \(\mathcal F_u(w)\to+\infty\) when \(b\to+\infty\).  If \(b\) stays
 in a compact subset of \((0,\infty)\), while
 \(\lambda\to0\), \(\lambda\to+\infty\), or the center escapes to
 infinity, then \(w^{p_*-1}\rightharpoonup0\) in
 \(L^{p_*'}(\bdH)\).  Consequently,
 \[
  \liminf \mathcal F_u(w)
  =
  \liminf\frac1{p_*}\|w\|_{L^{p_*}(\bdH)}^{p_*}
  \ge0
 \]
 in all these noncompact parameter regimes.

 On the other hand,
 \(\mathcal F_{\widehat v}(\widehat v)
 =-\|\widehat v\|_{L^{p_*}}^{p_*}/[p_*(p_*-1)]\), which is bounded
 away from zero uniformly for
 \(3/4\le\|\widehat v\|_{L^{p_*}}\le4/3\).  The trace inequality
 shows that \(\mathcal F_u(\widehat v)<0\) uniformly when
 \(\|\nabla(u-\widehat v)\|_{L^p}\) is sufficiently small.  Thus a
 minimizing sequence cannot enter any of the preceding noncompact
 regimes, and the minimum is attained.  The uniqueness proved above,
 followed by a contradiction argument, shows that every minimizer
 converges to \(\widehat v\) in \(\dot{W}^{1,p}\) as
 \(\|\nabla(u-\widehat v)\|_{L^p}\to0\), uniformly under the stated
 bounds on \(\|\widehat v\|_{L^{p_*}}\).  Consequently, for
 \(\widehat\eta\le\eta'\), the minimum is attained at some
 \(v\in\MT^+\) and
 \[
  \|\nabla(u-v)\|_{L^p(\Hhalf)}
  \le
  \omega(\widehat\eta)
 \]
 for a modulus \(\omega\) depending only on \(n,p\).

 Finally, for \(Z\in T_v\MT\), choose a smooth curve
 \(v_\tau\subset\MT^+\) with \(v_0=v\) and \(\dot v_0=Z\).
 The minimality of \(v\) and differentiation of
 \eqref{eq:auxiliary-modulation-functional} yield
 \begin{align*}
  0
  &=
  \left.\frac{d}{d\tau}\right|_{\tau=0}\mathcal F_u(v_\tau)\\
  &=
  \int_{\bdH}v^{p_*-1}Z\,dy
  -
  \int_{\bdH}v^{p_*-2}Zu\,dy\\
  &=
  -\int_{\bdH}v^{p_*-2}(u-v)Z\,dy.
 \end{align*}
 This proves \eqref{eq:uniform-modulation-orthogonality} and completes
 the proof.
\end{proof}

\begin{claim}[Global-to-local reduction]
 \label{prop:global-to-local-reduction}
 Suppose that there exist constants
 \[
  \varepsilon_0>0,
  \qquad
  c_0>0,
 \]
 such that every \(u\in \dot{W}^{1,p}(\Hhalf)\) with
 \(u|_{\bdH}\not\equiv0\) and satisfying
 \begin{equation}
  \label{eq:local-regime-assumption}
  d_{\mathrm T}(u,\MT)\le\varepsilon_0
 \end{equation}
 obeys
 \begin{equation}
  \label{eq:local-stability-assumption}
  \delta_{\mathrm T}(u)
  \ge
  c_0
  d_{\mathrm T}(u,\MT)^{\max\{2,p\}}.
 \end{equation}
 Then there exists \(c=c(n,p,c_0,\varepsilon_0)>0\) such that
 \begin{equation}
  \label{eq:global-stability-from-local}
  \delta_{\mathrm T}(u)
  \ge
  c
  d_{\mathrm T}(u,\MT)^{\max\{2,p\}}
 \end{equation}
 for every \(u\in \dot{W}^{1,p}(\Hhalf)\) with
 \(u|_{\bdH}\not\equiv0\).
\end{claim}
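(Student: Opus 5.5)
The plan is to split into the local regime \(d_{\mathrm T}(u,\MT)\le\varepsilon_0\) and its complement, and to handle the complement using the trivial bound \(d_{\mathrm T}(u,\MT)\le1\) together with the qualitative stability of Remark~\ref{rem:qualitative-stability}.

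\textbf{Step 1: the trivial bound on the distance.} Since \(\MT\) is closed under scalar multiplication by any \(a\ne0\), the zero function lies in its \(\dot W^{1,p}\)-closure. Taking \(v=aU\) with \(a\to0\) in the definition of \(d_{\mathrm T}\) gives
\[
 d_{\mathrm T}(u,\MT)\le \lim_{a\to0}\frac{\|\nabla(u-aU)\|_{L^p(\Hhalf)}}{\|\nabla u\|_{L^p(\Hhalf)}}=1
\]
for every admissible \(u\). Note that \(\|\nabla u\|_{L^p}>0\) because the trace of \(u\) is nonzero.

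\textbf{Step 2: a deficit floor away from \(\MT\).} Apply Remark~\ref{rem:qualitative-stability} with \(\varepsilon=\varepsilon_0\), and let \(\delta_*=\delta(n,p,\varepsilon_0)>0\) be the resulting threshold. Its contrapositive says: if \(d_{\mathrm T}(u,\MT)>\varepsilon_0\), then \(\delta_{\mathrm T}(u)>\delta_*\). The remark rests on Theorem~\ref{thm:almost-extremizer-compactness}, applied after normalizing \(\|u\|_{L^{p_*}(\bdH)}=1\). This normalization is harmless because both \(\delta_{\mathrm T}\) and \(d_{\mathrm T}\) are invariant under multiplication by nonzero constants, \(\MT\) being a cone. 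In this regime, Step 1 then gives
\[
 \delta_{\mathrm T}(u)>\delta_*\ge \delta_*\, d_{\mathrm T}(u,\MT)^{\max\{2,p\}}.
\]

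\textbf{Step 3: conclusion.} If instead \(d_{\mathrm T}(u,\MT)\le\varepsilon_0\), the hypothesis \eqref{eq:local-stability-assumption} applies directly. Setting \(c:=\min\{c_0,\delta_*\}\) therefore yields \eqref{eq:global-stability-from-local} in both cases. Since \(\delta_*\) depends only on \(n,p,\varepsilon_0\), the constant \(c\) depends only on \(n,p,c_0,\varepsilon_0\).

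The only step that needs any care is Step 2, specifically the passage from \(\delta_{\mathrm T}(u_k)\to0\) to \(\|\nabla u_k\|_{L^p}^p\to\ST^p\) under the normalization, and then to convergence to \(\MT\) via Lions' theorem. Both are already packaged in Remark~\ref{rem:qualitative-stability}, so the argument amounts to a contradiction/compactness bookkeeping step rather than a genuine obstacle.
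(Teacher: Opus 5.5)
Your proposal is correct and follows essentially the same route as the paper. Both combine a trivial a priori bound on \(d_{\mathrm T}(u,\MT)\) with the qualitative stability of Remark~\ref{rem:qualitative-stability} to obtain a deficit floor outside the local regime. The differences are cosmetic: you split on the distance rather than on the deficit (the contrapositive of the paper's case split), and you get the sharper bound \(d_{\mathrm T}(u,\MT)\le 1\) by letting the amplitude tend to zero, whereas the paper uses a triangle-inequality bound \(d_{\mathrm T}(u,\MT)\le C(n,p)\).
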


\begin{proof}
 By the triangle inequality, choosing
 \(v\in\MT\) with
 \[
  \|\nabla v\|_{L^p(\Hhalf)}
  \le
  2\|\nabla u\|_{L^p(\Hhalf)}
 \]
 gives a universal upper bound
 \begin{equation}
  \label{eq:distance-universal-bound}
  d_{\mathrm T}(u,\MT)\le C(n,p).
 \end{equation}

 By Remark~\ref{rem:qualitative-stability}, there exists \(\delta_0>0\)
 such that
 \[
  \delta_{\mathrm T}(u)<\delta_0
  \quad\Longrightarrow\quad
  d_{\mathrm T}(u,\MT)<\varepsilon_0.
 \]
 In this case, \eqref{eq:local-stability-assumption} applies.  If
 \(\delta_{\mathrm T}(u)\ge\delta_0\), then
 \eqref{eq:distance-universal-bound} yields
 \[
  \delta_{\mathrm T}(u)
  \ge
  \frac{\delta_0}{C(n,p)^{\max\{2,p\}}}
  d_{\mathrm T}(u,\MT)^{\max\{2,p\}}.
 \]
 Taking the smaller of the two constants proves the claim.
\end{proof}

\begin{proof}[Proof of Theorem~\ref{thm:main}]
 By homogeneity, we may normalize
 \begin{equation}
  \label{eq:main-proof-normalization}
  \|u\|_{L^{p_*}(\bdH)}=1.
 \end{equation}
 By Claim~\ref{prop:global-to-local-reduction}, it is enough to work
 when \(\delta_{\mathrm T}(u)\) is sufficiently small.  Fix
 \(\widehat\eta>0\), to be chosen below.  Remark~\ref{rem:qualitative-stability}
 provides \(\widehat v\in\MT\) such that
 \[
  \|\nabla(u-\widehat v)\|_{L^p(\Hhalf)}
  \le\widehat\eta.
 \]
 If \(\widehat v<0\), we replace both \(u\) and \(\widehat v\) by
 their negatives; this leaves the deficit and the distance unchanged.
 Thus we may assume \(\widehat v\in\MT^+\).  The sharp trace inequality
 and the reverse triangle inequality give, after decreasing
 \(\widehat\eta\),
 \[
  \frac34
  \le
  \|\widehat v\|_{L^{p_*}(\bdH)}
  \le
  \frac43.
 \]
 Up to a tangential translation and a critical dilation, we may write
 \(\widehat v=V_{a,1,0}\).  Proposition~\ref{prop:uniform-modulation}
 then gives \(v\in\MT^+\) such that, with \(h:=u-v\),
 \begin{equation}
  \label{eq:main-proof-orthogonality}
  \int_{\bdH}v^{p_*-2}hZ\,dy=0
  \qquad\forall Z\in T_v\MT,
 \end{equation}
 and
 \[
  \|\nabla h\|_{L^p(\Hhalf)}
  \le\omega(\widehat\eta).
 \]
 If \(h=0\), the conclusion is immediate.  Otherwise, set
 \[
  \varepsilon:=\|\nabla h\|_{L^p(\Hhalf)},
  \qquad
  \varphi:=\frac{h}{\varepsilon},
  \qquad
  \|\nabla\varphi\|_{L^p(\Hhalf)}=1.
 \]
 A further decrease of \(\widehat\eta\), together with the trace
 inequality and \eqref{eq:main-proof-normalization}, gives
 \begin{equation}
  \label{eq:v-trace-norm-local-bounds}
  \frac12
  \le
  \|v\|_{L^{p_*}(\bdH)}
  \le
  \frac32.
 \end{equation}
 Hence the uniform nonlinear spectral gap applies to \(v\).

Since
\[
 \delta_{\mathrm T}(u)
 =
 \|\nabla u\|_{L^p(\Hhalf)}-\ST
\]
under the normalization \eqref{eq:main-proof-normalization}, for
\(\delta_{\mathrm T}(u)\) small,
\begin{equation}
 \label{eq:norm-energy-deficit-comparison-general}
 c\,\delta_{\mathrm T}(u)
 \le
 \int_{\Hhalf}|\nabla u|^p\,dx-\ST^p
 \le
 C\,\delta_{\mathrm T}(u).
\end{equation}
Thus, in the local regime, it is enough to prove
\begin{equation}
 \label{eq:target-energy-local}
 \int_{\Hhalf}|\nabla u|^p\,dx-\ST^p
 \ge
 c\,
 d_{\mathrm T}(u,\MT)^{\max\{2,p\}}.
\end{equation}

Since \(v\) is an extremal,
\begin{equation}
 \label{eq:v-energy-exact}
 \int_{\Hhalf}|\nabla v|^p\,dx
 =
 \ST^p\|v\|_{L^{p_*}(\bdH)}^p.
\end{equation}
The bulk expansion gives
\begin{align}
 \int_{\Hhalf}|\nabla u|^p\,dx
 &=
 \int_{\Hhalf}|\nabla v|^p\,dx
 +
 p\int_{\Hhalf}
 |\nabla v|^{p-2}\nabla v\cdot\nabla h\,dx
 \notag\\
 &\quad
 +
 \frac p2\mathcal G_v[h],
 \label{eq:bulk-energy-exact-expansion}
\end{align}
where \(\mathcal G_v[h]\) is defined in
\eqref{eq:general-bubble-gradient-form}.

Since \(p/p_*<1\), the map
\[
 s\longmapsto s^{p/p_*}
\]
is concave.  Hence, with
\(s=\int_{\bdH}v^{p_*}\,dy\),
\begin{equation}
 \label{eq:concavity-at-A-v}
 1-\|v\|_{L^{p_*}(\bdH)}^p
 \le
 \frac p{p_*}
 \|v\|_{L^{p_*}(\bdH)}^{p-p_*}
 \left(
 1-\int_{\bdH}v^{p_*}\,dy
 \right).
\end{equation}
Using
\[
 \int_{\bdH}|u|^{p_*}\,dy=1,
\]
we have
\begin{align}
 1-\int_{\bdH}v^{p_*}\,dy
 &=
 p_*\int_{\bdH}v^{p_*-1}h\,dy
 +
 \int_{\bdH}\mathscr B_{p_*}(v,h)\,dy,
 \label{eq:boundary-remainder-definition}
\end{align}

Combining \eqref{eq:v-energy-exact},
\eqref{eq:bulk-energy-exact-expansion}, and
\eqref{eq:concavity-at-A-v}, we obtain
\begin{align}
 \int_{\Hhalf}|\nabla u|^p\,dx-\ST^p
 &\ge
 p\int_{\Hhalf}
 |\nabla v|^{p-2}\nabla v\cdot\nabla h\,dx
 +
 \frac p2\mathcal G_v[h]
 \notag\\
 &\quad
 -
 \frac p{p_*}
 \ST^p\|v\|_{L^{p_*}(\bdH)}^{p-p_*}
 \left(
 p_*\int_{\bdH}v^{p_*-1}h\,dy
 +
 \int_{\bdH}\mathscr B_{p_*}(v,h)\,dy
 \right).
 \label{eq:deficit-before-cancellation}
\end{align}

The Euler--Lagrange equation for \(v\) gives
\begin{equation}
 \label{eq:EL-cancellation-main}
 \int_{\Hhalf}
 |\nabla v|^{p-2}\nabla v\cdot\nabla h\,dx
 =
 \ST^p\|v\|_{L^{p_*}(\bdH)}^{p-p_*}
 \int_{\bdH}v^{p_*-1}h\,dy.
\end{equation}
Therefore the first-order terms in
\eqref{eq:deficit-before-cancellation} cancel exactly, and
\begin{equation}
 \label{eq:master-deficit-lower-bound}
 \int_{\Hhalf}|\nabla u|^p\,dx-\ST^p
 \ge
 \frac p2\mathcal G_v[h]
 -
 \frac p{p_*}
 \ST^p\|v\|_{L^{p_*}(\bdH)}^{p-p_*}
 \int_{\bdH}\mathscr B_{p_*}(v,h)\,dy.
\end{equation}

It remains to bound the right-hand side of
\eqref{eq:master-deficit-lower-bound} from below.

\noindent\textbf{Case 1: \(p_*\le2\).}

Fix \(\kappa>0\).  The scalar inequality
\eqref{eq:boundary-remainder-q-less-two} gives
\begin{equation}
 \label{eq:R-q-upper-singular}
 \int_{\bdH}\mathscr B_{p_*}(v,h)\,dy
 \le
 \left(
 \frac{p_*(p_*-1)}2+\kappa
 \right)
 \int_{\bdH}
 \frac{(v+C_\kappa|h|)^{p_*}}{v^2+|h|^2}|h|^2\,dy.
\end{equation}
Substituting \eqref{eq:R-q-upper-singular} into
\eqref{eq:master-deficit-lower-bound}, we obtain
\begin{align}
 \int_{\Hhalf}|\nabla u|^p\,dx-\ST^p
 &\ge
 \frac p2\mathcal G_v[h]
 \notag\\
 &\quad
 -
 \frac p2
 \left(
 (p_*-1)\ST^p
 \|v\|_{L^{p_*}(\bdH)}^{p-p_*}
 +
  \frac{2\kappa}{p_*}
  \ST^p\|v\|_{L^{p_*}(\bdH)}^{p-p_*}
  \right)
 \notag\\
 &\qquad\times
 \int_{\bdH}
 \frac{(v+C_\kappa|h|)^{p_*}}{v^2+|h|^2}|h|^2\,dy.
 \label{eq:singular-before-gap}
\end{align}

By Corollary~\ref{cor:uniform-nonlinear-gap}, for sufficiently small
\(\varepsilon\),
\begin{align}
 \mathcal G_v[h]
 &\ge
 \left(
 (p_*-1)\ST^p
 \|v\|_{L^{p_*}(\bdH)}^{p-p_*}
 +
  \lambda_{\mathrm T}(v)
 \right)
 \int_{\bdH}
 \frac{(v+C_\kappa|h|)^{p_*}}{v^2+|h|^2}|h|^2\,dy.
 \label{eq:singular-nonlinear-gap-used}
\end{align}
Choose \(\kappa>0\), depending only on \(n,p\), sufficiently small
that, for every \(v\) satisfying
\eqref{eq:v-trace-norm-local-bounds},
\begin{equation}
 \label{eq:kappa-choice-singular}
 \frac{2\kappa}{p_*}\ST^p
 \|v\|_{L^{p_*}(\bdH)}^{p-p_*}
 \le
 \frac{\lambda_{\mathrm T}(v)}4.
\end{equation}
Using the uniform bounds on \(\|v\|_{L^{p_*}(\bdH)}\), the last two
displays imply
\begin{equation}
 \label{eq:singular-after-gap-preliminary}
 \int_{\Hhalf}|\nabla u|^p\,dx-\ST^p
 \ge
 c\,\mathcal G_v[h].
\end{equation}

It remains to convert the mixed gradient remainder into a quadratic
power of \(\varepsilon\).

\begin{claim}
\label{claim:mixed-remainder-lower-epsilon-two}
Let \(1<p<2\), let \(v\in\MT^+\) satisfy
\eqref{eq:v-trace-norm-local-bounds}, and let
\[
h=\varepsilon\varphi,
\qquad
\|\nabla\varphi\|_{L^p(\Hhalf)}=1.
\]
Then, for \(0<\varepsilon\le1\),
\begin{equation}
 \label{eq:mixed-remainder-lower-epsilon-two}
 \int_{\Hhalf}
 \min
 \left\{
 |\nabla h|^p,\,
 |\nabla v|^{p-2}|\nabla h|^2
 \right\}\,dx
 \ge
 c(n,p)\varepsilon^2.
\end{equation}
\end{claim}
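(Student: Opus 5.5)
The plan is to split $\Hhalf$ according to which term realizes the minimum. Set
\[
 E:=\{|\nabla h|\ge|\nabla v|\},\qquad F:=\{|\nabla h|<|\nabla v|\}.
\]
Since $p<2$, on $E$ one has $|\nabla h|^p\le|\nabla v|^{p-2}|\nabla h|^2$, so the minimum equals $|\nabla h|^p$. On $F$ the inequality reverses, and the minimum equals $|\nabla v|^{p-2}|\nabla h|^2$. We also have
\[
 \int_E|\nabla\varphi|^p+\int_F|\nabla\varphi|^p=1,
\]
so one of the two pieces carries at least $1/2$ of the mass.

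\textbf{Case $\int_E|\nabla\varphi|^p\ge1/2$.} Here
\[
 \int_E\min\{\cdots\}=\varepsilon^p\int_E|\nabla\varphi|^p\ge\tfrac12\varepsilon^p\ge\tfrac12\varepsilon^2,
\]
because $p<2$ and $\varepsilon\le1$.

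\textbf{Case $\int_F|\nabla\varphi|^p\ge1/2$.} We apply H\"older's inequality exactly as in \eqref{eq:phi-k-W1p-absorption}:
\[
 \int_F|\nabla\varphi|^p\le\Bigl(\int_F|\nabla v|^{p-2}|\nabla\varphi|^2\Bigr)^{p/2}\Bigl(\int_F|\nabla v|^p\Bigr)^{1-p/2}.
\]
The last factor is bounded by $\|\nabla v\|_{L^p}^{p(1-p/2)}$. By \eqref{eq:equality-on-manifold} and \eqref{eq:v-trace-norm-local-bounds}, $\|\nabla v\|_{L^p}=\ST\|v\|_{L^{p_*}}\le\tfrac32\ST$, so this factor is bounded by a constant depending only on $n,p$. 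Hence
\[
 \int_F|\nabla v|^{p-2}|\nabla\varphi|^2\ge c(n,p)>0,
\]
and multiplying by $\varepsilon^2$ gives
\[
 \int_F\min\{\cdots\}=\varepsilon^2\int_F|\nabla v|^{p-2}|\nabla\varphi|^2\ge c\varepsilon^2.
\]

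Combining the two cases gives \eqref{eq:mixed-remainder-lower-epsilon-two} with $c=\min\{1/2,c(n,p)\}$. The only point needing care is where $\nabla v=0$, since the weight $|\nabla v|^{p-2}$ is then infinite. For a bubble, \eqref{eq:gradient-bubble-explicit} shows $\nabla v$ never vanishes, so this causes no issue. In any case such points lie in $E$ and do not affect the H\"older step on $F$, because $F\subset\{\nabla v\neq0\}$. No further obstacle is expected; the main thing to check is that the constant is uniform, which follows from the bound on $\|\nabla v\|_{L^p}$ supplied by \eqref{eq:v-trace-norm-local-bounds}.
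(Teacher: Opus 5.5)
Your proof is correct and is essentially the paper's argument. The paper uses the same splitting $E=\{\varepsilon|\nabla\varphi|\ge|\nabla v|\}$, $F=\{\varepsilon|\nabla\varphi|<|\nabla v|\}$, the same dichotomy on which set carries half of $\int|\nabla\varphi|^p$, and the same H\"older estimate on $F$. Like you, it controls the last factor through $\|\nabla v\|_{L^p(\Hhalf)}=\ST\|v\|_{L^{p_*}(\bdH)}$, and you correctly note that only the upper bound on this norm is needed.
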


\begin{proof}
 Define
 \begin{align}
  E
  &:=
  \left\{
  \varepsilon|\nabla\varphi|
  \ge
  |\nabla v|
  \right\},
  \label{eq:E-gradient-large}\\
  F
  &:=
  \left\{
  \varepsilon|\nabla\varphi|
  <
  |\nabla v|
  \right\}.
  \label{eq:F-gradient-small}
 \end{align}
 Then the integral in \eqref{eq:mixed-remainder-lower-epsilon-two} is
 \begin{align}
  &\varepsilon^p
  \int_E|\nabla\varphi|^p\,dx
  +
  \varepsilon^2
  \int_F
  |\nabla v|^{p-2}
  |\nabla\varphi|^2\,dx.
  \label{eq:R-split-E-F}
 \end{align}

 If
 \[
  \int_E|\nabla\varphi|^p\,dx\ge\frac12,
 \]
 then \eqref{eq:R-split-E-F} is bounded below by
 \(\frac12\varepsilon^p\ge\frac12\varepsilon^2\), since \(p<2\) and
 \(\varepsilon\le1\).  Otherwise,
 \[
  \int_F|\nabla\varphi|^p\,dx\ge\frac12.
 \]
 By H\"older's inequality,
 \begin{align}
  \int_F|\nabla\varphi|^p\,dx
  &=
  \int_F
  \left(
  |\nabla v|^{\frac{p-2}{2}}
  |\nabla\varphi|
  \right)^p
  |\nabla v|^{\frac{p(2-p)}2}
  \,dx
  \notag\\
  &\le
  \left(
  \int_F
  |\nabla v|^{p-2}
  |\nabla\varphi|^2\,dx
  \right)^{p/2}
  \left(
  \int_F|\nabla v|^p\,dx
  \right)^{(2-p)/2}.
 \label{eq:holder-mixed-remainder}
\end{align}
Since \(v\in\MT\) and its trace norm is uniformly bounded above and
below,
\[
 \|\nabla v\|_{L^p(\Hhalf)}
 =
 \ST\|v\|_{L^{p_*}(\bdH)}
 \simeq_{n,p}1.
\]
It follows that
\[
 \int_F
 |\nabla v|^{p-2}
 |\nabla\varphi|^2\,dx
 \ge
 c(n,p).
\]
This proves \eqref{eq:mixed-remainder-lower-epsilon-two}.
\end{proof}

Combining \eqref{eq:singular-after-gap-preliminary},
Lemma~\ref{lem:gradient-remainder-coercivity}, and
\eqref{eq:mixed-remainder-lower-epsilon-two}, we conclude that
\begin{equation}
 \label{eq:first-regime-final}
 \int_{\Hhalf}|\nabla u|^p\,dx-\ST^p
 \ge
 c\varepsilon^2
 \qquad
 \text{if }
 1<p\le\frac{2n}{n+1}.
\end{equation}

\noindent\textbf{Case 2: \(p_*>2\) and \(p<2\).}

It follows from \eqref{eq:boundary-remainder-q-greater-two} that, for every \(\kappa>0\),
\begin{align}
 \int_{\bdH}\mathscr B_{p_*}(v,h)\,dy
 &\le
 \left(
 \frac{p_*(p_*-1)}2+\kappa
 \right)
 \int_{\bdH}v^{p_*-2}|h|^2\,dy
 \notag\\
 &\quad
 +
 C_\kappa
 \int_{\bdH}|h|^{p_*}\,dy.
 \label{eq:R-q-upper-q-greater-two}
\end{align}
Using this in \eqref{eq:master-deficit-lower-bound}, we find
\begin{align}
 &\int_{\Hhalf}|\nabla u|^p\,dx-\ST^p
 \ge
 \frac p2\mathcal G_v[h]-
 C\|h\|_{L^{p_*}(\bdH)}^{p_*}
 \notag\\
 &
 -
 \frac p2
 \left(
 (p_*-1)\ST^p
 \|v\|_{L^{p_*}(\bdH)}^{p-p_*}
 +
 \frac{2\kappa}{p_*}
 \ST^p\|v\|_{L^{p_*}(\bdH)}^{p-p_*}
 \right)
 \int_{\bdH}v^{p_*-2}|h|^2\,dy.
 \label{eq:second-regime-before-gap}
\end{align}

By Corollary~\ref{cor:uniform-nonlinear-gap},
after choosing \(\kappa\) sufficiently small,
\begin{equation}
 \label{eq:second-regime-after-gap}
 \int_{\Hhalf}|\nabla u|^p\,dx-\ST^p
 \ge
 c\mathcal G_v[h]
 -
 C\|h\|_{L^{p_*}(\bdH)}^{p_*}.
\end{equation}
Since \(p<2\), Lemma~\ref{lem:gradient-remainder-coercivity} and
Claim~\ref{claim:mixed-remainder-lower-epsilon-two} imply
\begin{equation}
 \label{eq:G-quadratic-second-regime}
 \mathcal G_v[h]
 \ge
 c\varepsilon^2.
\end{equation}
The sharp trace inequality gives
\begin{equation}
 \label{eq:h-trace-q-controlled}
 \|h\|_{L^{p_*}(\bdH)}^{p_*}
 \le
 \ST^{-p_*}
 \|\nabla h\|_{L^p(\Hhalf)}^{p_*}
 =
 \ST^{-p_*}\varepsilon^{p_*}.
\end{equation}
Because \(p_*>2\),
\[
 \varepsilon^{p_*}=o(\varepsilon^2)
 \qquad\text{as }\varepsilon\to0.
\]
Thus, for sufficiently small \(\varepsilon\),
\begin{equation}
 \label{eq:second-regime-final}
 \int_{\Hhalf}|\nabla u|^p\,dx-\ST^p
 \ge
 c\varepsilon^2.
\end{equation}

\noindent\textbf{Case 3: \(p\ge2\).}

As in the preceding case, \eqref{eq:boundary-remainder-q-greater-two} and Corollary~\ref{cor:uniform-nonlinear-gap} imply
\begin{equation}
 \label{eq:third-regime-after-gap}
 \int_{\Hhalf}|\nabla u|^p\,dx-\ST^p
 \ge
 c\mathcal G_v[h]
 -
 C\|h\|_{L^{p_*}(\bdH)}^{p_*}.
\end{equation}
For \(p\ge2\), Lemma~\ref{lem:gradient-remainder-coercivity} yields
\begin{align}
 \mathcal G_v[h]
 &\ge
 c
 \int_{\Hhalf}
 \left(
 |\nabla v|^{p-2}|\nabla h|^2
 +
 |\nabla h|^p
 \right)\,dx
 \notag\\
 &\ge
 c\|\nabla h\|_{L^p(\Hhalf)}^p
 =
 c\varepsilon^p.
 \label{eq:G-controls-epsilon-p}
\end{align}
Moreover, by the Sobolev trace inequality,
\begin{equation}
 \label{eq:third-regime-boundary-error}
 \|h\|_{L^{p_*}(\bdH)}^{p_*}
 \le
 C\varepsilon^{p_*}.
\end{equation}
Since \(p_*>p\),
\[
 \varepsilon^{p_*}=o(\varepsilon^p)
 \qquad\text{as }\varepsilon\to0.
\]
Consequently,
\begin{equation}
 \label{eq:third-regime-final}
 \int_{\Hhalf}|\nabla u|^p\,dx-\ST^p
 \ge
 c\varepsilon^p.
\end{equation}

Combining
\eqref{eq:first-regime-final},
\eqref{eq:second-regime-final}, and
\eqref{eq:third-regime-final}, we have proved
\begin{equation}
 \label{eq:all-regimes-epsilon}
 \int_{\Hhalf}|\nabla u|^p\,dx-\ST^p
 \ge
 c
 \varepsilon^{\max\{2,p\}},
\end{equation}
when \(\varepsilon\) is sufficiently small.  By the definition of the
distance and the normalization \eqref{eq:main-proof-normalization},
\[
 d_{\mathrm T}(u,\MT)
 \le
 \frac{\varepsilon}{\|\nabla u\|_{L^p(\Hhalf)}}
 \le
 \frac{\varepsilon}{\ST}.
\]
Together with \eqref{eq:norm-energy-deficit-comparison-general}, this
proves the local stability estimate.  Claim~\ref{prop:global-to-local-reduction}
then yields
 \begin{equation}
  \label{eq:global-main-final}
  \delta_{\mathrm T}(u)
  \ge
  c(n,p)
  d_{\mathrm T}(u,\MT)^{\max\{2,p\}}
 \end{equation}
 for every \(u\in \dot{W}^{1,p}(\Hhalf)\) with
 \(u|_{\bdH}\not\equiv0\), as required.
\end{proof}

\section{Declaration of generative AI and AI-assisted technologies in the manuscript preparation process}

\noindent\textbf{Statement}: During the preparation of this work, the authors used ChatGPT (OpenAI) to assist in converting their proof manuscripts into a preliminary LaTeX draft and to identify potential grammatical and logical errors in the manuscript. After using this tool, the authors reviewed and edited the content as needed and take full responsibility for the content of the published article.

\section{Acknowledgments}

The first and third authors were supported by the National Natural Science Foundation of China [grant number: 2025YFA1017601].

\raggedbottom

\end{document}